\documentclass[11pt]{amsart} 
\usepackage[bottom=1in]{geometry}
\usepackage{esint} 
\geometry{letterpaper} 
\usepackage{graphicx, amssymb, hyperref }
\usepackage[mathscr]{euscript}
\usepackage{todonotes}
\usepackage[english]{babel}

\usepackage{enumitem}  

\DeclareGraphicsRule{.tif}{png}{.png}{`convert #1 `dirname #1`/`basename #1 .tif`.png}

\newtheorem{theorem}{Theorem}

\newtheorem{definition}[theorem]{Definition}
\newcommand{\ssize}{\text{size}\,}
\newcommand{\eenergy}{\text{energy}\,}
\newtheorem{lemma}[theorem]{Lemma}
\newtheorem{corollary}[theorem]{Corollary}
\newtheorem{proposition}[theorem]{Proposition}
\newtheorem*{remark}{Remark:}
\newtheorem*{notation}{Notation:}
\newcommand{\sssize}{\widetilde{\text{size}\,}}

\newcommand{\one}{\mathbf{1}}
\newcommand{\dist}{\text{\,dist\,}}
\newcommand{\rr}{\mathbb}
\newcommand{\ii}{\mathscr}
\newcommand{\ic}{\mathcal}
\newcommand{\ci}{\tilde{\chi}}

\newcommand{\ds}{\displaystyle}

\newtheorem{question*}{Question}
\newtheorem*{main*}{\underline{Induction statement}}

\newcommand{\nn}{\langle n \rangle}

\newcommand{\lft}{\big|}
\newcommand{\rg}{\big|}
\newcommand{\supp}{\text{supp\,}}

\author{Cristina Benea}
\address{Cristina Benea, Universit\'{e} de Nantes, Laboratoire Jean Leray, Nantes 44322, France}
\email{cristina.benea@univ-nantes.fr}

\author[Camil Muscalu]{Camil Muscalu*}
\thanks{$^*$The author is also a Member of the ``Simion Stoilow" Institute of Mathematics of the Romanian Academy}
\address{Camil Muscalu, Department of Mathematics, Cornell University, Ithaca, NY 14853, USA}
\email{camil@math.cornell.edu}

 \title[Quasi-Banach Valued Inequalities via the Helicoidal method]{Quasi-Banach Valued Inequalities via the Helicoidal method
} 
\begin{document}

\begin{abstract}
We extend the helicoidal method from \cite{vv_BHT} to the quasi-Banach context, proving in this way multiple Banach and quasi-Banach vector-valued inequalities for paraproducts $\Pi$ and for the bilinear Hilbert transform $BHT$. As an immediate application, we obtain mixed norm estimates for $\Pi \otimes \Pi$ in the whole range of Lebesgue exponents.

One of the novelties in the quasi-Banach framework (that is, when $0<r<1$), which we expect to be useful in other contexts as well, is the ``linearization" of the operator $ \left( \sum_k \vert T(f_k, g_k) \vert^r  \right)^{1/r}$, achieved by dualizing its \emph{weak-}$L^p$ quasinorms through $L^r$ (see Proposition \ref{prop:Lr-dualization}). Another important role is played by the sharp evaluation of the operatorial norm $\|  T_{I_0}(f \cdot \one_F, g \cdot \one_G) \cdot \one_{H'}\|_r$, which is obtained by dualizing the \emph{weak-}$L^p$ quasinorms through $L^\tau$, with $\tau \leq r$. In the Banach case, the linearization of the operator and the sharp estimates for the localized operatorial norm can be both achieved through the classical (generalized restricted type) $L^1$ dualization.

\end{abstract}

\maketitle

\section{Introduction}

The present work is a natural continuation of our prior article \cite{vv_BHT}, where we introduced a new method (termed the helicoidal method) for proving various multiple vector-valued inequalities in harmonic analysis. This technique, initially developed for the bilinear Hilbert transform $BHT$, reduces to H\"older's inequality and some very precise local estimates for the operator in question.

More precisely, let $T$ be an $m$-linear operator so that $T: L^{p_1} \times \ldots \times L^{p_m} \to L^p$ with $\frac{1}{p_1}+\ldots +\frac{1}{p_m}=\frac{1}{p}$, $1< p_j \leq \infty$ and $\frac{1}{m}<p<\infty$ (whenever $T$ satisfies such estimates, we say $(p_1, \ldots, p_m, p  ) \in Range(T)$). We want to prove the vector-valued inequality of ``depth $n$":
\begin{equation}
\label{eq:vv-depth-n}
\vec{T}_n : L^{p_1}\big( \rr{R}; L^{R_1}(\ii W, \mu) \big) \times \ldots \times L^{p_m}\big( \rr{R}; L^{R_m}(\ii W, \mu) \big) \to L^{p}\big( \rr{R}; L^{R}(\ii W, \mu) \big),
\end{equation}
where $\big( \ii W, \Sigma, \mu \big)$ is a totally $\sigma$-finite measure space and the $n$-tuples $R_k=\big( r^1_k, \ldots, r^n_k \big), R=\big( r^1, \ldots, r^n \big)$ satisfy for every $1 \leq k \leq m$,  $1 \leq j \leq n$
\begin{equation}
\label{eq:cond-r-tuples}
1< r_k^j \leq \infty, \quad \frac{1}{m}<r^j<\infty \quad \text{and}\quad \frac{1}{r_1^j}+\ldots +\frac{1}{r_m^j}=\frac{1}{r^j}.
\end{equation}
The $L^{R}$ norm on $(\ii W, \Sigma, \mu)=\big( \prod_{j=1}^n \ii W_j, \prod_{j=1}^n \Sigma_j, \prod_{j=1}^n \mu_j   \big)$ is defined as
\[
\| \vec f  \|_{L^R(\ii W, \mu)}:= \big(  \int_{\ii W_1} \ldots \big( \int_{\ii W_n} \vert \vec f(w_1, \ldots , w_n)   \vert^{r^n} d \mu_n(w_n)     \big)^{r^{n-1}/r^n} \ldots d \mu_1(w_1) \big)^{1/{r^1}}
\]
If all $ 1 \leq r^j <\infty$ and $1\leq p <\infty$, $\vec T_n$ can be understood through the multilinear form $\Lambda_{\vec T_n}$ associated to it and \eqref{eq:vv-depth-n} becomes equivalent to proving 
\[
\Lambda_{\vec T_n} : L^{p_1}\big( \rr{R}; L^{R_1}(\ii W, \mu) \big) \times \ldots \times L^{p_m}\big( \rr{R}; L^{R_m}(\ii W, \mu) \big) \times L^{p'}\big( \rr{R}; L^{R'}(\ii W, \mu) \big) \to \rr C
\]
where $R'=\big( \big( r^1 \big)', \ldots , \big( r^n \big)'  \big)$. That is, for every $1 \leq k \leq m$ and every $\vec f_k \in L^{p_k}\big( \rr{R}; L^{R_k}(\ii W, \mu) \big)$ and $\vec h \in  L^{p'}\big( \rr{R}; L^{R'}(\ii W, \mu) \big)$, we want to prove
\begin{equation}
\label{eq:intr-multilin-form}
\vert \Lambda_{\vec T_n}(\vec f_1, \ldots, \vec f_m, \vec h)  \vert \lesssim  \|\vec f_1\|_{ L^{p_1}\big( \rr{R}; L^{R_1}(\ii W, \mu) \big)} \cdot \ldots \cdot\|\vec f_m\|_{L^{p_m}\big( \rr{R}; L^{R_m}(\ii W, \mu) \big)} \cdot\| \vec h\|_{ L^{p'}\big( \rr{R}; L^{R'}(\ii W, \mu) \big)}.
\end{equation}

Using restricted weak-type interpolation, we were able in \cite{vv_BHT} to treat also the case $\frac{1}{m}<p<1$, when $1 \leq \vec R <\infty$ (the last inequality is to be read component-wise). The advantage to this approach is that we can dualize $L^{p, \infty}$ norms even for $p<1$. The inequality equivalent to \eqref{eq:intr-multilin-form} in this case is, morally speaking, 
\begin{equation}
\label{eq:intr-multilin-form-restr}
\vert \Lambda_{\vec T_n}(\vec f_1, \ldots, \vec f_m, \vec h)  \vert \lesssim |F_1|^{\frac{1}{p_1}} \cdot \ldots \cdot |F_{m}|^{\frac{1}{p_m}} |H|^{\frac{1}{p'}},
\end{equation}
for all $\vec{f_k}, \vec h$ so that $\|\vec f_k\|_{  L^{R_k}(\ii W, \mu)} \leq\one_{F_k}, \|\vec h\|_{ L^{R'}(\ii W, \mu)} \leq\one_{H'}$, where $F_k , H \subseteq \rr R$ are sets of finite measure, and $H'\subseteq H$ is a major subset of $H$ to be constructed in the process.

The helicoidal method is a recursive procedure in which vector-valued estimates of depth $n$ corresponding to $\vec T_{n}$ are proved using \emph{localized} versions of the $\left(n-1\right)$-depth vector-valued operator $\vec T_{n-1}$. Aiming to prove \eqref{eq:intr-multilin-form-restr} for fixed sets $F_1, \ldots, F_m, H, H'$, we need to exercise great care in evaluating 
\[
\Lambda_{\vec T_{n-1}; I_0}^{F_1, \ldots, F_m, H'}:=\Lambda_{\vec T_{n-1}; I_0}\big( \vec f_1 \cdot \one_{F_1}, \ldots, \vec f_m \cdot \one_{F_m}, \vec h \cdot \one_{H'}  \big).
\]
There is another localization associated to the spatial dyadic interval $I_0$, hence the notation $\Lambda_{\vec T_{n-1}; I_0}$. This will be made precise later, but such a localization is natural in the time-frequency analysis setting, where operators are decomposed into wave packets that retain both spatial and frequential information.

The estimates needed for $\Lambda_{\vec T_{n-1}; I_0}^{F_1, \ldots, F_m, H'}$ correspond to Lebesgue exponents between $1$ and $\infty$, as a result of the assumption $1 < R_k, R' \leq \infty$. So one could estimate the multilinear form as in \eqref{eq:intr-multilin-form}, but as a matter of fact, a sharper result can be obtained by using the fact that the functions $\vec f_k$ are supported inside the sets $F_k$:
\begin{align}
\label{eq:intr-multilin-form-local}
\big\vert \Lambda_{\vec T_{n-1}; I_0}^{F_1, \ldots, F_m, H'}(\vec f_1, \ldots, \vec f_m, \vec h)  \big\vert &\lesssim \|\Lambda_{\vec T_{n-1}; I_0}^{F_1, \ldots, F_m, H'} \| \cdot\\
& \|\vec f_1\|_{ L^{r_1}\big( \rr{R}; L^{\tilde R_1}(\ii W, \mu) \big)} \cdot \ldots \cdot\|\vec f_m\|_{L^{r_m}\big( \rr{R}; L^{ \tilde R_m}(\ii W, \mu) \big)} \cdot\| \vec h\|_{ L^{r'}\big( \rr{R}; L^{ \tilde R'}(\ii W, \mu) \big)}. \nonumber
\end{align}

Some information on $\|\vec f_k\|_{  L^{\tilde R_k}(\ii W, \mu)}$ is preserved in the operatorial norm $\|\Lambda_{\vec T_{n-1}; I_0}^{F_1, \ldots, F_m, H'} \|$, which is necessary in the induction step. Obtaining the desired vector-valued inequalities amounts to transforming $L^{r_k}$ estimates into $L^{p_k}$ estimates, and this resembles an extrapolation principle. If $r_k \leq p_k$, H\"older's inequality and localizations play an important role, but in the case $r_k >p_k$, the sharp evaluation of $\|\Lambda_{\vec T_{n-1}; I_0}^{F_1, \ldots, F_m, H'} \|$ is essential. The method of the proof is described in greater detail in Section \ref{sec:def-layout}, after introducing some necessary definitions.

However, if some $r^j \notin [1, \infty)$ (recall that $R=(r^1, \ldots, r^n)$ corresponds to the target space in \eqref{eq:vv-depth-n}), we cannot expect to have an inequality comparable to \eqref{eq:intr-multilin-form-local} for the multilinear form. The difficulty consists, for example,  in associating a trilinear form to an operator $T:L^p(\ell^{r_1}) \times L^q(\ell^{r_2}) \to L^s(\ell^r)$ when $r<1$. The linearization of such an operator is achieved in the quasi-Banach case by dualizing the $L^{p, \infty}$ quasinorm through the Lebesgue space $L^r$ (see Section \ref{sec:quasi-Banach}).

We improve the helicoidal method from \cite{vv_BHT} by substituting \eqref{eq:intr-multilin-form-local} with
{\fontsize{9}{10}
\begin{equation}
\label{eq:intr-op-local-sharp}
\big\|\vec{T}_{n-1; I_0}(\vec f_1 \cdot \one_{F_1}, \ldots, \vec f_m \cdot \one_{F_m}) \cdot \one_{H'}\big\|_{L^r} \lesssim  \|\vec T_{n-1; I_0}^{F_1, \ldots, F_m, H'} \| \|\vec f_1\|_{ L^{r_1}\left( \rr{R}; L^{\tilde R_1}(\ii W, \mu) \right)} \cdot \ldots \cdot\|\vec f_m\|_{L^{r_m}\left( \rr{R}; L^{ \tilde R_m}(\ii W, \mu) \right)}.
\end{equation}}
Again, optimal estimates are needed for the operatorial norm $\|\vec T_{n-1; I_0}^{F_1, \ldots, F_m, H'} \|$, which are in some sense finer than those for the operatorial norm $\| \Lambda_{\vec T_{n-1}; I_0}^{F_1, \ldots, F_m, H'}\|$ of the multilinear form. 

All things considered, we are able to prove that whenever $(r_1, \ldots, r_m ,r) \in Range (T)$, we have also $Range(\vec T_{\vec r}^1) \neq \emptyset $. Here we write $\vec T^n_{\vec R}$ for the vector-valued operator of depth $n$ associated to the tuple of vectors $\vec R =(R_1, \ldots, R_m, R)$. Recursively, whenever $(r_1, \ldots, r_m, r) \in Range(\vec T^{n-1}_{\left(\tilde R_1, \ldots, \tilde R_m ,\tilde R  \right)})$, we are able to give a characterization of $Range(\vec T^{n}_{\left( R_1, \ldots, R_m , R  \right)})$, where, for all $1 \leq k \leq m$, $R_k:=(r_k, \tilde R_k)$.

We will illustrate how the helicoidal method produces quasi-Banach valued inequalities for two bilinear operators: the paraproduct $\Pi$ and the bilinear Hilbert transform $BHT$. Then the techniques extend to allow for certain multiple Banach or quasi-Banach valued inequalities (for us, that corresponds to multiple $L^p$ spaces with $0 < p \leq \infty$). It turns out that 
\[
Range(\vec{\Pi}^n_{\vec R})=Range(\Pi),
\]
i.e. for paraproducts, vector-valued extensions exist for all the Lebesgue exponents in the range of the scalar operator. This is the case for linear Calder\'on-Zygmund operators as well. For $BHT$ the situation is more complicated due to its singularity; we can prove nevertheless that whenever $Range(\overrightarrow{BHT}^n_{\vec R}) \neq \emptyset$, it contains the local $L^2$ range:
\[
\lbrace  \left(p, q, s  \right): 2\leq p, q, s' \leq \infty, \frac{1}{p}+\frac{1}{q}+\frac{1}{s'}=1 \rbrace \subseteq Range(\overrightarrow{BHT}^n_{\vec R}),
\]
for any $n \geq 1$ and any tuple of vectors $\vec R=(R_1, R_2, R)$ satisfying for all $1 \leq j \leq n$ $$(r_1^j, r_2^j, r^j) \in Range(\overrightarrow{BHT}^{n-j}_{\left( \left(r_1^{j+1}, \ldots, r_1^n \right), \left(r_2^{j+1}, \ldots, r_2^n \right), \left( r^{j+1}, \ldots, r^n  \right)  \right)}).$$

Our main motivation was finding the full range of mixed norm estimates for the operator $\Pi \otimes \Pi$, i.e. the biparameter paraproduct. Such estimates imply Leibniz rules in mixed-norm $L^p$ spaces and they can prove useful in the study of nonlinear dispersive PDE (particular cases of these inequalities were used in \cite{kenig2004_vvLeibniz}). In \cite{vv_BHT}, we proved that 
\begin{equation}
\label{eq:mix-norm-biparam-paraprod}
\big\|  \big\| \Pi \otimes \Pi(f, g)   \big\|_{L^{s_2}_y}  \big\|_{L^{s_1}_x} \lesssim \big\|  \big\| f \big\|_{L^{p_2}_y}  \big\|_{L^{p_1}_x} \big\|  \big\| g \big\|_{L^{q_2}_y}  \big\|_{L^{q_1}_x}
\end{equation}
whenever $\frac{1}{p_j}+\frac{1}{q_j}=\frac{1}{s_j}$, with $1<p_j , q_j \leq \infty$ for $j=1, 2$ and $\frac{1}{2} <s_1<\infty, 1 \leq s_2 <\infty$. 

A similar result was proved using different techniques in \cite{francesco_UMDparaproducts}. Both approaches  invoke vector-valued inequalities in the study of multi-parameter multilinear operators. These operators are intriguing because they don't always behave as expected. For example, in \cite{bi-parameter_paraproducts} and \cite{multi-parameter_paraproducts} it was shown that $\Pi \otimes \Pi$ is bounded, but $BHT\otimes BHT$ doesn't satisfy any $L^p$ estimates of H\"older type. The range of boundedness for $\Pi \otimes BHT$ was only recently understood in \cite{vv_bht-Prabath} and \cite{vv_BHT}.

The question left open from \cite{vv_BHT} and \cite{francesco_UMDparaproducts} concerns an inequality similar to \eqref{eq:mix-norm-biparam-paraprod} when $\frac{1}{2}<s_2<1$, corresponding to a quasi-Banach vector space. Following the methods from \cite{vv_BHT}, such an estimate is implied by the multiple vector-valued estimate 
\begin{equation}
\label{eq:proof-mixed-norm}
\Pi_x : L_x^{p_1}\big(  L_y^{p_2} \big(  \ell^\infty \big) \big) \times  L_x^{q_1}\big(  L_y^{q_2} \big(  \ell^2 \big) \big) \to L_x^{s_1}\big(  L_y^{s_2} \big(  \ell^2 \big) \big).
\end{equation}
Here we extend the helicoidal method to the context of Banach or quasi-Banach spaces, which will allow us to prove estimates as above. Hence, combined with our previous results from \cite{vv_BHT}, we have
\begin{theorem}
\label{thm:kenig-two-param}
Let $1<p_i, q_i \leq \infty$ and $\frac{1}{2}<s_i<\infty$, be so that $\ds \frac{1}{p_i}+\frac{1}{q_i}=\frac{1}{s_i}$ for any index $i=1, 2$. Then the bi-parameter paraproduct $\Pi \otimes \Pi$ satisfies the following mixed norm estimates:
\begin{equation}
\label{eq:mixed-norms-quasiBanach}
\Pi \otimes \Pi : L^{p_1}_xL^{p_2}_y \times  L^{q_1}_xL^{q_2}_y \to L^{s_1}_xL^{s_2}_y.
\end{equation}
\end{theorem}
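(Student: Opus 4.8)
The plan is to derive the two-parameter bound \eqref{eq:mixed-norms-quasiBanach} from a single-parameter, depth-$2$ vector-valued inequality for the ordinary paraproduct, and then to establish that inequality by running the helicoidal recursion in its quasi-Banach incarnation. For the \emph{reduction}, we decompose $\Pi\otimes\Pi$ into a double sum of tensor wave packets over dyadic rectangles $I\times J$ and perform first the summation over the $J$-scales; one may then view $\Pi\otimes\Pi(f,g)(x,y)$ as an $x$-paraproduct acting on coefficients that are themselves the $L^{s_2}_y$-functions $\Pi_y(F_I,G_I)$, where $F_I(\cdot,y)$, $G_I(\cdot,y)$ denote the $y$-slices of $f,g$ paired against the $I$-wave packets. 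Undoing this outer $x$-paraproduct through the usual square-function/maximal-function estimates — the non-cancellative bump in $x$ contributing an $\ell^\infty$ over the $I$-scales and the two cancellative bumps an $\ell^2$ — reduces matters, exactly as in \cite{vv_BHT}, to the multiple vector-valued inequality \eqref{eq:proof-mixed-norm},
\[
\Pi_x : L_x^{p_1}\big(L_y^{p_2}(\ell^\infty)\big)\times L_x^{q_1}\big(L_y^{q_2}(\ell^2)\big)\to L_x^{s_1}\big(L_y^{s_2}(\ell^2)\big),
\]
together with the analogous estimates for the finitely many other paraproduct types, whose configurations lie likewise in the (vector-valued) range of $\Pi$. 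Note that the inner tuple $(\infty,2,2)$ and the middle tuple $(p_2,q_2,s_2)$ both satisfy \eqref{eq:cond-r-tuples}.

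For the \emph{helicoidal induction}, observe that \eqref{eq:proof-mixed-norm} is a depth-$2$ vector-valued estimate for $\Pi$ associated to the tuple of vectors $R_1=(p_2,\infty)$, $R_2=(q_2,2)$, $R=(s_2,2)$, so it suffices to verify the hypotheses layer by layer. The innermost (sequential) layer reduces to the classical scalar bound $(\infty,2,2)\in Range(\Pi)$. The middle ($y$-)layer requires $(p_2,q_2,s_2)\in Range(\Pi)$, which holds by hypothesis; when $\tfrac{1}{2}<s_2<1$ this is precisely the new quasi-Banach step, carried out by first linearizing the $\ell^2$-valued operator $\big(\sum_k|\Pi(f_k,g_k)|^2\big)^{1/2}$ through the $L^{s_2}$-dualization of its weak quasinorm (Proposition \ref{prop:Lr-dualization}), and then running the spatial localization with the sharp operatorial-norm estimate \eqref{eq:intr-op-local-sharp}, whose proof in turn dualizes the weak-$L^{s_2}$ quasinorm through $L^\tau$ for a suitable $\tau\le s_2$. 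The outer ($x$-)layer requires $(p_1,q_1,s_1)\in Range(\Pi)$, again true by hypothesis, in restricted weak-type form when $s_1<1$ as in \cite{vv_BHT}. Since for paraproducts $Range(\vec{\Pi}^n_{\vec R})=Range(\Pi)$ for every admissible $\vec R$, these three layers compose and yield \eqref{eq:proof-mixed-norm}, hence \eqref{eq:mixed-norms-quasiBanach}.

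The \emph{main obstacle} is the middle layer in the regime $s_2<1$: the target $L^{s_2}_y(\ell^2)$ is then merely quasi-Banach, so no trilinear form is available against which to run the usual $L^1$ (or generalized restricted-type) dualization of \cite{vv_BHT}. The remedy — the technical heart of the argument — is the $L^r$-dualization of weak quasinorms, which must be arranged so that it genuinely linearizes the square-function-valued operator while remaining compatible with the spatial localizations, and so that the resulting localized operatorial norms $\|\vec{\Pi}_{n-1;I_0}^{F_1,F_2,H'}\|$, obtained via the $L^\tau$-dualization with $\tau\le r$, remain summable over the relevant sparse-type family of dyadic intervals. Keeping this quasi-norm bookkeeping — the analogue below the Banach threshold of the size and energy estimates of \cite{vv_BHT} — sufficiently tight is where the real work lies; once it is in place, what remains is Hölder's inequality and the recursion.
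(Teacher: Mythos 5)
Your outline correctly identifies the key reduction to the depth-$2$ multiple vector-valued inequality \eqref{eq:proof-mixed-norm}, and your account of the helicoidal recursion — dualizing the weak quasinorm through $L^{s_2}$ to linearize the inner $\ell^2$-valued square-function operator, then obtaining the sharp localized operatorial norms via $L^\tau$-dualization with $\tau\le s_2$ — is faithful to the machinery of the paper. The layer-by-layer verification that $(\infty,2,2)$, $(p_2,q_2,s_2)$ and $(p_1,q_1,s_1)$ all lie in $Range(\Pi)$ is also the right bookkeeping.

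The gap is in the reduction step, which you have written upside-down. You propose to sum over the $J$-scales ($y$-frequencies) \emph{first}, view $\Pi\otimes\Pi$ as an outer $x$-paraproduct with coefficients $\Pi_y(F_I,G_I)$, and then ``undo'' the outer $x$-paraproduct via square-function and maximal-function estimates, claiming that the non-cancellative and cancellative bumps in $x$ produce the $\ell^\infty$ and $\ell^2$ indexed by the $I$-scales. This is inconsistent with \eqref{eq:proof-mixed-norm}, where $\ell^\infty$ and $\ell^2$ occupy the \emph{innermost} position and are indexed by the $y$-paraproduct scales, not the $x$-scales (the $\ell^\infty$ arises from $P^y_k$, not from a non-cancellative bump in $x$). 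More seriously, the step you would need — bounding $\big\|\,\big\|\sum_I a_I(y)\phi_I^3(x)\big\|_{L_y^{s_2}}\big\|_{L_x^{s_1}}$ by the corresponding square function over the $I$-scales — is an $L^{s_2}_y$-valued dual Littlewood-Paley inequality in the $x$-variable, applied \emph{through} the inner $L^{s_2}_y$ norm. For $1<s_2<\infty$ this is standard Banach (UMD)-valued theory, but for $s_2<1$ — the very regime this paper is about — $L^{s_2}_y$ is merely quasi-Banach and that step is unavailable.

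The correct regrouping is the opposite one: write $\Pi\otimes\Pi(f,g)(x,y)=\sum_k Q_k^y\big[\Pi_x(P_k^y f,Q_k^y g)\big](x,y)$, keeping the $y$-sum on the \emph{outside}. Then, for each fixed $x$, the scalar inequality $\big\|\sum_k Q_k^y\Phi_k\big\|_{L^{s_2}_y}\lesssim\big\|\big(\sum_k|\Phi_k|^2\big)^{1/2}\big\|_{L^{s_2}_y}$ (valid for all $0<s_2<\infty$, with no vector-valued subtleties) applies; the pointwise bound of $\sup_k |P_k^y f|$ by the $y$-maximal function delivers the innermost $\ell^\infty$; and the $x$-paraproduct $\Pi_x$ is the genuinely vector-valued operator whose inner index $k$ ranges over the $y$-scales. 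That is exactly how \eqref{eq:proof-mixed-norm} arises, with $\ell^\infty$ and $\ell^2$ innermost, $L^{p_2}_y, L^{q_2}_y, L^{s_2}_y$ in the middle, and $L^{p_1}_x, L^{q_1}_x, L^{s_1}_x$ outermost. Once you flip the decomposition to this form, the rest of your argument goes through as described.
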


The Leibniz rule implied by this theorem, as will be shown in Section \ref{sec:mixed_norm-est}, can be formulated in the following way: 
\begin{theorem}
\label{thm:Leibniz}
For any $\alpha, \beta >0$
\begin{align*}
\Big \| D_1^\alpha D^\beta_2 (f \cdot g)  \Big \|_{L^{s_1}_xL^{s_2}_y} &\lesssim \Big \| D_1^\alpha D^\beta_2 f  \Big \|_{L^{p_1}_xL^{p_2}_y} \cdot \|g\|_{L^{q_1}_xL^{q_2}_y} +\|f\|_{L^{p_3}_xL^{p_4}_y} \cdot \Big \| D_1^\alpha D^\beta_2 g  \Big \|_{L^{q_3}_xL^{q_4}_y} \\
&+\Big \| D_1^\alpha f  \Big \|_{L^{p_5}_xL^{p_6}_y} \cdot \Big \| D^\beta_2 g  \Big \|_{L^{q_5}_xL^{q_6}_y} +\Big \| D_2^\beta f  \Big \|_{L^{p_7}_xL^{p_8}_y} \cdot \Big \| D^\alpha_1 g  \Big \|_{L^{q_7}_xL^{q_8}_y} ,
\end{align*}
whenever $1< p_j,q_j \leq \infty$, $\frac{1}{1+\alpha} <s_1<\infty$, $\max \big( \frac{1}{1+\alpha}, \frac{1}{1+\beta}  \big) < s_2<\infty$, and the indices satisfy the natural H\"{o}lder-type conditions.
\end{theorem}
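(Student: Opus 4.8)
The plan is to reduce Theorem~\ref{thm:Leibniz} to the bi-parameter vector-valued paraproduct estimates that underlie Theorem~\ref{thm:kenig-two-param}, by decomposing the symbol of $D_1^\alpha D_2^\beta$ acting on the product $f\cdot g$ into paraproduct pieces. First I would run the classical Coifman--Meyer decomposition in each of the two variables separately, writing $f\cdot g=\sum_{\epsilon_1,\epsilon_2\in\{1,2,3\}}\Pi^{\epsilon_1}_x\otimes\Pi^{\epsilon_2}_y(f,g)$, where in parameter $i$ the index $\epsilon_i=1$ (resp.\ $2$) means that the first (resp.\ second) function carries the high frequency in that variable, and $\epsilon_i=3$ is the ``diagonal'' piece in which the two frequencies in that variable are comparable. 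This produces $3\times 3=9$ operators, each a bi-parameter paraproduct.

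Next, for each of the nine pieces I would transfer the fractional derivatives onto one of the two factors. In a variable where one function sits at the high frequency, say the first parameter with $f$ high, the symbol $|\xi_1+\eta_1|^{\alpha}/|\xi_1|^{\alpha}$ is a classical Coifman--Meyer multiplier on the relevant cone $|\xi_1|\gg|\eta_1|$, so $D_1^\alpha$ may be moved onto $f$ at the cost of replacing the paraproduct by another one of the same type; likewise for $D_2^\beta$ in the second parameter. In a diagonal variable $\epsilon_i=3$ the frequencies are comparable, so one may choose to let the corresponding derivative fall on either factor. Distributing the derivatives according to these rules, each of the nine pieces becomes a bi-parameter paraproduct applied to one of the four pairs $(D_1^\alpha D_2^\beta f,\, g)$, $(f,\, D_1^\alpha D_2^\beta g)$, $(D_1^\alpha f,\, D_2^\beta g)$, $(D_2^\beta f,\, D_1^\alpha g)$; grouping the pieces according to which pair they produce yields exactly the four terms on the right-hand side of the theorem, with the Lebesgue exponents $p_j,q_j$ constrained only by the natural H\"older relations $\frac1{p_j}+\frac1{q_j}=\frac1{s_1}$ in $x$ and the analogous one in $y$.

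Finally I would estimate each of the nine operators. The pieces that are of genuine high--low type in both variables, and the mixed high--low/diagonal ones once the derivative has been transferred off the low factor, are bi-parameter (possibly vector-valued, with an $\ell^2$ square-function output coming from the diagonal variable) paraproducts, to which Theorem~\ref{thm:kenig-two-param} applies together with the vector-valued paraproduct estimates \eqref{eq:proof-mixed-norm} and the identity $Range(\vec\Pi^n_{\vec R})=Range(\Pi)$; this works in the range $s_1,s_2>\frac12$, which is automatic here since all $p_j,q_j>1$. The real obstacle is the diagonal piece(s): in a variable with comparable frequencies the symbol $|\xi_i+\eta_i|^{\alpha_i}$ is \emph{not} bounded on the region $|\xi_i|\sim|\eta_i|$, because of the possible cancellation $\xi_i+\eta_i\approx 0$, so the derivative cannot merely be transferred. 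Here I would decompose further by the output frequency: on the part of the diagonal where the two input frequencies are $\sim 2^{k}$ but the output frequency is $\sim 2^{k'}\le 2^{k}$, the symbol is $2^{k'\alpha_i}$ times a bounded multiplier, and after reindexing by $k'$ one is left with a Littlewood--Paley square function weighted by $2^{k'\alpha_i}$. Estimating this square function in $L^{s_i}$ below $L^1$ is carried out via a Plancherel--P\'{o}lya/Fefferman--Stein argument with an auxiliary maximal operator $M_u$, $u<s_i$, and the gain $2^{k'\alpha_i}$ is precisely what allows summing the resulting tails when $\frac1{s_i}<1+\alpha_i$; this is the source of the conditions $s_1>\frac1{1+\alpha}$ and $s_2>\max\big(\frac1{1+\alpha},\frac1{1+\beta}\big)$, the inner exponent $s_2$ being constrained by both smoothing parameters because of the nested mixed-norm structure and the quasi-Banach $L^\tau$-dualization of Proposition~\ref{prop:Lr-dualization} that the inner paraproduct requires. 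The main difficulty, and the point at which the quasi-Banach helicoidal machinery of the paper is genuinely needed rather than the classical Coifman--Meyer theory, is exactly this diagonal-in-both-variables piece: one must establish the sharp bi-parameter bound with an $\ell^2(\ell^2)$-valued output and weights in both summation indices, and run the gain argument simultaneously and compatibly in the $x$- and $y$-variables.
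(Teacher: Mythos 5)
Your outline correctly identifies the main architecture of the argument: the nine-piece Coifman--Meyer decomposition in both parameters, the transfer of $D_1^\alpha$ and $D_2^\beta$ onto whichever factor carries the high frequency in that variable, and the fact that the ``diagonal'' pieces where the output sits at low frequency (so that the derivative cannot be transferred smoothly) are the real obstacle. You also correctly flag $\Pi^\alpha\otimes\Pi^\beta$, with the diagonal structure in both variables, as the worst term, and you correctly identify that the $s_i>\frac1{1+\alpha_i}$ constraints originate in this piece.

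Where you depart from the paper is in the mechanism for handling the diagonal piece. You propose to decompose the output frequency dyadically, getting a factor $2^{(k'-k)\alpha}$ on each shell and then controlling the resulting weighted Littlewood--Paley square function via Plancherel--P\'olya and a sub-$L^1$ maximal operator $M_u$. The paper instead applies $D_1^\alpha$ directly to the output bump $\varphi_k$, obtaining $\tilde{\tilde{\varphi}}_k$ with only $(1+|x|)^{-(1+\alpha)}$ decay, and then Fourier-expands this kernel into shifted bumps $\varphi_{k,n}$ with coefficients $c_{n,k}\lesssim(1+|n|)^{-(1+\alpha)}$. This converts $\Pi^\alpha$ into a superposition $\sum_n c_n\Pi_n$ of \emph{shifted paraproducts} $\Pi_n$, each bounded with norm $\lesssim(\log\langle n\rangle)^2$, and the condition $s>\frac1{1+\alpha}$ is exactly what makes $\sum_n(1+|n|)^{-(1+\alpha)s}(\log\langle n\rangle)^{2s}$ converge. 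Your approach is morally a direct kernel-decay analysis of the same phenomenon, and it is a legitimate alternative route at the scalar one-parameter level; but it is not clear how you intend to splice it into the quasi-Banach mixed-norm stopping-time machinery, whereas the shifted-paraproduct formulation localizes cleanly (one adapts the sizes, stopping times, and exceptional sets in Section~\ref{subsection-stopping_times} to the shifted intervals $I_n$, at the cost of a further $\log\langle n\rangle$ in the number of translates $I_0^\sharp$ that must be tracked).

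The genuine gap is that you do not account for the asymmetry between the hypotheses on $s_1$ and $s_2$. Your closing sentence attributes the constraint $s_2>\max\big(\frac1{1+\alpha},\frac1{1+\beta}\big)$ (but only $s_1>\frac1{1+\alpha}$) to ``the nested mixed-norm structure,'' but that alone does not explain it: if you split both $\Pi^\alpha$ and $\Pi^\beta$ into their respective Fourier/shift series from the outset and sum in both indices, the $\min(s_1,s_2)$-subadditivity of $\|\cdot\|_{L^{s_1}_xL^{s_2}_y}$ forces $\min(s_1,s_2)>\frac1{1+\alpha}$ \emph{and} $\min(s_1,s_2)>\frac1{1+\beta}$, which is strictly stronger than the stated hypotheses. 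The paper deliberately avoids this by splitting only the outer paraproduct $\Pi^\alpha$ (giving $\min(s_1,s_2)>\frac1{1+\alpha}$) and then proving localized, sharp, restricted-type estimates for $\Pi_n\otimes\Pi^\beta$ directly, so that the sum over the second index $m$ is carried out \emph{inside} the $L^{s_2}$-quasinorm and only costs a constraint on $s_2$. This requires the full vector-valued restricted-type interpolation and the $L^\tau$-dualization of Proposition~\ref{prop:Lr-dualization} applied to the shifted localized paraproducts $\Pi_{n,I_0}^{F,G,\tilde E}$, and it is exactly the point the paper emphasizes at the end of Section~\ref{sec:mixed_norm-est}: without the vector-valued viewpoint one cannot remove the spurious condition $s_1>\frac1{1+\beta}$. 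Your proposal would need to address this to yield the stated range.
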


That $s_2$ should be greater than both $\frac{1}{1+\alpha}$ and $\frac{1}{1+\beta}$ is sensible, since this is also the natural condition in the case when $s_1=s_2$, as proved in \cite{bi-parameter_paraproducts}. On the other hand, for $s_1$ we only have one constraint: $\frac{1}{1+\alpha} <s_1<\infty$.

Paraproducts correspond to bilinear Calder\'on-Zygmund operators, but we will only consider bilinear Fourier multipliers as in \cite{CoifMeyer-ondelettes}, for simplicity. These are denoted $\Pi$ and it is known that $\Pi : L^p \times L^q \to L^s$ for all $1<p, q \leq \infty, \frac{1}{2}<s<\infty$, provided $\frac{1}{p}+\frac{1}{q}=\frac{1}{s}$.

The ``depth $1$" vector-valued inequality is formulated in the following theorem:

\begin{theorem}
\label{thm:one-quasiBanach-Paraprod}
Let $r_1, r_2$ and $r$ be positive numbers such that $1<r_1, r_2 \leq \infty$, $\dfrac{1}{2}< r <\infty$, and $\ds \frac{1}{r_1}+\frac{1}{r_2}=\frac{1}{r}$. Then the paraproduct $\Pi$ satisfies
\begin{equation}
\label{eq:quasi-banch-paraprod}
\big\| \big( \sum_{k} \lft \Pi \big(f_k, g_k  \big) \rg^r \big)^{1/r} \big\|_s \lesssim \big\| \big( \sum_{k} \lft f_k \rg^{r_1} \big)^{1/{r_1}} \big\|_p \cdot \big\| \big( \sum_{k} \lft g_k\rg^{r_2} \big)^{1/{r_2}} \big\|_q,
\end{equation}
for any $p, q, s$ with $1< p, q \leq \infty, \dfrac{1}{2}< s <\infty$, and $\ds \frac{1}{p}+\frac{1}{q}=\frac{1}{s}$.  
\end{theorem}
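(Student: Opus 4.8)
The plan is to run the helicoidal scheme at its base level ($n=1$), using as analytic input only the scalar bound $\Pi: L^{r_1}\times L^{r_2}\to L^{r}$ (which, for paraproducts, holds on the entire range allowed in the hypothesis). First I would reduce $\Pi$, via the usual Coifman--Meyer decomposition and a rescaling, to a finite sum of discrete model paraproducts
\[
\Pi(f,g)=\sum_{I\in\ic D}\frac{1}{|I|^{1/2}}\,\langle f,\phi^1_I\rangle\,\langle g,\phi^2_I\rangle\,\phi^3_I ,
\]
with $L^2$-normalized bumps adapted to the dyadic intervals $I$ and with (at least) two of the three families $\{\phi^j_I\}_I$ cancellative; the position of the non-cancellative index gives the three standard model forms, treated identically. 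Since $\frac1p+\frac1q=\frac1s$ with $1<p,q\le\infty$, it suffices---by restricted-type multilinear interpolation as in \cite{vv_BHT}---to prove, for all finite-measure sets $F,G,H\subseteq\rr R$, the existence of a major subset $H'\subseteq H$ such that
\[
\big\|\big(\textstyle\sum_k|\Pi(f_k,g_k)|^r\big)^{1/r}\,\one_{H'}\big\|_{L^\tau}\;\lesssim\;|F|^{1/p}\,|G|^{1/q}\,|H|^{\frac1\tau-\frac1s}
\]
whenever $\|(f_k)_k\|_{\ell^{r_1}}\le\one_F$ and $\|(g_k)_k\|_{\ell^{r_2}}\le\one_G$ pointwise, for the quasi-endpoints $(p,q,s)$ of the target region and an auxiliary exponent $\tau\le r$ with $\tau<s$. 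When $r<1$ or $s<1$ the output lives only in a quasi-Banach space, and this is precisely the place where Proposition~\ref{prop:Lr-dualization} is needed: it makes sense of, and dualizes, the \emph{weak}-$L^s$ quasinorm of the $\ell^r$-valued object through $L^r$ (and, for the localized pieces below, through $L^\tau$ with $\tau\le r$), thereby ``linearizing'' $\big(\sum_k|\Pi(f_k,g_k)|^r\big)^{1/r}$ into a genuine multilinear-type expression. In the Banach regime $r,s\ge1$ this step is just ordinary $L^{r'}$/$L^1$ duality.

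The engine is the single-scale (localized) estimate: for every dyadic interval $I_0$,
\[
\big\|\big(\textstyle\sum_k|\Pi_{I_0}(f_k,g_k)|^r\big)^{1/r}\big\|_{L^\tau(3I_0)}\;\lesssim\;|I_0|^{1/\tau}\cdot\vssize_{I_0}\big((f_k)_k\big)\cdot\vssize_{I_0}\big((g_k)_k\big),
\]
where $\Pi_{I_0}$ collects the scales $I\subseteq I_0$ and $\vssize_{I_0}\big((f_k)_k\big):=\sup_{I\subseteq I_0}\frac{1}{|I|}\int_{\rr R}\big\|(f_k(x))_k\big\|_{\ell^{r_1}}\,\ci_I(x)\,dx$, with $\ell^{r_2}$ in place of $\ell^{r_1}$ for $(g_k)_k$. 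I would prove it by expanding $\Pi_{I_0}$ into its wave packets, using Proposition~\ref{prop:Lr-dualization} to trade the $\ell^r$-quasinorm for a pairing against an $\ell^{r'}$- (respectively $\ell^{\tau'}$-) valued dual object of unit norm, and then running the standard tree/size analysis for paraproducts. Here the trees are trivial---the non-cancellative family has a fixed lacunary frequency---so no energy (Bessel-type) bound is needed and everything collapses to estimating $\sup_{I\subseteq I_0}$ of suitable $\ell^{r_1}$- and $\ell^{r_2}$-averages, i.e.\ to H\"older's inequality and the vector-valued maximal inequality of Fefferman--Stein.

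It remains to globalize, which proceeds exactly as in the Banach case of \cite{vv_BHT}: given $F,G,H$, one removes an exceptional set $\Omega$ built from the Hardy--Littlewood maximal functions of $\one_F,\one_G$ (with thresholds $\sim|F|/|H|$, $\sim|G|/|H|$, so that $|\Omega|\le|H|/2$), sets $H':=H\setminus\Omega$, and sums the localized estimate over the resulting stopping/Whitney family of intervals $I_0$. On each such $I_0$ the vector-valued sizes of the inputs are controlled by powers of $|F\cap 3I_0|/|I_0|$ and $|G\cap 3I_0|/|I_0|$; combining the packing bound $\sum_{I_0}|I_0|\lesssim|H|$, discrete H\"older in $\frac1{r_1}+\frac1{r_2}=\frac1r$, and interpolation between the trivial ($\le 1$) and the ``measure'' size bounds for the two factors produces the displayed restricted-type estimates at all vertices of the target region; a final (standard) interpolation---again legitimate in the quasi-Banach range thanks to Proposition~\ref{prop:Lr-dualization}---then yields \eqref{eq:quasi-banch-paraprod} on the full open region $1<p,q\le\infty$, $\frac12<s<\infty$. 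The main obstacle throughout is the quasi-Banach target: linearizing the $\ell^r$-valued output when $r<1$, dualizing a genuinely sub-Banach weak-$L^s$ quasinorm, and---crucially---evaluating the localized operatorial norm \emph{sharply} (through $L^\tau$ with $\tau\le r$) so that the sum over scales converges; in the Banach regime everything reduces to the classical $L^1$-dualization and is essentially bookkeeping.
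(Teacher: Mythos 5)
Your overall scheme is the right one: reduce to restricted weak type, dualize the weak quasinorm through $L^\tau$ with $\tau\le r$, build an exceptional set from maximal functions of $\one_F,\one_G$, run a stopping-time decomposition, and sum localized estimates over the resulting intervals. But there is a genuine gap in the stated ``engine'' estimate, and it is exactly the gap that the quasi-Banach machinery of the paper is designed to fill. Your local bound
\[
\big\|\big(\textstyle\sum_k|\Pi_{I_0}(f_k,g_k)|^r\big)^{1/r}\big\|_{L^\tau(3I_0)}\lesssim|I_0|^{1/\tau}\cdot\vssize_{I_0}\big((f_k)_k\big)\cdot\vssize_{I_0}\big((g_k)_k\big)
\]
tracks the sizes of the inputs but contains no factor measuring the concentration of the output set $H'$ near $I_0$. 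After the stopping time one needs to sum over the parameter $n_3$ (the dyadic scale of $\sssize_{I_0}\one_{H'}$), and the only packing bound available in that variable is $\sum_{I_0}|I_0|\lesssim 2^{n_3}|E|$, which grows in $n_3$. Without a compensating factor $\big(\sssize_{I_0}\one_{H'}\big)^{1-\epsilon}\lesssim 2^{-n_3(1-\epsilon)}$ inside the local estimate, the geometric series over $n_3\ge 0$ diverges. The paper's Lemma~\ref{lemma:sub-unit-local-est-paraprod} is precisely the extra step that produces this factor: it shows that for $\tau<1$, $\|\Pi_{I_0}(f,g)\one_{\tilde E}\|_\tau^\tau$ picks up $\big(\sssize_{I_0}\one_{\tilde E}\big)^{1-\epsilon}$ rather than the naive $\big(\sssize_{I_0}\one_{\tilde E}\big)^{\tau-\epsilon}$, and Proposition~\ref{prop:localization-lemma} then upgrades this to the sharp operatorial bound with $\big(\sssize_{I_0}\one_{\tilde E}\big)^{1/r-\epsilon}$ that the stopping-time argument actually consumes. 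You acknowledge at the end of your proposal that the localized norm must be evaluated ``sharply,'' but the estimate you write down is not the sharp one, and as stated the argument does not close for all $\frac12<s<\infty$.

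A second, smaller point: you describe Proposition~\ref{prop:Lr-dualization} as ``trading the $\ell^r$-quasinorm for a pairing against an $\ell^{r'}$-valued dual object.'' For $r<1$ there is no useful $\ell^{r'}$ duality; that is the whole problem. What the proposition actually does is dualize the \emph{weak}-$L^s$ quasinorm (in the $x$-variable) through an $L^\tau$ norm on a major subset. The linearization of the $\ell^r$-sum is then not a dual pairing at all: after removing the weak norm, one passes to $\|\cdot\|_\tau^\tau$, which is subadditive, and uses Fubini to reduce to scalar estimates for $\Pi_{I_0}(f_k,g_k)\one_{\tilde E_3}$. You should also be aware that the scalar single-scale bound behind Lemma~\ref{lemma:sub-unit-local-est-paraprod} does go through the size/energy interpolation of Proposition~\ref{paraproduct estimates} (with the refined energy decay of Lemma~\ref{lemma-ref-energy-est-paraprod}), so the claim that ``no energy bound is needed'' is not quite accurate even for paraproducts.
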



In the proof of Theorem \ref{thm:kenig-two-param}, we need a more general result, corresponding to a ``depth $n$" vector-valued inequality:

\begin{theorem}
\label{thm:main-thm-paraprod}
Consider the tuples $R_1=\big(r_1^1, \ldots, r_1^n \big), R_2=\big(r_2^1, \ldots, r_2^n \big)$ and $R=\big(r^1, \ldots, r^n \big)$ satisfying for every $1 \leq j \leq n: 1<r_1^j, r_2^j \leq \infty, \frac{1}{2}<r^j <\infty$, and $\frac{1}{r_1^j}+\frac{1}{r_2^j}=\frac{1}{r^j}$. Then the paraproduct $\Pi$ satisfies the estimates
\[
\Pi : L^p\big( \rr R; L^{R_1}\big( \ii W, \mu  \big)  \big) \times L^q\big( \rr R; L^{R_2}\big( \ii W, \mu  \big)  \big) \to L^s\big( \rr R; L^{R}\big( \ii W, \mu  \big)  \big),
\]  
for any $1<p, q \leq \infty, \frac{1}{2}<s<\infty$ with $\frac{1}{p}+\frac{1}{q}=\frac{1}{s}$. 
\end{theorem}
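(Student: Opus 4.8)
The plan is to prove Theorem \ref{thm:main-thm-paraprod} by induction on the depth $n$, using the helicoidal method in exactly the recursive form advertised in the introduction. The base case $n=1$ is precisely Theorem \ref{thm:one-quasiBanach-Paraprod} (a mixed norm $L^R$ with a single component is an $\ell^r$-valued estimate), so we may assume the theorem holds for depth $n-1$ and all admissible tuples, and in particular that $\bigl(r_1^1,r_2^1,r^1\bigr) \in Range(\vec{\Pi}^{n-1}_{(\tilde R_1,\tilde R_2,\tilde R)})$ where $\tilde R_k = (r_k^2,\dots,r_k^n)$ and $\tilde R = (r^2,\dots,r^n)$. Since the whole H\"older range is available at each level by the inductive hypothesis, no range is lost and the conclusion $Range(\vec\Pi^n_{\vec R}) = Range(\Pi)$ is what we aim for.

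The main step is the inductive passage, and here the structure is: fix the target exponents $(p,q,s)$ with $1/p+1/q=1/s$; by the restricted-type machinery it suffices (when $s<1$, and harmlessly when $s\geq 1$) to prove the generalized restricted weak-type bound \eqref{eq:intr-multilin-form-restr} for functions $\vec f, \vec g$ with $\|\vec f\|_{L^{R_1}} \le \one_F$, $\|\vec g\|_{L^{R_2}} \le \one_G$, after selecting a major subset $H'\subseteq H$. One decomposes the paraproduct $\Pi$ into its wave-packet pieces associated to a collection of dyadic intervals, organizes these into trees, and on each tree / stopping-interval $I_0$ one invokes the localized operatorial estimate \eqref{eq:intr-op-local-sharp}: one bounds $\bigl\|\vec\Pi_{n-1;I_0}(\vec f\cdot\one_F, \vec g\cdot\one_G)\cdot\one_{H'}\bigr\|_{L^{r}}$ in terms of the local operatorial norm $\|\vec\Pi_{n-1;I_0}^{F,G,H'}\|$ times $L^{r_1}$ and $L^{r_2}$ mixed-norm averages of $\vec f$, $\vec g$, where the operatorial norm is evaluated sharply by dualizing the weak-$L^p$ quasinorm through $L^\tau$ with $\tau\le r$ (Proposition \ref{prop:Lr-dualization} and its refinement). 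The local norm $\|\vec\Pi_{n-1;I_0}^{F,G,H'}\|$ retains a gain of the form $|F\cap I_0|/|I_0|$ and $|G\cap I_0|/|I_0|$ to appropriate powers (and, crucially, a power from $|H'\cap I_0|/|I_0|$ coming from the choice of $H'$), which is exactly what makes the size/energy summation over trees converge. Summing the tree contributions using the standard sizes and energies (BMO-type and $L^1$-type estimates for the wave-packet coefficients of $f$, $g$, and the dual function) and optimizing the interpolation parameters yields \eqref{eq:intr-multilin-form-restr}.

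The extrapolation-type mechanism hidden in this summation is the heart of the matter: when $r_k^1\le p_k$ one uses H\"older and the spatial localizations to pass from $L^{r_k}$ to $L^{p_k}$ directly, while when $r_k^1 > p_k$ one must exploit the sharp decay of the operatorial norm in $|F\cap I_0|/|I_0|$ (and $|G\cap I_0|/|I_0|$) — this is where the quasi-Banach refinement, dualizing through $L^\tau$ with $\tau \le r$ rather than through $L^1$, is indispensable, because for $r<1$ the naive $L^1$-duality of the multilinear form is simply unavailable. I expect the main obstacle to be precisely this: setting up the correct $L^r$ (respectively $L^\tau$) dualization of the depth-$(n-1)$ \emph{operator} (not its multilinear form) so that the inductive hypothesis applies to the linearized object, and then checking that the exponent bookkeeping in \eqref{eq:cond-r-tuples} is preserved through one layer of the recursion — i.e. that the localized estimate \eqref{eq:intr-op-local-sharp} with the gains in $|F\cap I_0|$, $|G\cap I_0|$, $|H'\cap I_0|$ can be summed in the quasi-Banach regime $s<1$ without the triangle inequality. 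Once the sharp localized bound is in hand, the tree selection, size and energy estimates, and the final summation are routine adaptations of the arguments in \cite{vv_BHT}, and the reduction of Theorem \ref{thm:kenig-two-param} to the depth-$2$ case \eqref{eq:proof-mixed-norm} is then immediate by tensoring one copy of $\Pi$ in $x$ with a vector-valued copy in $y$.
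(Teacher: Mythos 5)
Your overall strategy — induction on the depth $n$, with base case Theorem \ref{thm:one-quasiBanach-Paraprod}, localizing the depth-$(n-1)$ operator and dualizing the weak quasinorm through $L^\tau$ with $\tau\le r$, then summing via a stopping-time decomposition using the sharp size gains stored in the operatorial norm — is indeed the paper's approach (Theorem \ref{thm:genral_case} is exactly the localized inductive statement $\ii P(n-1)\Rightarrow\ii P(n)$, and the helicoidal stopping time of Section \ref{subsection-stopping_times} plays the role of your ``tree selection''). However, your sketch silently assumes you can dualize the outermost norm through $L^{r^1}$ (or some $\tau\le r^1$) and that $\|\cdot\|_{L^{R^n}}^{r^1}$ is subadditive, i.e. that the outer exponent $r^1$ is also the minimum of the tuple. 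This is Case I of the paper's proof; when instead $r^1 > r^{j_0}:=\min_j r^j$, the expression $\|\cdot\|_{L^{R^n}}^{r^1}$ is \emph{not} subadditive (only $\|\cdot\|_{L^{R^n}}^{r^{j_0}}$ is, by Proposition \ref{prop:reorder}), and you cannot immediately unfold the outer $L^{r^1}_{w_1}$ integral, apply the inductive hypothesis pointwise in $w_1$, and sum over the stopping intervals. The paper resolves this (Case II of Theorem \ref{thm:genral_case}) by factoring $\|\cdot\|_{L^{R^n}} = \bigl\| \|\cdot\|_{L^{\tilde R^{n-1}}}^{r^{j_0}}\bigr\|_{L^{\sigma}_{w_1}}^{1/r^{j_0}}$ with $\sigma=r^1/r^{j_0}>1$, so that the $w_1$-layer becomes a genuine Banach space and one can dualize the $L^{\tilde s/r^{j_0},\infty}_x$ quasinorm against an auxiliary function $h(w_1,x)$ with $\|h(\cdot,x)\|_{L^{\sigma'}_{w_1}}\le \one_{E_3'}(x)$. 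Without this reorganization your summation over the stopping intervals does not close. Your case split ``$r_k^1\le p_k$ versus $r_k^1>p_k$'' (taken from the introduction's heuristic) is also not the one that actually appears in the proof: the relevant dichotomies are $r^1 = r^{j_0}$ vs.\ $r^1 > r^{j_0}$, and within each, $\tilde s\ge r^{j_0}$ vs.\ $\tilde s<r^{j_0}$ (the latter requiring an intermediate $L^{r^{j_0}}\to L^\tau$ downgrading step \`a la Lemma \ref{lemma:sub-unit-local-est-paraprod}). Finally, your invocation of ``trees, sizes, energies, BMO-type estimates'' is borrowed from the $BHT$ analysis; for paraproducts the mechanism is the simpler triple stopping time on dyadic intervals driven by the averages $\sssize_{I_0}\one_F$, $\sssize_{I_0}\one_G$, $\sssize_{I_0}\one_{\tilde E}$, with no tile combinatorics needed.
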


The multiple vector-valued estimates for $\Pi$ seem to be new only in the case when $L^\infty$ spaces are involved (that is, when one of $p, q, r_1^j$ or $r_2^j$ equals $\infty$). This is the case in \eqref{eq:proof-mixed-norm}, which in turn is necessary for Theorem \ref{thm:kenig-two-param}. In fact, in the bi-parameter analysis, the estimate \eqref{eq:proof-mixed-norm} corresponds to the boundedness of the square function and of the maximal function in the one-parameter study of paraproducts. Otherwise, multiple vector-valued inequalities for multilinear Calder\'on-Zygmund operators can be obtained as in \cite{extrapolation_multilinear} or \cite{multi-extrap-CU-martell-perez} by extrapolation, from weighted estimates.

The bilinear Hilbert transform is an operator given by
\[
BHT(f, g)(x):=p.v. \int_{\rr R} f(x-t) g(x+t) \frac{dt}{t}.
\]
It is known (\cite{initial_BHT_paper}) to satisfy $L^p \times L^q \to L^s$ estimates for $\frac{2}{3}<s<\infty$, if $\frac{1}{p}+\frac{1}{q}=\frac{1}{s}$, but the method of the proof breaks down for $\frac{1}{2}<s \leq \frac{2}{3}$, leaving open the question concerning the optimal range of boundedness. The multiplier of $BHT$ is $sgn (\xi -\eta)$, making it the prototype of a bilinear operator with a one-dimensional singularity in frequency.

When proving vector-valued inequalities for the bilinear Hilbert transform, there are more constraints appearing than in the paraproduct case. Our approach to vector-valued inequalities for $BHT$ uses the boundedness of the scalar operator and its localizations in an essential way, and in consequence it is not surprising that restrictions similar to the scalar case appear. We are however able to provide a wide range of vector-valued inequalities for $BHT$. Combining the present work with the result from our previous \cite{vv_BHT}, we have:

\begin{theorem}
\label{thm-main-vvBHT-1vs}
For any triple $( r_1, r_2, r)$ with $1<r_1, r_2 \leq \infty, \frac{2}{3}<r <\infty$ and so that $\frac{1}{r_1}+\frac{1}{r_2}=\frac{1}{r}$, there exists a non-empty set $\ii D_{ r_1, r_2, r}$ of triples $(p, q, s)$ satisfying $\frac{1}{p}+\frac{1}{q}=\frac{1}{s}$, for which
\[
\big \|  \big\| BHT(\vec f, \vec g)\big\|_{L^r\left( \ii W, \mu \right)} \big\|_{L^s\left( \rr R \right)} \lesssim \big \|  \big\|  \vec f  \big\|_{L^{r_1}\left( \ii W, \mu \right)}  \big\|_{L^p\left( \rr R \right)} \big \|  \big\|  \vec g \big\|_{L^{r_2}\left( \ii W, \mu \right)}  \big\|_{L^q\left( \rr R \right)}.
\]
In brief, $\ii D_{r, r_1, r_2}:=Range(\overrightarrow{BHT}_{\vec r})$ is conditioned by the existence of certain $0 \leq \theta_1, \theta_2, \theta_3<1$ with $ \theta_1+\theta_2+\theta_3=1$ so that 
\[
\frac{1}{r_1}<\frac{1+\theta_1}{2}, \quad \frac{1}{r_2}<\frac{1+\theta_2}{2}, \quad \frac{1}{r'}<\frac{1+\theta_3}{2}, 
\]
and, at the same time, 
\[
\frac{1}{p}<\frac{1+\theta_1}{2}, \quad \frac{1}{q}<\frac{1+\theta_2}{2}, \quad \frac{1}{s'}<\frac{1+\theta_3}{2}.
\]
\end{theorem}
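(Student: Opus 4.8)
The plan is to run one step of the helicoidal recursion with the scalar operator $BHT$ — together with sharp bounds for its spatial localizations $BHT_{I_0}$ — as the ``depth $0$'' base case; the Banach sub-range $1\le r<\infty$ is already contained in \cite{vv_BHT}, so I would focus on the genuinely quasi-Banach regime $\tfrac23<r<1$ (and on the regime $s<1$ in the outer variable), the argument specializing to the Banach case once every $L^\tau$-dualization is replaced by ordinary $L^1$-dualization. First, fix a triple $(\theta_1,\theta_2,\theta_3)$ with $0\le\theta_i<1$ and $\theta_1+\theta_2+\theta_3=1$ that simultaneously witnesses $(r_1,r_2,r)\in Range(BHT)$ and $(p,q,s)\in Range(BHT)$ in the sense of the statement; the set $\ii D_{r_1,r_2,r}$ of admissible $(p,q,s)$ is by definition the set of triples for which such a common $\theta$ exists, and it is nonempty because the hypotheses $\tfrac1{r_1}+\tfrac1{r_2}=\tfrac1r$ and $r>\tfrac23$ already force $(r_1,r_2,r)\in Range(BHT)$, whence $(p,q,s)=(r_1,r_2,r)$ qualifies with the same $\theta$. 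By the usual Fourier-analytic reduction $BHT$ is dominated by finitely many discrete model sums $T$ built from $L^2$-normalized wave packets over collections of tri-tiles, so it suffices to prove the vector-valued bound for such a $T$, uniformly; and using Proposition \ref{prop:Lr-dualization} to linearize the target quasinorm (dualizing the \emph{weak-}$L^s$ quasinorm through $L^r$) the claim reduces to a generalized restricted-type inequality of the shape \eqref{eq:intr-multilin-form-restr}: for all sets $F_1,F_2,H\subseteq\rr R$ of finite measure there is a major subset $H'\subseteq H$ with
\[
\big|\Lambda_{\vec T}(\vec f,\vec g,\vec h)\big|\lesssim |F_1|^{1/p}|F_2|^{1/q}|H|^{1/s'}
\]
whenever $\|\vec f\|_{L^{r_1}(\ii W,\mu)}\le\one_{F_1}$, $\|\vec g\|_{L^{r_2}(\ii W,\mu)}\le\one_{F_2}$ and $\|\vec h\|_{L^{r'}(\ii W,\mu)}\le\one_{H'}$.

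I would then build $H'$ in the standard way, removing from $H$ the set where the Hardy--Littlewood maximal functions of $\one_{F_1}/|F_1|$, $\one_{F_2}/|F_2|$ and $\one_H/|H|$ exceed a large absolute constant; this makes $|H\setminus H'|\le\tfrac12|H|$ and, for every dyadic interval $I$ supporting a contributing tile, forces $\langle\one_{F_i}\rangle_I\lesssim |F_i|/|I|$, $\langle\one_H\rangle_I\lesssim |H|/|I|$, together with the fast decay $\langle\one_{H'}\rangle_I\lesssim_N(|H|/|I|)^N$ coming from the distance to $H'$. The tri-tiles are then organized into trees by the usual $BHT$ stopping-time decomposition, sorted simultaneously by the sizes $\ssize_{F_1},\ssize_{F_2},\ssize_{H'}$ and the associated energies, with the number of trees at each size level controlled by the energy/John--Nirenberg estimates.

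The heart of the matter — and where the helicoidal method is used — is the single-tree estimate. For a tree $T$ with top $I_T$, the recursive inequality \eqref{eq:intr-op-local-sharp}, specialized to $n=1$ (so the depth-$0$ operator is simply $BHT$ localized to $I_T$), bounds the contribution of $T$ by the localized operatorial norm $\|BHT_{I_T}^{F_1,F_2,H'}\|$ times the natural $\ell^{r_1}$-, $\ell^{r_2}$-, $\ell^{r'}$-norms of $\vec f,\vec g,\vec h$. The subtle point when $r<1$ is that one cannot associate a trilinear form to the operator $(\sum_k|BHT(f_k,g_k)|^r)^{1/r}$ directly; instead one dualizes its \emph{weak-}$L^r$ quasinorm through $L^\tau$ with $\tau\le r$, as in Proposition \ref{prop:Lr-dualization} — this is the quasi-Banach refinement announced in the abstract, and since at depth $1$ there is only one inner layer it keeps the inner integrability exactly at $L^{r_k}$. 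The operatorial norm itself is bounded by the scalar \emph{local} theory for $BHT$ (cf. \cite{vv_BHT,initial_BHT_paper}): for every exponent triple $(p_0,q_0,s_0)$ strictly inside the scalar range one has $\|BHT_{I_T}^{F_1,F_2,H'}\|\lesssim |I_T|\,\langle\one_{F_1}\rangle_{I_T}^{a_1}\langle\one_{F_2}\rangle_{I_T}^{a_2}\langle\one_{H'}\rangle_{I_T}^{a_3}$ with $a_i$ tied to $1/p_0,1/q_0,1/s_0'$, and the strict inequalities $\tfrac1{r_i}<\tfrac{1+\theta_i}2$ and $\tfrac1{s'}<\tfrac{1+\theta_3}2$ are exactly what give enough freedom in the choice of $(p_0,q_0,s_0)$ to leave, after combining with the size and energy information along $T$, a positive power of $|F_i|/|I_T|$ and of $|H|/|I_T|$ to be summed.

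Finally I would assemble the pieces: interpolating the sizes between the trivial bound and the localization bounds $\langle\one_{F_i}\rangle_{I_T}\lesssim |F_i|/|I_T|$, $\langle\one_{H'}\rangle_{I_T}\lesssim_N(|H|/|I_T|)^N$, summing the resulting geometric series in $|I_T|$ within each stopping-time generation, and then over generations, produces exactly $|F_1|^{1/p}|F_2|^{1/q}|H|^{1/s'}$ by virtue of $\tfrac1p+\tfrac1q=\tfrac1s$ and $\tfrac1{r_1}+\tfrac1{r_2}=\tfrac1r$; the implied constant is finite precisely because the $\theta$-inequalities are strict. A last interpolation/extrapolation off these restricted-type estimates (using the $L^\tau$-dualization once more to return to strong type when $r$ or $s$ is $<1$) delivers the theorem, with $\ii D_{r_1,r_2,r}=Range(\vBHT_{\vec r})$ characterized as stated. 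I expect the main obstacle to be exactly this double use of the $L^\tau$-dualization — globally, to set up the restricted-type scheme, and locally, to make $\|\vec T_{0;I_T}^{F_1,F_2,H'}\|_r$ sharp enough that the sum over trees converges — and verifying that the two dualizations are mutually compatible and mesh correctly with the stopping-time bookkeeping, since, unlike the Banach case, neither can be replaced by the classical $L^1$-dualization.
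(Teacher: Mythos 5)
Your overall picture — helicoidal recursion, scalar localized $BHT$ bounds as the base case, restricted weak-type interpolation at the end — is the right one, but the central reduction as written fails at exactly the point where the quasi-Banach regime becomes delicate, and that is the point this theorem is about.

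You propose to reduce, after one $L^r$-dualization of the outer weak quasinorm, to the restricted-type bound $\lvert\Lambda_{\vec T}(\vec f,\vec g,\vec h)\rvert\lesssim |F_1|^{1/p}|F_2|^{1/q}|H|^{1/s'}$ under $\|\vec h\|_{L^{r'}(\ii W,\mu)}\le\one_{H'}$. When $\tfrac23<r<1$ the exponent $r'=r/(r-1)$ is negative, so there is no $L^{r'}$ space to put $\vec h$ in, and no trilinear form $\Lambda_{\vec T}$ to estimate. This is not an incidental notational slip: the paper explicitly identifies the inability to form $\Lambda_{\vec T}$ as the obstruction to running the Banach helicoidal argument, and the entire content of the quasi-Banach extension is to circumvent it. You do acknowledge this two paragraphs later, but your displayed reduction contradicts it. The paper's substitute is to never pass to a multilinear form at the vector-valued level: one dualizes only the outer weak-$L^s$, through $L^\tau$ with $\tau\le r$ (via Proposition~\ref{prop:Lr-dualization}), and then exploits that $\|\cdot\|_\tau^\tau$ is additive. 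When $\tau=r$ (possible iff $s\ge r$) the inner $\ell^r$ norm unfolds exactly, $\big\|\big(\sum_k|BHT(f_k,g_k)|^r\big)^{1/r}\one_{\tilde E_3}\big\|_r^r=\sum_k\|BHT(f_k,g_k)\one_{\tilde E_3}\|_r^r$; when $s<r$ one takes $\tau<s$ and inserts an extra H\"older step on each localized piece. Neither step is a ``weak-$L^r$ dualization of the operator $(\sum_k|BHT(f_k,g_k)|^r)^{1/r}$'' — there is no weak-$L^r$ quasinorm in play, and the second $L^\tau$-dualization you refer to happens one level down, inside the scalar localized estimate $\ii P^*(0)$ (Lemma~\ref{lemma:sub-unit-BHT}/Proposition~\ref{lemma-subuni-BHT-functions}), not at the vector-valued level.

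A second, smaller discrepancy: you describe the decomposition as the classical scalar time-frequency stopping time over trees, sorted by $BHT$-sizes and energies with John--Nirenberg control, and reduce to a ``single-tree estimate.'' The paper does not re-run tree analysis at the vector-valued level. The trees, sizes and energies live entirely inside the proof of the scalar localized bound (Proposition~\ref{prop:loc-tril-form} and Lemma~\ref{lemma:sub-unit-BHT}). What the vector-valued step uses is the triple spatial stopping time of Section~\ref{subsection-stopping_times}: maximal dyadic intervals selected from level sets of the local averages of $\one_{E_1}$, $\one_{E_2}$, $\one_{\tilde E_3}$, with the additional parameter $d$ recording distance to the exceptional set. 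One then applies the scalar black-box $\ii P^*(0)$ on each selected interval, sums using properties \eqref{item-1}--\eqref{item-4}, and recovers the $|F_1|^{\tau/p}|F_2|^{\tau/q}|E_3|^{1-\tau/s}$ right-hand side. The exponent conditions $\tfrac1{r_i}<\tfrac{1+\theta_i}{2}$ and $\tfrac1{p}<\tfrac{1+\theta_1}{2}$, etc., enter precisely as convergence conditions for the resulting geometric sums in $n_1,n_2,n_3$ — this part of your intuition is correct, but it needs to be hung on the spatial stopping time, not on a tree decomposition of the vector-valued operator.
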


The set $\ii D_{ r_1, r_2, r}$ can be given an explicit characterization, depending on the values of $r_1, r_2, r$:
\begin{enumerate}[label=(\roman*), ref=\roman*, leftmargin=*]
\item \label{lbl:localL2} If $\ds \frac{1}{r_1}, \frac{1}{r_2}, \frac{1}{r'} \leq \frac{1}{2} $, then $\ii{D}_{r_1, r_2, r}=Range(BHT)$.
\item \label{lbl:r_1<2}  If $\ds\frac{1}{r_2},  \frac{1}{r'} \leq \frac{1}{2},  \frac{1}{r_1}>\frac{1}{2}$, then $\ii{D}_{r_1, r_2, r}$ corresponds to those $(p, q, s)\in Range(BHT)$ for which  $\ds 0\leq \frac{1}{q}< \frac{3}{2}-\frac{1}{r_1}.$                                                                                                                                                          
\item \label{lbl:r_2<2}  If $\ds\frac{1}{r_1},  \frac{1}{r'} \leq \frac{1}{2},  \frac{1}{r_2}>\frac{1}{2}$, then  the range of exponents is similar to the  one in $ii)$, with the roles of $r_1$ and $r_2$ interchanged. That is, $\ii{D}_{r_1, r_2, r}$ consists of tuples  $(p, q, s)\in Range(BHT)$ for which $\ds 0\leq \frac{1}{p}< \frac{3}{2}-\frac{1}{r_2}.$
\item \label{lbl:r'<2} If $\ds\frac{1}{r_1},  \frac{1}{r_2} \leq \frac{1}{2},  \frac{1}{r'}>\frac{1}{2}$, then $\ii{D}_{r_1, r_2, r}$ corresponds to those $(p, q, s) \in Range(BHT)$ for which $\ds 0 \leq \frac{1}{p}, \frac{1}{q}<\frac{1}{2}+\frac{1}{r}, \quad -\frac{1}{r} < \frac{1}{s'}<1.$
\item \label{lblr<1andr_1<2} If $\ds \frac{1}{r_1}>\frac{1}{2}, \frac{1}{r_2} \leq \frac{1}{2}$ and $-\dfrac{1}{2} < \dfrac{1}{r'} <0$, then $\ii{D}_{r_1, r_2, r}$ corresponds to those $(p, q, s) \in Range(BHT)$ for which $\ds 0 \leq \frac{1}{q} <\frac{3}{2}-\frac{1}{r_1}, \frac{1}{s'}<\frac{3}{2}-\frac{1}{r_1}$.
\item \label{lblr<1andr_2<2} If $\ds \frac{1}{r_2}>\frac{1}{2}, \frac{1}{r_1} \leq \frac{1}{2}$ and $-\dfrac{1}{2} < \dfrac{1}{r'} <0$, then $\ii{D}_{r_1, r_2, r}$ corresponds to those $(p, q, s) \in Range(BHT)$ for which $\ds 0 \leq \frac{1}{p} <\frac{3}{2}-\frac{1}{r_2}, \frac{1}{s'}<\frac{3}{2}-\frac{1}{r_2}$.
\item \label{lblr<1andr_2<2andr_1<2} If $\ds \frac{1}{r_1}>\frac{1}{2}, \frac{1}{r_2} > \frac{1}{2}$ and $-\dfrac{1}{2} < \dfrac{1}{r'} <0$, then $\ii{D}_{r_1, r_2, r}$ corresponds to those $(p, q, s) \in Range(BHT)$ for which $\ds 0 \leq \frac{1}{p}<\frac{3}{2}-\frac{1}{r_2}, 0 \leq \frac{1}{q} <\dfrac{3}{2}-\frac{1}{r_1}$ and $\dfrac{1}{s'}< 2 -\dfrac{1}{r}$.
\end{enumerate}

While \eqref{lbl:localL2}-\eqref{lbl:r'<2} are from \cite{vv_BHT}, the rest of the estimates are new and correspond to the quasi-Banach case $\frac{2}{3}<r<1$.  We note that whenever $(r_1, r_2, r)$ is contained in the ``local $L^2$" triangle (that is, $0 \leq \frac{1}{r_1}, \frac{1}{r_2}, \frac{1}{r'} \leq \frac{1}{2}$), then $Range(\overrightarrow{BHT}_{\vec r}) =Range(BHT)$. In return, if $(p, q, s)$ is contained in the ``local $L^2$" triangle, the bilinear Hilbert transform admits a vector-valued extension $\overrightarrow{BHT}_{\vec r} :  {L^p\big( \rr R ; {L^{r_1}\big( \ii W, \mu \big)} \big)}\times {L^q\big( \rr R ; {L^{r_2}\big( \ii W, \mu \big)} \big)} \to {L^s\big( \rr R ; {L^{r}\big( \ii W, \mu \big)} \big)}$, for any $(r_1, r_2, r) \in Range(BHT)$.

Finally, we can obtain a result similar to Theorem \ref{thm:main-thm-paraprod}, for multiple vector spaces:
\begin{theorem}
\label{thm:main-thm-BHT}
Consider the tuples $R_1=(r_1^1, \ldots, r_1^n ), R_2=(r_2^1, \ldots, r_2^n)$ and $R=(r^1, \ldots, r^n )$ satisfying for every $1 \leq j \leq n: 1<r_1^j, r_2^j \leq \infty, \frac{2}{3}<r^j <\infty$, and $\frac{1}{r_1^j}+\frac{1}{r_2^j}=\frac{1}{r^j}$. If for every $1 \leq j \leq n-1$ we have $( r_1^j, r_2^j, r^j ) \in \ii D_{r_1^{j+1}, r_2^{j+1}, r^{j+1}}$, then there exists a non-empty set $\ii D_{R_1, R_2, R}:=Range(\overrightarrow{BHT}^n_{\vec R})$ of triples $(p, q, s)$ for which 
\[
BHT: L^p\big( \rr R; L^{R_1}\big( \ii W, \mu \big) \big) \times L^q\big( \rr R; L^{R_2}\big( \ii W, \mu \big) \big) \to L^s\big( \rr R; L^{R}\big( \ii W, \mu \big) \big).\]
Furthermore, $\ds \ii D_{R_1, R_2, R} =\bigcap_{1 \leq j \leq n} \ii D_{r_1^j, r_2^j, r^j}$.
\end{theorem}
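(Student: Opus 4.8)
The plan is to induct on the depth $n$, following the recursive architecture of the helicoidal method: a depth-$n$ vector-valued estimate for $BHT$ is obtained from a \emph{localized and sharp} form of the depth-$(n-1)$ estimate. The base case $n=1$ is Theorem~\ref{thm-main-vvBHT-1vs}, which in particular identifies $\ii D_{r_1^1, r_2^1, r^1} = Range(\overrightarrow{BHT}_{(r_1^1, r_2^1, r^1)})$. For the inductive step I would write $\tilde R_i := (r_i^2, \ldots, r_i^n)$ ($i = 1,2$) and $\tilde R := (r^2, \ldots, r^n)$, and note that the chain hypotheses $(r_1^j, r_2^j, r^j) \in \ii D_{r_1^{j+1}, r_2^{j+1}, r^{j+1}}$ for $2 \le j \le n-1$ are precisely the hypotheses of the statement at depth $n-1$ for the tuples $(\tilde R_1, \tilde R_2, \tilde R)$. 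Hence, by the inductive hypothesis, $\overrightarrow{BHT}^{n-1}_{(\tilde R_1, \tilde R_2, \tilde R)}$ is bounded on the nonempty set $Range(\overrightarrow{BHT}^{n-1}_{(\tilde R_1, \tilde R_2, \tilde R)}) = \bigcap_{2 \le j \le n} \ii D_{r_1^j, r_2^j, r^j}$ and, what is essential, it also satisfies the sharp localized operatorial estimates~\eqref{eq:intr-op-local-sharp} (with $\overrightarrow{BHT}^{n-1}_{\tilde R; I_0}$ in place of $\vec T_{n-1;I_0}$) for every spatial dyadic interval $I_0$ and every admissible choice of exceptional sets; this is the part of the statement that must be strengthened to make the induction self-sustaining.

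Fixing $(p,q,s)$ in the claimed range, the inductive step then runs the helicoidal argument once more, with $\overrightarrow{BHT}^{n-1}_{\tilde R}$ as the underlying operator and $(r_1^1, r_2^1, r^1)$ as the exponents of the new vector-valued layer over $\ii W_1$. Since $r^1$ may be $<1$, the operator $\big(\sum_k |\overrightarrow{BHT}^{n-1}_{\tilde R}(f_k, g_k)|^{r^1}\big)^{1/r^1}$ admits no associated multilinear form; I would first linearize it by dualizing its weak-$L^s$ quasinorm through $L^{r^1}$, as in Proposition~\ref{prop:Lr-dualization}, and reduce by generalized restricted weak-type interpolation to the restricted-type bound~\eqref{eq:intr-multilin-form-restr} for $\|\vec f_k\|_{L^{R_k}(\ii W,\mu)} \le \one_{F_k}$, $\|\vec h\|_{L^{R'}(\ii W,\mu)} \le \one_{H'}$, with $H' \subseteq H$ a major subset constructed along the way. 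Decomposing $\overrightarrow{BHT}^{n-1}_{\tilde R}$ into wave packets adapted to tiles and grouping the tiles by their spatial component expresses the form as a sum over maximal dyadic intervals $I_0$; on each $I_0$ one invokes~\eqref{eq:intr-op-local-sharp} for $\overrightarrow{BHT}^{n-1}_{\tilde R; I_0}$, computing the operatorial norm by dualizing the relevant weak-$L^p$ quasinorm through $L^\tau$ with $\tau \le r^1$ (the dualization that retains the information $\|\vec f_k\|_{L^{\tilde R_k}(\ii W,\mu)} \lesssim \one_{F_k}$ needed at the next induction step). Summing the $I_0$-pieces via the usual stopping-time decomposition built on the sizes and energies of the wave-packet coefficients, and finally turning the $L^{r_k^1}(\rr R; L^{\tilde R_k}(\ii W,\mu))$-norms into $L^{p_k}(\rr R; L^{\tilde R_k}(\ii W,\mu))$-norms — by H\"older on the exceptional sets when $r_k^1 \le p_k$, and by the sharp operatorial-norm bound when $r_k^1 > p_k$ — gives~\eqref{eq:intr-multilin-form-restr}, hence the depth-$n$ inequality. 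The chain hypothesis $(r_1^1, r_2^1, r^1) \in \ii D_{r_1^2, r_2^2, r^2}$, together with the explicit description of the sets $\ii D$, is exactly what makes the exponents entering the localized estimate and its summation over the $I_0$'s admissible for $\overrightarrow{BHT}^{n-1}_{\tilde R}$.

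To identify $Range(\overrightarrow{BHT}^n_{\vec R})$: the inclusion $Range(\overrightarrow{BHT}^n_{\vec R}) \subseteq \bigcap_{1 \le j \le n}\ii D_{r_1^j, r_2^j, r^j}$ is the easy direction. Testing the depth-$n$ inequality on functions that in the $\ii W$-variables are concentrated on a single fibre $\ii W_j$ (times characteristic functions of finite-measure sets in the remaining variables), and using that $BHT$ acts only in the $\rr R$-variable, the iterated $L^R(\ii W,\mu)$-norms factor, the measure factors cancel by the H\"older relations~\eqref{eq:cond-r-tuples}, and the inequality collapses to the depth-$1$ vector-valued bound for $(r_1^j, r_2^j, r^j)$; Theorem~\ref{thm-main-vvBHT-1vs} then forces $(p,q,s) \in \ii D_{r_1^j, r_2^j, r^j}$, and $j$ is arbitrary. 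For the reverse inclusion I would observe that each vector-valued layer peeled off in the recursion contributes exactly one constraint, namely $(p,q,s) \in \ii D_{r_1^j, r_2^j, r^j}$ — the $\theta$-parameters being chosen freshly, and independently, for each layer — so that $n$ steps reconstruct $\bigcap_{1 \le j \le n}\ii D_{r_1^j, r_2^j, r^j}$. Non-emptiness is automatic once one notes that the chain hypothesis forces $\sum_i \big(\frac{2}{r_i^j}-1\big)_+ < 1$ for every $j$ (with $r_3^j := (r^j)'$), whence every ``local-$L^2$'' triple, i.e.\ every $(p,q,s)$ with $2 \le p, q, s' \le \infty$ and $\frac{1}{p}+\frac{1}{q}=\frac{1}{s}$, belongs to $\bigcap_j \ii D_{r_1^j, r_2^j, r^j}$.

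The hard part will be the inductive step in the genuinely quasi-Banach regime, i.e.\ when some $r^j < 1$ and the target $L^R(\ii W,\mu)$ — possibly together with $L^s(\rr R; L^R(\ii W,\mu))$ — is only quasi-Banach, so that the $L^1$-dualization of \cite{vv_BHT} is unavailable. One must simultaneously linearize the operator through the weak-$L^p$/$L^{r^1}$ dualization of Proposition~\ref{prop:Lr-dualization} and extract the \emph{sharp} localized operatorial norm $\|\overrightarrow{BHT}^{n-1}_{\tilde R; I_0}(\vec f_1 \cdot \one_{F_1}, \vec f_2 \cdot \one_{F_2}) \cdot \one_{H'}\|_{r^1}$ through the $L^\tau$-dualization with $\tau \le r^1$, and then check that these sharp localized estimates persist as part of the inductive package, so that~\eqref{eq:intr-op-local-sharp} is genuinely at our disposal at depth $n-1$, uniformly in $I_0$; by comparison, the bookkeeping of $\theta$-parameters that pins down the range as $\bigcap_j \ii D_{r_1^j, r_2^j, r^j}$ is routine.
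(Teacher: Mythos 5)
Your proposal follows essentially the same route as the paper: the paper itself only sketches the proof of Theorem~\ref{thm:main-thm-BHT} in Section~\ref{sec:BHT}, establishing the scalar localized estimates for $BHT$ (Lemma~\ref{lemma:sub-unit-BHT} and Proposition~\ref{lemma-subuni-BHT-functions}), formulating the induction statements $\ii P(n)$ and $\ii P^*(n)$ with size exponents $\frac{1+\theta_j}{2}$, running the step $\ii P(n-1) \Rightarrow \ii P(n)$ via $L^{r^1}$-dualization with the usual two-case split $s \geq r^{j_0}$ versus $s < r^{j_0}$, and explicitly leaving the remaining details to the reader. Your outline matches this skeleton and fills in two points the paper glosses over — the fibre-concentration argument for $Range(\overrightarrow{BHT}^n_{\vec R}) \subseteq \bigcap_j \ii D_{r_1^j, r_2^j, r^j}$, and the observation that the chain hypothesis forces $\sum_i\big(\tfrac{2}{r_i^j}-1\big)_+ < 1$ for each $j$, yielding non-emptiness via the local-$L^2$ triangle — so the substance is the same and the argument is sound.
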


The paper is organized as follows: in the next two sections, we set the stage for the actual proofs: in Section \ref{sec:quasi-Banach} we recall a few useful things about quasi-Banach spaces, and in Section \ref{sec:def-layout} we present a few technical results, as well as the outline of the general method of the proof. Theorem \ref{thm:one-quasiBanach-Paraprod} is proved in Section \ref{sec:quasi-banach-Pi}, Theorem \ref{thm:main-thm-paraprod} in Section \ref{sec:multiple vector-valued inequalities}, and Theorem \ref{thm:main-thm-BHT} in Section \ref{sec:BHT}. The bi-parameter paraproducts and the Leibniz rules in Theorem \ref{thm:Leibniz} are discussed in Section \ref{sec:mixed_norm-est}. 
\subsection*{Acknowledgements} The first author was partially supported by NSF grant DMS 1500262  and ERC project FAnFArE no. 637510; the second author was partially supported by NSF grant DMS 1500262.

\section{Quasi-Banach spaces: a short review}
\label{sec:quasi-Banach}
The quasi-Banach spaces concerned in the present paper are $L^r$ spaces, with $\frac{1}{2}<r<1$. In a first instance, they will represent target spaces for bilinear operators. For our most general results, which are Theorems \ref{thm:main-thm-paraprod} and \ref{thm:main-thm-BHT}, we consider multiple vector spaces that are either quasi-Banach or Banach $L^p$ spaces. More exactly, let $R^n=( r^1, \ldots, r^n )$ be an $n$-tuple so that for every $1 \leq j \leq n$, $\frac{1}{2}<r^j <\infty$. We will be using the notation $\ds \| \Phi \|_{L^{R^n}}$ for the mixed $L^p$ norm:
\[
\| \Phi \|_{L^{R^n}}:=\| \Phi \|_{L^{r^1}( \ldots ( L^{r^n} ) )}.
\]
The following observation will be used throughout the paper:
\begin{proposition}
\label{prop:reorder}
Let $R^n=( r^1, \ldots, r^n  )$, with at least one Lebesgue exponent corresponding to a quasi-Banach space. Then $\ds \| \cdot \|_{L^{R^n}}^{r^{j_0}}$ is subadditive, where $j_0$ is any index for which $\ds r^{j_0}=\min_{1 \leq j \leq n } r^j$.
\begin{proof}
We prove the above statement by induction. The case $n=1$ is trivial, as there is only one $L^r$ space, which is quasi-Banach, and $\| \cdot \|_r^r$ is subadditive.

We assume the statement to be true for any tuple of length $n-1$, and prove it for a tuple $R^n=( r^1, \ldots, r^n )$. First, we note that $R^n=(r^1, \tilde R^{n-1})$, where $\tilde R^{n-1}=(r^2, \ldots, r^{n})$ is a tuple of length $n-1$. We don't know for sure if $\tilde R^{n-1}$ satisfies the hypothesis in the proposition, but if it doesn't, all the Lebesgue exponents are $\geq 1$, and $\| \cdot \|_{\tilde R^{n-1}}$ is subadditive. It also means that $r_1<1$, and in consequence, $\| \cdot \|_{R^n}^{r_1}$ is subadditive.

Otherwise, $\tilde R^{n-1}$ satisfies the induction hypotheses, and  $\ds \| \cdot \|_{L^{\tilde{R}^{n-1}}}^{r^{j_0}}$ is subadditive, where $\ds r^{j_0} :=\min_{2 \leq r^j \leq n} r^j$. We note that 
\[
 \| \cdot \|_{L^{R^n}}= \big\|   \big( \big\| \cdot \big\|_{L^{\tilde R^{n-1}}}^{r^{j_0}} \big)^{1/{r^{j_0}}} \big\|_{r^1}=
 \big \|    \big\| \cdot \big\|_{L^{\tilde R^{n-1}}}^{r^{j_0}} \big\|_{L^{\frac{r^1}{r^{j_0}}}}^{\frac{1}{r^{j_0}}}.
\]

If $\dfrac{r^1}{r^{j_0}} \geq 1$, then $\ds \| \cdot \|_{L^{{R}^{n}}}^{r^{j_0}}$ is subadditive; otherwise, $\ds \| \cdot \|_{L^{R^n}}^{r^{1}}$
is subadditive, which means the induction statement is true for an index $j_1$ corresponding to the minimal $r^j$.

\end{proof}
\end{proposition}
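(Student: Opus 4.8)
The plan is to argue by induction on the length $n$ of the tuple $R^n$. The base case $n=1$ is immediate: the single exponent is quasi-Banach, so $r^1<1$ and $\|\cdot\|_{r^1}^{r^1}$ is subadditive by the elementary inequality $|a+b|^{r^1}\le|a|^{r^1}+|b|^{r^1}$.

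For the inductive step I would write $R^n=(r^1,\tilde R^{n-1})$ with $\tilde R^{n-1}=(r^2,\ldots,r^n)$ and split into two cases according to whether $\tilde R^{n-1}$ still contains a quasi-Banach exponent. If it does not, then every exponent among $r^2,\ldots,r^n$ is $\ge 1$, so $\|\cdot\|_{L^{\tilde R^{n-1}}}$ is a genuine norm (by iterated Minkowski), while the standing hypothesis forces $r^1<1$, hence $r^1=\min_{1\le j\le n}r^j$; combining the pointwise triangle inequality for the inner norm with $(a+b)^{r^1}\le a^{r^1}+b^{r^1}$ then gives subadditivity of $\|\cdot\|_{L^{R^n}}^{r^1}$ at once.

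If instead $\tilde R^{n-1}$ does contain a quasi-Banach exponent, the induction hypothesis applies to it and $\|\cdot\|_{L^{\tilde R^{n-1}}}^{r^{j_0}}$ is subadditive, with $r^{j_0}=\min_{2\le j\le n}r^j$. The crucial identity is then
\[
\|\Phi\|_{L^{R^n}}=\Big\|\,\|\Phi\|_{L^{\tilde R^{n-1}}}^{r^{j_0}}\,\Big\|_{L^{r^1/r^{j_0}}}^{1/r^{j_0}},
\]
which transfers the question to the behaviour of $\|\cdot\|_{L^{r^1/r^{j_0}}}$. If $r^1\ge r^{j_0}$, this last functional is a norm, the overall minimum is still $r^{j_0}$, and composing with the (subadditive) inner functional $\|\cdot\|_{L^{\tilde R^{n-1}}}^{r^{j_0}}$ yields subadditivity of $\|\cdot\|_{L^{R^n}}^{r^{j_0}}$. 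If $r^1<r^{j_0}$, then $r^1$ is the overall minimum, $r^1/r^{j_0}<1$, so raising $\|\cdot\|_{L^{r^1/r^{j_0}}}$ to the power $r^1/r^{j_0}$ makes it subadditive, and the same composition gives subadditivity of $\|\cdot\|_{L^{R^n}}^{r^1}$.

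I expect the only real work to be bookkeeping rather than analysis: one must keep track at each stage of which exponent realises the minimum and make sure that the power to which $\|\cdot\|_{L^{R^n}}$ is raised is exactly that minimum, and one must check that the sub-case split ($r^1\ge r^{j_0}$ versus $r^1<r^{j_0}$) is exhaustive and that the non-uniqueness of $j_0$ is harmless, since only the numerical value $r^{j_0}$ enters. The analytic inputs — Minkowski's integral inequality for exponents $\ge 1$ and the concavity bound $(a+b)^{\theta}\le a^{\theta}+b^{\theta}$ for $0<\theta\le 1$ — are entirely standard.
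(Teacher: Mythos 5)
Your proposal is correct and follows essentially the same inductive argument as the paper: peel off $r^1$, split on whether $\tilde R^{n-1}$ still contains a quasi-Banach exponent, and in the latter case use the identity $\|\Phi\|_{L^{R^n}}=\big\|\,\|\Phi\|_{L^{\tilde R^{n-1}}}^{r^{j_0}}\,\big\|_{L^{r^1/r^{j_0}}}^{1/r^{j_0}}$ together with the subcase split $r^1\ge r^{j_0}$ versus $r^1<r^{j_0}$. The only difference is that you make the elementary inputs (Minkowski for exponents $\ge 1$, concavity $(a+b)^\theta\le a^\theta+b^\theta$ for $0<\theta\le1$) and the bookkeeping about which index realises the minimum more explicit, which the paper leaves implicit.
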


\begin{remark}
We can reformulate the proposition above in a way that includes the Banach case: if $R^n:=( r^1, \ldots, r^n )$, with $0 < r^j \leq\infty$, then $\ds  \| \cdot \|_{L^{R^n}}^{r^{j_0}}$ is subadditive, where $\ds r^{j_0}:=\min ( 1, \min_{1 \leq j \leq n} r^j  )$. 
\end{remark} 

\subsection*{Dualization through $L^r$ spaces}~\\
\label{subsection:dualization-L^r}
Because the triangle inequality is missing, the duals of $L^p$ quasi-Banach spaces are either too simple (the case of non-atomic spaces, where the dual is $\lbrace 0 \rbrace$), or too complicated (atomic spaces, such as $\ell^p$, the dual of which contains $\ell^\infty$, but doesn't have an exact  characterization). However, for \emph{weak} $L^p$ spaces, the quasinorm can be ``dualized" by using generalized restricted weak-type estimates:
\begin{equation}
\label{eq:dualization-L1}
\big\|f\big\|_{p, \infty} \sim \sup_{E,  0<\lft E \rg< \infty } \inf_{\substack{  E' \subseteq E \\ \text{major subset}} }
\frac{\lft \left \langle  f, \one_{E'}   \right \rangle\rg}{\lft E \rg^{1-\frac{1}{p}}},
\end{equation}
where we say $E'$ is a major subset of $E$ if $E'\subseteq E$ and $\ds \lft E'\rg \geq \lft E \rg/2$.

One can have an equivalent statement by making use of $L^r$ norms. This is very similar to Lemma 2.6 of \cite{multilinear_harmonic}:

\begin{proposition}
\label{prop:Lr-dualization}
The following are equivalent:
\begin{itemize}
\item[(i)]$\ds \|  f \|_{p, \infty} \leq A$
\item[(ii)] For any set $E$ of finite measure, there exists a major subset $\tilde{E} \subseteq E$ so that 
\[
 \|  f \cdot \one_{\tilde{E}} \|_{r} \lesssim A \cdot \lft E \rg^{\frac{1}{r}-\frac{1}{p}}.
 \]
\end{itemize}
This can be reformulated as
\begin{equation}
\label{eq:dualization-Lr}
\| f \|_{p, \infty} \sim \sup_{0< \lft E \rg< \infty} \inf_{\substack{\tilde E \subseteq E \\ \text{major subset}}} \frac{\| f \cdot \one_{\tilde E}  \|_r}{\lft E \rg^{\frac{1}{r}-\frac{1}{p}}}.
\end{equation}
\begin{proof}
$\ds `` (i) \Rightarrow (ii) "$ Let $E$ be a set of finite measure and consider the set
\[
\Omega:=\big\lbrace x: \lft f(x)\rg> C \cdot \frac{A}{\lft E \rg^{1/p}} \big\rbrace.
\]
Pick $\tilde{E}$ to be $\ds \tilde{E}:=E \setminus \Omega$, which is a major subset of $E$ for $C$ large enough; this is the only place where we are using inequality $(i)$. Then it's easy to see that
\[
\| f \cdot \one_{\tilde{E}}\|_r \lesssim C \cdot \frac{A}{\lft E \rg^{1/p}}  \cdot \lft E \rg^{1/r} \lesssim A \cdot \lft E \rg^{\frac{1}{r}-\frac{1}{p}}.
\]

$\ds `` (ii) \Rightarrow (i)"$ Let $\lambda >0$ and set $\ds E:=\lbrace  x: \lft f(x)\rg>\lambda \rbrace$. From $(ii)$, we know there exists a major subset $\tilde{E}$ of $E$ for which we have
\[
\lambda \vert \tilde{E} \vert^{1/r} < \|  f \cdot \one_{\tilde{E}} \|_{r} \lesssim A \cdot \lft E \rg^{\frac{1}{r}-\frac{1}{p}}.
\]
Since $\lft E \rg$ and $\vert \tilde{E}\vert$ are comparable, we get that $\ds \lambda \lft E \rg^{1/p} \lesssim A$. Since $\lambda >0$ was arbitrary, we deduce $(i)$.
\end{proof}
\end{proposition}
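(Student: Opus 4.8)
The plan is to prove the two implications separately; the equivalence \eqref{eq:dualization-Lr} then follows by applying $(i)\Rightarrow(ii)$ with $A=\|f\|_{p,\infty}$ and $(ii)\Rightarrow(i)$ with $A$ equal to the supremum on the right-hand side of \eqref{eq:dualization-Lr}. The guiding principle is the one already recorded in \eqref{eq:dualization-L1}: a weak-$L^p$ bound is equivalent to a family of restricted estimates on major subsets, and replacing the pairing $\langle f,\one_{E'}\rangle$ by an $L^r$ norm costs nothing essential, because the major subset is chosen precisely so as to excise the part of $f$ on which it is large.

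For $``(i)\Rightarrow(ii)"$, fix a set $E$ with $0<|E|<\infty$ and consider, as in the statement, the exceptional set $\Omega:=\{x:\,|f(x)|>CA|E|^{-1/p}\}$ for a large absolute constant $C$ to be chosen. The weak-$L^p$ bound $(i)$ gives $|\Omega|\le\big(A/(CA|E|^{-1/p})\big)^p=C^{-p}|E|$, so for $C^p\geq 2$ the set $\tilde E:=E\setminus\Omega$ is a major subset of $E$; this is the only place $(i)$ enters. On $\tilde E$ one has the pointwise bound $|f|\le CA|E|^{-1/p}$, so writing the $L^r$ norm through its distribution function,
\[
\|f\cdot\one_{\tilde E}\|_r^r=r\int_0^{CA|E|^{-1/p}}\lambda^{r-1}\,\big|\{x\in\tilde E:\,|f(x)|>\lambda\}\big|\,d\lambda\le r\int_0^{CA|E|^{-1/p}}\lambda^{r-1}|E|\,d\lambda=C^rA^r|E|^{1-r/p},
\]
which is precisely $\|f\cdot\one_{\tilde E}\|_r\lesssim A|E|^{1/r-1/p}$. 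Note it is the truncation to $\tilde E$ that renders this integral finite and, in particular, frees us from any constraint between $r$ and $p$.

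For $``(ii)\Rightarrow(i)"$, fix $\lambda>0$ and set $E:=\{x:\,|f(x)|>\lambda\}$. First one disposes of the case $|E|=\infty$: picking finite-measure subsets $E_N\subseteq E$ with $|E_N|\to\infty$, hypothesis $(ii)$ furnishes major subsets $\tilde E_N$ with $\lambda|E_N|^{1/r}\lesssim\lambda|\tilde E_N|^{1/r}\le\|f\cdot\one_{\tilde E_N}\|_r\lesssim A|E_N|^{1/r-1/p}$, i.e. $\lambda\lesssim A|E_N|^{-1/p}\to0$, forcing $\lambda=0$; hence $|E|<\infty$. Now apply $(ii)$ to $E$ itself, obtaining a major subset $\tilde E$ with $\lambda|\tilde E|^{1/r}\le\|f\cdot\one_{\tilde E}\|_r\lesssim A|E|^{1/r-1/p}$; since $|\tilde E|\ge|E|/2$, this rearranges to $\lambda|E|^{1/p}\lesssim A$, and taking the supremum over $\lambda>0$ gives $\|f\|_{p,\infty}\lesssim A$. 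I do not anticipate a genuine obstacle: the only points demanding a little care are using the truncation (rather than a crude bound) in the layer-cake estimate of the first implication, since $f$ is only assumed to lie in weak-$L^p$, and the exhaustion argument ruling out infinite-measure level sets in the converse; the rest is Chebyshev's inequality and bookkeeping of the exponent $\tfrac1r-\tfrac1p$.
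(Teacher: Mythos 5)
Your proof is correct and follows essentially the same approach as the paper's: the same exceptional set $\Omega=\{|f|>CA|E|^{-1/p}\}$, the same major subset $\tilde E=E\setminus\Omega$ (justified by Chebyshev), and the same comparison $\lambda|\tilde E|^{1/r}\le\|f\cdot\one_{\tilde E}\|_r$ in the converse. Two small points of difference are worth noting, both in your favor. First, where the paper simply invokes the pointwise bound $|f|\le CA|E|^{-1/p}$ on $\tilde E$ together with $|\tilde E|\le|E|$ to get $\|f\cdot\one_{\tilde E}\|_r\lesssim A|E|^{1/r-1/p}$, you reach the same estimate via the layer-cake formula; this is equivalent but slightly more verbose. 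Second, and more substantively, you explicitly rule out $|\{|f|>\lambda\}|=\infty$ by exhausting with finite-measure subsets before applying $(ii)$; the paper applies $(ii)$ directly to $E=\{|f|>\lambda\}$ without checking that this set has finite measure, even though $(ii)$ is only stated for such sets. Your preliminary reduction closes this small gap cleanly and should be retained.
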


\begin{remark}
Regarding the notation, from now on we will denote $E'$ a major subset of $E$ whenever we dualize the $\|\cdot \|_{L^{p, \infty}}$ quasinorm by identity \eqref{eq:dualization-L1}, and $\tilde E$ when we use a different $L^r$ space. 
\end{remark}

\subsection*{An Interpolation Theorem}~\\
\label{subsec:interpolation}
Interpolation for linear or multi-linear Banach-valued operators is almost identical to the scalar case, with the difference that $|  \cdot |_{\rr C}$ is replaced by the norm of the Banach space $X$, $\| \cdot \|_X$. In the case of bilinear operators, we will prove a quasi-Banach-valued interpolation result, which is the natural modification of the Banach-valued case. First, we recall a new definitions:

\begin{definition}
A tuple $( \alpha_1, \alpha_2, \alpha_3 )$ is called \emph{admissible} if $\alpha_1+\alpha_2+\alpha_3=1$, $-1<\alpha_1, \alpha_2, \alpha_3<1$ and $\alpha_j \leq 0$ for at most one index.

In a similar way, a triple of Lebesgue exponents $(  p, q, s )$ is called \emph{admissible} if $\big(  \frac{1}{p}, \frac{1}{q} , \frac{1}{s'}\big)$ is admissible according to the above definition. In most cases, we will have $1<p, q \leq \infty$ and $\frac{1}{2}< s< \infty$, with $\frac{1}{p}+\frac{1}{q}=\frac{1}{s}$.
\end{definition}

\begin{proposition}
\label{prop:interpolation}
Let $ \frac{1}{2} <r < 1$, and $1 < r_1, r_2 \leq \infty$ be so that $\ds \frac{1}{r}=\frac{1}{r_1}+\frac{1}{r_2}$. The tuple $\left( p, q, s \right)$ is so that $\frac{1}{2}< s <\infty, 1 <p, q < \infty $ and $\ds \frac{1}{s}=\frac{1}{p}+\frac{1}{q}$, and $T$ is a bilinear operator satisfying the restricted type estimate:

for any sets of finite measure $E_1$ and $E_2$, and any sequences of functions $\lbrace f_k \rbrace_k, \lbrace g_k \rbrace_k$ so that $\ds \big( \sum_k \lft f_k \rg^{r_1} \big)^{1/{r_1}}\leq \one_{E_1}$ and $\ds \big( \sum_k \lft g_k \rg^{r_2} \big)^{1/{r_2}} \leq \one_{E_2}$ respectively, the estimate
\begin{equation}
\label{eq:interp-restricted-weak-type}
\big\| \big( \sum_k \lft T( f_k, g_k) \rg^{r} \big)^{1/{r}}  \big\|_{\tilde s , \infty} \leq K_{ s_1, s_2, \tilde s} \lft E_1\rg^{1/{s_1}} \lft E_2\rg^{1/{s_2}}
\end{equation}
holds for all admissible tuples $\left( s_1, s_2 , \tilde s \right)$ in a neighborhood of $\left( p, q, s \right)$, with the constant $K_{s_1, s_2, \tilde s}$ depending continuously on $s_1, s_2, \tilde s$. 

Then $T$ is of strong type $\left( p, q, s \right)$, in the sense that 
\begin{equation}
\label{eq:qB-interp-strong}
\big\| \big( \sum_k \lft T( f_k, g_k) \rg^{r} \big)^{1/{r}}  \big\|_s \leq K_{ p, q, s} \big\| \big( \sum_k \lft f_k \rg^{r_1} \big)^{1/{r_1}} \big\|_p \big\|\big( \sum_k \lft g_k \rg^{r_2} \big)^{1/{r_2}} \big\|_q
\end{equation}
for any sequences of functions for which the RHS is finite.
\end{proposition}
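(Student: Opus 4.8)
The plan is to deduce the strong-type bound \eqref{eq:qB-interp-strong} from the restricted-type family \eqref{eq:interp-restricted-weak-type} by a two-step reduction: first linearize the vector-valued operator through the $L^r$-dualization of Proposition \ref{prop:Lr-dualization} so as to reduce matters to a genuine trilinear form, and then run a Stein--Weiss/Marcinkiewicz-type restricted weak-type interpolation argument in the exponent triple $(s_1,s_2,\tilde s)$, exactly as in the Banach-valued case, but working at the level of the $L^r$-linearized form. Since $0<r<1$, the object $\big(\sum_k |T(f_k,g_k)|^r\big)^{1/r}$ is not the norm of any Banach space, so we cannot directly dualize its $L^{\tilde s}$ or $L^{\tilde s,\infty}$ quasinorm through $L^1$; the point of Proposition \ref{prop:Lr-dualization} (with the space $L^r$ in the role of the auxiliary exponent) is precisely to supply a substitute for that dualization.

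Concretely, I would first reduce \eqref{eq:qB-interp-strong} to restricted-type estimates by decomposing the input sequences into level sets: writing $\big(\sum_k |f_k|^{r_1}\big)^{1/r_1}=\sum_{m_1}F_{m_1}$ and $\big(\sum_k |g_k|^{r_2}\big)^{1/r_2}=\sum_{m_2}G_{m_2}$ with $F_{m_1}\sim 2^{m_1}\one_{E_{m_1}}$, $G_{m_2}\sim 2^{m_2}\one_{E_{m_2}}$ (and correspondingly splitting each $f_k,g_k$), so that it suffices to sum the contributions $\sum_{m_1,m_2}2^{m_1}2^{m_2}\,\big\|(\sum_k|T(\cdot,\cdot)|^r)^{1/r}\big\|_{\tilde s,\infty}$-type terms against the measures $|E_{m_1}|,|E_{m_2}|$. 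This is the standard descent from strong type to restricted weak type; the subadditivity of $\|\cdot\|_{\tilde s,\infty}^{\tilde s\,\wedge 1}$, i.e. of $\|\cdot\|_{\tilde s,\infty}^{\tilde s}$ here since $\tilde s<1$ is possible, together with the remark after Proposition \ref{prop:reorder}, is what lets this triangle-inequality step go through in the quasi-Banach range. For each fixed pair $(m_1,m_2)$ one then has exactly the hypothesis \eqref{eq:interp-restricted-weak-type}; the gain over a single exponent comes from being allowed to choose, for each $(m_1,m_2)$, a slightly different admissible triple $(s_1,s_2,\tilde s)$ near $(p,q,s)$, so that the geometric series in $m_1,m_2$ converges. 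This is the usual Stein--Weiss interpolation between restricted weak types: pick two (or finitely many) vertices $(s_1^{(i)},s_2^{(i)},\tilde s^{(i)})$ in the given neighborhood whose convex hull contains $(p,q,s)$ in its interior, apply \eqref{eq:interp-restricted-weak-type} at each vertex, and optimize in $i$ according to the sign of $m_1$ and $m_2$, with the continuity of $K_{s_1,s_2,\tilde s}$ ensuring the implied constant is controlled by $K_{p,q,s}$ up to a harmless factor.

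The main obstacle — and the reason this is stated as a proposition rather than quoted off the shelf — is handling the quasi-Banach target correctly at two places simultaneously: the outer $L^{\tilde s}$ with $\tilde s$ possibly less than $1$, and the inner $\ell^r$ with $r<1$. For the outer quasinorm one must replace the $L^1$-duality step of classical Marcinkiewicz interpolation by the $L^{\tilde s}$-style dualization; concretely, to pass from $\|\cdot\|_{\tilde s,\infty}$ control to $\|\cdot\|_{s}$ control one uses Proposition \ref{prop:Lr-dualization} (applied with the pair $(\tilde s,\text{auxiliary }r)$ appropriately, or equivalently the restricted-type characterization of strong-type norms) to reintroduce a major subset $\tilde E$ of each level set and estimate $\big\|(\sum_k|T(f_k,g_k)|^r)^{1/r}\one_{\tilde E}\big\|_r$. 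For the inner $\ell^r$ the key observation is that \eqref{eq:interp-restricted-weak-type} is assumed uniformly over all sequences with the stated normalization, so the interpolation can be performed "coordinate-blind," treating $\big(\sum_k|T(f_k,g_k)|^r\big)^{1/r}$ as a single scalar-valued output function throughout; linearity of $T$ in each slot is used only in the level-set decomposition of the inputs, not of the outputs. Once these two points are set up carefully, the remainder is the routine bookkeeping of a restricted weak-type interpolation, and I would not grind through it in detail.
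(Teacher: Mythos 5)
Your level-set decomposition of the inputs, and the idea of choosing the admissible tuple $(s_1,s_2,\tilde s)$ to depend on the pieces $(n,m_1,m_2)$ so that the resulting geometric series converges, is indeed the heart of the paper's proof. But two ingredients of your outline are off the mark, and both point to the same misconception. The opening plan to ``linearize via the $L^r$-dualization of Proposition~\ref{prop:Lr-dualization} and reduce to a genuine trilinear form'' is not what happens and cannot work as stated: Proposition~\ref{prop:Lr-dualization} plays no role in the proof of Proposition~\ref{prop:interpolation}, and the object obtained after $L^r$-dualization, $\sum_k\int|T(f_k,g_k)|^r\one_{\tilde E}\,dx$, is not multilinear in anything when $r<1$, so there is no trilinear form to interpolate. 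More serious is the claim that $\|\cdot\|_{\tilde s,\infty}^{\tilde s\wedge 1}$ is subadditive and that the remark after Proposition~\ref{prop:reorder} justifies this: that remark is about mixed \emph{strong} Lebesgue norms $\|\cdot\|_{L^{R^n}}$; the weak quasinorm $\|\cdot\|_{\tilde s,\infty}$ is only \emph{quasi}-subadditive in any power (the constant is strictly greater than $1$), so iterating it over the infinite sum in $m_1,m_2$ does not close.

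What the argument actually needs — and what the paper uses — is the pointwise subadditivity supplied by $r$-concavity. Set $\Phi:=\sum_k|T(f_k,g_k)|^r$; writing $f_k=\sum_{m_1}f_{k,m_1}$, $g_k=\sum_{m_2}g_{k,m_2}$, using bilinearity of $T$ and $|\,\cdot\,|^r\leq\sum|\,\cdot\,|^r$ (valid since $r<1$) gives $\Phi\leq\sum_{m_1,m_2}\Phi_{m_1,m_2}$ with $\Phi_{m_1,m_2}:=\sum_k|T(f_{k,m_1},g_{k,m_2})|^r$, and hence
\[
d_{\Phi}(\lambda)\leq\sum_{m_1,m_2}d_{\Phi_{m_1,m_2}}(c_{n,m_1,m_2}\lambda),\qquad\sum_{m_1,m_2}c_{n,m_1,m_2}\sim1.
\]
One then passes from the weak hypothesis to the strong conclusion directly through the distribution-function identity $\big\|\big(\sum_k|T(f_k,g_k)|^r\big)^{1/r}\big\|_s^s=\|\Phi\|_{s/r}^{s/r}\sim\sum_n2^{ns/r}d_{\Phi}(2^n)$, feeding the hypothesis \eqref{eq:interp-restricted-weak-type} for each $\Phi_{m_1,m_2}$ with an $(n,m_1,m_2)$-dependent admissible tuple and tuning $c_{n,m_1,m_2}$ so the resulting sums converge. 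This distribution-function bookkeeping, rather than any dualization of the output quasinorm, is what replaces the Banach-valued argument, and it is precisely the part of the proof that your outline does not supply.
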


The proof will be postponed to the last Section \ref{sec:proofs-interp-etc}, but it is nothing more than an adaptation of the classical argument.

A similar result can be formulated for mixed norm $L^p$ spaces analogous to those appearing in Proposition \ref{prop:reorder}, and for that reason proving restricted weak-type estimates as in \eqref{eq:interp-restricted-weak-type} will be sufficient for establishing Theorem \ref{thm:main-thm-paraprod} or Theorem \ref{thm:main-thm-BHT}. Moreover, the measures involved can be arbitrary (we will need this result with Lebesgue measures replaced by $\ci_{I_0} dx$ measures).

\begin{proposition}
\label{prop:interp-multi}
Let $R_1=\big(r_1^1, \ldots, r_1^n \big), R_2=\big(r_2^1, \ldots, r_2^n \big)$ and $R=\big(r^1, \ldots, r^n \big)$ be three tuples satisfying for every $1 \leq j \leq n: 1<r_1^j, r_2^j \leq \infty, \frac{1}{2}<r^j <\infty$, and $\frac{1}{r_1^j}+\frac{1}{r_2^j}=\frac{1}{r^j}$. Assume $T$ is a bilinear operator satisfying the restricted type estimate:

for any sets of finite measure $E_1$ and $E_2$, and any functions $\vec f, \vec g$ so that $\big\|  \vec f(x) \big\|_{L^{R_1}\left(\ii  W, \mu \right)} \leq \one_{E_1}(x)$ and $\big\|  \vec g(x) \big\|_{L^{R_2}\left(\ii  W, \mu \right)} \leq \one_{E_2}(x)$ respectively, the estimate
\begin{equation}
\label{eq:multi-restric-weak}
\big\|   \| T(\vec f, \vec g) \|_{L^R\left(\ii W, \mu  \right)} \big\|_{L^{\tilde s, \infty}\left( \nu \right)} \leq K_{ s_1, s_2, \tilde s} \cdot \nu_1 \left( E_1\right)^{1/{s_1}} \nu_2 \left(E_2\right)^{1/{s_2}}
\end{equation}
holds for all admissible tuples $\left( s_1, s_2, \tilde s \right)$ in a neighborhood of $\left( p, q, s \right)$, with the constant $K_{ s_1, s_2, \tilde s}$ depending continuously on $ s_1, s_2, \tilde s$. 

Then $T$ is of strong type $\left(p, q, s \right)$, in the sense that 
\[
\big\|   \| T(\vec f, \vec g) \|_{L^R\left(\ii W, \mu  \right)} \big\|_{L^s\left( \nu \right)} \leq K_{ p, q, s} \big\|  \| \vec f  \|_{L^{R_1}} \big\|_{L^p\left( \nu_1 \right)} \big\|  \| \vec g  \|_{L^{R_2}} \big\|_{L^q\left( \nu_2 \right)}.
\]
\end{proposition}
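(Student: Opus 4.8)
The plan is to reduce Proposition~\ref{prop:interp-multi} to the single-parameter statement of Proposition~\ref{prop:interpolation} by treating the mixed-norm targets $L^{R_1}, L^{R_2}, L^R$ as a single ``vector'' $L^p$ scale, and then running the Marcinkiewicz-type multilinear interpolation machinery exactly as in the classical (e.g.\ Grafakos--Tao, or \cite{vv_BHT}) argument, with the only genuine change being bookkeeping of the quasi-Banach exponent. First I would fix the minimal exponent $r^{j_0}=\min_{1\le j\le n} r^j$ among the target tuple $R$; if $r^{j_0}\ge 1$ we are in the Banach-valued case and the result is the standard multilinear Marcinkiewicz interpolation for Banach-valued operators, so we may assume $r^{j_0}<1$. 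Then by the Remark following Proposition~\ref{prop:reorder}, $\|\cdot\|_{L^R}^{r^{j_0}}$ is subadditive, which is precisely the substitute for the triangle inequality that makes the level-set decomposition of the interpolation argument work.

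Next I would set up the restricted weak-type input for the interpolation. Given arbitrary $\vec f,\vec g$ with finite right-hand side, decompose dyadically in the ``outer'' variable: write $\vec f=\sum_{m\in\mathbb Z}\vec f\cdot\one_{E_1^m}$ where $E_1^m:=\{x: 2^m<\|\vec f(x)\|_{L^{R_1}}\le 2^{m+1}\}$, and similarly $\vec g=\sum_{n}\vec g\cdot\one_{E_2^n}$ with level sets $E_2^n$. After normalizing, each piece $2^{-m}\vec f\cdot\one_{E_1^m}$ satisfies $\|2^{-m}\vec f(x)\cdot\one_{E_1^m}\|_{L^{R_1}}\le\one_{E_1^m}(x)$, so the hypothesis \eqref{eq:multi-restric-weak} applies to the pair $(2^{-m}\vec f\cdot\one_{E_1^m},\ 2^{-n}\vec g\cdot\one_{E_2^n})$ for every admissible triple $(s_1,s_2,\tilde s)$ in a neighborhood of $(p,q,s)$. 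By bilinearity,
\[
\big\|\,\|T(\vec f,\vec g)\|_{L^R}\,\big\|_{L^{\tilde s,\infty}(\nu)}^{\,\tilde r}
\le \sum_{m,n} 2^{m\tilde r}2^{n\tilde r}\big\|\,\|T(2^{-m}\vec f\one_{E_1^m},2^{-n}\vec g\one_{E_2^n})\|_{L^R}\,\big\|_{L^{\tilde s,\infty}(\nu)}^{\,\tilde r},
\]
where $\tilde r:=\min(1,\tilde s,r^{j_0})$ is the exponent for which the relevant quasinorms are subadditive (using the Remark again, now applied to the combined tuple including the outer $\tilde s$); this is exactly the step where we need the hypothesis to hold on a neighborhood rather than at the single point $(p,q,s)$. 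Applying \eqref{eq:multi-restric-weak} to each summand yields $\lesssim \sum_{m,n} 2^{m\tilde r}2^{n\tilde r}(K_{s_1,s_2,\tilde s})^{\tilde r}\nu_1(E_1^m)^{\tilde r/s_1}\nu_2(E_2^n)^{\tilde r/s_2}$.

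Then I would carry out the two-parameter summation in $(m,n)$ by the standard trick: split the $(m,n)$-plane into regions according to the sign of a linear form, and on each region choose the admissible triple $(s_1,s_2,\tilde s)$ near $(p,q,s)$ so that the geometric factors $2^{m(\tilde r-\tilde r\, s/s_1\cdot(\text{power of }\nu_1))}$ decay; the continuity of $K_{s_1,s_2,\tilde s}$ and the openness of the admissible region around $(p,q,s)$ guarantee we can make these choices with uniformly bounded constants. Summing the resulting geometric series and using $\sum_m 2^{m\tilde r}\nu_1(E_1^m)^{\tilde r/p}\lesssim \|\,\|\vec f\|_{L^{R_1}}\,\|_{L^p(\nu_1)}^{\tilde r}$ (Chebyshev/layer-cake in the outer variable) gives the claimed strong-type bound. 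The one point requiring care beyond pure bookkeeping is the choice of the exponent $\tilde r$ and verifying subadditivity of all intermediate mixed quasinorms in both the inner $(\ii W,\mu)$ variables and the outer variable simultaneously — this is where Proposition~\ref{prop:reorder} and its Remark are used, and it is the only place the quasi-Banach nature of the problem really intervenes.

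I expect the main obstacle to be not any single estimate but the \emph{organization} of the interpolation in the presence of the nested mixed norms: one must check that the subadditive exponent $\tilde r=\min(1,\tilde s,r^{j_0})$ simultaneously controls the outer $L^{\tilde s,\infty}(\nu)$ quasinorm and the target mixed norm $L^R$, and that the arbitrary measures $\nu,\nu_1,\nu_2$ (which we will later take to be $\ci_{I_0}\,dx$) do not interfere — they don't, since the Marcinkiewicz argument only uses finiteness and $\sigma$-finiteness, never any special structure. As stated in the excerpt, the full details are ``nothing more than an adaptation of the classical argument,'' so the proposal is to invoke the scheme of Proposition~\ref{prop:interpolation} verbatim, reading $L^{R_1},L^{R_2},L^R$ in place of $\ell^{r_1},\ell^{r_2},\ell^r$ and $\nu_i$ in place of Lebesgue measure, and to note that Proposition~\ref{prop:reorder} supplies precisely the subadditivity that replaces the triangle inequality at every step where the classical proof uses it.
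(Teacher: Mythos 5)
Your proposal follows the same overall Marcinkiewicz scheme as the paper (level-set decomposition of $\vec f,\vec g$ in the outer variable, apply the restricted weak-type hypothesis to the normalized pieces, tilt $(s_1,s_2,\tilde s)$ in a region-dependent way to create geometric decay, reassemble), but there is a genuine gap in the reassembly step. You claim
\[
\big\|\,\|T(\vec f,\vec g)\|_{L^R}\,\big\|_{L^{\tilde s,\infty}(\nu)}^{\tilde r}
\le \sum_{m,n} 2^{m\tilde r}2^{n\tilde r}\big\|\,\|T(2^{-m}\vec f\one_{E_1^m},2^{-n}\vec g\one_{E_2^n})\|_{L^R}\,\big\|_{L^{\tilde s,\infty}(\nu)}^{\tilde r}
\]
with $\tilde r=\min(1,\tilde s, r^{j_0})$, invoking the subadditivity given by Proposition~\ref{prop:reorder} and its Remark. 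But that Remark concerns the \emph{strong} mixed quasinorms $L^{R}$; the weak-$L^{\tilde s,\infty}$ quasinorm is not $\theta$-subadditive for any $\theta$ without a multiplicative constant $>1$ (coming from $d_{\varphi+\psi}(\lambda)\le d_{\varphi}(\lambda/2)+d_{\psi}(\lambda/2)$), and with a doubly-infinite family of summands $(m,n)$ that constant accumulates and the inequality fails. There is also a more basic mismatch: the conclusion to be proved is a \emph{strong} $L^s(\nu)$ bound, but your left-hand side is a weak quasinorm; the passage from weak to strong is the entire content of the interpolation and cannot be suppressed.

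The paper's argument (given for Proposition~\ref{prop:interpolation} in Section~\ref{sec:proofs-interp-etc}, and adapted to Proposition~\ref{prop:interp-multi} by replacing $\ell^r$ with $L^R$ and $r$ with $r^{j_0}$) sidesteps both issues by never touching a weak quasi-triangle inequality. The subadditivity supplied by Proposition~\ref{prop:reorder} is used \emph{pointwise} in $x$: $\|T(\vec f,\vec g)(x)\|_{L^R}^{r^{j_0}}\le\sum_{m_1,m_2}\|T(\vec f_{m_1},\vec g_{m_2})(x)\|_{L^R}^{r^{j_0}}$. Then one works with the distribution function, via the exact (constant-free) inequality $d_{\sum_k h_k}(\lambda)\le\sum_k d_{h_k}(c_k\lambda)$ valid for any $c_k>0$ with $\sum_k c_k\le 1$; the freedom in choosing $c_{n,m_1,m_2}$ is precisely what lets one sum over infinitely many pieces. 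The target strong norm is recovered by the layer-cake identity $\|\varphi\|_{s/r^{j_0}}^{s/r^{j_0}}\sim\sum_n 2^{ns/r^{j_0}} d_{\varphi}(2^n)$, and the restricted weak-type hypothesis \eqref{eq:multi-restric-weak} is fed in only as a bound on the distribution function of each single piece: $d_{\|T(\vec f_{m_1},\vec g_{m_2})\|_{L^R}^{r^{j_0}}}(\lambda)\lesssim K_{s_1,s_2,\tilde s}^{\tilde s}\lambda^{-\tilde s/r^{j_0}}\nu_1(E_1^{m_1})^{\tilde s/s_1}\nu_2(E_2^{m_2})^{\tilde s/s_2}$. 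The rest of your plan (region-dependent choice of $(s_1,s_2,\tilde s)$, continuity of $K$, layer-cake for $\vec f,\vec g$, and the observation that the general measures $\nu,\nu_1,\nu_2$ cause no difficulty) is correct once the decomposition is carried out at the distribution-function level rather than by iterated weak quasi-triangle inequalities.
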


\section{Definitions and a layout of the proof}
\label{sec:def-layout}
In this section, we present the main definitions, as well as a sketch of the method of the proof of our theorems. In addition, we introduce some notation conventions and discuss in detail certain technicalities that are recurrent in the paper.

\subsection{Definitions}

\begin{notation}
Given an interval $I$, we denote by $\ci_I$ the function 
\begin{equation}
\label{def:adapted-fnc}
\ci_I(x):= \big( 1+\frac{ \dist(x, I)}{|I|} \big)^{-10}.
\end{equation}

The exponent in the above expression can change all through the presentation, and it can even depend on certain values of $p, q, s$. This will be only implicit in our estimates, as we attempt to keep the notation simple. 

In a similar way, the \emph{sizes} that will be introduced shortly, will appear with exponent $1-\epsilon$. The $\epsilon$ represents a small error, but we will not be tracking its exact value. For example, upon an application of H\"older's inequality, we obtain $\epsilon_{\text{final}}= \epsilon_{\text{initial}} \cdot \frac{r_j}{r}$; however, we denote both expressions by $\epsilon$, since the final error term can be made arbitrarily small.
\end{notation}

\begin{definition}
A collection $\lbrace\phi_I\rbrace_I$ of smooth bump functions associated to a family $\ii I$ of dyadic intervals is called \emph{lacunary} if  $$\supp \hat{\phi}_I \subseteq \big[ \frac{1}{|I|}, \frac{2}{|I|} \big], \quad \text{and} \quad |\partial^\alpha \phi_I(x) | \lesssim |I|^{-\frac{1}{2}-|\alpha|} \ci_I^M(x).$$

Similarly, a collection $\lbrace\phi_I\rbrace_I$ of smooth bump functions associated to a family $\ii I$ of dyadic intervals is called \emph{non-lacunary} if  $$\supp \hat{\phi}_I \subseteq \big[ 0, \frac{1}{|I|} \big], \quad \text{and} \quad |\partial^\alpha \phi_I(x) | \lesssim |I|^{-\frac{1}{2}-|\alpha|} \ci_I^M(x).$$

For lacunary collections we use the notation $\lbrace \psi_I \rbrace_I$, and for non-lacunary, $\lbrace\varphi_I\rbrace_I$.
\end{definition}

\begin{definition}
\label{def:paraproduct}
The \emph{discretized paraproduct} $\Pi_{\ii I}$ associated to a family $\ii I$ of dyadic intervals is the bilinear expression
\begin{equation}
\label{eq:def-discr-par}
\Pi_{\ii I} (f,g)(x)=\sum_{I \in \ii I} c_I \frac{1}{\lft I \rg^{\frac{1}{2}}}  \langle f, \varphi_I \rangle \langle g, \psi_I \rangle \psi_I(x),
 \end{equation}
where $\lbrace c_I \rbrace_{I \in \ii I}$ is a bounded sequence of complex numbers.
\end{definition}

In proving the Leibniz rule in Section \ref{sec:mixed_norm-est}, a special role is played by the paraproducts arising form the classical decomposition into ``low" and ``high" frequencies. We have 
{\fontsize{10}{10}
\begin{align}
\label{def:classicalParap}
f\cdot g (x)&=\sum_{k } \big(f \ast \varphi_k      \cdot g \ast \psi_k   \big) \ast \psi_k(x) +\sum_{k } \big(f \ast \psi_k    \cdot g \ast \varphi_k   \big) \ast \psi_k(x) +\sum_{k } \big(f \ast \psi_k      \cdot g \ast \psi_k   \big) \ast \varphi_k(x)\\
&=\sum_{k} Q_k(P_kf \cdot Q_k g)(x)+\sum_{k} Q_k(Q_kf \cdot P_k g) +\sum_{k} P_k(Q_kf \cdot Q_k g). \nonumber
\end{align}}
Here $\psi_k(x)=2^k \psi(2^k x)$, $\varphi_k(x)=2^k \varphi(2^k x)$,  $\hat{\varphi}(\xi) \equiv 1$ on $\ds \left[-1/2, 1/2 \right]$, is supported on $[-1,1]$  and $\hat{\psi}(\xi)=\hat{\varphi}(\xi/2)-\hat{\varphi}(\xi) $. The $\ds \lbrace Q_k \rbrace_k$ represent Littlewood-Paley projections onto the frequency $|\xi| \sim 2^k$, while $\ds \lbrace P_k \rbrace_k$ are convolution operators associated with dyadic dilations of a nice bump function of integral 1. We refer to any of the expressions on the right hand side of \eqref{def:classicalParap} as \emph{classical paraproducts}.

Bilinear Fourier multipliers of the form
\[
(f, g) \mapsto \int_{\rr R^2} \hat{f}(\xi) \hat g(\eta) m(\xi, \eta) e^{2 \pi i \left( \xi+\eta \right)} d \xi d \eta
\]
given by a multiplier $m(\cdot, \cdot)$ which is smooth away from the origin, can be expressed as superpositions of classical paraproducts, and hence as a superposition of operators analogous to those in \eqref{eq:def-discr-par}. The boundedness of the discretized paraproducts will imply that of the classical paraproducts and of the bilinear Fourier multipliers above. For this reason, we only study the discretized paraproducts.

\begin{definition}
Let $\mathscr{I}$ be a family of dyadic intervals. For any $1 \leq j \leq 3$, we define
\[
\ssize_{\mathscr{I}}\big( \langle f, \phi_I^j  \rangle_{I \in \mathscr{I}}   \big)=\sup_{I \in \ii{I}} \frac{|\langle f, \phi_I^j \rangle|}{|I|^{1/2}}, \quad \text{ if $(\phi_I^j)_I$ is non-lacunary and}
\]
\[
\ssize_{\mathscr{I}}\big( \langle f, \phi_I^j  \rangle_{I \in \mathscr{I}}   \big)=\sup_{I_0 \in \ii{I}}  \frac{1}{|I_0|} \| \big( \sum_{\substack{I \subseteq I_0\\ I \in \ii{I}}} \frac{|\langle f, \phi_I^j \rangle|^2}{|I|}  \cdot \one_I \big)^{1/2}    \|_{1, \infty}, \quad \text{ if $(\phi_I^j)_I$ is lacunary.}
\]
The energy is defined as
\[
\eenergy^j_{\ii{I}} \big( \langle f, \phi_I^j   \rangle_{I \in \ii{I}}   \big):=\sup_{n \in \rr{Z}} 2^n \sup_{\rr{D}} (\sum_{I \in \rr{D}}|I|)
\]
where $\rr{D}$ ranges over all collections of disjoint intervals $I_0$ with the property that 
\begin{align*}
& \frac{|\langle f, \phi_{I_0}^j \rangle|}{|I_0|^{1/2}} \geq 2^n, \quad \text{ if $(\phi_{I}^j)_I$ is non-lacunary and} \\
&\frac{1}{|I_0|^{1/2}} \| \big( \sum_{\substack{I \subseteq I_0\\ I \in \ii{I}}} \frac{|\langle f, \phi_I^j \rangle|^2}{|I|}  \cdot \one_I \big)^{1/2}    \|_{1, \infty} \geq 2^n, \quad \text{ if $(\phi_I^j)_I$ is lacunary.}
\end{align*}
\end{definition}

\begin{lemma}[Lemma 2.13 of \cite{multilinear_harmonic}]\label{lemma:size-est-paraprod}
 If $F$ is an $L^1$ function and $1 \leq j \leq 3$, then
\[
\ssize_{\ii{I}}^j(\langle F, \phi_I^j  \rangle_{I \in \ii{I}}) \lesssim \sup_{I \in \ii{I}}\frac{1}{|I|} \int_{\rr{R}} |F| \ci_{I}^M dx
\]
for any $M>0$, with implicit constants depending on $M$.
\end{lemma}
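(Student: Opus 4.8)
The plan is to verify the two formulas defining $\ssize_{\ii I}^j$ separately; throughout I will not keep track of the exponent on $\ci_I$, since $\ci_I\le 1$ makes the right-hand side monotone decreasing in that exponent, so it is enough to argue for the exponent as large as needed. For the \emph{non-lacunary} formula the estimate is immediate from the $\alpha=0$ instance of the bump bound $|\phi_I(x)|\lesssim |I|^{-1/2}\ci_I^M(x)$: indeed $|\langle F,\phi_I^j\rangle|\le\int_{\rr R}|F|\,|\phi_I^j|\lesssim |I|^{-1/2}\int_{\rr R}|F|\,\ci_I^M$, and dividing by $|I|^{1/2}$ and taking the supremum over $I\in\ii I$ yields exactly $\sup_{I\in\ii I}\frac1{|I|}\int_{\rr R}|F|\,\ci_I^M$. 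All the content is therefore in the \emph{lacunary} formula.

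For the lacunary formula, fix $I_0\in\ii I$, write $\psi_I:=\phi_I^j$, put $\mathcal A:=\sup_{I\in\ii I}\frac1{|I|}\int_{\rr R}|F|\,\ci_I^M$, and let $S_{I_0}F:=\big(\sum_{I\subseteq I_0,\,I\in\ii I}\frac{|\langle F,\psi_I\rangle|^2}{|I|}\one_I\big)^{1/2}$. I would prove $\|S_{I_0}F\|_{1,\infty}\lesssim |I_0|\,\mathcal A$ and then take $\sup_{I_0}$. The mechanism is to split $F=F\one_{3I_0}+F\one_{(3I_0)^c}$, so that $S_{I_0}F\le S_{I_0}(F\one_{3I_0})+S_{I_0}(F\one_{(3I_0)^c})$ pointwise, and to treat the two terms by completely different means. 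For the first (local) term I would dominate $S_{I_0}(F\one_{3I_0})$ by the full discretized Littlewood--Paley square function $Sg:=\big(\sum_{I\in\ii I}\frac{|\langle g,\psi_I\rangle|^2}{|I|}\one_I\big)^{1/2}$ and use its weak-type $(1,1)$ boundedness (a classical fact, as $S$ is dominated by the square function over all dyadic intervals, which is $L^2$-bounded by almost orthogonality and has a vector-valued Calder\'on--Zygmund kernel), obtaining $\|S_{I_0}(F\one_{3I_0})\|_{1,\infty}\lesssim\int_{3I_0}|F|\lesssim\int_{\rr R}|F|\,\ci_{I_0}^M\le |I_0|\,\mathcal A$, since $\ci_{I_0}^M\gtrsim 1$ on $3I_0$.

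For the second (tail) term I would use a brutal pointwise estimate whose whole point is a gain in the scale ratio. If $I\subseteq I_0$ and $y\notin 3I_0$, then $\dist(y,I)\ge\dist(y,I_0)\ge |I_0|$, so simultaneously $1+\dist(y,I)/|I|\gtrsim |I_0|/|I|$ and $1+\dist(y,I)/|I|\gtrsim 1+\dist(y,I_0)/|I_0|$; taking the geometric mean of these two lower bounds gives $\ci_I^M(y)\lesssim (|I|/|I_0|)^{5M}\,\ci_{I_0}^{M/2}(y)$, hence $|\langle F\one_{(3I_0)^c},\psi_I\rangle|\lesssim |I|^{-1/2}(|I|/|I_0|)^{5M}|I_0|\,\mathcal A$ and therefore $\frac{|\langle F\one_{(3I_0)^c},\psi_I\rangle|^2}{|I|}\lesssim (|I|/|I_0|)^{10M-2}\mathcal A^2$. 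Summing this over the at most one interval $I\subseteq I_0$ of each dyadic length $2^{-m}|I_0|$, $m\ge 0$, through a fixed point, and taking $M$ large enough, gives $S_{I_0}(F\one_{(3I_0)^c})\lesssim\mathcal A$ pointwise; since this function is supported in $I_0$, $\|S_{I_0}(F\one_{(3I_0)^c})\|_{1,\infty}\le \|S_{I_0}(F\one_{(3I_0)^c})\|_1\le |I_0|\,\|S_{I_0}(F\one_{(3I_0)^c})\|_\infty\lesssim |I_0|\,\mathcal A$. Adding the two contributions and dividing by $|I_0|$ gives the lacunary estimate.

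The only non-elementary ingredient, and hence the main obstacle, is the weak-type $(1,1)$ bound for the discretized Littlewood--Paley square function that powers the local term --- this is classical, but it is exactly where the cancellation of the $\psi_I$ is genuinely needed. Indeed, the naive bound $|\langle F,\psi_I\rangle|\lesssim |I|^{1/2}\mathcal A$ by itself is useless for the local term: summed over all scales through a single point it only produces a bound growing with the number of scales, and it is precisely the square-function theory that converts this into a weak-$L^1$ estimate with the correct factor $|I_0|$. Everything else in the argument is H\"older's inequality and the decay of the bump functions.
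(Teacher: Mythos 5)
The paper does not actually prove this lemma; it is imported verbatim as Lemma 2.13 of Muscalu--Schlag, so there is no in-paper argument to compare your proposal against. Your proof is correct and is essentially the standard argument for this size estimate: the non-lacunary case is, as you say, immediate from the $\alpha=0$ bump bound, and for the lacunary case the near/far decomposition $F=F\one_{3I_0}+F\one_{(3I_0)^c}$ does exactly the right thing, with the near part controlled by the weak-type $(1,1)$ bound for the discrete Littlewood--Paley square function (a classical vector-valued Calder\'on--Zygmund fact, and indeed the one place where the cancellation of the lacunary $\psi_I$ is used, as you correctly emphasize) and the far part controlled by the pointwise geometric gain $\ci_I^M(y)\lesssim(|I|/|I_0|)^{5M}\,\ci_{I_0}^{M/2}(y)$ followed by a convergent geometric sum over the at most one interval per scale through a fixed point. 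Your remark about not tracking the exponent on $\ci_I$ is also appropriate: one runs the argument with the wave-packet decay exponent taken large enough, loses a factor of two in passing from $\ci_I$ to $\ci_{I_0}$, and this is harmless because the estimate is asserted for every $M>0$ with $M$-dependent constants; there is no circularity because the decay of the $\phi_I^j$ can be taken as fast as desired independently of the target $M$.
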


\begin{lemma}[Lemma 2.14 of \cite{multilinear_harmonic}]
\label{lemma:energy-est-paraprod}
If $F$ is an $L^1$ function and $1 \leq j \leq 3$, then 
$$\ds \eenergy_{\ii{I}}^j(\langle F, \phi_I^j  \rangle_{I \in \ii{I}}) \lesssim \| F \|_1.$$
\end{lemma}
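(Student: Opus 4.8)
The plan is to unwind the definition of the energy and reduce the statement to a single ``selected collection'' inequality, to be verified separately in the non-lacunary and lacunary cases. Concretely, it suffices to produce an absolute constant $C$ so that for every $n \in \rr Z$ and every finite family $\rr D$ of pairwise disjoint intervals $I_0$, each satisfying the corresponding threshold condition at level $2^n$, one has $2^n \sum_{I_0 \in \rr D} |I_0| \leq C \| F \|_1$; the lemma then follows by taking the supremum over $\rr D$ and over $n$. This mirrors the way the classical time-frequency ``energy $\lesssim \| f \|_2$'' estimate is proved, with the role of $L^2$ now taken by $L^1$.

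First I would dispose of the non-lacunary case. The threshold asserts $|\langle F, \varphi_{I_0} \rangle| \geq 2^n |I_0|^{1/2}$ for each $I_0 \in \rr D$, while the non-lacunary bound $|\varphi_{I_0}(x)| \lesssim |I_0|^{-1/2} \ci_{I_0}^M(x)$ gives
\[
\frac{|\langle F, \varphi_{I_0} \rangle|}{|I_0|^{1/2}} \lesssim \frac{1}{|I_0|} \int_{\rr R} |F| \, \ci_{I_0}^M \, dx \lesssim \inf_{x \in I_0} MF(x),
\]
where $M$ is the Hardy--Littlewood maximal operator; the last step is the computation underlying Lemma \ref{lemma:size-est-paraprod}, decomposing $\rr R$ into $I_0$ and the dyadic annuli $2^{k+1} I_0 \setminus 2^k I_0$ and summing the geometric series in $k$. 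Hence every $I_0 \in \rr D$ lies inside $\{ MF \gtrsim 2^n \}$, and by disjointness of the $I_0$'s together with the weak type $(1,1)$ bound for $M$,
\[
\sum_{I_0 \in \rr D} |I_0| = \Big| \bigcup_{I_0 \in \rr D} I_0 \Big| \leq \big| \{ MF \gtrsim 2^n \} \big| \lesssim 2^{-n} \| F \|_1,
\]
which is exactly $2^n \sum_{I_0 \in \rr D} |I_0| \lesssim \| F \|_1$.

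For the lacunary case I would run the same scheme with the dyadic square function in place of the maximal operator. With $S_{I_0} F := \big( \sum_{I \subseteq I_0,\ I \in \ii I} |\langle F, \psi_I \rangle|^2 |I|^{-1} \one_I \big)^{1/2}$, supported in $I_0$, the threshold reads $\| S_{I_0} F \|_{1, \infty} \geq 2^n |I_0|^{1/2}$. The three tools are: (i) the weak type $(1,1)$ bound $\| SF \|_{1, \infty} \lesssim \| F \|_1$ for the global Littlewood--Paley square function $SF := \big( \sum_{I \in \ii I} |\langle F, \psi_I \rangle|^2 |I|^{-1} \one_I \big)^{1/2}$, a standard vector-valued Calder\'on--Zygmund estimate; (ii) the localization estimate $\| S_{I_0} F \|_{1, \infty} \lesssim \int_{\rr R} |F| \, \ci_{I_0}^M \, dx$, obtained, just as in Lemma \ref{lemma:size-est-paraprod}, by splitting $F = F \one_{3 I_0} + F \one_{(3 I_0)^c}$ and exploiting the Schwartz decay of the $\psi_I$ and summation over the scales $I \subseteq I_0$; and (iii) the pointwise domination $\sum_{I_0 \in \rr D} (S_{I_0} F)^2 \leq (SF)^2$, valid because the $I_0$ are disjoint, so every $I \in \ii I$ lies below at most one of them. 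Combining these, together if needed with a Calder\'on--Zygmund decomposition of $F$ at the relevant height (the good part handled via the $L^2$ bound for $S_{I_0}$, the bad part via its $L^1$/mean-zero structure and the ensuing decay away from the bad cubes), one reaches $2^n \sum_{I_0 \in \rr D} |I_0| \lesssim \| F \|_1$. This is precisely Lemma 2.14 of \cite{multilinear_harmonic}, which we may also simply invoke.

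The genuine obstacle is the lacunary case. The non-lacunary argument is one weak type $(1,1)$ inequality away from being routine, but in the lacunary case one must convert the ``global, averaged'' information encoded in the weak $(1,1)$ bound for the square function into the ``summed over disjoint selected intervals'' statement: this forces one to exploit the disjointness of the $I_0$ carefully, to track the $L^{1, \infty}$ (as opposed to $L^1$) structure, and to absorb uniformly the Schwartz tails produced by the localization step (ii). Organizing the Calder\'on--Zygmund decomposition so that all the resulting error terms sum to a multiple of $\| F \|_1$ is the technical heart of the matter.
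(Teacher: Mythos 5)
The paper does not supply a proof of this lemma; it is quoted as Lemma 2.14 of \cite{multilinear_harmonic}, so there is no in-paper argument to compare against. Your non-lacunary case is correct and is the standard argument. The lacunary case, however, is an incomplete sketch that never assembles its stated tools, and the closing ``which we may also simply invoke'' concedes as much. The ``genuine obstacle'' you describe in your final paragraph is not actually there: of tools (i)--(iii), only the localization estimate (ii), $\|S_{I_0}F\|_{1,\infty}\lesssim\int_{\rr R}|F|\,\ci_{I_0}^M\,dx$, is needed. Dividing it by $|I_0|$ gives $|I_0|^{-1}\|S_{I_0}F\|_{1,\infty}\lesssim\inf_{x\in I_0}MF(x)$, and from there the lacunary case closes exactly as the non-lacunary one did: each selected $I_0$ lies in $\{MF\gtrsim 2^n\}$, and disjointness together with the weak type $(1,1)$ of $M$ give $2^n\sum_{I_0\in\rr D}|I_0|\lesssim\|F\|_1$. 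The global square-function weak $(1,1)$ bound, the pointwise domination $\sum(S_{I_0}F)^2\leq(SF)^2$, and the Calder\'on--Zygmund decomposition are all superfluous; there is no need to ``track the $L^{1,\infty}$ structure'' beyond (ii).

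One further issue your sketch passes over: you take the lacunary threshold verbatim from the paper's definition, $\|S_{I_0}F\|_{1,\infty}\geq 2^n|I_0|^{1/2}$. With that normalization, (ii) only yields the interval-dependent level $\inf_{I_0}MF\gtrsim 2^n|I_0|^{-1/2}$, and the level-set argument does not close (nor would the assembly you gesture at). The exponent $1/2$ is almost certainly a typographical slip for $1$: only the $|I_0|^{-1}$ normalization of the weak $L^1$ quasinorm is dilation invariant, and it matches the $|I_0|^{-1}$ normalization used in the lacunary size. With the corrected normalization the short argument above is complete; with $|I_0|^{-1/2}$, neither your sketch nor the simpler route goes through, and you should address this rather than quietly reproduce the printed formula.
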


It is customary to study the trilinear form associated to $\Pi_{\ii I}$ rather than the operator itself. The trilinear form is the expression 
\[
\Lambda_{\Pi}(f, g, h):=\sum_{I \in \ii{I}} c_I \frac{1}{|I|^{1/2}} \langle f, \phi_I^1 \rangle \langle g, \phi_I^2 \rangle \langle h, \phi_I^3 \rangle,
\]
and it can be estimated using the above sizes and energies.

\begin{proposition}[Proposition 2.12 of \cite{multilinear_harmonic}]
\label{paraproduct estimates}
Given a paraproduct $\Pi$ associated to a family $\ii{I}$ of intervals,
\begin{align*}
 \Big |\Lambda_{\Pi}(f_1, f_2, f_3) \Big| \lesssim \prod_{j=1}^3 \big( \ssize_{\ii{I}}^{(j)} ( \langle f_j, \phi_I^j  \rangle_{I \in \ii{I}} )  \big)^{1-{\theta_j}} \big( \eenergy_{\ii{I}}^{(j)} (\langle f_j, \phi_I^j  \rangle_{I \in \ii{I}}   )  \big)^{\theta_j},
\end{align*}
for any $0 \leq \theta_1, \theta_2, \theta_3 <1$ such that $\theta_1+\theta_2+\theta_3=1$, where the implicit constant depends on $\theta_1, \theta_2, \theta_3$ only.
\end{proposition}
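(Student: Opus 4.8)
The statement to prove is Proposition~\ref{paraproduct estimates}, the size-energy estimate for the trilinear paraproduct form. This is the standard telescoping argument, but let me sketch it.

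\medskip

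\textbf{The plan.} The strategy is to decompose the family $\ii I$ of dyadic intervals into subcollections on which the relevant quantities are under control, then sum geometrically. For each index $j \in \{1,2,3\}$, set $S_j := \ssize^{(j)}_{\ii I}(\langle f_j, \phi^j_I\rangle_I)$ and $E_j := \eenergy^{(j)}_{\ii I}(\langle f_j, \phi^j_I\rangle_I)$. After normalizing (dividing each $f_j$ by the appropriate constant), we may assume $S_j = E_j = 1$ for all $j$; the claimed inequality then reads $|\Lambda_\Pi(f_1,f_2,f_3)| \lesssim 1$. First I would stop the intervals by \emph{energy}: for each $j$ and each $n \in \rr Z$, select a maximal disjoint collection $\rr D^j_n$ of intervals $I_0$ realizing the threshold $2^{-n}$ in the definition of energy (i.e.\ the local $L^2$-type average of $\langle f_j, \phi^j_I\rangle$ over $I \subseteq I_0$ is $\geq 2^{-n}$), and let $\ii I^j_n$ be the intervals $I \in \ii I$ whose ``energy-parent'' in $\rr D^j_n$ is this collection. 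By the definition of energy (with $E_j = 1$) we get $\sum_{I_0 \in \rr D^j_n} |I_0| \lesssim 2^{2n}$ for the lacunary indices, and correspondingly for the non-lacunary one; also, on $\ii I^j_n$ the localized quantity is $\lesssim 2^{-n}$, i.e.\ a ``size'' bound of order $\min(1, 2^{-n})$ holds. Intersecting over $j$: each $I \in \ii I$ lands in a triple $(n_1, n_2, n_3)$ of levels, and we organize the sum over $\ii I$ accordingly.

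\medskip

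\textbf{Key steps.} On the piece of $\ii I$ with levels $(n_1,n_2,n_3)$, one estimates the partial trilinear form by pulling out the wave-packet coefficients: for each $j$, $|\langle f_j,\phi^j_I\rangle| \lesssim |I|^{1/2} \min(S_j, 2^{-n_j})$ in a suitable averaged sense, so that after using Cauchy--Schwarz in $I$ (pairing the lacunary factor with $\psi_I$, which has $L^2$-orthogonality properties encoded in the $\ci_I$ bounds) the sum over intervals $I \subseteq I_0$ for a fixed $I_0 \in \rr D^{j_0}_{n_{j_0}}$ collapses to $|I_0|$ times the product of the three level-sizes; summing over $I_0$ then produces a factor $\sum_{I_0}|I_0| \lesssim 2^{2n_{j_0}}$. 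Choosing $j_0$ to be the index with the \emph{smallest} $n_j$ (equivalently exploiting that we may distribute the counting function among the three collections) one arrives at a bound of the shape
\[
|\Lambda_\Pi|_{(n_1,n_2,n_3)} \;\lesssim\; \min(1,2^{-n_1})^{1-\theta_1}\min(1,2^{-n_2})^{1-\theta_2}\min(1,2^{-n_3})^{1-\theta_3}\cdot 2^{2\min_j n_j},
\]
and more precisely, distributing the single counting factor $2^{2\min n_j}$ as $2^{2\theta_1 n_1}2^{2\theta_2 n_2}2^{2\theta_3 n_3}$ is \emph{not} legitimate directly, so instead one keeps $2^{2 n_{j_0}}$ on the smallest index and absorbs the deficit using $2^{-n_{j_0}} \le 2^{-n_j}$ for the other two — this is exactly where the exponents $\theta_j$ with $\sum\theta_j = 1$ and $\theta_j<1$ enter, yielding a term like $\prod_j 2^{-(1-\theta_j)n_j}$ when all $n_j \geq 0$, and the companion analysis when some $n_j<0$ (where the size bound $1$ is the binding one and the energy counting is used). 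Finally one sums the geometric series over $(n_1,n_2,n_3)\in\rr Z^3$: convergence holds precisely because each exponent $1-\theta_j$ is strictly positive on the $n_j \geq 0$ side and the energy/counting gives decay on the $n_j<0$ side.

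\medskip

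\textbf{Main obstacle.} The genuinely delicate point is the interplay between the three stopping-time collections and the correct bookkeeping of the counting function: one cannot simultaneously be at the ``worst'' energy level for all three functions, and the argument must commit to one index $j_0$ (the one governing the $\sum|I_0|$ factor via its energy) while the other two indices contribute only through their size bounds $\min(1,2^{-n_j})$. Making this rigorous requires the sparsity/disjointness of $\rr D^j_n$ together with the almost-orthogonality of the $\{\psi_I\}$ (via the decay $\ci_I^M$) to run the local Cauchy--Schwarz cleanly, and then checking that the resulting triple geometric sum converges uniformly in the $\theta_j$'s on compact subsets of the open simplex. Since this is Proposition~2.12 of \cite{multilinear_harmonic}, I would in practice cite it; the sketch above is the argument one would reproduce if needed. $\qed$
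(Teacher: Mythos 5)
The paper does not prove this proposition; it simply cites Proposition~2.12 of \cite{multilinear_harmonic}, exactly as you say you would do in practice, so there is no in-paper proof to compare against. That said, since you did attempt a sketch, one bookkeeping issue is worth flagging: you write $\sum_{I_0\in\rr D^j_n}|I_0|\lesssim 2^{2n}$ and invoke a local Cauchy--Schwarz, which is the $L^2$-type energy used for the bilinear Hilbert transform. The energy defined in this paper (and in Prop.~2.12) is the $L^1$-type quantity $\sup_n 2^n\sup_{\rr D}\sum_{I\in\rr D}|I|$, which gives $\sum_{I_0\in\rr D^j_n}|I_0|\lesssim 2^{n}E_j$ (not $2^{2n}E_j^2$), and the local estimate on a stopping interval is the straightforward $|\Lambda_{I_0}|\lesssim(\text{product of sizes})\cdot|I_0|$ with no Cauchy--Schwarz needed. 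Your $2^{2n}$ counting factor, if carried through the geometric series as you wrote it, would produce $\prod_j 2^{-(1-2\theta_j)n_j}$ and force $\theta_j<1/2$, which is strictly weaker than the claimed $\theta_j<1$; with the correct $L^1$-energy counting factor $2^{n_{j_0}}$ one gets $\prod_j 2^{-(1-\theta_j)n_j}$ and the full range $\theta_j<1$ as stated. Aside from this mix-up between the paraproduct and $BHT$ energies, the overall architecture of your sketch (stopping by energy levels, size bounds off the stopping intervals, distributing the counting factor via $n_{j_0}=\sum_j\theta_j n_{j_0}\leq\sum_j\theta_j n_j$, and summing the geometric series) is the standard and correct approach.
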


While the above proposition is the main ingredient, we need localized estimates.

\begin{definition}
\label{def:localized-collections}
If $I_0$ is some fixed dyadic interval, we define
\[
\ii I \left( I_0 \right):=\left \lbrace I \in \ii I : I \subseteq I_0  \right\rbrace.
\]

For a collection $\ii I$ of intervals, we denote
\[
\ii I ^+ :=\left \lbrace J \text{ dyadic interval }:  \exists I \in \ii I \text{ such that } I \subseteq J  \right\rbrace.
\]
In the particular case of the collection $\ii I \left( I_0  \right)$, we have
\[
\ii I ^+\left( I_0 \right):= \left \lbrace J \text{ dyadic interval }:  J \subseteq 3 I_0 \text{  and  } \exists I \in \ii I \text{ such that } I \subseteq J  \right\rbrace.
\]
\end{definition}

\begin{definition}[Modified Size]
We define the following size, which is more suitable for localizations:
\begin{equation}
\label{eq:def-mod-size}
\sssize_{\ii I} f:= \sup_{J \in \ii I ^+} \frac{1}{\lft J \rg} \int_{\rr R} \lft f \rg \cdot \ci_{J}^M dx.
\end{equation}
We note that, thanks to Lemma \ref{lemma:size-est-paraprod}, we have $\ds \ssize_{\ii I } f \lesssim \sssize_{\ii I} f$.

In the particular case of a collection localized to a certain interval $I_0$, we use the notation
\begin{equation}
\label{def-size_I_0}
\sssize_{I_0} f :=\sssize_{\ii I ^+\left( I_0 \right)} f.
\end{equation}
\end{definition}
\vspace{.3 cm}

\subsection{A few Technical Results: Localizations}~\\
Throughout this section, we consider $I_0$ to be a fixed interval, and we use the notation $\ds \Pi_{I_0}:=\Pi_{\ii I \left( I_0\right)}$. That is, 
\[
\Pi_{I_0}(f, g)(x)=\sum_{\substack{I \in \ii{I} \\ I \subseteq I_0}} c_I \frac{1}{|I|^{1/2}} \langle f, \varphi_I \rangle \langle g, \psi_I \rangle \psi_I(x). 
\]

\begin{lemma}[Refinement of Lemma \ref{lemma:energy-est-paraprod}]
\label{lemma-ref-energy-est-paraprod}
If $f$ is a function whose support has the property that 
\[
2^{k-1} \leq 1+ \frac{\dist \left( \supp f, I_0 \right) }{\lft I_0 \rg} \leq 2^k,
\]
for some $k \geq 1$, then $\eenergy_{\ii I \left( I_0 \right)} (f) \lesssim 2^{-kM} \big\| f \big\|_1$. 
\begin{proof}
The proof is almost identical to that of Lemma \ref{lemma:energy-est-paraprod}, with the only difference that now we have, for any interval $I \subseteq I_0$,
\[
\frac{1}{| I |} \lft \langle f, \ci_I^{2M} \rangle \rg \lesssim 2^{-kM} \inf_{y \in I} \ic M f(y).
\]
\end{proof}
\end{lemma}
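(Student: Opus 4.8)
The plan is to adapt the proof of Lemma \ref{lemma:energy-est-paraprod} (Lemma 2.14 of \cite{multilinear_harmonic}), keeping track of the decay coming from the separation between $\supp f$ and $I_0$. Recall that the energy is defined via collections $\rr D$ of disjoint dyadic intervals $I_0' \in \ii I(I_0)$ on which a suitable average of $|\langle f, \phi_{I_0'}^j \rangle|/|I_0'|^{1/2}$ (or the lacunary square-function variant) exceeds a threshold $2^n$, and one must bound $2^n \sum_{I_0' \in \rr D}|I_0'|$ uniformly in $n$. Since every interval $I \subseteq I_0$ appearing in the paraproduct is contained in $I_0$, and $\supp f$ lies at distance roughly $2^k |I_0|$ from $I_0$, the bump $\phi_I^j$ — which is $L^2$-normalized and concentrated on $I$ with rapidly decaying tails $\ci_I^M$ — sees the mass of $f$ only through its tail. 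Concretely, for any $I \subseteq I_0$ one has $\dist(\supp f, I) \geq \dist(\supp f, I_0) \gtrsim 2^k |I_0| \geq 2^k |I|$, so $\ci_I^{2M}(x) \lesssim 2^{-kM}\ci_I^{M}(x)$ for $x \in \supp f$; this is exactly the pointwise gain recorded in the statement of the lemma.

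First I would reduce, exactly as in \cite{multilinear_harmonic}, the estimate on $|\langle f, \phi_I^j\rangle|$ (non-lacunary case) or on the localized square function (lacunary case) to the elementary bound $\frac{1}{|I|}|\langle f, \ci_I^{2M}\rangle| \lesssim 2^{-kM}\inf_{y\in I}\ic M f(y)$, which is the displayed inequality in the proof sketch; this follows by splitting $\supp f$ into the annuli around $I$ and summing the geometric series, the extra factor $2^{-kM}$ coming from the observation above that the nearest annulus is already at scale $2^k|I|$. Then I would run the stopping-time/exceptional-set argument of Lemma \ref{lemma:energy-est-paraprod} verbatim: for each $n$, the disjoint intervals in $\rr D$ satisfy (after this size reduction) $\inf_{y \in I_0'}\ic M f(y) \gtrsim 2^{n} 2^{kM}$, hence $\sum_{I_0' \in \rr D}|I_0'| \lesssim |\{\ic M f \gtrsim 2^n 2^{kM}\}| \lesssim 2^{-n}2^{-kM}\|f\|_1$ by the weak-$(1,1)$ bound for the Hardy–Littlewood maximal function. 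Multiplying by $2^n$ and taking the supremum over $n \in \rr Z$ yields $\eenergy_{\ii I(I_0)}(f) \lesssim 2^{-kM}\|f\|_1$, as claimed. The lacunary case is handled the same way, using that the square function $\big(\sum_{I \subseteq I_0'} |\langle f,\phi_I^j\rangle|^2|I|^{-1}\one_I\big)^{1/2}$ inherits the same $2^{-kM}$ decay since each individual coefficient does, and then invoking the weak-$L^1$ bound for this square function as in the original proof.

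I do not expect a genuine obstacle here: the lemma is a routine localization of an already-proven estimate, and the entire content is the pointwise tail gain $\ci_I^{2M} \lesssim 2^{-kM}\ci_I^M$ valid on $\supp f$ because every $I$ in the collection is trapped inside $I_0$. The only mild point of care is bookkeeping with the two exponents $M$ and $2M$ — one needs the original lemma proved with a large enough power of $\ci_I$ so that, after extracting $2^{-kM}$, enough decay remains to run the annular summation; since the exponent in $\ci_I$ is allowed to be taken as large as desired (the Notation paragraph explicitly says it ``can change all through the presentation''), this is not a real constraint. A secondary check is that the implicit constants in the maximal-function and square-function weak-type bounds do not depend on $I_0$ or $k$, which is clear by translation and dilation invariance.
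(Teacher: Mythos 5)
Your proof is correct and follows exactly the route the paper sketches: the key is the pointwise tail gain $\ci_I^{2M}(x)\lesssim 2^{-kM}\ci_I^M(x)$ for $x\in\supp f$, valid because every $I\subseteq I_0$ satisfies $\dist(\supp f, I)\ge\dist(\supp f, I_0)\gtrsim 2^k|I_0|\ge 2^k|I|$, which feeds the factor $2^{-kM}$ into the displayed inequality $\frac{1}{|I|}|\langle f,\ci_I^{2M}\rangle|\lesssim 2^{-kM}\inf_{y\in I}\ic Mf(y)$ and then through the stopping-time and weak-$(1,1)$ argument verbatim. The remark on flexibility of the decay exponent is exactly the bookkeeping the Notation paragraph anticipates, so nothing further is needed.
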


\begin{lemma}[Refinement of Proposition \ref{paraproduct estimates}]
\label{lemma:localized-paraprod-est}
For any functions $f, g$ and $h$, the trilinear form associated to the paraproduct $\Pi_{I_0}$ satisfies
\begin{align*}
\lft \Lambda_{I_0} \left( f, g, h \right) \rg & \lesssim \big( \sssize_{I_0} f \big)^{1-\theta_1} \big( \sssize_{I_0} g \big)^{1-\theta_2} \big( \sssize_{I_0} h \big)^{1-\theta_3} \big\| f \cdot \ci_{I_0} \big\|_1^{\theta_1} \big\| g\cdot \ci_{I_0} \big\|_1^{\theta_2} \big\| h \cdot \ci_{I_0} \big\|_1^{\theta_3}
\end{align*}
for any $0 \leq \theta_1, \theta_2, \theta_3<1$ such that $\theta_1+\theta_2+\theta_3=1$, where the implicit constants depend on $\theta_j$ only.
\begin{proof}
We first assume that $0< \theta_j <1$. Then write $f$ as $f=\sum_{k_1 \geq 0}f_{k_1}$, where $f_{k_1}=f \cdot \one_{\left \lbrace x:  \dist \left( x, I_0 \right) \sim \left( 2^{k_1}-1\right) | I_0 | \right\rbrace}$. Similarly,
\[
g=\sum_{k_2 \geq 0} g_{k_2}, \qquad \text{and}  \qquad h=\sum_{k_3 \geq 0} h_{k_3}.
\]

From Lemma \ref{lemma:energy-est-paraprod}  we have
\begin{align*}
&\lft \Lambda_{\Pi\left( I_0 \right)} \left( f, g, h \right) \rg \lesssim \sum_{k_1, k_2, k_3} \lft \Lambda_{\Pi\left( I_0 \right)} \left( f, g, h \right) \rg \\
&\lesssim \sum_{k_1, k_2, k_3} \big( \ssize_{I_0} f_{k_1}  \big)^{1-\theta_1} \big( \ssize_{I_0} g_{k_2}  \big)^{1-\theta_1}  \big( \ssize_{I_0} h_{k_3}  \big)^{1-\theta_3} \\
&\qquad \cdot \big( \eenergy_{I_0} f_{k_1} \big)^{\theta_1} \big( \eenergy_{I_0} g_{k_2} \big)^{\theta_2} \big( \eenergy_{I_0} h_{k_3} \big)^{\theta_3}.
\end{align*}
For the energy of $f_{k_1}$ we use the estimate in Lemma \ref{lemma-ref-energy-est-paraprod}, bounding the expression above by
\begin{align*}
&  \sum_{k_1, k_2, k_3} \big( \ssize_{I_0} f_{k_1}  \big)^{1-\theta_1} \big( \ssize_{I_0} g_{k_2}  \big)^{1-\theta_2}  \big( \ssize_{I_0} h_{k_3}  \big)^{1-\theta_3} \\
&\qquad \cdot \left( 2^{-k_1M}\| f_{k_1} \|_1\right)^{\theta_1} \left(2^{-k_2M}\| g_{k_2}\|_1 \right)^{\theta_2} \left( 2^{-k_3 M}\| h_{k_3} \|_1\right)^{\theta_3}.
\end{align*} 

For the sizes, we use the trivial estimate $\ds \ssize_{\ii I} f_{k_1} \lesssim \sssize_{\ii I} f$. Now we note that H\"older's inequality implies
\begin{align*}
\sum_{k_1 \geq 0} 2^{-k_1M \theta_1}\| f_{k_1} \|_1^{\theta_1} &\lesssim \left( \sum_{k_1 \geq 0} 2^{-k_1M \theta_1/{2 \left( 1-\theta_1\right)}} \right)^{1-\theta_1} \left(\sum_{k_1 \geq 0} 2^{-k_1M/{2 }}\| f_{k_1}\|_1 \right)^{\theta_1}\\
&\lesssim\left(\sum_{k_1 \geq 0} \| f_{k_1} \cdot \ci_{I_0}\|_1 \right)^{\theta_1} \lesssim \| f \cdot \ci_{I_0}\|_1^{\theta_1},
\end{align*}
since the $f_{k_1}$ all have almost disjoint support.

The proof in the case when some $\theta_j=0$ is identical, with the difference that we sum in two indices $k_\iota$, for $\iota \neq j$. 

\end{proof}
\end{lemma}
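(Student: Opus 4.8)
The plan is to obtain this localized estimate from the non-localized size/energy bound of Proposition~\ref{paraproduct estimates}: decompose each of $f$, $g$, $h$ into dyadic annuli centered at $I_0$, apply Proposition~\ref{paraproduct estimates} annulus by annulus, and use the refined energy estimate of Lemma~\ref{lemma-ref-energy-est-paraprod} to sum the far-away contributions against the adapted weight $\ci_{I_0}$.

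First I would treat the generic case $0<\theta_1,\theta_2,\theta_3<1$. Write $f=\sum_{k_1\ge 0}f_{k_1}$ with $f_{k_1}:=f\cdot\one_{\{x:\dist(x,I_0)\sim(2^{k_1}-1)|I_0|\}}$, and analogously $g=\sum_{k_2\ge 0}g_{k_2}$, $h=\sum_{k_3\ge 0}h_{k_3}$, so that by multilinearity $\lft\Lambda_{I_0}(f,g,h)\rg\le\sum_{k_1,k_2,k_3}\lft\Lambda_{I_0}(f_{k_1},g_{k_2},h_{k_3})\rg$. I would then apply Proposition~\ref{paraproduct estimates} to each summand with the prescribed $(\theta_1,\theta_2,\theta_3)$. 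The three energies are handled by Lemma~\ref{lemma-ref-energy-est-paraprod}, which supplies the gain $\eenergy_{I_0} f_{k_1}\lesssim 2^{-k_1 M}\|f_{k_1}\|_1$ (and likewise for $g_{k_2}$, $h_{k_3}$), while the three sizes are controlled \emph{uniformly in the annulus index} by
\[
\ssize_{I_0} f_{k_1}\ \lesssim\ \sssize_{I_0} f_{k_1}\ \le\ \sssize_{I_0} f,
\]
combining Lemma~\ref{lemma:size-est-paraprod} (which gives $\ssize\lesssim\sssize$) with the trivial monotonicity of the modified size under truncation. This chain is the one structural point of the argument: the oscillatory size is not itself monotone under truncation, which is precisely why one localizes with $\sssize$. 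Pulling the $\sssize_{I_0}$ factors out, the triple sum decouples into a product of three one-variable sums:
\[
\lft\Lambda_{I_0}(f,g,h)\rg\ \lesssim\ (\sssize_{I_0} f)^{1-\theta_1}(\sssize_{I_0} g)^{1-\theta_2}(\sssize_{I_0} h)^{1-\theta_3}\prod_{i=1}^{3}\Big(\sum_{k_i\ge 0}\big(2^{-k_i M}\|(\cdot)_{k_i}\|_1\big)^{\theta_i}\Big).
\]

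It then remains to prove $\sum_{k_1\ge 0}2^{-k_1 M\theta_1}\|f_{k_1}\|_1^{\theta_1}\lesssim\|f\cdot\ci_{I_0}\|_1^{\theta_1}$ and its two analogues. I would factor $2^{-k_1 M\theta_1}=\big(2^{-k_1 M\theta_1/(2(1-\theta_1))}\big)^{1-\theta_1}\big(2^{-k_1 M/2}\big)^{\theta_1}$ and apply H\"older with exponents $1/(1-\theta_1)$ and $1/\theta_1$, which bounds the sum by $C^{1-\theta_1}\big(\sum_{k_1\ge 0}2^{-k_1 M/2}\|f_{k_1}\|_1\big)^{\theta_1}$ with $C=\sum_{k_1\ge 0}2^{-k_1 M\theta_1/(2(1-\theta_1))}<\infty$. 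Since $\ci_{I_0}(x)\sim 2^{-c k_1}$ on $\supp f_{k_1}$ and the $f_{k_1}$ have almost disjoint supports, the remaining sum is $\lesssim\sum_{k_1}\|f_{k_1}\cdot\ci_{I_0}\|_1\lesssim\|f\cdot\ci_{I_0}\|_1$, provided the energy decay exponent $M$ is taken large relative to the decay exponent of $\ci_{I_0}$ (harmless, as $M$ is free). Multiplying the three resulting estimates gives the claimed bound. The degenerate cases are identical, except that if, say, $\theta_3=0$, then $h$ carries no energy factor in Proposition~\ref{paraproduct estimates}, so $h$ is not decomposed (no index $k_3$), only $k_1$ and $k_2$ are summed, and the factor $\|h\cdot\ci_{I_0}\|_1^{0}=1$ is vacuous.

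I do not anticipate a genuine obstacle here — the statement is essentially a localized repackaging of Proposition~\ref{paraproduct estimates}. The only points requiring care are the uniform-in-$k_i$ passage from the oscillatory size to the modified size, and keeping the decay exponents of the refined energy estimate and of $\ci_{I_0}$ compatible, so that the single decay factor $2^{-k_i M}$ simultaneously sums a geometric series and regenerates the weighted $L^1$ norm.
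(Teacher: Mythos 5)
Your proposal is correct and follows essentially the same approach as the paper's proof: the same annular decomposition of $f$, $g$, $h$ about $I_0$, the same application of Proposition~\ref{paraproduct estimates} annulus by annulus, the refined energy bound from Lemma~\ref{lemma-ref-energy-est-paraprod}, the passage $\ssize_{I_0} f_{k_1}\lesssim\sssize_{I_0} f$ to gain uniformity in the annulus index, and the identical H\"older splitting of $2^{-k_i M\theta_i}$ to sum the geometric series while regenerating $\|f\cdot\ci_{I_0}\|_1^{\theta_i}$. Your remark that the oscillatory size is not monotone under truncation, which is why one must pass through $\sssize$ via Lemma~\ref{lemma:size-est-paraprod}, is a clean articulation of what the paper calls the ``trivial estimate'' and does not alter the argument.
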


As immediate consequences, we obtain
\begin{corollary}
\label{cor:local-paraprod}
For any functions $f, g$ and $h$, we can bound the trilinear form $\Lambda_{\Pi\left( I_0\right)} \left( f, g, h  \right)$ by
\[
\lft \Lambda_{\Pi\left( I_0\right)} \left( f, g, h  \right) \rg \lesssim \sssize_{I_0} f \cdot \sssize_{I_0} g \cdot  \sssize_{I_0} h \cdot  |I_0 |.\]
\begin{proof}
We need only to note that in Lemma \ref{lemma:localized-paraprod-est}, we have 
\[
\|  f \cdot \ci_{I_0} \|_1 \lesssim | I_0 | \, \sssize_{I_0} f.
\]
\end{proof}
\end{corollary}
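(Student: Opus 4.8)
The statement to prove is Corollary \ref{cor:local-paraprod}, which bounds the localized trilinear form $\Lambda_{\Pi(I_0)}(f,g,h)$ by $\sssize_{I_0} f \cdot \sssize_{I_0} g \cdot \sssize_{I_0} h \cdot |I_0|$.

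The plan is to invoke Lemma \ref{lemma:localized-paraprod-est} directly, since it already provides the fundamental estimate in terms of sizes and $L^1$ norms against $\ci_{I_0}$. First I would fix a choice of admissible exponents, say $\theta_1 = \theta_2 = \theta_3 = 1/3$, so that Lemma \ref{lemma:localized-paraprod-est} gives
\[
\lft \Lambda_{\Pi(I_0)}(f,g,h) \rg \lesssim \big( \sssize_{I_0} f \big)^{2/3} \big( \sssize_{I_0} g \big)^{2/3} \big( \sssize_{I_0} h \big)^{2/3} \big\| f \cdot \ci_{I_0} \big\|_1^{1/3} \big\| g\cdot \ci_{I_0} \big\|_1^{1/3} \big\| h \cdot \ci_{I_0} \big\|_1^{1/3}.
\]
Then the key observation is that, by the definition of the modified size \eqref{eq:def-mod-size} (equivalently \eqref{def-size_I_0}), taking $J = I_0 \in \ii I^+(I_0)$ itself as an admissible interval in the supremum yields $\frac{1}{|I_0|}\int_{\rr R} |f| \cdot \ci_{I_0}^M dx \leq \sssize_{I_0} f$, and therefore $\| f \cdot \ci_{I_0} \|_1 \lesssim |I_0| \, \sssize_{I_0} f$, and similarly for $g$ and $h$. (One should note the minor point that $\ci_{I_0}$ versus $\ci_{I_0}^M$ differ only by the exponent $M$, which is harmless: $\ci_{I_0} \leq \ci_{I_0}^M$ when $M \geq 1$ pointwise since $0 < \ci_{I_0} \leq 1$, or one can simply absorb this into the convention that the exponent in $\ci_I$ is unspecified.)

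Substituting these three bounds into the displayed inequality, each factor $\| f \cdot \ci_{I_0}\|_1^{1/3}$ contributes $|I_0|^{1/3} (\sssize_{I_0} f)^{1/3}$, and collecting terms gives exactly
\[
\lft \Lambda_{\Pi(I_0)}(f,g,h) \rg \lesssim \sssize_{I_0} f \cdot \sssize_{I_0} g \cdot \sssize_{I_0} h \cdot |I_0|,
\]
since $(2/3) + (1/3) = 1$ in each slot and $|I_0|^{1/3+1/3+1/3} = |I_0|$. This is the claimed estimate.

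There is essentially no obstacle here — the corollary is a routine specialization of Lemma \ref{lemma:localized-paraprod-est}. The only thing requiring a moment's care is the elementary inequality $\| f \cdot \ci_{I_0}\|_1 \lesssim |I_0| \, \sssize_{I_0} f$, which is precisely the content of plugging $J = I_0$ into the supremum defining $\sssize_{I_0}$; this is exactly what the author's one-line proof records. I would present the argument in two sentences: state the consequence of Lemma \ref{lemma:localized-paraprod-est} with $\theta_j$ chosen symmetrically, then observe $\| f \cdot \ci_{I_0}\|_1 \lesssim |I_0|\,\sssize_{I_0} f$ (and likewise for $g,h$) and substitute.
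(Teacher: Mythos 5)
Your proof is correct and takes essentially the same route as the paper: specialize Lemma \ref{lemma:localized-paraprod-est} to any admissible $(\theta_1,\theta_2,\theta_3)$ (your symmetric choice $1/3,1/3,1/3$ is fine) and substitute $\|f\cdot\ci_{I_0}\|_1 \lesssim |I_0|\,\sssize_{I_0} f$, which is exactly the one-line observation the paper records. One small correction to your parenthetical: for $0<\ci_{I_0}\leq 1$ and $M\geq 1$ the pointwise inequality runs the other way, $\ci_{I_0}^M \leq \ci_{I_0}$, so that is not the right justification; the clean way out is the one you also mention, namely the paper's standing convention that the decay exponent in $\ci_I$ is left implicit and may be adjusted, so the exponent in the $\ci_{I_0}$ appearing in $\|f\cdot\ci_{I_0}\|_1$ can be taken at least as large as the $M$ in the definition of $\sssize_{I_0}$, after which $\int |f|\,\ci_{I_0}\,dx \leq \int |f|\,\ci_{I_0}^M\,dx \leq |I_0|\,\sssize_{I_0} f$ follows by taking $J=I_0$ in the supremum.
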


\begin{corollary}
\label{cor:local-paraprod-L1}
If $\tilde E$ is a fixed set of finite measure, then
\[
\big\| \Pi_{I_0}\left( f, g \right) \cdot \one_{\tilde E}  \big\|_1 \lesssim \sssize_{I_0} f \cdot \sssize_{I_0} g \cdot \sssize_{I_0} \one_{\tilde E} \cdot \lft I_0 \rg.
\]
\end{corollary}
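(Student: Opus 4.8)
The plan is to deduce Corollary \ref{cor:local-paraprod-L1} from Corollary \ref{cor:local-paraprod} by dualizing the $L^1$ norm. Since $L^1$ is a Banach space, we have
\[
\big\| \Pi_{I_0}(f, g) \cdot \one_{\tilde E} \big\|_1 = \sup_{\|h\|_\infty \leq 1} \big| \langle \Pi_{I_0}(f, g) \cdot \one_{\tilde E}, h \rangle \big| = \sup_{\|h\|_\infty \leq 1} \big| \Lambda_{\Pi(I_0)}(f, g, h \cdot \one_{\tilde E}) \big|,
\]
where we have used that the trilinear form $\Lambda_{\Pi(I_0)}$ is precisely the dual pairing of $\Pi_{I_0}(f,g)$ against the third entry. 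For each such $h$, Corollary \ref{cor:local-paraprod} gives
\[
\big| \Lambda_{\Pi(I_0)}(f, g, h \cdot \one_{\tilde E}) \big| \lesssim \sssize_{I_0} f \cdot \sssize_{I_0} g \cdot \sssize_{I_0}(h \cdot \one_{\tilde E}) \cdot |I_0|,
\]
so it remains only to bound $\sssize_{I_0}(h \cdot \one_{\tilde E})$ by $\sssize_{I_0} \one_{\tilde E}$ uniformly over $\|h\|_\infty \leq 1$.

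This last point is immediate from the definition \eqref{eq:def-mod-size} of the modified size: for any $J \in \ii I^+(I_0)$,
\[
\frac{1}{|J|} \int_{\rr R} |h \cdot \one_{\tilde E}| \cdot \ci_J^M\, dx \leq \frac{1}{|J|} \int_{\rr R} \one_{\tilde E} \cdot \ci_J^M\, dx \leq \sssize_{I_0} \one_{\tilde E},
\]
since $|h| \leq 1$. Taking the supremum over $J \in \ii I^+(I_0)$ yields $\sssize_{I_0}(h \cdot \one_{\tilde E}) \leq \sssize_{I_0} \one_{\tilde E}$. Combining the three displays and taking the supremum over $h$ completes the argument.

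There is really no substantive obstacle here; the statement is a routine corollary, and the only point requiring (minimal) care is the identification of $\langle \Pi_{I_0}(f,g), h \rangle$ with $\Lambda_{\Pi(I_0)}(f, g, h)$ — which holds because in Definition \ref{def:paraproduct} the lacunary bump $\psi_I$ appearing as the output wave packet is the same (up to the harmless normalization already absorbed into the form) as $\phi_I^3$ in the trilinear form, so that pairing $\Pi_{I_0}(f,g)$ against $h$ reproduces $\langle h, \phi_I^3 \rangle$ in each summand. Everything else is bookkeeping with the modified size, whose monotonicity under pointwise domination of the integrand is transparent from its definition.
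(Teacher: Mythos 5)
Your proposal is correct and follows the route the paper evidently intends: Corollary~\ref{cor:local-paraprod-L1} is stated without proof as an immediate consequence, and the $L^\infty$-dualization you use is exactly the device made explicit one lemma later in the paper's proof of Lemma~\ref{lemma-loc-parad-L1-target} (``$\| \Pi_{I_0}^{F, G, \tilde E} (f, g) \|_1 = \sup_{\|h\|_\infty =1} |\tilde\Lambda_h(f,g)|$''). The monotonicity $\sssize_{I_0}(h\cdot\one_{\tilde E}) \leq \sssize_{I_0}\one_{\tilde E}$ is indeed transparent from the definition \eqref{eq:def-mod-size}, so nothing further is needed.
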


In what follows, we will need a different kind of localization; that is, we fix $F, G, \tilde E$ sets of finite measure and define
\begin{equation}
\label{eq-def-loc-paraprod}
\Pi_{I_0}^{F, G, \tilde E}\left( f, g \right)(x):=\Pi_{I_0}\left( f \cdot \one_F,  g \cdot \one_G \right)(x) \cdot \one_{\tilde E}(x).
\end{equation}
This is part of our approach to proving multiple-vector-valued inequalities. We recover also the following result, which first appeared in \cite{vv_BHT}:

\begin{proposition}
\label{prop:Localization for local L^1 paraproducts}
Let $I_0$ be a fixed dyadic interval and $F, G, \tilde E \subset \rr{R}$ sets of finite measure. Then
{\fontsize{9}{10}
\begin{align*}
\label{eq:local-tril-form-paraprod}
\lft\Lambda_{\Pi(I_0)}^{F, G, \tilde E}(f, g, h)\rg\lesssim \big(\sssize_{I_0}\one_{F}\big)^{\frac{1}{r_1'}-\epsilon} \cdot \big( \sssize_{I_0}\one_{G}\big)^{\frac{1}{r_2'}-\epsilon} \cdot  \big(\sssize_{I_0}\one_{\tilde E}\big)^{\frac{1}{r}-\epsilon} \cdot \| f \cdot \ci_{I_0}\|_{r_1}   \| g \cdot \ci_{I_0} \|_{r_2} \| h \cdot \ci_{I_0} \|_{r'} 
\end{align*}}
whenever $\ds \frac{1}{r_1}+\frac{1}{r_2}+\frac{1}{r'}=1$, and $ 1< r_1, r_2, r' \leq \infty$. Here $\epsilon$ is some small positive number that will be chosen later.
\begin{proof}
This result was proved in \cite{vv_BHT} in detail for the bilinear Hilbert transform operator. For paraproducts, we note that it follows from Lemma \ref{lemma:localized-paraprod-est}, in the case of restricted-type functions: that is, functions that are bounded above by characteristic functions of finite sets. If $\lft f(x) \rg \leq \one_{E_1}(x), \lft g(x) \rg \leq \one_{E_2}(x)$ and $\lft h(x) \rg \leq \one_{E_3}(x)$, then the conclusion is immediate; the general case follows through interpolation.

If there are any $L^\infty$ spaces involved (for example, if $r_2=\infty$), we only need to notice that 
\[
\sssize_{I_0\,} g \cdot \one_G :=\sup_{I \in \ii I^+(I_0)} \frac{1}{|I|} \int_{\rr R} |g| \cdot \one_G \cdot \ci_I^N dx \leq \|  g \cdot \ci_{I_0} \|_\infty \cdot \sssize_{I_0} \one_G.
\]
We fix $g \in L^\infty$ and the trilinear form becomes a bilinear form $\left( f, h \right) \mapsto \Lambda_{\Pi\left( I_0 \right)}^{F,G, \tilde E}(f,g,h)$. The desired inequality is proved again for restricted-type functions, which is sufficient, in view of interpolation theory.  

For characteristic functions of sets, we have the equivalence
\[
\big\|  \one_F \cdot \ci_{I_0} \big\|_{r_1} \sim \| \one_F \cdot \ci'_{I_0} \|_1^{\frac{1}{r_1}},
\]
where $\ci'_{I_0}$ is also an $L^\infty$-adapted bump function associated to the interval $I_0$, as defined in \eqref{def:adapted-fnc}. The function $\ci'_{I_0}$ is of the form $\ci'_{I_0}=\ci_{I_0}^\alpha$, where $\alpha$ depends on $r_1$. For our purposes however, the difference between $\ci_{I_0}$ and $\ci'_{I_0}$ is not important, and we will denote both of them simply by $\ci_{I_0}$.
\end{proof}
\end{proposition}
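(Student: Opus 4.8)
The plan is to reduce the stated inequality to Lemma~\ref{lemma:localized-paraprod-est}, which is the local version of the paraproduct estimate already in hand, and to obtain the exponents on the sizes by a careful bookkeeping of H\"older's inequality together with the interpolation machinery of Section~\ref{sec:quasi-Banach}. The key observation is that since the arguments of $\Lambda_{\Pi(I_0)}^{F,G,\tilde E}$ are $f\cdot\one_F$, $g\cdot\one_G$ and $h\cdot\one_{\tilde E}$, all the ``size'' quantities that arise will be of the form $\sssize_{I_0}(f\cdot\one_F)$ etc., and these split as a product of a size of the truncated function and a size of a characteristic function. The exponents $\frac{1}{r_1'}$, $\frac{1}{r_2'}$, $\frac{1}{r}$ then come precisely from distributing H\"older exponents among the three slots, in the same way the local $L^1$ estimate distributes them.

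First I would treat the model case of restricted-type functions, i.e.\ $|f|\le\one_{E_1}$, $|g|\le\one_{E_2}$, $|h|\le\one_{E_3}$ for sets of finite measure. In that situation Lemma~\ref{lemma:localized-paraprod-est} applied to $f\cdot\one_F$, $g\cdot\one_G$, $h\cdot\one_{\tilde E}$ gives, for any admissible $(\theta_1,\theta_2,\theta_3)$,
\[
|\Lambda_{\Pi(I_0)}^{F,G,\tilde E}(f,g,h)|\lesssim \prod_{j}\big(\sssize_{I_0}(\cdot)\big)^{1-\theta_j}\prod_j\|\cdot\,\ci_{I_0}\|_1^{\theta_j},
\]
and here each size factors as $\sssize_{I_0}(f\cdot\one_F)\le \sssize_{I_0}\one_{F}$ (using $|f|\le\one_{E_1}$, or more carefully $\sssize_{I_0}(f\cdot\one_F)\lesssim\|f\cdot\ci_{I_0}\|_\infty\,\sssize_{I_0}\one_F$), while each $\|f\cdot\one_F\,\ci_{I_0}\|_1\le|I_0|^{1-\epsilon}\cdot(\text{size of }\one_F)^\epsilon\cdot\|f\cdot\ci_{I_0}\|_{r_1}^{\text{something}}$ after a further H\"older step converting the $L^1$ norm into an $L^{r_1}$ norm against $\ci_{I_0}$ and picking up one more power of $\sssize_{I_0}\one_F$. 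Choosing $\theta_j$ close to (but bounded away from) the endpoint dictated by $r_1,r_2,r'$ — so that $1-\theta_1+\epsilon$-corrections add up to $\frac{1}{r_1'}-\epsilon$ and similarly for the others — produces exactly the claimed exponents $\frac{1}{r_1'}-\epsilon$, $\frac{1}{r_2'}-\epsilon$, $\frac{1}{r}-\epsilon$, using $\frac{1}{r_1}+\frac{1}{r_2}+\frac1{r'}=1$ so that $\frac1{r_1'}+\frac1{r_2'}+\frac1r=2$, which matches the total power of sizes coming from $(1-\theta_j)$ plus the extra $\epsilon$-corrections summing to $1$. The case where some $r_k=\infty$ is handled as indicated in the excerpt: fix that function in $L^\infty$, pull $\|g\cdot\ci_{I_0}\|_\infty$ out of the size, and run the same argument with the remaining bilinear form.

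Next I would remove the restricted-type hypothesis by multilinear real interpolation. The estimate just proved is a generalized restricted-type bound with the ``test set'' exponents built into the size factors; scaling $f\mapsto\lambda f$ etc.\ shows the bound is of the right homogeneity, and a standard interpolation of restricted-type estimates (exactly the kind of argument behind Proposition~\ref{prop:interpolation}, or the classical one in \cite{multilinear_harmonic}) upgrades it to arbitrary $f\in L^{r_1}$, $g\in L^{r_2}$, $h\in L^{r'}$ with the $\|\cdot\,\ci_{I_0}\|_{r_k}$ norms on the right. The main obstacle — though it is more bookkeeping than conceptual — is making the $\epsilon$-accounting honest: each application of H\"older to pass from $L^1$ to $L^{r_k}$ and to peel off size factors degrades $\epsilon$ by a factor like $\frac{r_k}{r}$ (as the Notation remark warns), and one must check that after finitely many such steps the exponent on each size is still strictly positive and that the exponents genuinely sum to what is forced by $\frac1{r_1}+\frac1{r_2}+\frac1{r'}=1$; the clean way to do this is to keep the $\theta_j$ free until the very end and then solve the linear system $1-\theta_j+(\text{H\"older correction})=\tfrac1{r_j'}-\epsilon$ (with the convention $r_3'=r$), which is solvable precisely because of the H\"older relation among $r_1,r_2,r'$. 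Since the paraproduct analogue of the $BHT$ result from \cite{vv_BHT} is exactly this, I expect the proof to be a faithful transcription of that argument with the (simpler) paraproduct size/energy estimates substituted in.
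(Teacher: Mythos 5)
Your overall plan — prove the estimate for restricted-type functions via Lemma~\ref{lemma:localized-paraprod-est}, then pass to general functions by interpolation, with the $L^\infty$ slot handled separately — is the same as the paper's proof, so the structure is correct. However, the detailed exponent bookkeeping you propose is both unnecessary and, as written, would not close. The cleanest route for the restricted-type step is simply to choose $\theta_1 = \tfrac{1}{s_1}$, $\theta_2 = \tfrac{1}{s_2}$, $\theta_3 = \tfrac{1}{s'}$ in Lemma~\ref{lemma:localized-paraprod-est} for tuples $(s_1, s_2, s')$ in a small neighborhood of $(r_1, r_2, r')$; since $\tfrac1{s_1}+\tfrac1{s_2}+\tfrac1{s'}=1$ with all $s_j>1$, these $\theta_j$ are admissible. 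Bounding $\sssize_{I_0}(f_j\one_{F_j})\le\sssize_{I_0}\one_{F_j}$ and $\|f_j\one_{F_j}\ci_{I_0}\|_1\le\|\one_{E_j}\ci_{I_0}\|_1$ then yields
\[
\big|\Lambda_{\Pi(I_0)}^{F,G,\tilde E}(f,g,h)\big|\lesssim
\big(\sssize_{I_0}\one_F\big)^{1/s_1'}\big(\sssize_{I_0}\one_G\big)^{1/s_2'}\big(\sssize_{I_0}\one_{\tilde E}\big)^{1/s}
\big\|\one_{E_1}\ci_{I_0}\big\|_1^{1/s_1}\big\|\one_{E_2}\ci_{I_0}\big\|_1^{1/s_2}\big\|\one_{E_3}\ci_{I_0}\big\|_1^{1/s'},
\]
which, after the observation $\|\one_{E_j}\ci_{I_0}\|_1^{1/s_j}\sim\|\one_{E_j}\ci'_{I_0}\|_{s_j}$, is precisely the restricted-type version of the claim. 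No extra H\"older step on $\|f\one_F\ci_{I_0}\|_1$ is needed here.

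The ``further H\"older step converting the $L^1$ norm into an $L^{r_1}$ norm against $\ci_{I_0}$'' that you propose cannot produce the claimed exponent. If you write $\|f\one_F\ci_{I_0}\|_1\lesssim\|f\ci_{I_0}\|_{r_1}(\sssize_{I_0}\one_F)^{1/r_1'}|I_0|^{1/r_1'}$ and combine this with the factor $(\sssize_{I_0}(f\one_F))^{1-\theta_1}\le(\sssize_{I_0}\one_F)^{1-\theta_1}$ from the lemma, the total power of $\sssize_{I_0}\one_F$ is $(1-\theta_1)+\theta_1/r_1'=1-\theta_1/r_1$; for this to equal $1/r_1'-\epsilon = 1-1/r_1-\epsilon$ you would need $\theta_1 = 1+\epsilon r_1>1$, which is not admissible, and moreover the power of $\|f\ci_{I_0}\|_{r_1}$ would then be $\theta_1\ne 1$. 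The resolution is that the conversion from $\|\one_{E_j}\ci_{I_0}\|_1^{1/s_j}$ to $\|f\ci_{I_0}\|_{r_j}$ is not a H\"older step at all — it is the output of the interpolation, and the small $\epsilon$-loss on the size exponents is exactly the price of having to uniformly bound $(\sssize_{I_0}\one_F)^{1/s_1'}$ by $(\sssize_{I_0}\one_F)^{1/r_1'-\epsilon}$ over the neighborhood of $(r_1,r_2,r')$ used in the interpolation, using that the sizes are subunitary. You should also state explicitly that the restricted-type estimate must be proved for a \emph{neighborhood} of exponent tuples for the interpolation to apply; this is implicit in your appeal to Proposition~\ref{prop:interpolation} but worth making precise.
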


An adaptation of Proposition 4.6 from \cite{vv_BHT} is the following:
\begin{lemma}
\label{lemma-loc-parad-L1-target}
If $1 <r_1, r_2<\infty$ are so that $\ds \frac{1}{r_1}+\frac{1}{r_2}=1$, then
\begin{equation}
\label{eq:loc-parad-Lone-target}
\big\| \Pi_{I_0}^{F, G, \tilde E} (f, g)  \big\|_1 \lesssim \big( \sssize_{I_0}\one_{F}\big)^{\frac{1}{r_1'}-\epsilon} \cdot \big(\sssize_{I_0}\one_{G}\big)^{\frac{1}{r_2'}-\epsilon} \cdot  \big( \sssize_{I_0}\one_{\tilde E}\big)^{1-\epsilon} \cdot \| f \cdot \ci_{I_0}\|_{r_1}   \| g \cdot \ci_{I_0} \|_{r_2}. 
\end{equation}
\begin{proof}

Let $h$ be a function so that $\| h\|_{\infty}=1$, and define the bilinear form 
\[
\tilde \Lambda_h(f, g):=\Lambda_{I_0}(f\cdot \one_F, g \cdot \one_G, h \cdot \one_{\tilde E}).
\]

Then we have from Lemma \ref{lemma:localized-paraprod-est}, with $\theta_3=0$, that for any sets of finite measure $E_1$ and $E_2$, and any functions $f, g$ so that $\lft f \rg\leq \one_{E_1}, \lft g \rg\leq \one_{E_2}$, 
\[
\big| \tilde \Lambda_h(f, g) \big | \lesssim \big( \ssize_{I_0} \one_F  \big)^{1/{\tilde r_1'}} \big( \ssize_{I_0} \one_G  \big)^{1/{\tilde r_2'}} \big( \ssize_{I_0} \one_{\tilde E}  \big) \|\one_{E_1} \cdot \ci_{I_0} \|_{\tilde r_1} \|\one_{E_2} \cdot \ci_{I_0} \|_{\tilde r_2},
\]
for any tuple $(\tilde r_1, \tilde r_2)$ in a neighborhood of $(r_1, r_2)$, with the property that $\frac{1}{\tilde r_1}+\frac{1}{\tilde r_2}=1$.

Interpolation theory then implies the inequality
\[
\big| \tilde \Lambda_h(f, g) \big | \lesssim \big( \ssize_{I_0} \one_F  \big)^{1/{ r_1'}} \big( \ssize_{I_0} \one_G  \big)^{1/{ r_2'}} \big( \ssize_{I_0} \one_{\tilde E}  \big) \| f \cdot \ci_{I_0} \|_{\tilde r_1} \| g \cdot \ci_{I_0} \|_{\tilde r_2},
\]
for any functions $f$ and $g$. We note that the implicit constants do not depend on $h$.

The estimate \eqref{eq:loc-parad-Lone-target} follows, since $$\big\| \Pi_{I_0}^{F, G, \tilde E} (f, g)  \big\|_1 = \sup_{ \| h \|_\infty =1 } \big| \tilde \Lambda_{ h}(f,g)\big|.$$
\end{proof}
\end{lemma}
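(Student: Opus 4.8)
The plan is to exploit the duality between $L^1$ and $L^\infty$. Fix a function $h$ with $\|h\|_\infty = 1$ and set $\tilde\Lambda_h(f,g) := \Lambda_{I_0}(f\cdot\one_F, g\cdot\one_G, h\cdot\one_{\tilde E})$; this is precisely the pairing of $\Pi_{I_0}^{F,G,\tilde E}(f,g)$ against $h$, so that $\big\|\Pi_{I_0}^{F,G,\tilde E}(f,g)\big\|_1 = \sup_{\|h\|_\infty = 1}\big|\tilde\Lambda_h(f,g)\big|$. It therefore suffices to bound $\big|\tilde\Lambda_h(f,g)\big|$ by the right-hand side of \eqref{eq:loc-parad-Lone-target} uniformly in $h$.

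For the bilinear form $\tilde\Lambda_h$ I would apply Lemma \ref{lemma:localized-paraprod-est} with $\theta_3 = 0$ (and $\theta_1, \theta_2 \in (0,1)$, $\theta_1 + \theta_2 = 1$, to be fixed at the end), to the triple $(f\cdot\one_F, g\cdot\one_G, h\cdot\one_{\tilde E})$. The third slot then contributes only the size factor $\sssize_{I_0}(h\cdot\one_{\tilde E}) \le \|h\|_\infty\,\sssize_{I_0}\one_{\tilde E} \le \sssize_{I_0}\one_{\tilde E}$, which accounts for the $(\sssize_{I_0}\one_{\tilde E})^{1-\epsilon}$ factor. I would first run the argument for restricted-type functions, i.e.\ $|f|\le\one_{E_1}$ and $|g|\le\one_{E_2}$ with $E_1, E_2$ of finite measure, where Lemma \ref{lemma:localized-paraprod-est} together with $\ssize\lesssim\sssize$ gives
\[
\big|\tilde\Lambda_h(f,g)\big| \lesssim \big(\sssize_{I_0}(\one_{E_1}\one_F)\big)^{1-\theta_1}\big(\sssize_{I_0}(\one_{E_2}\one_G)\big)^{1-\theta_2}\,\sssize_{I_0}\one_{\tilde E}\;\|\one_{E_1}\one_F\cdot\ci_{I_0}\|_1^{\theta_1}\,\|\one_{E_2}\one_G\cdot\ci_{I_0}\|_1^{\theta_2}.
\]
Using $\sssize_{I_0}(\one_{E_1}\one_F) \le \min\big(\sssize_{I_0}\one_F,\, \sssize_{I_0}\one_{E_1}\big) \le (\sssize_{I_0}\one_F)^\beta(\sssize_{I_0}\one_{E_1})^{1-\beta}$ (and symmetrically for $g$), $\|\one_{E_1}\one_F\cdot\ci_{I_0}\|_1 \le \|\one_{E_1}\cdot\ci_{I_0}\|_1$, and the equivalence $\|\one_{E_1}\cdot\ci_{I_0}\|_1^{1/\tilde r_1} \sim \|\one_{E_1}\cdot\ci_{I_0}\|_{\tilde r_1}$ for characteristic functions (a power of an $L^\infty$-adapted bump being again one, modulo its decay exponent — exactly as in the proof of Proposition \ref{prop:Localization for local L^1 paraproducts}), one collects the residual powers of $\sssize_{I_0}\one_{E_i}$ with the $L^1$-norm powers to reconstitute the localized $L^{\tilde r_i}$ norms, arriving at
\[
\big|\tilde\Lambda_h(f,g)\big| \lesssim (\ssize_{I_0}\one_F)^{1/\tilde r_1'}(\ssize_{I_0}\one_G)^{1/\tilde r_2'}\,(\ssize_{I_0}\one_{\tilde E})\,\|\one_{E_1}\cdot\ci_{I_0}\|_{\tilde r_1}\,\|\one_{E_2}\cdot\ci_{I_0}\|_{\tilde r_2}
\]
for every admissible pair $(\tilde r_1,\tilde r_2)$ with $\tfrac1{\tilde r_1}+\tfrac1{\tilde r_2}=1$ in a fixed neighborhood of $(r_1,r_2)$, with constant continuous in $(\tilde r_1,\tilde r_2)$ and independent of $h$.

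Since $1<r_1,r_2<\infty$, such a neighborhood of admissible pairs is available, so a standard restricted-type-to-strong-type (multilinear Marcinkiewicz) interpolation — formally identical to the scalar argument and to Proposition 4.6 of \cite{vv_BHT} — upgrades the last display to arbitrary $f, g$, with $\|\one_{E_i}\cdot\ci_{I_0}\|_{\tilde r_i}$ replaced by $\|f\cdot\ci_{I_0}\|_{r_1}$ and $\|g\cdot\ci_{I_0}\|_{r_2}$ and $\tilde r_i = r_i$; uniformity in $h$ is preserved throughout. Taking the supremum over $\|h\|_\infty = 1$ and using $\ssize\lesssim\sssize$ yields \eqref{eq:loc-parad-Lone-target}. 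I expect the main obstacle to be the bookkeeping in the restricted-type step: one has to choose $\beta$ and $\theta_1,\theta_2$ — equivalently, to absorb an arbitrarily small loss $\epsilon$ — so that the three size exponents come out to be $\tfrac1{r_1'}-\epsilon$, $\tfrac1{r_2'}-\epsilon$ and $1-\epsilon$ while the leftover $\sssize_{I_0}\one_{E_i}$ powers combine exactly with the $L^1$-norm powers into the localized $L^{\tilde r_i}$ norms. This is the same innocuous $\epsilon$ the paper elsewhere declines to track, and it can be made as small as desired.
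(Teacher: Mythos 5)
Your proposal is correct and follows the same route as the paper: dualize the $L^1$ norm through $L^\infty$, apply Lemma~\ref{lemma:localized-paraprod-est} with $\theta_3=0$ to the bilinear form $\tilde\Lambda_h$ for restricted-type inputs, upgrade by interpolation, and take the supremum over $\|h\|_\infty=1$. The only cosmetic difference is in where the harmless $\epsilon$-loss is absorbed: you introduce an auxiliary interpolation exponent $\beta<1$ to split $\sssize_{I_0}(\one_{E_i}\one_F)$ and then drop the leftover $(\sssize_{I_0}\one_{E_i})$-powers (which are $\le 1$), whereas the paper simply discards those factors at $\beta=1$ and lets the $\epsilon$ enter when interpolating over the neighborhood of admissible tuples $(\tilde r_1,\tilde r_2)$; both lead to the same conclusion.
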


In the case of quasi-Banach spaces, when $ \frac{1}{r_1}+\frac{1}{r_2}=\frac{1}{r} >1$, a result resembling Corollary \ref{cor:local-paraprod-L1} holds. This cannot be obtained directly from the estimate for $\Lambda_{\Pi\left( I_0 \right)}(f, g, h \cdot \one_{\tilde E})$, but requires an extra decomposition and handling of the sizes. A similar argument will be used repeatedly throughout the paper, but the details of the decomposition will not be reproduced.

\begin{lemma}
\label{lemma:sub-unit-local-est-paraprod}
If $\tau<1$, then for any $\epsilon>0$ small enough, we have
\begin{equation}
\label{eq:sub-unit-local-est-paraprod}
\big\| \Pi_{I_0} \left( f, g \right) \cdot \one_{\tilde{E}}\big\|_\tau^\tau \lesssim \big( \sssize_{I_0} f  \big)^\tau \cdot  \big( \sssize_{I_0} g \big)^\tau \cdot  \big( \sssize_{I_0} \one_{\tilde E}  \big)^{1-\epsilon} \cdot \lft I_0 \rg.
\end{equation}
\begin{proof}
Let $\tau_0>0$ be so that $\frac{1}{\tau}=1+\frac{1}{\tau_0}$.

As in Lemma \ref{lemma:localized-paraprod-est}, we split 
$$\one_{\tilde E}(x):=\sum_{k_3 \geq 0}\one_{\tilde E _{k_3}}, \quad \text{where  } \one_{\tilde E_{k_3}}=\one_{\tilde E} \cdot \one_{\left \lbrace x:  \dist \left(x, I_0 \right) \sim \left(2^{k_3} -1\right) | I_0 |\right\rbrace}.$$

We use the subadditivity of $\| \cdot \|_\tau^\tau$, and H\"older's inequality to get
\begin{align*}
\big\| \Pi_{I_0}\left( f, g \right) \cdot \one_{\tilde E}\big\|_\tau^\tau &\lesssim \sum_{k_3 \geq 0} \big\| \Pi_{I_0} \left( f, g \right) \cdot \one_{\tilde E_{k_3}} \big\|_\tau^\tau \\
&\lesssim \sum_{k_3 \geq 0} \big\| \Pi_{I_0} \left( f, g \right) \cdot \one_{\tilde E_{k_3}} \big\|_1^\tau \| \one_{\tilde E_{k_3}} \|_{\tau_0}^\tau.
\end{align*}

From Corollary \ref{cor:local-paraprod-L1}, we have 
\[
\big\| \Pi_{I_0} \left( f, g \right) \cdot \one_{\tilde E_{k_3}} \big\|_1 \lesssim \sssize_{I_0} f \cdot \sssize_{I_0} g \cdot \sssize_{I_0} \one_{\tilde E_{k_3}} \cdot \lft I_0 \rg.
\]
Using this and the observation that 
\[
\sssize_{I_0} \one_{\tilde E_{k_3}} \lesssim 2^{-k_3 M}, \qquad \sssize_{I_0} \one_{\tilde E_{k_3}} \lesssim \sssize_{I_0} \one_{\tilde E},
\]
we can estimate the desired expression by

\begin{align*}
&\big\| \Pi_{I_0}\left( f, g \right) \cdot \one_{\tilde E}\big\|_\tau^\tau \lesssim \sum_{k_3 \geq 0} \big(\sssize_{I_0} f\big)^\tau \cdot \big( \sssize_{I_0} g \big)^\tau \cdot \big( \sssize_{I_0} \one_{\tilde E}\big)^{\tau-\epsilon} \cdot \lft I_0 \rg^\tau 2^{-k_3 M \epsilon \tau} \big\| \one_{\tilde E_{k_3}} \big\|_{\tau_0}^\tau.
\end{align*}

Similarly to Lemma \ref{lemma:localized-paraprod-est}, we use H\"older's inequality in order to sum in $k_3$ and bring into play the fast decay when $\tilde E$ is away from $I_0$:
\begin{align}
\label{eq:handling_sizes}
\sum_{k_3 \geq 0} 2^{-k_3 M \epsilon \tau} \| \one_{\tilde E_{k_3}} \|_1^{\frac{\tau}{\tau_0}} \lesssim \sum_{k_3 \geq 0} 2^{-\frac{k_3 M \epsilon \tau}{2}}\big\| \one_{\tilde E_{k_3}} \cdot \ci_{I_0} \big\|_1^{\frac{\tau}{\tau_0}} \lesssim \big\|  \one_{\tilde E} \cdot \ci_{I_0} \big\|_1^{\frac{\tau}{\tau_0}} \lesssim \big( \sssize_{I_0} \one_{\tilde E} \big)^{\frac{\tau}{\tau_0}} \cdot \lft I_0 \rg^{\frac{\tau}{\tau_0}}.
\end{align}

Since $\ds 1= \tau+ \frac{\tau}{\tau_0}$, we have in fact obtained 
\[
\big\| \Pi_{I_0}\left( f, g \right) \cdot \one_{\tilde E}\big\|_\tau^\tau \lesssim \big(\sssize_{I_0} f\big)^\tau \cdot \big( \sssize_{I_0} g \big)^\tau \cdot \big( \sssize_{I_0} \one_{\tilde E}\big)^{1-\epsilon} \cdot | I_0 |.
\]
\end{proof}
\end{lemma}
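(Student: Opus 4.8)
The target is a local $L^\tau$ estimate ($\tau<1$) for the paraproduct $\Pi_{I_0}(f,g)\cdot\one_{\tilde E}$ in which the dependence on $\one_{\tilde E}$ appears only through $\sssize_{I_0}\one_{\tilde E}$ to an almost-full power $1-\epsilon$, and $f,g$ appear through their sizes with the natural exponent $\tau$. The obstacle is that the $L^\tau$ quasinorm is below $L^1$, so we cannot dualize against a test function $h$ as we did in Lemma \ref{lemma-loc-parad-L1-target}; instead we trade the ``extra'' integrability $\tau_0$ (defined by $\frac{1}{\tau}=1+\frac{1}{\tau_0}$) against the spatial decay of $\tilde E$ away from $I_0$. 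The plan is as follows.

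\textbf{Step 1 (spatial decomposition of $\tilde E$).} Decompose $\one_{\tilde E}=\sum_{k_3\ge 0}\one_{\tilde E_{k_3}}$ with $\tilde E_{k_3}=\tilde E\cap\{x:\dist(x,I_0)\sim(2^{k_3}-1)|I_0|\}$, so that the pieces have essentially disjoint supports and each is confined to a dyadic-distance annulus around $I_0$. Using that $\tau<1$ makes $\|\cdot\|_\tau^\tau$ subadditive (this is Proposition \ref{prop:reorder} in the $n=1$ case, or just the elementary inequality), we bound $\|\Pi_{I_0}(f,g)\cdot\one_{\tilde E}\|_\tau^\tau\lesssim\sum_{k_3}\|\Pi_{I_0}(f,g)\cdot\one_{\tilde E_{k_3}}\|_\tau^\tau$, and then apply H\"older's inequality on each annulus with exponents $1$ and $\tau_0$ (recall $\frac{1}{\tau}=1+\frac{1}{\tau_0}$) to get $\|\Pi_{I_0}(f,g)\cdot\one_{\tilde E_{k_3}}\|_\tau^\tau\lesssim\|\Pi_{I_0}(f,g)\cdot\one_{\tilde E_{k_3}}\|_1^\tau\,\|\one_{\tilde E_{k_3}}\|_{\tau_0}^\tau$.

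\textbf{Step 2 (local $L^1$ bound and size decay).} Feed each term $\|\Pi_{I_0}(f,g)\cdot\one_{\tilde E_{k_3}}\|_1$ into Corollary \ref{cor:local-paraprod-L1}, which gives $\lesssim\sssize_{I_0}f\cdot\sssize_{I_0}g\cdot\sssize_{I_0}\one_{\tilde E_{k_3}}\cdot|I_0|$. Since $\tilde E_{k_3}$ lives at distance $\sim 2^{k_3}|I_0|$ from $I_0$, the adapted bump in the modified size forces $\sssize_{I_0}\one_{\tilde E_{k_3}}\lesssim 2^{-k_3 M}$, and trivially $\sssize_{I_0}\one_{\tilde E_{k_3}}\lesssim\sssize_{I_0}\one_{\tilde E}$; interpolating between these two bounds we extract $\big(\sssize_{I_0}\one_{\tilde E}\big)^{\tau-\epsilon}$ together with a gained factor $2^{-k_3 M\epsilon\tau}$ (plus the harmless leftover $2^{-k_3 M(1-\epsilon)\tau}$ which we simply discard into the decay). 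This produces the bound $\|\Pi_{I_0}(f,g)\cdot\one_{\tilde E}\|_\tau^\tau\lesssim(\sssize_{I_0}f)^\tau(\sssize_{I_0}g)^\tau(\sssize_{I_0}\one_{\tilde E})^{\tau-\epsilon}|I_0|^\tau\sum_{k_3}2^{-k_3 M\epsilon\tau}\|\one_{\tilde E_{k_3}}\|_{\tau_0}^\tau$.

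\textbf{Step 3 (summing in $k_3$ and recombining).} Finally, sum the series: by H\"older in $k_3$ (splitting each $2^{-k_3 M\epsilon\tau}$ into two halves, keeping one half to absorb $\one_{\tilde E_{k_3}}$ into $\ci_{I_0}$ and the other half summable), we get $\sum_{k_3}2^{-k_3 M\epsilon\tau}\|\one_{\tilde E_{k_3}}\|_{\tau_0}^\tau\lesssim\|\one_{\tilde E}\cdot\ci_{I_0}\|_1^{\tau/\tau_0}\lesssim\big(\sssize_{I_0}\one_{\tilde E}\big)^{\tau/\tau_0}|I_0|^{\tau/\tau_0}$, using the almost-disjointness of the annuli together with Lemma \ref{lemma:size-est-paraprod}. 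Combining with Step 2 and invoking the arithmetic identity $\tau+\frac{\tau}{\tau_0}=1$ collapses the powers of $|I_0|$ to $1$ and the powers of $\sssize_{I_0}\one_{\tilde E}$ to $1-\epsilon$, which is precisely \eqref{eq:sub-unit-local-est-paraprod}. The one point requiring care is the bookkeeping of $\epsilon$ through the interpolation in Step 2: the two competing bounds on $\sssize_{I_0}\one_{\tilde E_{k_3}}$ must be combined so that the full-power $1-\epsilon$ eventually splits as $(\tau-\epsilon)+\frac{\tau}{\tau_0}$, and enough geometric decay is left over to run the $k_3$-summation — but since $M$ can be taken arbitrarily large and $\epsilon$ arbitrarily small, there is ample room.
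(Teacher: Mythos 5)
Your proposal follows essentially the same route as the paper's own proof: the same annular decomposition $\one_{\tilde E}=\sum_{k_3}\one_{\tilde E_{k_3}}$, the same use of subadditivity of $\|\cdot\|_\tau^\tau$ and H\"older with conjugate exponents $(1,\tau_0)$, Corollary \ref{cor:local-paraprod-L1} for the local $L^1$ bound, the interpolation between the two size estimates for $\sssize_{I_0}\one_{\tilde E_{k_3}}$, and the final $k_3$-summation absorbing $\one_{\tilde E_{k_3}}$ into $\ci_{I_0}$ via the identity $\tau+\frac{\tau}{\tau_0}=1$. No discrepancies; this is correct and matches the paper.
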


\vspace{.3 cm}
\subsection{The method of the proof}~\\
\label{subsec-method-proof}
In \cite{vv_BHT}, we proved the vector-valued inequalities $T : L^p( \rr R; L^{R^1}( \ii W, \mu  )) \times L^q ( \rr R; L^{R^2} ( \ii W, \mu )) \to L^s ( \rr R; L^{R}( \ii W, \mu ) )$, whenever $1< r_1^j, r_2^j \leq \infty$ and $1\leq r^j <\infty$, where $T$ is either the bilinear Hilbert transform $BHT$ or a paraproduct $\Pi$. In the present paper, we are concerned with the case when at least one of the $r^j$ is $<1$.

Whenever $1 \leq r< \infty$, the $\| \cdot \|_{L^r\left( \ii W, \mu \right)}$ norm can be dualized and the problem reduces to estimating the trilinear form $\Lambda_T(\vec f, \vec g, \vec h)$. We recall that in the discrete case, this corresponds to $\sum_k \Lambda_T(f_k, g_k, h_k)$. 

There are two coupled statements for the localized trilinear forms $\Lambda_{T;I_0}^{F, G, H'}(\vec f,\vec g, \vec h):=\Lambda_{T; I_0}(\vec f \cdot \one_F, \vec g \cdot \one_G, \vec h \cdot \one_{H'})$, that are at the very core of our method from \cite{vv_BHT}:
\begin{align}
\label{eq-helM-n}
\tag*{$\ii P (n)$}
 \lft \Lambda_{T;I_0}^{F, G, H'}(\vec f,\vec g, \vec h) \rg &\lesssim \big( \sssize_{I_0} \one_F \big)^{\frac{1}{r_1'}-\epsilon} \big( \sssize_{I_0} \one_G \big)^{\frac{1}{r_2'}-\epsilon} \big( \sssize_{I_0} \one_{H'} \big)^{\frac{1}{r}-\epsilon}\\
& \qquad \cdot  \big\| \big\|  \vec f\big\|_{L^{R^n_1}} \cdot \ci_{I_0} \big\|_{r_1} \big\| \big\| \vec g  \big\|_{L^{R^n_2}}\cdot \ci_{I_0} \big\|_{r_2} \big\| \big\| \vec h \big\|_{L^{\left(R^n\right)'}}\cdot \ci_{I_0} \big\|_{r'} \nonumber
\end{align}
and, for functions $\vec f, \vec g, \vec h$ satisfying $\big\|\vec f(x) \big\|_{L^{R^n_1}}\leq \one_F(x), \big\| \vec g(x) \big\|_{L^{R^n_2}}\leq \one_G(x)$ and $\big\| \vec h(x) \big\|_{L^{\left(R^n\right)'}}\leq \one_{H'}(x)$ respectively, 
\begin{align}
\label{eq-helM-n-sizes-only}
\tag*{$\ii P^* (n)$}
 \lft \Lambda_{T;I_0}^{F, G, H'}(\vec f, \vec g, \vec h) \rg \lesssim \big( \sssize_{I_0} \one_F \big)^{1-\epsilon} \big( \sssize_{I_0} \one_G \big)^{1-\epsilon} \big( \sssize_{I_0} \one_{H'} \big)^{1-\epsilon} \cdot \lft I_0 \rg.
\end{align}

For paraproducts, the exponents in $\ii P^*(n)$ are $1-\epsilon$, while for $BHT$ they are of the form $\frac{1+\theta_j}{2}-\epsilon$, where $0 \leq \theta_1,\theta_2, \theta_3<1$ and $ \theta_1+\theta_2+\theta_3=1$.

If $r<1$, an argument employing the localized trilinear form is not available. Instead, we will use only estimates for the localized operators $\Pi_{I_0}^{F, G, \tilde E}$ and $BHT_{I_0}^{F, G, \tilde E}$. Here we focus on the paraproduct case, for clarity. The localized induction statements are
{\fontsize{10}{10}
\begin{align}
\label{eq-helM-n-oper}
\tag*{$\ii P(n):$}
\big\|   \big\|  \Pi_{I_0}^{F, G, \tilde E}(\vec f, \vec g)   \big\|_{L^{R^n}}   \big\|_{s} \lesssim \big( \sssize_{I_0} \one_F \big)^{\frac{1}{p'}-\epsilon} \big( \sssize_{I_0} \one_G \big)^{\frac{1}{q'}-\epsilon} \big( \sssize_{I_0} \one_{\tilde E} \big)^{\frac{1}{s}-\epsilon} 
\big\| \| \vec f  \|_{L^{R^n_1}} \cdot \ci_{I_0} \big\|_p \big\| \big\| \vec g  \big\|_{L^{R^n_2}} \cdot \ci_{I_0} \big\|_q,
\end{align}}
and, for functions $\vec f, \vec g$ satisfying $\big\| \vec f(x) \big\|_{L^{R^n_1}}\leq \one_F(x)$ and $\big\| \vec g(x) \big\|_{L^{R^n_2}}\leq \one_G(x)$ respectively, 
\begin{equation}
\label{eq-helM-sizes-oper} \tag*{$\ii{P}^*\left( n\right):$}
\big\|   \big\|  \Pi_{I_0}^{F, G, \tilde E}(\vec f, \vec g)   \big\|_{L^{R^n}}   \big\|_{s} \lesssim \big( \sssize_{I_0} \one_F \big)^{1-\epsilon} \big( \sssize_{I_0} \one_G \big)^{1-\epsilon} \big( \sssize_{I_0} \one_{\tilde E} \big)^{\frac{1}{s}-\epsilon} \lft I_0 \rg^{\frac{1}{s}}.
\end{equation}

The proof of the induction step $\ii P(n-1) \Rightarrow \ii P(n)$ is presented in Theorem \ref{thm:genral_case}. The statement $\ii P(0)$ represents the content of Proposition \ref{prop:localization-lemma}, and relies on $\ii P^*(0)$. On the other hand, $\ii P ^*(0)$ follows from Lemma \ref{lemma:sub-unit-local-est-paraprod}, where the local estimate for the trilinear form is used. 

In what follows, we will show how to use $\ii P(0)$ in order to obtain the $\ell^r$-valued estimates, for $r<1$. We want to estimate $\big\| \big( \sum_k | \Pi(f_k, g_k) |^r  \big)^{1/r} \big\|_{\tilde s, \infty}$ under the assumption that $\|  \vec f(x) \|_{\ell^{r_1}} \leq \one_F(x)$ and $\| \vec g(x) \|_{\ell^{r_2}} \leq \one_G(x)$. In order to deal with the $\ell^r$ quasinorm inside, we dualize through $L^r$; given $E$ a set of finite measure, we can construct a major subset $\tilde E \subseteq E$ so that 
\begin{equation}
\label{eq:vector-lr}
\big\| \big( \sum_k \lft \Pi(f_k, g_k) \rg^r  \big)^{1/r} \big\|_{\tilde s, \infty} \sim \big\| \big( \sum_k \lft \Pi(f_k, g_k) \rg^r  \big)^{1/r} \cdot \one_{\tilde E} \big\|_{r} \cdot \lft E \rg^{\frac{1}{\tilde s}-\frac{1}{r}}.
\end{equation}
The advantage is that $\big\| \big( \sum_k | \Pi(f_k, g_k) |^r  \big)^{1/r} \cdot \one_{\tilde E} \big\|_r^r=\sum_k \big\| \Pi(f_k, g_k)  \cdot \one_{\tilde E} \big\|_r^r$, and even more, $\| \cdot \|_r^r$ is subadditive. We use dualization through $L^r$ in order to ``linearize" the expression $ \big( \sum_k | \Pi(f_k, g_k) |^r  \big)^{1/r}$ when classical Banach space techniques are not available.

Afterwards we employ the helicoidal method as in \cite{vv_BHT}. Through a triple stopping time that will be described shortly, and using the subadditivity of $\| \cdot \|_r^r$, \eqref{eq:vector-lr} is reduced to obtaining ``sharp estimates" for $\big\| \Pi_{I_0}(\tilde f, \tilde g)  \cdot \one_{\tilde E} \big\|_r^r$ for scalar functions $\tilde f$ and $\tilde g$. If we use the trilinear form associated to $\Pi_{I_0}$, we get 
\[
\big\| \Pi_{I_0}(\tilde f, \tilde g)  \cdot \one_{\tilde E} \big\|_r^r\lesssim \big(\sssize_{I_0} \one_{E_1}\big)^{r-\epsilon} \big(\sssize_{I_0} \one_{E_2}\big)^{r-\epsilon} \big(\sssize_{I_0} \one_{\tilde E}\big)^{r-\epsilon} \cdot | I_0 |,
\]
where we assume $\lft \tilde f\rg \leq \one_{E_1}, \lft \tilde g\rg \leq \one_{E_2}$. However, we can obtain a better estimate, which is precisely Lemma \ref{lemma:sub-unit-local-est-paraprod}:
\[
\big\| \Pi_{I_0}(\tilde f, \tilde g)  \cdot \one_{\tilde E} \big\|_r^r\lesssim \big(\sssize_{I_0} \one_{E_1}\big)^{r-\epsilon} \big(\sssize_{I_0} \one_{E_2}\big)^{r-\epsilon} \big(\sssize_{I_0} \one_{\tilde E}\big)^{1-\epsilon} \cdot | I_0 |.
\]
This improvement (since the sizes are subunitary and $r<1$, $(\sssize_{I_0} \one_{\tilde E})^{1-\epsilon} \leq \big(\sssize_{I_0} \one_{\tilde E}\big)^{r-\epsilon}$) allows us to prove vector-valued inequalities for paraproducts within the whole $Range(\Pi)$.

Similarly, the statements $\ii P(n)$ and $\ii P^* (n)$ for $R^n=(r^1, \tilde R^{n-1})$ follow from $\ii P (n-1)$ by using $L^{r^1}$-dualization.
\vspace{.3cm}
\subsection{The Triple Stopping Time}~\\
\label{subsection-stopping_times}
All through the paper, we will need estimates along the line
\begin{equation}
\label{eq:imp-est}
\big\|  \Pi_{\ii I \left( I_0 \right)}^{F, G, \tilde E}(\vec f, \vec  g) \cdot \one_{\tilde E_3} \big\|_\tau \lesssim \big\| \Pi_{\ii I \left( I_0 \right)}^{F, G, \tilde E} \big\| \|  \one_{E_1} \cdot \ci_{I_0} \|_{s_1} \|  \one_{E_2} \cdot \ci_{I_0} \|_{s_2} |E_3|^{\frac{1}{\tau} -\frac{1}{s}},
\end{equation} 
where $\tilde E_3 \subseteq E_3$ is a major subset of $E_3$, and the functions $\vec f$ and $\vec g$ satisfy $\| \vec f(x) \|_{R^n_1}\leq \one_{E_1}(x)$ and $\| \vec g(x) \|_{R^n_2}\leq \one_{E_2}(x)$. Here $\big\| \Pi_{\ii I \left( I_0 \right)}^{F, G, \tilde E} \big\|$ represents the operatorial norm, as introduced in \eqref{eq:intr-op-local-sharp}.

We note that sometimes we might have $F=E_1$, $G=E_2$ and $\tilde E=\tilde E_3$; also, the estimate doesn't need to be local, and in this case we regard $I_0$ as being the whole real line, and $\ci_{I_0}(x) \equiv 1$ a. e.

The exceptional set represents the set where the values of $\| \vec f(x) \|_{R^n_1}$ and $\| \vec g(x) \|_{R^n_2}$ are too large:
\[
\tilde \Omega:=\Big\lbrace x:  \ic{M}\left( \one_{E_1} \cdot \ci_{I_0}  \right)(x) > C \frac{\| \one_{E_1} \cdot \ci_{I_0} \|_1}{\lft  E_3\rg}  \Big\rbrace \cup \Big\lbrace x:  \ic{M}\left( \one_{E_2} \cdot \ci_{I_0}  \right)(x) > C \frac{\| \one_{E_2} \cdot \ci_{I_0} \|_1}{\lft  E_3\rg}  \Big\rbrace,
\]
and $\tilde E_3=E_3 \setminus \tilde \Omega$. Now we partition the collection $\ii I$ of intervals into subcollection $\ii I_d$ so that for any $I \in \ii I_d$, we have $\dist (I, \tilde \Omega^c) \sim \left( 2^d-1\right) |I|$. This will allow us to gain some information on the sizes of $\one_{E_1}$ and $\one_{E_2}$:
\[
\sup_{I \in \ii I_d } \frac{1}{|I|} \int_{\rr R} \one_{E_1} \cdot \ci_I dx \lesssim 2^d  \frac{\| \one_{E_1} \cdot \ci_{I_0} \|_1}{|E_3|}, \qquad \sup_{I \in \ii I_d } \frac{1}{|I|} \int_{\rr R} \one_{E_2} \cdot \ci_I dx \lesssim 2^d  \frac{\| \one_{E_2} \cdot \ci_{I_0} \|_1}{|E_3|}.
\] 
Also, note that $\ds \sup_{I \in \ii I_d } \frac{1}{|I|} \int_{\rr R} \one_{\tilde E_3} \cdot \ci_I^M dx \lesssim 2^{-Md}$.

We want to apply our local estimates, but they cannot be directly implemented to $\Pi_{\ii I\left( I_0 \right)}$ nor to $\Pi_{\ii I_d\left( I_0 \right)}$. Instead, we will partition again the collection $\ii I_d(I_0)$ as 
\begin{equation}
\label{eq:partition-stopping-time}
\ii I (I_0):=\bigcup_{d \geq 0} \ii I_d(I_0):=\bigcup_{d} \bigcup_{n_1, n_2, n_3} \bigcup_{K \in \ii I^{n_1, n_2, n_3}} \ii I_d(K). 
\end{equation}
We will construct collections $\ii I_{n_1}, \ii I_{n_2}, \ii I_{n_3}$ of dyadic intervals, and for every $I_j$ contained in some $\ii I_{n_j} $, we will select $\ii I_d(I_j) \subseteq \ii I_d(I_0)$ a subcollection of our initial $\ii I_d(I_0)$. Then we say $K \in \ii I^{n_1, n_2, n_3}$ if $K=I_1\cap I_2 \cap I_3$ for $I_j \in \ii I_{n_j}$, and 
\[
\ii I_d(K):=\ii I_d(I_1) \cap \ii I_d(I_2) \cap \ii I_d(I_3).
\]

In effect, we carry out the local estimates on $\Pi_{\ii I_d(K)}$, where $K \in \ii I^{n_1, n_2, n_3}$. For any two such intervals $K\in \ii I^{n_1, n_2, n_3}$ and $K' \in \ii I^{n_1', n_2', n_3'}$, we could have $K \cap K' \neq \emptyset$; but the collections $\ii I_d(K)$ and $\ii I_{d'}(K)$ are going to be disjoint.

The families of dyadic intervals $\ii I_{n_1}$ will have the following properties:
\begin{enumerate}[label=(\arabic*), ref=\arabic*, leftmargin=1cm]
\item \label{item-1} the intervals $I' \in \ii I_{n_1}$ are all mutually disjoint
\item \label{item-2} moreover, they satisfy $\ds \sum_{I' \in  \ii I_{n_1}}|I'| \lesssim 2^{n_1} \| \one_{E_1} \cdot \ci_{I_0} \|_1$
\item \label{item-3}whenever $I_1 \in \ii I_{n_1}$ and  $\ic I \subseteq \ii I_d(I_1)$ is a subset of the selected $\ii I_d(I_1)$, we have 
\[
\sssize_{ \ic I (I_1)^+} \one_{E_1} \lesssim 2^{-n_1} \lesssim 2^d  \frac{\| \one_{E_1} \cdot \ci_{I_0} \|_1}{|E_3|}.
\]
\item \label{item-4} as a consequence of \eqref{item-3}, $\ds \sssize_{ \ic I_{n_1} (I_0)^+} \one_{E_1} \lesssim 2^{-n_1}$, where $\ds \ic I_{n_1} (I_0):=\bigcup_{I_1 \in \ii I_{n_1}} \ii I(I_1)$.
\end{enumerate}
Since the construction argument is similar for the collections $\ii I_{n_2}$ and $\ii I_{n_3}$, we will only describe it for $\ii I_{n_1}$. We start by setting $\ii I_{Stock}:=\ii I_d$, the collection of intervals in $\ii I$ having the property that $\dist(I, \tilde \Omega^c) \sim \left(2^d -1\right) |I|$. 

Assuming the collections $\ii I_{n_1}$ were selected for all $n_1 <\bar n_1$, we construct $\ii I_{\bar n_1}$ in the following way:
\begin{enumerate}[label=(\roman*), ref=\roman*, leftmargin=.7 cm]
\item \label{item-i}if $\ds \sssize_{\ii I_{Stock}} \one_{E_1}< 2^{-\bar n_1}$, then nothing happens; restart the procedure with $\bar n_1:=\bar n_1 +1$.
\item \label{item-ii} admitting that we are in the situation where $\ds \sssize_{\ii I_{Stock}} \one_{E_1}=2^{-\bar n_1}$,  look for $I \in \ii I_{Stock}$ so that 
\[
\frac{1}{|I|} \int_{\rr R} \one_{E_1} \cdot \ci_I dx \sim 2^{- \bar n_1}.
\]
\item \label{item-iii} then $\ii I_{\bar n_1}$ will consist of maximal dyadic intervals $I_1 \subseteq I_0$ with the property that they contain at least one interval $I \in \ii I_{Stock}$ as in \eqref{item-ii}, and so that 
\[
2^{-\bar n_1} \leq \frac{1}{|I_1|}\int_{\rr R} \one_{E_1} \cdot \ci_{I_1} dx \leq 2^{-\bar n_1}.
\]
\item \label{item-iv} for every $I_1 \in \ii I_{\bar n_1}$, the collection $\ii I_d(I_1)$ is defined as 
\[
\ii I_d(I_1):=\lbrace I \in \ii I_{Stock}: I \subseteq I_1  \rbrace.
\]
\item before we restart the procedure from step \eqref{item-i} by increasing $\bar n_1$, we update $\ds \ii I_{Stock}:=\ii I_{Stock} \setminus \bigcup_{I_1 \in \ii I_{\bar n_1}}  \ii I(I_1)$.
\end{enumerate}

It's not difficult to check that conditions \eqref{item-1}-\eqref{item-4} are verified. 

The last step consist in putting everything together, in order to deduce \eqref{eq:imp-est}. Here we use the subadditivity of $\| \cdot \|_\tau^\tau$ as follows:
\begin{align*}
\big\|  \Pi_{\ii I \left( I_0 \right)}^{F, G, \tilde E}(\vec f, \vec  g) \cdot \one_{\tilde E_3} \big\|_\tau^\tau \lesssim \sum_{d \geq 0} \sum_{n_1, n_2, n_3} \sum_{K \in \ii I^{n_1, n_2, n_3}} \big\|  \Pi_{\ii I_d \left( K \right)}^{F, G, \tilde E}(\vec f, \vec  g) \cdot \one_{\tilde E_3}  \big\|_\tau^\tau.
\end{align*}
The remaining part follows from the local estimates, as it will be detailed later on. In order to simplify the notation, sometimes we forget about the $d$ parameter.

\section{Quasi-Banach Valued Inequalities}
\label{sec:quasi-banach-Pi}

In the present section, we develop the ideas from Sections \ref{subsec-method-proof} and \ref{subsection-stopping_times}. In fact, we prove that $\ds \Pi : L^p\left( \ell^{r_1} \right) \times L^q\left( \ell^{r_2} \right) \to L^s\left( \ell^{r} \right)$,  when $\frac{1}{2}<r<1$.

\vspace{.3cm}

\subsection{A Localization Lemma for quasi-Banach spaces}~\\
Fix $I_0$ a dyadic interval, and the sets $F, G$ and $\tilde{E}$, as in \eqref{eq-def-loc-paraprod}. Then we have the following result:

\begin{proposition}
\label{prop:localization-lemma}
For any $1< r_1, r_2 \leq \infty$ and $\frac{1}{2}<r <\infty$ such that $\ds \frac{1}{r_1}+\frac{1}{r_2}=\frac{1}{r}$, the localized paraproduct $\Pi_{I_0}^{F, G, \tilde{E}}$ satisfies the estimate
\begin{equation}
\label{eq:localized-paraproduct}
\big\| \Pi_{I_0}^{F, G, \tilde{E}}(f, g) \big\|_r \lesssim \big(\sssize_{I_0} \one_F \big)^{1/{r_1'}-\epsilon} \cdot \big(\sssize_{I_0} \one_G \big)^{1/{r_2'}-\epsilon} \cdot \big(\sssize_{I_0} \one_{\tilde{E}} \big)^{1/{r}-\epsilon} \cdot \big\| f \cdot \ci_{I_0} \big\|_{r_1} \cdot \big\| g \cdot \ci_{I_0} \big\|_{r_2}.
\end{equation}
\begin{proof}
The case $r \geq 1$ was considered in \cite{vv_BHT}, so here we focus on the situation when $r<1$. We will prove the inequality $\eqref{eq:localized-paraproduct}$ by using restricted weak-type interpolation. Hence we start by fixing $E_1$ and $E_2$, sets of finite measure, and let $f$ and $g$ be functions satisfying $\ds \lft f \rg \leq \one_{E_1}$ and  $\ds \lft g \rg \leq \one_{E_2}$. Following standard interpolation theory, it will be enough to prove
\begin{align}
\label{eq:weak-norms-local-paraprod}
\big\| \Pi_{I_0}^{F, G, \tilde{E}}(f, g) \big\|_{s, \infty} &\lesssim \big(\sssize_{I_0} \one_F \big)^{1/{s_1'}-\epsilon} \cdot \big(\sssize_{I_0} \one_G \big)^{1/{s_2'}-\epsilon} \cdot \big(\sssize_{I_0} \one_{\tilde{E}} \big)^{1/{s}-\epsilon} \\
&\qquad \cdot \|\one_{E_1} \cdot \ci_{I_0}\|_{1}^{\frac{1}{s_1}} \cdot \|\one_{E_2} \cdot \ci_{I_0}\|_{1}^{\frac{1}{s_2}}, \nonumber
\end{align}
for all admissible tuples $( s_1, s_2, s)$ in a neighborhood of $( r_1, r_2, r)$ that satisfy the usual H\"{o}lder scaling condition. 

To this end, we use $L^\tau$-dualization, as described in Proposition \ref{prop:Lr-dualization}. The estimate \eqref{eq:localized-paraproduct} will be independent on the choice of $\tau$, if we pick $\tau\leq r$. If we dualize through an $L^\tau $ space with $\tau>r$, we obtain on the RHS of \eqref{eq:localized-paraproduct} the term
\[
\big( \sssize_{I_0} \one_{\tilde E}  \big)^{\frac{1}{\tau}-\epsilon}.
\]
Since the sizes are subunitary, such an estimate is less sharp than \eqref{eq:localized-paraproduct}. This is in part why, for $r<1$, we don't obtain an ``optimal bound" by dualizing through $L^1$, as in Proposition \ref{prop:Localization for local L^1 paraproducts}.

Thus, given a set $E_3$ of finite measure, we define an exceptional set $\tilde{\Omega}$ by
\begin{equation}
\label{eq:def-ex-set-local}
\tilde{\Omega}:=\Big\lbrace x: \ic{M}\left( \one_{E_1}\cdot \ci_{I_0} \right)(x)>C \frac{\| \one_{E_1}\cdot \ci_{I_0}\|_1}{\lft E_3 \rg}  \Big\rbrace \cup \Big\lbrace x: \ic{M}\left( \one_{E_2}\cdot \ci_{I_0} \right)(x)>C \frac{\| \one_{E_2}\cdot \ci_{I_0}\|_1}{\lft E_3 \rg}  \Big\rbrace, 
\end{equation}
and set $\tilde E_3:=E_3\setminus \tilde{\Omega}$. Then $\tilde E_3$ is a major subset of $E_3$, and we are left with proving
\begin{equation*}
\big\| \Pi_{I_0}(f \cdot \one_{F}, g \cdot \one_{G})\cdot \one_{\tilde{E}}\cdot \one_{\tilde E_3} \big\|_\tau \lesssim \big\| \Pi_{I_0}^{F, G, \tilde{E}} \big\| \cdot \|\one_{E_1} \cdot \ci_{I_0}\|_{1}^{\frac{1}{s_1}} \cdot \|\one_{E_2} \cdot \ci_{I_0}\|_{1}^{\frac{1}{s_2}} \cdot \lft E_3 \rg^{\frac{1}{\tau}-\frac{1}{s}},
\end{equation*}
where the operatorial norm $\ds \big\| \Pi_{I_0}^{F, G, \tilde{E}} \big\|$ is given by
\[
 \big\| \Pi_{I_0}^{F, G, \tilde{E}} \big\|:= \big(\sssize_{I_0} \one_F \big)^{1/{r_1'}-\epsilon} \cdot \big(\sssize_{I_0} \one_G \big)^{1/{r_2'}-\epsilon} \cdot \big(\sssize_{I_0} \one_{\tilde{E}} \big)^{1/{r}-\epsilon}.
\]

As in \cite{vv_BHT}, we will have a triple stopping time which is dictated by the ``concentration" of the sets $E_1, E_2$ and $\tilde E_3$. This is explained in detail in Section \ref{subsection-stopping_times}. More exactly, we have three collections of intervals:
\[
I \in \ii{I}^{n_1}: \qquad 2^{-n_1-1} \leq \frac{1}{\lft I \rg} \int_{\rr{R}} \one_{E_1} \cdot \ci_I dx \leq 2^{-n_1} \lesssim 2^{d} \frac{\| \one_{E_1} \cdot \ci_{I_0} \|_1}{\lft E_3 \rg}, 
\]
\[
I \in \ii{I}^{n_2}: \qquad 2^{-n_2-1} \leq \frac{1}{\lft I \rg} \int_{\rr{R}} \one_{E_2} \cdot \ci_I dx \leq 2^{-n_2} \lesssim 2^{d} \frac{\| \one_{E_2} \cdot \ci_{I_0} \|_1}{\lft E_3\rg}, 
\]
\[
I \in \ii{I}^{n_3}: \qquad 2^{-n_3-1} \leq \frac{1}{\lft I \rg} \int_{\rr{R}} \one_{E_3'} \cdot \ci_I dx \leq 2^{-n_3} \lesssim 2^{-Md}.
\]

Since $\tau<1$, $\| \cdot \|_\tau$ doesn't satisfy the triangle inequality, but $\| \cdot \|_\tau^\tau$ does, and we proceed to estimate
\[
\big\| \Pi_{I_0}\left( f \cdot \one_F, g \cdot \one_G \right) \cdot \one_{\tilde E_3} \cdot \one_{\tilde{E}} \big\|_\tau^\tau \lesssim \sum_{n_1, n_2, n_3} \sum_{I \in \ii{I}^{n_1, n_2, n_3}} \big\|  \Pi_{I}\left( f \cdot \one_F, g \cdot \one_G \right) \cdot \one_{\tilde E_3} \cdot \one_{\tilde{E}} \big\|_\tau^\tau.
\]

We recall that for each $I \in \ii{I}^{n_1, n_2, n_3}$, $\Pi_I$ should be understood as $\Pi_{\ii I_d\left( I \right)}$, as explained in Section \ref{subsection-stopping_times}.

For each interval $I$, we can estimate the localized paraproduct $\ds \Pi_I^{F, G, \tilde E \cap \tilde E_3}$ using sizes only. Lemma \ref{lemma:sub-unit-local-est-paraprod} yields
\begin{align}
\label{eq:trick_1}
&\big\| \Pi_{I}\big( f \cdot \one_F, g \cdot \one_G \big) \cdot \one_{\tilde E_3} \cdot \one_{\tilde{E}}  \big\|_\tau \\
&\lesssim \big( \sssize_{I} f \cdot \one_{F} \big) \cdot \big( \sssize_{I} g \cdot \one_{G} \big) \cdot \big( \sssize_{I} \one_{\tilde E} \cdot \one_{\tilde E_3} \big)^{\frac{1}{\tau} -\epsilon} | I |^{\frac{1}{\tau}} \nonumber \\
&\lesssim \big( \sssize_{I} \one_{F} \big)^{\frac{1}{s_1'}-\epsilon} \cdot \big( \sssize_{I} \one_{G} \big)^{\frac{1}{s_2'}-\epsilon} \cdot \big( \ssize_{I} \one_{\tilde{E}} \big)^{\frac{1}{s}-\epsilon} \nonumber\\
& \quad \cdot \big( \sssize_{I} \one_{E_1} \big)^{\frac{1}{s_1}} \cdot \big( \sssize_{I} \one_{E_2}\big)^{\frac{1}{s_2}} \cdot \big( \ssize_{I} \one_{\tilde E_3}\big)^{\frac{1}{\tau}-\frac{1}{s}+\epsilon} \cdot \lft I \rg^{\frac{1}{\tau}}. \nonumber
\end{align}
Hence, if we denote $\ds K:=\big( \sssize_{I_0} \one_{F} \big)^{\frac{1}{s_1'}-\epsilon} \cdot \big( \sssize_{I_0} \one_{G} \big)^{\frac{1}{s_2'}-\epsilon} \cdot \big( \ssize_{I_0} \one_{\tilde{E}} \big)^{\frac{1}{s}-\epsilon}$, we actually obtain 

\begin{align*}
&\big\| \Pi_{I_0}\left( f \cdot \one_F, g \cdot \one_G \right) \cdot \one_{\tilde E_3} \cdot \one_{\tilde{E}} \big\|_\tau^\tau \lesssim \sum_{n_1, n_2, n_3} \sum_{I \in \ii{I}^{n_1, n_2, n_3}} K^\tau 2^{-\frac{n_1 \tau}{s_1} } 2^{-\frac{n_2 \tau}{s_2} } 2^{- n_3 \left(  1-\frac{\tau}{s}+\epsilon \right)} \cdot \lft I \rg.
\end{align*}

Given the selection process for the collections of intervals $\ii{I}^{n_1, n_2, n_3}$, we have that 
\[
\sum_{I \in \ii{I}^{n_1, n_2, n_3}} \lft I \rg\lesssim \left( 2^{n_1} \big\| \one_{E_1} \cdot \ci_{I_0} \big\|_1 \right)^{\gamma_1}\cdot \left( 2^{n_2} \big\| \one_{E_2} \cdot \ci_{I_0} \big\|_1 \right)^{\gamma_2}\cdot  \lft E_3 \rg^{\gamma_3},
\]
where $\gamma_1, \gamma_2, \gamma_3$ are numbers between $0$ and $1$, with $\gamma_1+\gamma_2+\gamma_3=1$. This implies
\begin{align*}
\big\| \Pi_{I_0}\left( f \cdot \one_F, g \cdot \one_G \right) \cdot \one_{\tilde E_3} \cdot \one_{\tilde{E}} \big\|_\tau^\tau & \lesssim \sum_{n_1, n_2, n_3}  K^\tau 2^{-n_1 \left(\frac{ \tau}{s_1}-\gamma_1\right) }
\cdot  2^{-n_2\left( \frac{\tau}{s_2}-\gamma_2 \right) } 2^{- n_3 \left(  1-\frac{\tau}{s}+\epsilon-\gamma_3 \right)} \\
&  \qquad\cdot \big\| \one_{E_1} \cdot \ci_{I_0} \big\|_1^{\gamma_1} \cdot \big\| \one_{E_2} \cdot \ci_{I_0} \big\|_1^{\gamma_2} \cdot \lft E_3 \rg^{\gamma_3} \\
&\lesssim K^\tau 2^{-\tilde{M} d} \big\| \one_{E_1} \cdot \ci_{I_0} \big\|_1^{\frac{\tau}{s_1}} \big\| \one_{E_2} \cdot \ci_{I_0} \big\|_1^{\frac{\tau}{s_2}} \lft E_3 \rg^{1-\frac{\tau}{s}}.
\end{align*}
Above we used that $2^{-n_1} \leq 2^d\|  \one_{E_1} \cdot \ci_{I_0} \|_1/{|E_3|}$. The only constraint we have is that 
\[
\frac{\tau}{s_1}+\frac{\tau}{s_2}+ 1-\frac{\tau}{s}+\epsilon >1=\gamma_1+\gamma_2+\gamma_3,
\] 
which is definitely true for $\epsilon>0$. We obtained in this way the desired inequality \eqref{eq:weak-norms-local-paraprod}.
\end{proof}
\end{proposition}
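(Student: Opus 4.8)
The plan is to follow the recipe sketched in Section \ref{subsec-method-proof}: the case $r \geq 1$ is already in \cite{vv_BHT} (it is a local-$L^1$ estimate of the type of Proposition \ref{prop:Localization for local L^1 paraproducts}), so I concentrate on $\frac12 < r < 1$. The argument has three stages. First, reduce \eqref{eq:localized-paraproduct} to a family of restricted weak-type estimates and invoke the quasi-Banach interpolation of Proposition \ref{prop:interpolation}. Second, prove each restricted weak-type estimate by dualizing the $\|\cdot\|_{s,\infty}$ quasinorm through an $L^\tau$ space with $\tau \leq r$ (Proposition \ref{prop:Lr-dualization}), the point of this choice being that only then does the operatorial norm acquire the sharp power $(\sssize_{I_0}\one_{\tilde E})^{1/r-\epsilon}$ rather than the weaker $(\sssize_{I_0}\one_{\tilde E})^{1/\tau-\epsilon}$ — this is exactly why one cannot dualize through $L^1$ when $r<1$. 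Third, bound the resulting $\|\cdot\|_\tau^\tau$ via the triple stopping time of Section \ref{subsection-stopping_times}, using on each elementary piece the size-only local estimate of Lemma \ref{lemma:sub-unit-local-est-paraprod}.

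Carrying this out: I would fix sets $E_1, E_2$ of finite measure and functions with $\lft f \rg \leq \one_{E_1}$, $\lft g \rg \leq \one_{E_2}$, and reduce by standard restricted weak-type interpolation to proving, for all admissible tuples $(s_1, s_2, s)$ in a neighbourhood of $(r_1, r_2, r)$ obeying the H\"older scaling,
\[
\big\| \Pi_{I_0}^{F, G, \tilde{E}}(f, g) \big\|_{s, \infty} \lesssim \big(\sssize_{I_0} \one_F \big)^{1/{s_1'}-\epsilon} \big(\sssize_{I_0} \one_G \big)^{1/{s_2'}-\epsilon} \big(\sssize_{I_0} \one_{\tilde{E}} \big)^{1/{s}-\epsilon} \|\one_{E_1} \ci_{I_0}\|_{1}^{1/s_1} \|\one_{E_2} \ci_{I_0}\|_{1}^{1/s_2}.
\]
Since the factor $\big(\sssize_{I_0} \one_F \big)^{1/{s_1'}-\epsilon}\big(\sssize_{I_0} \one_G \big)^{1/{s_2'}-\epsilon}\big(\sssize_{I_0} \one_{\tilde{E}} \big)^{1/{s}-\epsilon}$ does not see the functions $f, g$, it plays the role of the $(s_1,s_2,s)$-dependent constant in Proposition \ref{prop:interpolation}, so applying that proposition at the target point produces \eqref{eq:localized-paraproduct} with constant $\big(\sssize_{I_0} \one_F \big)^{1/{r_1'}-\epsilon}\big(\sssize_{I_0} \one_G \big)^{1/{r_2'}-\epsilon}\big(\sssize_{I_0} \one_{\tilde{E}} \big)^{1/{r}-\epsilon}$. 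To obtain the weak-type bound itself I would fix $\tau \leq r$ and a test set $E_3$ of finite measure, discard the exceptional set on which $\ic{M}(\one_{E_1}\ci_{I_0})$ or $\ic{M}(\one_{E_2}\ci_{I_0})$ exceeds a large multiple of $\|\one_{E_j}\ci_{I_0}\|_1 / \lft E_3 \rg$, so that $\tilde E_3 := E_3$ minus that set is a major subset, and then Proposition \ref{prop:Lr-dualization} reduces everything to
\[
\big\| \Pi_{I_0}(f \one_F, g \one_G)\,\one_{\tilde E}\,\one_{\tilde E_3} \big\|_\tau \lesssim \big\| \Pi_{I_0}^{F,G,\tilde E}\big\| \cdot \|\one_{E_1} \ci_{I_0}\|_{1}^{1/s_1} \|\one_{E_2} \ci_{I_0}\|_{1}^{1/s_2}\, \lft E_3 \rg^{1/\tau - 1/s},
\]
with $\big\| \Pi_{I_0}^{F,G,\tilde E}\big\| = \big(\sssize_{I_0}\one_F\big)^{1/r_1'-\epsilon}\big(\sssize_{I_0}\one_G\big)^{1/r_2'-\epsilon}\big(\sssize_{I_0}\one_{\tilde E}\big)^{1/r-\epsilon}$.

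For this last inequality I would run the triple stopping time of Section \ref{subsection-stopping_times}, organizing the concentration of $E_1$, $E_2$ and $\tilde E_3$ so as to split $\ii I(I_0) = \bigcup_{n_1, n_2, n_3}\bigcup_{K \in \ii I^{n_1,n_2,n_3}}\ii I_d(K)$ with the relevant sizes of $\one_{E_1}, \one_{E_2}, \one_{\tilde E_3}$ controlled by $2^{-n_1}, 2^{-n_2}, 2^{-n_3}$ on each $\ii I_d(K)$, and with $\sum_{K \in \ii I^{n_1,n_2,n_3}}|K| \lesssim (2^{n_1}\|\one_{E_1}\ci_{I_0}\|_1)^{\gamma_1}(2^{n_2}\|\one_{E_2}\ci_{I_0}\|_1)^{\gamma_2}\lft E_3\rg^{\gamma_3}$ for some $\gamma_1+\gamma_2+\gamma_3 = 1$. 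Using the subadditivity of $\|\cdot\|_\tau^\tau$ I would sum the norm over these pieces, applying on each $\Pi_{\ii I_d(K)}$ — the local estimates are usable on the $\ii I_d(K)$ but not on $\Pi_{\ii I(I_0)}$ or $\Pi_{\ii I_d(I_0)}$ directly — Lemma \ref{lemma:sub-unit-local-est-paraprod}; after splitting each size $\sssize_K(f\one_F) \lesssim (\sssize_K\one_F)^{1/s_1'}(\sssize_K\one_{E_1})^{1/s_1}$ (and likewise for $g$) and distributing the $\one_{\tilde E}\one_{\tilde E_3}$ size between an $\tilde E$-part and an $\tilde E_3$-part, one arrives at a term of shape $\big\|\Pi_{I_0}^{F,G,\tilde E}\big\|^\tau\, 2^{-n_1\tau/s_1}\,2^{-n_2\tau/s_2}\,2^{-n_3(1-\tau/s+\epsilon)}\,|K|$. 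Summing in $K$ via the measure estimate and then the geometric series in $n_1, n_2, n_3$ (using also the lower bounds $2^{-n_i}\lesssim 2^d\|\one_{E_i}\ci_{I_0}\|_1/\lft E_3\rg$) closes the argument, the sole requirement being $\frac{\tau}{s_1}+\frac{\tau}{s_2}+1-\frac{\tau}{s}+\epsilon > 1$, which holds for every $\epsilon > 0$ since $\frac{1}{s_1}+\frac{1}{s_2} = \frac{1}{s}$; the decay $2^{-\tilde M d}$ in the suppressed distance parameter $d$ then absorbs the outermost sum.

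The hard part will be reconciling the two dualizations: one must dualize the weak-$L^s$ quasinorm through $L^\tau$ with $\tau$ strictly below $r$ — forced by the requirement that the estimate be independent of $\tau$, hence valid uniformly over the whole interpolation neighbourhood of $s$ — and then verify that Lemma \ref{lemma:sub-unit-local-est-paraprod}, itself the sharp size-only local bound with $\one_{\tilde E}$-size exponent $1-\epsilon$, is exactly strong enough to reconstitute the operatorial norm with the sharp exponent $1/r - \epsilon$ on $\sssize_{I_0}\one_{\tilde E}$. The remaining bookkeeping (the distance parameter $d$, the finer partition into the $\ii I_d(K)$, and checking that the constants depend continuously on $(s_1,s_2,s)$ as Proposition \ref{prop:interpolation} requires) is routine once the stopping time is in place.
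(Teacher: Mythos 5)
Your proposal follows essentially the same route as the paper's own proof: the reduction to restricted weak-type estimates via the quasi-Banach interpolation of Proposition \ref{prop:interpolation}, the $L^\tau$-dualization with $\tau\leq r$ via Proposition \ref{prop:Lr-dualization} and an exceptional-set major subset, the triple stopping time of Section \ref{subsection-stopping_times} applied to the pieces $\Pi_{\ii I_d(K)}$, the size-only estimate of Lemma \ref{lemma:sub-unit-local-est-paraprod} on each such piece, the splitting of $\sssize_K(\one_{F\cap E_1})$ and of the $\tilde E\cap\tilde E_3$ size, and the convergent geometric series in $n_1,n_2,n_3$ with the $2^{-\tilde M d}$ decay absorbing the outer $d$-sum. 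The details you highlight (why $\tau\leq r$ and not $\tau>r$, where the sharp $(\sssize_{I_0}\one_{\tilde E})^{1/r-\epsilon}$ comes from, and the role of the subadditivity of $\|\cdot\|_\tau^\tau$) are exactly the points the paper makes.
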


\subsection{Proof of Theorem \ref{thm:one-quasiBanach-Paraprod}}

Now we are ready to prove a quasi-Banach valued inequality for paraproducts, as well as its localized version, corresponding to the scalar Proposition \ref{prop:localization-lemma}.

\begin{theorem}
\label{thm:localized-one-quasiBanach-Paraprod}
For  $r_1, r_2, r$ as in Theorem \ref{thm:one-quasiBanach-Paraprod}, the localized paraproduct $\Pi_{I_0}^{F, G, \tilde E}$ defined in \eqref{eq-def-loc-paraprod} satisfies the estimate
\begin{align*}
\big\| \big( \sum_{k} \lft \Pi_{I_0}^{F, G, \tilde E} \left(f_k, g_k  \right) \rg^r \big)^{1/r} \big\|_s &\lesssim 
\big(\sssize_{I_0} \one_F \big)^{1/{s_1'}-\epsilon} \cdot \big(\sssize_{I_0} \one_G \big)^{1/{s_2'}-\epsilon} \cdot \big(\sssize_{I_0} \one_{\tilde{E}} \big)^{1/{s}-\epsilon} \\
& \cdot \big\| \big( \sum_{k} \lft f_k \rg^{r_1} \big)^{1/{r_1}} \cdot \ci_{I_0}\big\|_{s_1} \cdot \big\| \big( \sum_{k} \lft g_k\rg^{r_2} \big)^{1/{r_2}} \cdot \ci_{I_0} \big\|_{s_2},
\end{align*}
for any $1< s_1, s_2 \leq \infty$, $\frac{1}{2}<s <\infty$ so that $\ds \frac{1}{s_1}+\frac{1}{s_2}=\frac{1}{s}$.
\end{theorem}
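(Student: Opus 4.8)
The plan is to establish this as the $\ell^r$-valued counterpart of Proposition \ref{prop:localization-lemma}, following the same template. Since the case $r\geq 1$ was treated in \cite{vv_BHT}, I focus on $\frac12<r<1$. First, by the restricted weak-type interpolation of Proposition \ref{prop:interpolation} (applied with the measure $\ci_{I_0}\,dx$ on the domain side), it suffices to prove, for all admissible triples $(s_1,s_2,s)$ in a neighbourhood of $(r_1,r_2,r)$ with $\frac1{s_1}+\frac1{s_2}=\frac1s$, the weak-type bound
\[
\big\|\big(\textstyle\sum_k|\Pi_{I_0}^{F,G,\tilde E}(f_k,g_k)|^r\big)^{1/r}\big\|_{s,\infty}\lesssim \big\|\Pi_{I_0}^{F,G,\tilde E}\big\|\cdot\|\one_{E_1}\ci_{I_0}\|_1^{1/s_1}\|\one_{E_2}\ci_{I_0}\|_1^{1/s_2}
\]
for restricted-type data $\big(\sum_k|f_k|^{r_1}\big)^{1/r_1}\leq\one_{E_1}$, $\big(\sum_k|g_k|^{r_2}\big)^{1/r_2}\leq\one_{E_2}$ with $E_1,E_2$ of finite measure, where $\big\|\Pi_{I_0}^{F,G,\tilde E}\big\|=(\sssize_{I_0}\one_F)^{1/s_1'-\epsilon}(\sssize_{I_0}\one_G)^{1/s_2'-\epsilon}(\sssize_{I_0}\one_{\tilde E})^{1/s-\epsilon}$.

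Next I would dualize the weak-$L^{s,\infty}$ quasinorm through $L^\tau$ with $\tau\leq r$ — not through $L^1$ — via Proposition \ref{prop:Lr-dualization}: dualizing through $L^1$ would leave the factor $(\sssize_{I_0}\one_{\tilde E})^{1-\epsilon}$, which is weaker than the required $(\sssize_{I_0}\one_{\tilde E})^{1/s-\epsilon}$ since the sizes are subunitary and $r<1$. Concretely, given a set $E_3$ of finite measure I would take $\tilde E_3:=E_3\setminus\tilde\Omega$ with $\tilde\Omega$ the exceptional set \eqref{eq:def-ex-set-local} (a major subset of $E_3$), and reduce to proving
\[
\big\|\big(\textstyle\sum_k|\Pi_{I_0}(f_k\one_F,g_k\one_G)|^r\big)^{1/r}\one_{\tilde E}\one_{\tilde E_3}\big\|_\tau\lesssim\big\|\Pi_{I_0}^{F,G,\tilde E}\big\|\,\|\one_{E_1}\ci_{I_0}\|_1^{1/s_1}\|\one_{E_2}\ci_{I_0}\|_1^{1/s_2}\,|E_3|^{\frac1\tau-\frac1s}.
\]
The role of this $L^\tau$-dualization is linearization: since $\tau\leq r<1$, $\|\cdot\|_\tau^\tau$ is subadditive and, by Fubini, $\big\|\big(\sum_k|a_k|^r\big)^{1/r}\one_{\tilde E_3}\big\|_\tau^\tau=\big\|\big(\sum_k|a_k|^r\big)^{\tau/r}\one_{\tilde E_3}\big\|_1\leq\sum_k\|a_k\one_{\tilde E_3}\|_\tau^\tau$, so the $\ell^r$-valued operator is replaced by a sum of scalar $\tau$-th powers (I would in fact take $\tau=r$, which makes this last step an identity).

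Then I would run the triple stopping time of Section \ref{subsection-stopping_times} dictated by the concentration of $\one_{E_1},\one_{E_2},\one_{\tilde E_3}$, decompose $\ii I(I_0)=\bigcup_d\bigcup_{n_1,n_2,n_3}\bigcup_{K\in\ii I^{n_1,n_2,n_3}}\ii I_d(K)$, and use subadditivity of $\|\cdot\|_\tau^\tau$ with the above to bound the left side by $\sum_k\sum_d\sum_{n_1,n_2,n_3}\sum_{K}\|\Pi_{\ii I_d(K)}(f_k\one_F,g_k\one_G)\one_{\tilde E}\one_{\tilde E_3}\|_\tau^\tau$. On each piece $\ii I_d(K)$ I would apply the scalar localized estimate of Proposition \ref{prop:localization-lemma} (or the sub-unit local estimate of Lemma \ref{lemma:sub-unit-local-est-paraprod}) to each index $k$; the point is that this produces genuine averages $\|f_k\ci_K\|_{r_1}$, $\|g_k\ci_K\|_{r_2}$ on the right, so that summing over $k$ by Hölder's inequality with the conjugate exponents $\frac{r_1}{r},\frac{r_2}{r}$ (here $\frac{r}{r_1}+\frac{r}{r_2}=1$) reassembles them into $\big\|\big(\sum_k|f_k|^{r_1}\big)^{1/r_1}\ci_K\big\|_{r_1}\leq\|\one_{E_1}\ci_K\|_{r_1}\sim\|\one_{E_1}\ci_K\|_1^{1/r_1}$ and likewise for $g$. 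The remaining summation over $K,d,n_1,n_2,n_3$ — trading subunitary powers of $\sssize_K\one_F,\sssize_K\one_G,\sssize_K\one_{\tilde E}$ against the stopping-time gains $\sssize_K\one_{E_1}\lesssim 2^{-n_1}$, $\sssize_K\one_{E_2}\lesssim 2^{-n_2}$, $\sssize_K\one_{\tilde E_3}\lesssim\min(2^{-n_3},2^{-Md})$, and invoking the packing estimate $\sum_{K\in\ii I^{n_1,n_2,n_3}}|K|\lesssim(2^{n_1}\|\one_{E_1}\ci_{I_0}\|_1)^{\gamma_1}(2^{n_2}\|\one_{E_2}\ci_{I_0}\|_1)^{\gamma_2}|E_3|^{\gamma_3}$ with $\gamma_1+\gamma_2+\gamma_3=1$ — would be carried out exactly as in Proposition \ref{prop:localization-lemma}, the only constraint being $\frac{\tau}{s_1}+\frac{\tau}{s_2}+1-\frac{\tau}{s}+\epsilon>1$, which holds for $\epsilon>0$. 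Assembling everything gives the weak-type bound with exponents $\frac1{s_1'},\frac1{s_2'},\frac1s$, and interpolation then yields the asserted strong-type estimate.

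The hard part, compared with the scalar Proposition \ref{prop:localization-lemma}, is keeping the two summations compatible: the per-$k$ localized estimates must output honest $L^{r_1}$- and $L^{r_2}$-averages of $f_k,g_k$ (not maximal-type sizes) so that the Hölder sum over $k$ closes and the vector-valued norms are reconstructed, while at the same time the size factors must be split so that their subunitary powers match $\frac1{s_1'}-\epsilon,\frac1{s_2'}-\epsilon,\frac1s-\epsilon$ and the gains $2^{-n_j}$ together with the $\tilde E_3$-decay dominate the packing estimate — all in the regime $\frac12<r<1$, where only $\|\cdot\|_r^r$ (not $\|\cdot\|_r$) is subadditive. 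It is precisely to make both halves of this balancing act possible that the $L^r$-dualization of Proposition \ref{prop:Lr-dualization}, rather than the classical $L^1$-dualization, is used.
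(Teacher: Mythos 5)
Your plan correctly reproduces the paper's treatment of the case $s\geq r$, but the theorem is claimed for all $\tfrac12<s<\infty$, and your commitment to dualizing through $L^\tau$ with $\tau=r$ breaks down when $s<r$. To see where: after running the stopping time, applying Proposition~\ref{prop:localization-lemma} per index $k$, summing over $k$ by H\"older, and using $\|\one_{E_1}\ci_I\|_{r_1}^r\sim(\sssize_I\one_{E_1})^{r/r_1}|I|^{r/r_1}$ (likewise for $E_2$), one reaches per piece a bound of the form
\[
\big(\sssize_I\one_{F\cap E_1}\big)^{r-\epsilon}\big(\sssize_I\one_{G\cap E_2}\big)^{r-\epsilon}\big(\sssize_I\one_{\tilde E\cap\tilde E_3}\big)^{1-\epsilon}|I|.
\]
To close, you must split the last factor as $(\sssize_{I_0}\one_{\tilde E})^{\tau/\tilde s-\epsilon}(\sssize_I\one_{\tilde E_3})^{1-\tau/\tilde s}$ so that the stopping-time gain $\sssize_I\one_{\tilde E_3}\lesssim 2^{-n_3}$ beats the packing term $2^{n_3\gamma_3}$. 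With $\tau=r$ and $\tilde s<r$ the residual exponent $1-r/\tilde s$ is negative, so this split is not available: the total exponent on $\sssize_{I_0}\one_{\tilde E}$ you can extract is at most $1-\epsilon<r/\tilde s-\epsilon$, and if one insists on the negative residual exponent, the $n_3$-geometric series diverges (both $|1-r/\tilde s|$ and $\gamma_3$ contribute with the wrong sign). The constraint you record, $\tfrac{\tau}{s_1}+\tfrac{\tau}{s_2}+1-\tfrac{\tau}{s}+\epsilon>1$, is trivially satisfied by H\"older scaling and therefore does \emph{not} capture the real obstruction, which is that the size-splitting requires $\tau\leq\tilde s$.

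The paper resolves this with a genuinely two-stage argument, which your proposal misses. Stage one is your plan with $\tau=r$, valid for $s\geq r$; applied to restricted-type data it gives the $\ii P^*$-type estimate $\big\|\big(\sum_k|\Pi_I^{F,G,\tilde E}(f_k,g_k)|^r\big)^{1/r}\big\|_r\lesssim(\sssize_I\one_F)^{1-\epsilon}(\sssize_I\one_G)^{1-\epsilon}(\sssize_I\one_{\tilde E})^{1/r-\epsilon}|I|^{1/r}$. Stage two, for $\tfrac12<s<r$, dualizes through $L^\tau$ with $\tau\leq s<r$ (so the size-splitting exponent $1-\tau/\tilde s$ stays nonnegative), and then cannot apply Proposition~\ref{prop:localization-lemma} per $k$ directly, because its output is an $L^r$-norm, not an $L^\tau$-norm. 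Instead, the paper uses the stage-one estimate as a black box on each stopping-time piece, together with the H\"older step $\big\|\big(\sum_k|\Pi_I^{F,G,\tilde E\cap\tilde E_3}(f_k,g_k)|^r\big)^{1/r}\big\|_\tau\lesssim\big\|\big(\sum_k|\Pi_I^{F,G,\tilde E\cap\tilde E_3}(f_k,g_k)|^r\big)^{1/r}\big\|_r\cdot\|\one_{\tilde E\cap\tilde E_3}\ci_I\|_{\tau_r}$ (where $\tfrac1\tau=\tfrac1r+\tfrac1{\tau_r}$) and the dyadic decomposition in the spirit of Lemma~\ref{lemma:sub-unit-local-est-paraprod} to replace $|\tilde E\cap\tilde E_3|^{1/\tau_r}$ by a localized $\sssize_I(\one_{\tilde E}\one_{\tilde E_3})^{1/\tau_r}|I|^{1/\tau_r}$. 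Your proposal needs to incorporate this second stage to prove the statement in its full range.
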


Even though Theorem \ref{thm:one-quasiBanach-Paraprod} can be regarded as a limiting case of Theorem \ref{thm:localized-one-quasiBanach-Paraprod}, we first give a proof of the former. It will be clear why this result is not necessarily sharp, and how it can be improved.

\begin{proof}[Proof of Theorem \ref{thm:one-quasiBanach-Paraprod}]
The proof is similar to the Banach-space case from \cite{vv_BHT}, with the exception that now we use $L^r$ dualization, as presented in Proposition \ref{prop:Lr-dualization}.

Fix $F, G, E$ sets of finite measure, and let $\lbrace f_k\rbrace_k, \lbrace g_k\rbrace_k$ be so that
\begin{equation}
\label{eq:cond-resticted-type-qB}
\big( \sum_k \lft f_k \rg^{r_1}  \big)^{1/{r_1}} \leq \one_F \quad \text{a. e.    and} \quad \big( \sum_k \lft g_k \rg^{r_2}  \big)^{1/{r_2}} \leq \one_G \qquad \text{a. e.}
\end{equation}

For simplicity, we assume that $\lft E \rg=1$, and we construct the set $\tilde{E}$ by removing the parts where $\ic{M}\left(\one_F \right)$ and $\ic{M}\left(\one_G \right)$ are big:
\begin{equation}
\label{def:exceptional-set}
\Omega:=\left\lbrace x : \ic{M}\left(\one_F \right)(x)> C \lft F \rg  \right\rbrace \cap \left\lbrace x : \ic{M}\left(\one_G \right)(x)> C \lft G \rg  \right\rbrace,
\end{equation}
and $\tilde{E}:= E \setminus \Omega$.

We assume that all the intervals $I \in \ii{I}$ have the property that 
\[
1+\frac{\dist(I , \Omega^c)}{\lft I \rg} \sim 2^d.
\]
This will allow us to have a better control over the maximal functions of $\one_F$ and $\one_G$, and in consequence over their sizes. It will be enough to prove
\begin{equation}
\label{eq:proof-quasi-banach-wanted}
\big\|\big( \sum_k \lft \Pi_{\ii{i}}\left( f_k, g_k \right) \rg^r \big)^{1/r} \cdot \one_{\tilde{E}} \big\|_{r} \lesssim 2^{-100d} \lft F \rg^{1/{\tilde s_1}} \lft G \rg^{1/{\tilde s_2}},
\end{equation}
for $\frac{1}{\tilde s_1}, \frac{1}{\tilde s_2}$ arbitrarily close to $ \frac{1}{r_1}$ and $\frac{1}{r_2}$ respectively.

As usual, we perform a triple stopping time through which we select the collections of intervals $\ds \ii{I}_{n_1},  \ii{I}_{n_2}$ and $ \ii{I}_{n_3}$
\begin{equation}
\label{eq:stopping-time_1}
I \in \ii{I}_{n_1}: \qquad 2^{-n_1-1} \leq \frac{1}{\lft I \rg} \int_{\rr{R}} \one_F(x) \cdot \ci_{I}^M dx \leq 2^{-n_1} \lesssim 2^d \lft F \rg. 
\end{equation} 
Similarly, for $\one_G$ and $\one_{\tilde{E}}$ we have
\begin{equation}
\label{eq:stopping-time_2}
I \in \ii{I}_{n_2}: \qquad 2^{-n_2-1} \leq \frac{1}{\lft I \rg} \int_{\rr{R}} \one_G(x) \cdot \ci_{I}^M dx \leq 2^{-n_1} \lesssim 2^d \lft G \rg, 
\end{equation} 
\begin{equation}
\label{eq:stopping-time_3}
I \in \ii{I}_{n_3}: \qquad 2^{-n_3-1} \leq \frac{1}{\lft I \rg} \int_{\rr{R}} \one_{\tilde{E}}(x) \cdot \ci_{I}^M dx \leq 2^{-n_3} \lesssim 2^{-Md}.
\end{equation} 

We denote $\ds \ii{I}^{n_1, n_2, n_3}:=\ii{I}_{n_1}\cap \ii{I}_{n_2} \cap \ii{I}_{n_3}$. Using the sub-additivity of $ \| \cdot \|_r^r$, we get that 
\begin{align*}
& \big\|\big( \sum_k \lft \Pi_{\ii{i}}\left( f_k, g_k \right) \rg^r \big)^{1/r} \cdot \one_{\tilde{E}} \big\|_{r}^r =\sum_k \big\|  \Pi\left( f_k, g_k \right) \cdot \one_{\tilde{E}} \big\|_r^r \\
&=\sum_{n_1, n_2, n_3} \sum_{I_0 \in \ii{I}^{n_1, n_2, n_3}} \big\|  \Pi_{\ii{I}\left( I_0 \right)}\left( f_k, g_k \right) \cdot \one_{\tilde{E}} \big\|_r^r.
\end{align*}

In fact, since the functions $f_k$ and $g_k$ are supported on $F$ and $G$ respectively, we have that $\ds \Pi_{\ii{I}\left( I_0 \right)}\left( f_k, g_k \right) \cdot \one_{\tilde{E}} =\Pi_{\ii{I}\left( I_0 \right)}\left( f_k \cdot \one_F, g_k \cdot \one_G \right) \cdot \one_{\tilde{E}}$, and Proposition \ref{prop:localization-lemma} implies
\begin{align*}
& \big\|\big( \sum_k \lft \Pi_{\ii{i}}\left( f_k, g_k \right) \rg^r \big)^{1/r} \cdot \one_{\tilde{E}} \big\|_{r}^r \\
& \lesssim \sum_{n_1, n_2, n_3} \sum_{I_0 \in \ii{I}^{n_1, n_2, n_3}} 2^{-n_1\left( \frac{r}{r_1'}-\epsilon \right)} 2^{-n_2\left( \frac{r}{r_2'} -\epsilon\right)} 2^{-n_3\left( 1-\epsilon \right)}  \sum_k \big\| f_k \cdot \ci_{I_0} \big\|_{r_1}^{r} \cdot \big\| g_k \cdot \ci_{I_0} \big\|_{r_2}^{r} \\
&\lesssim \sum_{n_1, n_2, n_3} \sum_{I_0 \in \ii{I}^{n_1, n_2, n_3}} 2^{-n_1\left( \frac{r}{r_1'}-\epsilon \right)} 2^{-n_2\left( \frac{r}{r_2'} -\epsilon\right)} 2^{-n_3\left( 1-\epsilon \right)} \big\| \one_F \cdot \ci_{I_0} \big\|_{r_1}^{r} \big\| \one_G \cdot \ci_{I_0} \big\|_{r_2}^{r}.
\end{align*}
Above, we used H\"{o}lder's inequality, together with property \eqref{eq:cond-resticted-type-qB}. Because of the stopping time properties \eqref{item-1} - \eqref{item-4}, we can estimate the norms of $\one_F \cdot \ci_{I_0}$ and $\one_G \cdot \ci_{I_0}$ by:
\[
\big\| \one_F \cdot \ci_{I_0} \big\|_{r_1}^r \lesssim \left( 2^{-n_1} \lft I_0 \rg \right)^{\frac{r}{r_1}}, \quad \text{and }   \quad      \big\| \one_G \cdot \ci_{I_0} \big\|_{r_2}^r \lesssim \left(  2^{-n_2}\lft I_0 \rg \right)^{\frac{r}{r_2}}.
\]
In this way, the estimate we obtain for the $\ell^r$-valued paraproduct is
\begin{align*}
& \big\|\big( \sum_k \lft \Pi_{\ii{i}}\left( f_k, g_k \right) \rg^r \big)^{1/r} \cdot \one_{\tilde{E}} \big\|_{r}^r \lesssim \sum_{n_1, n_2, n_3} \sum_{I_0 \in \ii{I}^{n_1, n_2, n_3}} 2^{-n_1\left( r-\epsilon \right)} 2^{-n_2\left( r -\epsilon\right)} 2^{-n_3\left( 1-\epsilon \right)} \lft I_0 \rg.
\end{align*}

The sum $\ds \sum_{I_0 \in \ii{I}^{n_1, n_2, n_3}} \lft I_0 \rg$ can be bounded above by
\[
2^{n_1} \lft F \rg, \qquad 2^{n_2} \lft G \rg, \qquad \text{ and     } 2^{n_3}.
\]
Hence, if $\gamma_1, \gamma_2, \gamma_3$ are so that $0 \leq \gamma_1, \gamma_2, \gamma_3 \leq 1$ and $\gamma_1+\gamma_2+\gamma_3=1$, 
\[
\sum_{I_0 \in \ii{I}^{n_1, n_2, n_3}} \lft I_0 \rg \lesssim \left( 2^{n_1} \lft F \rg  \right)^{\gamma_1} \left( 2^{n_2} \lft G \rg  \right)^{\gamma_2} \left( 2^{n_3} \right)^{\gamma_3}.
\]


Since the sizes are all sub-unitary, the equations \eqref{eq:stopping-time_1}, \eqref{eq:stopping-time_2} and \eqref{eq:stopping-time_3} imply that 
\begin{align}
\label{eq:qB-one-loss}
& \big\|\big( \sum_k \lft \Pi_{\ii{i}}\left( f_k, g_k \right) \rg^r \big)^{1/r} \cdot \one_{\tilde{E}} \big\|_{r}^r \lesssim  \sum_{n_1, n_2, n_3}  2^{-n_1\left( \frac{r}{\tilde s_1} -\gamma_1 \right)} 2^{-n_2\left( \frac{r}{\tilde s_2} -\gamma_2 \right)} 2^{-n_3\left(1-\epsilon -\gamma_3\right)} \lft F \rg^{\gamma_1}  \lft G \rg^{\gamma_2}.
\end{align}
The series above converges provided $\ds \frac{1}{\tilde s_1}+\frac{1}{\tilde s_2}+\frac{1-\epsilon}{r} >\frac{1}{r}$, which is true for $( \tilde s_1, \tilde s_2, \tilde s)$ in a neighborhood of $\ds ( r_1, r_2, r)$.

We obtain that
\begin{align*}
 \big\|\big( \sum_k \lft \Pi_{\ii{i}}\left( f_k, g_k \right) \rg^r \big)^{1/r} \cdot \one_{\tilde{E}} \big\|_{r}^r & \lesssim \left( 2^d \lft F \rg  \right)^{\left( \frac{r}{\tilde s_1} -\gamma_1 \right)} \cdot \left( 2^d \lft G \rg  \right)^{\left( \frac{r}{\tilde s_2} -\gamma_2 \right)} 2^{-Md \left(\epsilon -\gamma_3\right)} \lft F \rg^{\gamma_1}  \lft G \rg^{\gamma_2},
\end{align*}
which is exactly \eqref{eq:proof-quasi-banach-wanted}.
\end{proof}

Now we proceed with the localized version:

\begin{proof}[Proof of Theorem \ref{thm:localized-one-quasiBanach-Paraprod}]~\\
The proof consists of two steps:
\begin{itemize}
\item[i)] First prove Theorem \ref{thm:localized-one-quasiBanach-Paraprod} in the case $s \geq r$.
\item[ii)]For $\frac{1}{2}<s <r$, we make use of the result corresponding to $s=r$, which was proved in the previous step.
\end{itemize}

\textbf{\emph{i)The case $s \geq r$:}}~\\
The result in this case follows from a careful examination of the proof of Theorem \ref{thm:one-quasiBanach-Paraprod}. We noticed in \eqref{eq:qB-one-loss} that we lose some information by changing the exponents of the subunitary sizes from $r$ to $\frac{r}{\tilde s_j}$, where $1 < \tilde s_j< \infty$. It is this point that we will modify in order to obtain the sharper estimate in Theorem \ref{thm:localized-one-quasiBanach-Paraprod}.

This time, the sets $F, G, \tilde E$ are fixed, and we have two sequences of functions $\lbrace f_k \rbrace_k$ and $\lbrace g_k \rbrace_k$ satisfying 
\begin{equation}
\label{eq:rest-case_s}
\big( \sum_k \lft f_k \rg^{r_1}  \big)^{1/{r_1}} \leq \one_{E_1}, \qquad \big( \sum_k \lft g_k \rg^{r_2}  \big)^{1/{r_2}} \leq \one_{E_2},
\end{equation}
where $E_1$ and $E_2$ are sets of finite measure.

Following a variant of Proposition \ref{prop:interpolation} for general measures, it will be enough to prove
\begin{equation}
\label{eq:restr-one-vv-loc}
\big\| \big( \sum_k \lft \Pi_{I_0}^{F, G, \tilde E}( f_k, g_k) \rg^{r} \big)^{1/{r}}  \big\|_{\tilde s , \infty} \lesssim \big\| \Pi_{I_0}^{F, G, \tilde E} \big\| \cdot \big\| \one_{E_1} \cdot \ci_{I_0} \big\|_{\tilde s_1} \big\|  \one_{E_2} \cdot \ci_{I_0} \big\|_{\tilde s_2},
\end{equation}
where the operatorial norm is 
\[
\big\| \Pi_{I_0}^{F, G, \tilde E} \big\| \sim \big(  \sssize_{I_0} \one_F  \big)^{\frac{1}{\tilde s_1'}-\epsilon} \cdot \big(  \sssize_{I_0} \one_G  \big)^{\frac{1}{\tilde s_2'}-\epsilon} \cdot \big(  \sssize_{I_0} \one_{\tilde{E}}  \big)^{\frac{1}{r}-\epsilon}
\]
and $( \tilde s_1, \tilde s_2, \tilde s)$ is an admissible tuple lying in an arbitrarily small neighborhood of $( r_1, r_2, r)$. We note that the conditions in \eqref{eq:rest-case_s} imply that 
\[
\Pi_{I_0}^{F, G, \tilde E}( f_k, g_k)=\Pi_{I_0}^{F \cap E_1, G\cap E_2, \tilde E}( f_k, g_k).
\]

We dualize the $\| \cdot \|_{\tilde s, \infty}$ norm through $L^r$, just like in the proof of Theorem \ref{thm:one-quasiBanach-Paraprod}: given $E_3$ a set of finite measure, we define the exceptional set 
\[
\tilde \Omega:=\left\lbrace x:  \ic{M}\left( \one_{E_1} \cdot \ci_{I_0}  \right)(x) > C \frac{\| \one_{E_1} \cdot \ci_{I_0} \|_1}{\lft  E_3\rg}  \right\rbrace \cup \left\lbrace x:  \ic{M}\left( \one_{E_2} \cdot \ci_{I_0}  \right)(x) > C \frac{\| \one_{E_2} \cdot \ci_{I_0} \|_1}{\lft  E_3\rg}  \right\rbrace,
\]
and set $\tilde E_3:= E_3 \setminus \tilde \Omega$. Then $\tilde E_3$ is a major subset of $E_3$ and
\[
\big\| \big( \sum_k \lft \Pi_{I_0}^{F, G, \tilde E}( f_k, g_k) \rg^{r} \big)^{1/{r}}  \big\|_{\tilde s , \infty} \sim \big\| \big( \sum_k \lft \Pi_{I_0}^{F, G, \tilde E}( f_k, g_k) \rg^{r} \big)^{1/{r}} \cdot \one_{\tilde E_3}  \big\|_r \cdot \lft E_3 \rg^{\frac{1}{\tilde s}-\frac{1}{r}}.
\]
Since we are in the case $r<1$, $\| \cdot \|_r^r$ is subadditive and we have
\begin{align*}
&\big\| \big( \sum_k \lft \Pi_{I_0}^{F, G, \tilde E}( f_k, g_k) \rg^{r} \big)^{1/{r}} \cdot \one_{\tilde E_3}  \big\|_r^r \lesssim  \sum_{\bar n_1, \bar n_2, \bar n_3} \sum_{I \in \ii{I}^{\bar n_1, \bar n_2, \bar n_3}} \big( \sssize_I \one_{F} \cdot \one_{E_1} \big)^{\frac{r}{r_1'}-\epsilon} \\
& \cdot \big( \sssize_I \one_{G} \cdot \one_{E_2} \big)^{\frac{r}{r_2'}-\epsilon} \big( \sssize_I \one_{\tilde E} \cdot \one_{\tilde E_3} \big)^{1-\epsilon} \sum_k \| f_k \cdot \ci_{I}  \|_{r_1}^{r} \| g_k \cdot \ci_{I}  \|_{r_2}^{r}\\
&\lesssim \sum_{\bar n_1, \bar n_2, \bar n_3} \sum_{I \in \ii{I}^{\bar n_1, \bar n_2, \bar n_3}} \big( \sssize_I \one_{F} \cdot \one_{E_1} \big)^{r-\epsilon} \cdot \big( \sssize_I \one_{G} \cdot \one_{E_2} \big)^{r-\epsilon} \big( \sssize_I \one_{\tilde E} \cdot \one_{\tilde E_3} \big)^{1-\epsilon} \cdot \lft I \rg.
\end{align*}
where we used \eqref{eq:localized-paraproduct} from Proposition \ref{prop:localization-lemma}, together with H\"older's inequality and the trivial estimate $\| \one_{F\cap E_1} \cdot \ci_I \|_1 \leq (\sssize_{I} \one_{F\cap E_1}) \cdot | I |$. Here the collections $\ii I^{\bar n_1, \bar n_2, \bar n_3}$ of intervals are defined in the same way as in Section \ref{subsection-stopping_times}.

From the expression $\ds \big( \sssize_I \one_{F} \cdot \one_{E_1} \big)^{r-\epsilon}$, a part will the used to rebuild the norms of $\one_{E_1}$ and $\one_{E_2}$, while the rest becomes part of the sharp operatorial norm:
\[
\big( \sssize_I \one_{F} \cdot \one_{E_1} \big)^{r-\epsilon} \lesssim \big( \sssize_{I_0} \one_{F} \big)^{\frac{r}{\tilde s_1'}-\epsilon} \cdot \big( \sssize_I \one_{E_1} \big)^{\frac{r}{\tilde s_1} }.
\]
Similar estimates are used for $\one_G \cdot \one_{E_2}$ and $\one_{\tilde E} \cdot \one_{\tilde E_3}$:
\begin{align*}
\big\| \big( \sum_k \lft \Pi_{I_0}^{F, G, \tilde E}( f_k, g_k) \rg^{r} \big)^{1/{r}} \cdot \one_{\tilde E_3}  \big\|_r^r &\lesssim 
\big( \sssize_{I_0} \one_{F} \big)^{\frac{r}{\tilde s_1'}-\epsilon} \cdot  \big( \sssize_{I_0} \one_{G} \big)^{\frac{r}{\tilde s_2'}-\epsilon} \cdot  \big( \sssize_{I_0} \one_{\tilde E} \big)^{\frac{r}{\tilde s}-\epsilon} \\
& \cdot \sum_{\bar n_1, \bar n_2, \bar n_3} \sum_{I \in \ii{I}^{\bar n_1, \bar n_2, \bar n_3}} 2^{- \frac{ n_1r}{\tilde s_1}} \cdot 2^{-\frac{n_2 r}{\tilde s_2}} 2^{-n_3 \left(1 -\frac{r}{\tilde s} +\epsilon   \right)} \lft I \rg.
\end{align*}
The last part adds up exactly to $\ds \| \one_{E_1} \cdot \ci_{I_0} \|_1^\frac{r}{\tilde s_1}  \| \one_{E_2} \cdot \ci_{I_0} \|_1^{\frac{r}{\tilde s_2}} \lft E_3 \rg^{1- \frac{r}{ \tilde{s}}}$, which proves \eqref{eq:restr-one-vv-loc}.

\textbf{\emph{ii) The case $s < r$:}}~\\
This case uses an extra step. The interval $I_0$ and the sets $F, G, \tilde E, E_1, E_2, E_3$ and $\tilde E_3$ are define as before; the only difference is that now we dualize through $L^\tau$, where $\tau <s$:
\[
\big\| \big( \sum_k \lft \Pi_{I_0}^{F, G, \tilde E}( f_k, g_k) \rg^{r} \big)^{1/{r}}  \big\|_{\tilde s , \infty} \sim \big\| \big( \sum_k \lft \Pi_{I_0}^{F, G, \tilde E}( f_k, g_k) \rg^{r} \big)^{1/{r}} \cdot \one_{\tilde E_3} \big\|_\tau \cdot \lft E_3 \rg^{\frac{1}{\tilde{s}}-\frac{1}{\tau}}.
\]
We use the monotonicity and the  subadditivity of $\| \cdot \|_\tau^\tau$:
\begin{align*}
\big\| \big( \sum_k \lft \Pi_{I_0}^{F, G, \tilde E}( f_k, g_k) \rg^{r} \big)^{1/{r}} \cdot \one_{\tilde E_3} \big\|_\tau^\tau \lesssim \sum_{\bar n_1, \bar n_2, \bar n_3} \sum_{I \in \ii{I}^{\bar n_1, \bar n_2, \bar n_3}} \big\| \big( \sum_k \lft \Pi_{I}^{F, G, \tilde E}( f_k, g_k) \rg^{r} \big)^{1/{r}} \cdot \one_{\tilde E_3} \big\|_\tau^\tau ,
\end{align*}
where the collection $\ii{I}_{\bar n_1}, \ii{I}_{\bar n_2}, \ii{I}_{\bar n_3}$ are defined by the usual triple stopping time. Now we use a trick similar to the one in \eqref{eq:trick_1}: since $\tau<s<r$, we can write $\ds \frac{1}{\tau} =\frac{1}{r}+\frac{1}{\tau_r}$, where $\tau_r >0$. We have, due to H\"older, that 
\[
\big\| \big( \sum_k \lft \Pi_{I}^{F, G, \tilde E \cap \tilde E_3}( f_k, g_k) \rg^{r} \big)^{1/{r}} \big\|_\tau \lesssim \big\| \big( \sum_k \lft \Pi_{I}^{F, G, \tilde E \cap \tilde E_3}( f_k, g_k) \rg^{r} \big)^{1/{r}} \big\|_r \cdot \|  \one_{\tilde E} \cdot \one_{\tilde E_3} \cdot \ci_{I}  \|_{\tau_r}.
\]

On the right hand side, we initially have $\lft \tilde E \cap \tilde E_3 \rg^{1/{r_\tau}}$, but we will soon see that $\sssize_{I}(\one_{\tilde E} \cdot \one_{\tilde E_3})$ appears, accounting for the decay when $\tilde E \cap \tilde E_3$ is supported away from $I$. This step was explained in more detail in Proposition \ref{prop:Localization for local L^1 paraproducts}.

Using the \emph{sharp} estimate from Theorem \ref{thm:localized-one-quasiBanach-Paraprod} in the case $s=r$, we have
{\fontsize{10}{10}
\begin{align*}
&\big\| \big( \sum_k \lft \Pi_{I}^{F, G, \tilde E \cap \tilde E_3}( f_k, g_k) \rg^{r} \big)^{1/{r}} \big\|_\tau \lesssim \big( \sssize_{I} \one_{F \cap E_1} \big)^{1-\epsilon} \big( \sssize_{I} \one_{G \cap E_2} \big)^{1-\epsilon} \big( \sssize_{I} \one_{\tilde E \cap \tilde E_3}  \big)^{\frac{1}{r}+\frac{1}{\tau_r}-\epsilon} \lft I \rg^{\frac{1}{r}+\frac{1}{\tau_r}}.
\end{align*}}

Here we actually use the result from the previous step for the operator  $\ds \Pi_{I}^{F \cap E_1, G \cap E_2, \tilde E \cap \tilde E_3}$, which coincides with $\Pi_{I}^{F, G, \tilde E \cap \tilde E_3}$ when applied to functions $f$ supported on $E_1$ and functions $g$ supported on $E_2$.

From here on, the proof is identical to the previous case $s \geq r$, and we have
\begin{align*}
&\big\| \big( \sum_k \lft \Pi_{I_0}^{F, G, \tilde E \cap \tilde E_3}( f_k, g_k) \rg^{r} \big)^{1/{r}}\big\|_\tau^\tau \lesssim \big( \sssize_{I_0} \one_F \big)^{\frac{\tau}{\tilde s_1'} -\epsilon} \big( \sssize_{I_0} \one_G \big)^{\frac{\tau}{\tilde s_2'} -\epsilon} \big( \sssize_{I_0} \one_{\tilde E} \big)^{\frac{\tau}{\tilde s} -\epsilon} \\
& \qquad \cdot \sum_{\bar n_1, \bar n_2, \bar n_3} \sum_{I \in \ii{I}^{\bar n_1, \bar n_2, \bar n_3}} 2^{-\frac{n_1 \tau}{\tilde s_1}} \cdot 2^{\frac{-n_2 \tau}{\tilde s_2} } 2^{-n_3 \left(1 -\frac{\tau}{\tilde s} +\epsilon   \right)} \lft I \rg \\
&\lesssim \big( \sssize_{I_0} \one_F \big)^{\frac{\tau}{\tilde s_1'} -\epsilon} \big( \sssize_{I_0} \one_G \big)^{\frac{\tau}{\tilde s_2'} -\epsilon} \big( \sssize_{I_0} \one_{\tilde E} \big)^{\frac{\tau}{\tilde s} -\epsilon} \| \one_{E_1} \cdot \ci_{I_0}  \|_1^{\frac{\tau}{\tilde s_1}} \| \one_{E_2} \cdot \ci_{I_0}  \|_1^{\frac{\tau}{\tilde s_2} } \lft E_3\rg^{1-\frac{\tau}{\tilde{s}}},
\end{align*}
for any admissible tuple $( \tilde s_1, \tilde s_2, \tilde s)$ in a neighborhood of $( s_1, s_2, s)$. We don't include all the details because they are identical to the previous case $s \geq r$.
\end{proof}
\subsection{The case $p=\infty$ or $q=\infty$}~\\
We need to treat separately the cases when $p=\infty$ or $q=\infty$. Commonly, these are handled with the help of the two adjoint operators, but here we work with quasi-Banach valued bilinear operators, and cannot consider the associated trilinear form. We will only prove the case $q=\infty$ for Theorem \ref{thm:localized-one-quasiBanach-Paraprod}, since Theorem \ref{thm:one-quasiBanach-Paraprod} can be seen as a limiting case of the former. Also, the case $p=\infty$ or or $q=\infty$ for Theorems \ref{thm:main-thm-paraprod} or \ref{thm:main-thm-BHT} can be treated similarly and we will not elaborate on the details.

\begin{proof}[Proof of Theorem \ref{thm:localized-one-quasiBanach-Paraprod} for $q=\infty$]

Here we want to prove 
\begin{align*}
\big\| \big( \sum_k \big| \Pi_{I_0}^{F,G, \tilde E} (f_k, g_k) \big|^r   \big)^{1/r}   \big\|_s  &\lesssim \big( \sssize_{I_0} \one_F \big)^{\frac{1}{s'}-\epsilon} \big( \sssize_{I_0} \one_G \big)^{1-\epsilon} \big( \sssize_{I_0} \one_{\tilde E} \big)^{\frac{1}{s}-\epsilon}  \\
&\cdot \big\| \big(  \sum_k \lft  f_k \rg^{r_1}  \big)^{1/{r_1}} \cdot \ci_{I_0} \big\|_s \cdot \big\| \big(  \sum_k \lft  g_k \rg^{r_2}  \big)^{1/{r_2}} \cdot \ci_{I_0} \big\|_\infty.
\end{align*}

In order to achieve this, we will use a linear version of Proposition \ref{prop:interpolation}. That is, we consider $\lbrace g_k \rbrace_k$ to be a fixed sequence of functions so that $\ds  \big(  \sum_k \lft  g_k \rg^{r_2}  \big)^{1/{r_2}}  \in L^\infty$, and it will be enough to prove
\begin{equation}
\label{eq:restr-lin-infty}
\big\| \big( \sum_k \big| \Pi_{I_0}^{F,G, \tilde E} (f_k, g_k) \big|^r   \big)^{1/r}   \big\|_{\tilde s, \infty} \lesssim \| \Pi_{I_0}^{F, G, \tilde E} \| \| \one_{E_1} \cdot \ci_{I_0} \|_{\tilde s} \cdot \big\| \big(  \sum_k \lft  g_k \rg^{r_2}  \big)^{1/{r_2}} \cdot \ci_{I_0} \big\|_\infty,
\end{equation}
where, this time, $\big(  \sum_k \lft  f_k \rg^{r_1}  \big)^{1/{r_1}} \leq \one_{E_1}$, and 
\[
\| \Pi_{I_0}^{F, G, \tilde E} \|  := \big( \sssize_{I_0} \one_F \big)^{\frac{1}{\tilde s'}-\epsilon} \big( \sssize_{I_0} \one_G \big)^{1-\epsilon} \big( \sssize_{I_0} \one_{\tilde E} \big)^{\frac{1}{\tilde s}-\epsilon}.
\]
The idea in this case is to isolate the $L^\infty$ norm of $\ds \big(  \sum_k \lft  g_k \rg^{r_2}  \big)^{1/{r_2}}$; in other words, we will not use the functions $\lbrace g_k \rbrace$ in the stopping time. The exceptional set is defined as
\[
\tilde \Omega=\left\lbrace x: \ic M \left( \one_{E_1} \cdot \ci_{I_0} \right) >C \frac{\| \one_{E_1} \cdot \ci_{I_0}  \|_1}{|E_3|}  \right\rbrace, 
\] 
and $\tilde E_3:=E_3 \setminus \tilde \Omega$. (As usual, we will dualize through $L^r$, and for any given $E_3$, we construct $\tilde E_3$ a major subset...)

The stopping time will differ from the general one described in Section \ref{subsection-stopping_times} in the sense that it will be a double stopping time involving only level sets of $\one_{F \cap E_1}$ and $\one_{\tilde E \cap \tilde E_3}$. The rest will be analogous to the proof of Theorem \ref{thm:localized-one-quasiBanach-Paraprod}.
\begin{align*}
&\big \|  \big( \sum_k \lft \Pi_{I_0}^{F, G, \tilde E}(f_k, g_k)\rg^r   \big)^{1/r} \cdot \one_{\tilde E_3} \big\|_r^r=\sum_k \big \|  \Pi_{I_0}^{F, G, \tilde E}(f_k, g_k) \cdot \one_{\tilde E_3} \big\|_r^r \\
& \lesssim \sum_{n_1, n_3} \sum_{I \in \ii I^{n_1, n_3}} \sum_k \big \|  \Pi_{I}^{F, G, \tilde E}(f_k, g_k) \cdot \one_{\tilde E_3} \big\|_r^r \\
& \lesssim \sum_{n_1, n_3} \sum_{I \in \ii I^{n_1, n_3}} \big( \sssize_{I} \one_{F \cap E_1} \big)^{\frac{r}{r_1'}-\epsilon} \big( \sssize_{I} \one_G \big)^{\frac{r}{r_2'}-\epsilon} \big( \sssize_{I} \one_{\tilde E \cap \tilde E_3} \big)^{1-\epsilon} \sum_k \| f_k \cdot \one_F\cdot \ci_I\|_{r_1}^r \cdot \| g_k  \cdot \one_G\cdot \ci_I\|_{r_2}^r.
\end{align*}
Above, we used the subadditivity of $\|  \cdot\|_r^r$, as well as the local estimate proved in Proposition \ref{prop:localization-lemma}. For the term on the most right, we use H\"older to get
\begin{align*}
&\sum_k \| f_k \cdot \one_F \cdot \ci_I\|_{r_1}^r \cdot \| g_k \cdot \one_G \cdot \ci_I\|_{r_2}^r \lesssim \big\| \big( \sum_k \lft f_k \cdot \one_F \rg^{r_1} \big)^{1/{r_1}} \cdot \ci_{I}\big\|_{r_1}^r \cdot \big\| \big( \sum_k \lft g_k \cdot \one_{G}\rg^{r_2} \big)^{1/{r_2}} \cdot \ci_{I}\big\|_{r_2}^r \\
&\qquad \lesssim \| \one_{F \cap E_1} \cdot \ci_I \|_{r_1}^r \cdot \big\| \big(  \sum_k \lft  g_k \rg^{r_2}  \big)^{1/{r_2}} \cdot \ci_{I_0} \big\|_\infty^r \cdot  \| \one_{G} \cdot \ci_I \|_{r_2}^r.
\end{align*}

The estimate \eqref{eq:restr-lin-infty} reduces to proving 
\[
\sum_{n_1, n_3} \sum_{I \in \ii I^{n_1, n_3}} \big( \sssize_I \one_{E_1}  \big)^{\frac{r}{\tilde s}} \big( \sssize_I \one_{\tilde E_3}  \big)^{1-\frac{r}{\tilde s} +\epsilon} |I| \lesssim \| \one_{E_1} \cdot \ci_{I_0}  \|_1^{\frac{r}{\tilde s}} |E_3|^{1-\frac{r}{\tilde s}}.
\]
Here we need to remember the properties achieved through the double stopping time: if $I \in \ii I ^{n_1, n_3}$
\[
 \sssize_I \one_{E_1} \leq 2^{-n_1} \lesssim 2^d \frac{ \| \one_{E_1} \cdot \ci_{I_0}  \|_1}{|E_3|}, \quad \sssize_I \one_{\tilde E_3} \leq 2^{-n_3} \lesssim 2^{-Md}
\]
and also
\[
\sum_{I \in \ii I^{n_1, n_3}} |I| \lesssim \left( 2^{n_1}  \| \one_{E_1} \cdot \ci_{I_0}  \|_1 \right)^{\gamma_1} |E_3|^{ 1- \gamma_1},
\]
for any $0 < \gamma_1<1$.

All of the above imply
\begin{align*}
&\sum_{n_1, n_3} \sum_{I \in \ii I^{n_1, n_3}} \big( \sssize_I \one_{E_1}  \big)^{\frac{r}{\tilde s}} \big( \sssize_I \one_{\tilde E_3}  \big)^{1-\frac{r}{\tilde s} +\epsilon} |I| \\
&\lesssim \sum_{n_1, n_3} 2^{-n_1\left(\frac{r}{\tilde s} -\gamma_1 \right)} 2^{-n_3 \left( 1-\frac{r}{\tilde s} +\epsilon  -1+ \gamma_1 \right)} \cdot  \| \one_{E_1} \cdot \ci_{I_0}  \|_1 ^{\gamma_1} \cdot |E_3|^{ 1- \gamma_1} \\
&\lesssim \left( 2^d \frac{ \| \one_{E_1} \cdot \ci_{I_0}  \|_1}{|E_3|}  \right)^{\frac{r}{\tilde s} -\gamma_1} 2^{-\tilde M d}  \| \one_{E_1} \cdot \ci_{I_0}  \|_1^{\gamma_1} \cdot |E_3|^{ 1- \gamma_1} \\
&\lesssim \| \one_{E_1} \cdot \ci_{I_0}  \|_1^{\frac{r}{\tilde s} } |E_3|^{1-\frac{r}{\tilde s}},
\end{align*}
which is precisely what we wanted.

This ends the proof for the case when one of $p$ or $q$ is $\infty$, which is also necessary for the proof of multiple vector-valued estimates when $L^\infty$(or $\ell^\infty$) spaces are involved (i.e. some $r^j_1=\infty$).
\end{proof}

\section{Multiple Vector-Valued Inequalities}
\label{sec:multiple vector-valued inequalities}

We prove the general Theorem \ref{thm:main-thm-paraprod} by induction. In fact, since the sizes are subunitary, it will be enough to prove its localized version:

\begin{theorem}
\label{thm:genral_case}
Let $I_0$ be a fixed dyadic interval, and $F, G, \tilde E$ sets of finite measure. $R^n=\left( r^1, \ldots r^n \right)$ is an $n$-tuple containing at least one index $r^j<1$, and $R^n_1, R^n_2$ are $n$-tuples satisfying component-wise $1< r^j_i \leq \infty, \frac{1}{2}<r^j<\infty$, $\ds \frac{1}{r^j_1}+\frac{1}{r^j_2}=\frac{1}{r^j}$. Then the localized paraproduct $\Pi_{I_0}^{F, G, \tilde E}$ defined in \eqref{eq-def-loc-paraprod}, satisfies the following estimates:
{\fontsize{10}{10}
\begin{equation}
\label{eq:general-localized-ind} \tag*{$\ii{P}\left( n\right):$}
\big\|   \big\|  \Pi_{I_0}^{F, G, \tilde E}(\vec f, \vec g)   \big\|_{L^{R^n}}   \big\|_{s} \lesssim \big( \sssize_{I_0} \one_F \big)^{\frac{1}{p'}-\epsilon} \big( \sssize_{I_0} \one_G \big)^{\frac{1}{q'}-\epsilon} \big( \sssize_{I_0} \one_{\tilde E} \big)^{\frac{1}{s}-\epsilon} 
\big\| \| \vec f  \|_{L^{R^n_1}} \cdot \ci_{I_0} \big\|_p \big\| \big\| \vec g  \big\|_{L^{R^n_2}} \cdot \ci_{I_0} \big\|_q,
\end{equation}}
for any $p, q, s$ such that $\ds \frac{1}{p}+\frac{1}{q}=\frac{1}{s}$, $1<p, q \leq \infty$ and $\dfrac{1}{2}<s < \infty$.
\end{theorem}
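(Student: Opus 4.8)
I would prove Theorem~\ref{thm:genral_case} by induction on the depth $n$, the base case $\ii{P}(0)$ being the scalar localization estimate of Proposition~\ref{prop:localization-lemma} (which itself rests on the size-only estimate $\ii{P}^*(0)$, Lemma~\ref{lemma:sub-unit-local-est-paraprod}). For the inductive step I would split off the first vector coordinate, writing $R^n=(r^1,\widetilde R^{n-1})$ with $\widetilde R^{n-1}=(r^2,\ldots,r^n)$ and $R^n_i=(r^1_i,\widetilde R^{n-1}_i)$, and assume both $\ii{P}(n-1)$ and its companion $\ii{P}^*(n-1)$ for every dyadic interval and every admissible triple of Lebesgue exponents --- falling back on the Banach-valued estimates of \cite{vv_BHT} when $\widetilde R^{n-1}$ contains no quasi-Banach exponent, in which case $r^1<1$ necessarily.

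By Proposition~\ref{prop:interp-multi} (applied with the measures $\ci_{I_0}\,dx$) it is enough to prove the restricted weak-type version of $\ii{P}(n)$: for finite-measure sets $E_1,E_2$ and $\vec f,\vec g$ with $\|\vec f(x)\|_{L^{R^n_1}}\le\one_{E_1}(x)$, $\|\vec g(x)\|_{L^{R^n_2}}\le\one_{E_2}(x)$, the weak-$L^{\tilde s}$ quasinorm of $\|\Pi_{I_0}^{F,G,\tilde E}(\vec f,\vec g)\|_{L^{R^n}}$ is $\lesssim\|\Pi_{I_0}^{F,G,\tilde E}\|\cdot\|\one_{E_1}\ci_{I_0}\|_{\tilde s_1}\|\one_{E_2}\ci_{I_0}\|_{\tilde s_2}$ for admissible $(\tilde s_1,\tilde s_2,\tilde s)$ near $(p,q,s)$. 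I would fix $\tau$ with $\tau<s$ and $\tau\le\min_{1\le j\le n}r^j$, dualize that weak quasinorm through $L^\tau$ via Proposition~\ref{prop:Lr-dualization} (producing an exceptional set $\tilde\Omega$ and a major subset $\tilde E_3=E_3\setminus\tilde\Omega$ as in Section~\ref{subsection-stopping_times}), then run the triple stopping time of Section~\ref{subsection-stopping_times} on the concentrations of $\one_{E_1},\one_{E_2},\one_{\tilde E_3}$. Since $\|\cdot\|_{L^{R^n}}^{r^{j_0}}$ --- hence $\|\cdot\|_{L^{R^n}}^\tau$ --- is subadditive by Proposition~\ref{prop:reorder}, this decomposes $\|\,\|\Pi_{I_0}^{F,G,\tilde E}(\vec f,\vec g)\|_{L^{R^n}}\,\one_{\tilde E_3}\|_\tau^\tau$ into a sum over the stopping intervals $K$ of the local pieces $\|\,\|\Pi_{\ii I_d(K)}^{F,G,\tilde E}(\vec f,\vec g)\|_{L^{R^n}}\,\one_{\tilde E_3}\|_\tau^\tau$, reducing everything to a sharp per-$K$ estimate.

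On each $K$ the idea is to peel off $r^1$: because $\tau\le r^1$, Minkowski moves the outer $L^\tau_x$ inside the $L^{r^1}_{w_1}$ norm, leaving for fixed $w_1\in\ii W_1$ the depth-$(n-1)$ quantity $\|\,\|\Pi_{\ii I_d(K)}^{F,G,\tilde E}(\vec f(\cdot,w_1,\cdot),\vec g(\cdot,w_1,\cdot))\|_{L^{\widetilde R^{n-1}}}\,\one_{\tilde E_3}\|_{L^\tau_x}$. To this I would apply, in order: H\"older in $x$ passing from $L^\tau_x$ to $L^{\tilde s}_x$ (decomposing $\one_{\tilde E}$ by distance to $K$ to recover the $\ci_K$ decay, exactly as in Lemma~\ref{lemma:sub-unit-local-est-paraprod} and Proposition~\ref{prop:Localization for local L^1 paraproducts}); the induction hypothesis $\ii{P}(n-1)$ (and, where the sharp operatorial norm is needed, $\ii{P}^*(n-1)$ after dyadically decomposing $\|\vec f(\cdot,w_1,\cdot)\|_{L^{\widetilde R^{n-1}_1}}$ in $w_1$ into restricted-type slices); and then $\|\cdot\|_{L^{r^1}_{w_1}}$, recombining the two factors by H\"older in $w_1$ with $\tfrac1{r^1}=\tfrac1{r^1_1}+\tfrac1{r^1_2}$ followed by a final Minkowski in $w_1$, which bounds them by $\|\one_{E_1}\ci_K\|_{\tilde s_1}\|\one_{E_2}\ci_K\|_{\tilde s_2}$ times local sizes of $\one_F,\one_G,\one_{\tilde E},\one_{\tilde E_3}$. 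Summing this per-$K$ bound back up, the stopping-time bookkeeping ($\sssize_K\one_{E_i}\lesssim 2^{-n_i}\lesssim 2^d\|\one_{E_i}\ci_{I_0}\|_1/|E_3|$, $\sssize_K\one_{\tilde E_3}\lesssim 2^{-n_3}\lesssim 2^{-Md}$, and $\sum_{K\in\ii I^{n_1,n_2,n_3}}|K|\lesssim(2^{n_1}\|\one_{E_1}\ci_{I_0}\|_1)^{\gamma_1}(2^{n_2}\|\one_{E_2}\ci_{I_0}\|_1)^{\gamma_2}|E_3|^{\gamma_3}$) collapses the triple geometric series --- convergent since $\tfrac\tau{s_1}+\tfrac\tau{s_2}+1-\tfrac\tau s+\epsilon>1$ --- and reconstitutes the desired right-hand side, once the subunitary $\tilde E$-sizes are split so that $(\sssize_{I_0}\one_{\tilde E})^{1/\tilde s-\epsilon}$ enters the operatorial norm and the remaining power of $\sssize_K\one_{\tilde E_3}$ rebuilds $|E_3|^{1-\tau/\tilde s}$ (this is precisely where $\tau\le\min_j r^j$ and $\tau<s$ are both used, and why an $L^1$-dualization would be wasteful). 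The cases $p=\infty$ or $q=\infty$ I would treat separately, replacing Proposition~\ref{prop:interp-multi} by its linearized version and keeping the relevant $L^\infty$ norm outside the stopping time, just as in the proof of Theorem~\ref{thm:localized-one-quasiBanach-Paraprod} for $q=\infty$.

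The hardest part will be the exponent and error bookkeeping in the per-$K$ estimate, in particular the case split according to whether each $r^1_i$ lies above or below the corresponding target exponent: when it lies above, the final Minkowski in $w_1$ goes the favorable direction and $\ii{P}(n-1)$ alone suffices; when it lies below, that Minkowski step fails and one genuinely needs the sharp companion $\ii{P}^*(n-1)$, so that the power ``lost'' from the $\one_F$- or $\one_G$-size is exactly what is required to reconstitute the $L^{r^1_i}_{w_1}$-norm of $\vec f$ or $\vec g$ --- the rest being a routine adaptation of the depth-$1$ argument already carried out for Theorems~\ref{thm:one-quasiBanach-Paraprod} and~\ref{thm:localized-one-quasiBanach-Paraprod}.
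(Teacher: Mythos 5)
Your overall architecture is right (induction, interpolation reduction, $L^\tau$-dualization, triple stopping time, recovery of $|E_3|^{1-\tau/\tilde s}$ from the $\tilde E_3$-size), but the central ``peel off $r^1$'' step is broken. You fix a single $\tau$ with $\tau\le\min_j r^j=r^{j_0}$ to ensure subadditivity of $\|\cdot\|_{L^{R^n}}^\tau$, and then claim that ``because $\tau\le r^1$, Minkowski moves the outer $L^\tau_x$ inside the $L^{r^1}_{w_1}$ norm.'' That direction of Minkowski's integral inequality requires the \emph{outer} exponent to be at least the inner one: $\big\|\|g\|_{L^{r^1}_{w_1}}\big\|_{L^\tau_x}\le\big\|\|g\|_{L^\tau_x}\big\|_{L^{r^1}_{w_1}}$ holds iff $r^1\le\tau$, whereas with $\tau\le r^1$ you get the reverse inequality — a lower bound, useless here. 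Unless $\tau=r^1$ exactly (where the interchange is a Fubini \emph{equality}, not Minkowski), and that in turn is possible only when $r^1=r^{j_0}$ and $s>r^1$, the step fails. In particular, it fails whenever $r^1>r^{j_0}$ (then necessarily $\tau\le r^{j_0}<r^1$) and whenever $s\le r^{j_0}$; your proposed escape via $\ii P^*(n-1)$ is predicated on a wrong case split (you compare $r^1_i$ with target exponents, but the relevant dichotomy is $r^1$ versus $r^{j_0}$ and $s$ versus $r^{j_0}$), so it does not repair the argument.

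The paper avoids Minkowski altogether. When $r^1=r^{j_0}$ and $\tilde s\ge r^1$ it dualizes through $L^{r^1}$ exactly, so that $\int_x\|\cdot\|_{L^{R^n}}^{r^1}dx=\int_{w_1}\int_x\|\cdot\|_{L^{\tilde R^{n-1}}}^{r^1}dx\,dw_1$ is a Fubini identity; after applying $\ii P(n-1)$ pointwise in $w_1$, it recombines in $w_1$ by H\"older with exponents $r^1_1/r^1$ and $r^1_2/r^1$ and another Fubini. When $\tilde s<r^1$ it first dualizes through a smaller $L^\tau$, then upgrades $L^\tau$ to $L^{r^{j_0}}$ by a H\"older-with-set trick, splitting off $\|\one_{\tilde E\cap\tilde E_3}\ci_I\|_{\tau_r}$ with $1/\tau=1/r^{j_0}+1/\tau_r$ (the mechanism of Lemma~\ref{lemma:sub-unit-local-est-paraprod}), and then invokes the size-only statement $\ii P^*(n)$. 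When $r^1>r^{j_0}$ the subadditivity exponent and the first Lebesgue exponent no longer coincide, and the paper dualizes the $L^{\tilde s/r^{j_0},\infty}_x$ quasinorm against a genuinely Banach-valued dual function $h(w_1,x)$ with $\|h(\cdot,x)\|_{L^{\sigma'}_{w_1}}\lesssim\one_{E_3'}(x)$, where $\sigma=r^1/r^{j_0}>1$; this Banach-valued $L^\tau$-dualization is the piece your proposal is missing. You should replace the Minkowski claim by the appropriate one of these three mechanisms, with the case split driven by $r^1$ versus $r^{j_0}$ and $s$ versus $r^{j_0}$.
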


\begin{remark}
\label{rmk:P*}
A related estimate that proves useful for establishing Theorem \ref{thm:genral_case} is the following:
\begin{equation}
\label{eq:general-localized-ind-sizes} \tag*{$\ii{P}^*\left( n\right):$}
\big\|   \big\|  \Pi_{I_0}^{F, G, \tilde E}(\vec f, \vec g)   \big\|_{L^{R^n}}   \big\|_{s} \lesssim \big( \sssize_{I_0} \one_F \big)^{1-\epsilon} \big( \sssize_{I_0} \one_G \big)^{1-\epsilon} \big( \sssize_{I_0} \one_{\tilde E} \big)^{\frac{1}{s}-\epsilon} \lft I_0 \rg^{\frac{1}{s}},
\end{equation}
whenever $\vec f, \vec g$ are so that $\big\| \vec f (x)\big\|_{L^{R^n_1}} \leq \one_F(x),   \big\|  \vec g(x)  \big\|_{L^{R^n_2}} \leq \one_G(x)$.
\end{remark}

\begin{proof}[Proof of Theorem \ref{thm:genral_case}]
Since $\ii P (0)$ represents the scalar case, and $\ii{P}(1)$ was proved in Theorem \ref{thm:localized-one-quasiBanach-Paraprod}, it will be enough to show $\ii{P}(n-1) \Rightarrow \ii{P}(n)$. Although Theorem \ref{thm:localized-one-quasiBanach-Paraprod} deals with discrete $\ell^r$ spaces, the result easily extends to general $L^r$ spaces. From Proposition \ref{prop:reorder}, we know that $\ds \| \cdot \|_{L^{R^n}}^{r^{j_0}}$ is subadditive, where $\ds r^{j_0}=\min_{1 \leq j \leq n} r^j$. Our iterative argument will depend on the ratio $\ds r^{j_0}/{r^1}$; more exactly, we will treat the cases $r^1 = r^{j_0}$ and  $r^1 >r^{j_0}$ differently. In both situations, we are re-organizing the (quasi-)norms, in order to obtain subadditivity.

We denote the $\left( n-1 \right)$-tuple $\tilde{R}^{n-1}:=\left( r^2, \ldots r^n\right)$, obtaining in this way $\ds R^n=\left( r^1, \tilde R^{n-1}  \right)$ and $\ds \big\|  \cdot \big\|_{L^{R^n}}=\big\|  \| \cdot \|_{L^{\tilde R^{n-1}}}  \big\|_{r^1}$.

\vspace{.2 cm}
\subsection*{I. The case $\ds r^1=\min_{1 \leq j \leq n} r^j <1$}~\\

In this case, $\ds \|  \cdot \|_{L^{R^n}}^{r^1}$ is subadditive, and this will play an important role. We will be proving, for $\left( s_1, s_2 , \tilde s\right)$ in a neighborhood of $\left( p, q, s \right)$, that
\begin{align}
\label{eq:weak-type-mul-r-1}
&\big\|  \big\|  \Pi_{I_0}^{F, G, \tilde E}(\vec f, \vec g)   \big\|_{L^{R^n}} \big\|_{\tilde{s}, \infty}\\
& \qquad \lesssim \big( \sssize_{I_0} \one_F \big)^{\frac{1}{s_1'}-\epsilon} \big( \sssize_{I_0} \one_G \big)^{\frac{1}{s_2'}-\epsilon} \big( \sssize_{I_0} \one_{\tilde E} \big)^{\frac{1}{s}-\epsilon} \big\|  \one_{E_1} \cdot \ci_{I_0}\big\|_1^{\frac{1}{s_1}} \cdot \big\|  \one_{E_2} \cdot \ci_{I_0}\big\|_1^{\frac{1}{s_2}}, \nonumber
\end{align}
whenever $\vec f, \vec g$ are so that $\ds \big\| \vec f (x)\big\|_{L^{R^n_1}} \leq \one_{E_1}(x)$ and $\ds \big\| \vec g (x)\big\|_{L^{R^n_2}} \leq \one_{E_2}(x)$, and $E_1, E_2$ are sets of finite measure.

Here again we need to treat two separate cases:
\begin{enumerate}[label=(\alph*), ref=(\alph*)]
\item \label{case:simple} $\tilde{s} \geq r^1$
 \item \label{case:complex} $\tilde{s} <r^1$.
\end{enumerate}

In the case \ref{case:simple}, it is easier to obtain the exponent $\frac{1}{s}-\epsilon$ for $\sssize_{I_0} \one_{\tilde E}$; the case \ref{case:complex} relies on case \ref{case:simple} and an intermediate step.

\subsection*{Case $I. (a): \tilde{s} \geq r^1$}~\\
We dualize the quasinorm $\| \cdot \|_{\tilde s, \infty}$ through $L^{r_1}$, as in Proposition \ref{prop:Lr-dualization}. Given $E_3$, a set of finite measure, we define the exceptional set
\begin{equation}
\label{eq:def-exceptional-set}
\tilde \Omega :=\Big \lbrace x: \ic{M}\left( \one_{E_1} \cdot \ci_{I_0} \right)(x) > C \cdot  \frac{\big\| \one_{E_1} \cdot \ci_{I_0} \big\|_1}{\lft E_3 \rg} \Big \rbrace \cup \Big \lbrace x: \ic{M}\left( \one_{E_2} \cdot \ci_{I_0} \right)(x) > C \cdot \frac{\big\| \one_{E_2} \cdot \ci_{I_0} \big\|_1}{\lft E_3 \rg} \Big \rbrace,
\end{equation}
and set $\tilde E_3:= E_3 \setminus \tilde \Omega$. Then we have
\begin{align*}
\big\|  \big\|  \Pi_{I_0}^{F, G, \tilde E}(\vec f, \vec g)   \big\|_{L^{R^n}} \big\|_{\tilde{s}, \infty}^{r_1} \cdot \lft E_3 \rg^{1-\frac{r^1}{\tilde s}} \sim \big\|  \big\|  \Pi_{I_0}^{F, G, \tilde E}(\vec f, \vec g) \big\|_{L^{R^n}}  \cdot \one_{\tilde E_3} \big\|_{r^1}^{r^1}.
\end{align*}

Now we unfold the Lebesgue norms on the RHS of the above expression:
\begin{align*}
& \int_{\rr R} \big\| \Pi_{I_0}^{F, G, \tilde E \cap \tilde E_3} \left( \vec f(x), \vec g(x)  \right) \big\|_{R^n}^{r^1} dx \lesssim \sum_{\bar n_1, \bar n_2, \bar n_3} \sum_{I \in \ii{I}^{\bar n_1, \bar n_2, \bar n_3}}  \int_{\rr R} \big\| \Pi_{I}^{F, G, \tilde E \cap \tilde E_3} \left( \vec f(x), \vec g(x)  \right) \big\|_{R^n}^{r^1} dx\\
&= \sum_{\bar n_1, \bar n_2, \bar n_3} \sum_{I \in \ii{I}^{\bar n_1, \bar n_2, \bar n_3}}  \int_{\rr R} \int_{\mathcal W_1} \big \| \Pi_{I}^{F, G, \tilde E \cap \tilde E_3} \left( \vec f_{w_1}(x), \vec g_{w_1}(x)  \right)  \big\|_{L^{\tilde R^{n-1}}}^{r^1} d w_1 dx \\
&=\sum_{\bar n_1, \bar n_2, \bar n_3} \sum_{I \in \ii{I}^{\bar n_1, \bar n_2, \bar n_3}}  \int_{\mathcal W_1} \int_{\rr R} \big \| \Pi_{I}^{F, G, \tilde E \cap \tilde E_3} \left( \vec f_{w_1}(x), \vec g_{w_1}(x)  \right)  \big\|_{L^{\tilde R^{n-1}}}^{r^1} dx dw_1.
\end{align*}

The collection of intervals $\ii{I}^{\bar n_1, \bar n_2, \bar n_3}$ is defined by a triple time, as in Section \ref{subsection-stopping_times}. We used here the subadditivity of $\|  \cdot \|_{R^n}^{r_1}$, and afterwards Fubini. This allows us to use the induction step in order to estimate $\ds \big\| \big \| \Pi_{I}^{F, G, \tilde E \cap \tilde E_3} \left( \vec f_{w_1}(x), \vec g_{w_1}(x)  \right)  \big\|_{L^{\tilde R^{n-1}}} \big\|_{L^{r^1}_x}$. First we note that $\vec f_{w_1}$ is supported on $E_1$ and $\vec g_{w_1}$ is supported on $E_2$, and hence $\ii P(n-1)$ implies
\begin{align*}
&\int_{\rr R} \big \| \Pi_{I}^{F, G, \tilde E \cap \tilde E_3} \left( \vec f_{w_1}(x), \vec g_{w_1}(x)  \right)  \big\|_{L^{\tilde R^{n-1}}}^{r^1} dx=\int_{\rr R} \big \| \Pi_{I}^{F\cap E_1, G \cap E_2, \tilde E \cap \tilde E_3} \left( \vec f_{w_1}(x), \vec g_{w_1}(x)  \right)  \big\|_{L^{\tilde R^{n-1}}}^{r^1} dx\\ &\lesssim \big( \sssize_{I} \one_{F \cap E_1} \big)^{\frac{r^1}{\left(r^1_1\right)'}-\epsilon} \cdot \big( \sssize_{I} \one_{G \cap E_2} \big)^{\frac{r^1}{\left(r^1_2\right)'}-\epsilon} \cdot \big( \sssize_{I} \one_{\tilde E \cap \tilde E_3} \big)^{1-\epsilon} \\
& \qquad \cdot  \big\|  \big\| \vec f_{w_1} \big\|_{\tilde R^{n-1}_1} \cdot \ci_I \big\|_{r_1^1}^{r^1} \big\|  \big\| \vec g_{w_1} \big\|_{\tilde R^{n-1}_2} \cdot \ci_I\big\|_{r_2^1}^{r^1}.
\end{align*}

Upon integrating in $w_1$, we use H\"older's inequality with conjugate Lebesgue exponents $\dfrac{r_1^1}{r^1}$ and $\dfrac{r_2^1}{r^1}$:
\begin{align*}
&\int_{\mathcal W_1} \int_{\rr R} \big \| \Pi_{I}^{F, G, \tilde E \cap \tilde E_3} \left( \vec f_{w_1}(x), \vec g_{w_1}(x)  \right)  \big\|_{L^{\tilde R^{n-1}}}^{r^1} dx dw_1 \\
&\lesssim \big( \sssize_{I} \one_{F \cap E_1} \big)^{\frac{r^1}{\left(r^1_1\right)'}-\epsilon}  \big( \sssize_{I} \one_{G \cap E_2} \big)^{\frac{r^1}{\left(r^1_2\right)'}-\epsilon} \cdot \big( \sssize_{I} \one_{\tilde E \cap \tilde E_3} \big)^{1-\epsilon} \\
& \qquad \cdot \big\| \big\|  \big\| \vec f_{w_1} \big\|_{\tilde R^{n-1}_1} \cdot \ci_I \big\|_{L_x^{r_1^1}}  \big\| _{L_{w_1}^{r_1^1}}^{r^1} \big\| \big\|  \big\| \vec g_{w_1} \big\|_{\tilde R^{n-1}_2} \cdot \ci_I \big\|_{L_x^{r_2^1}}  \big\| _{L_{w_1}^{r_2^1}}^{r^1}.
\end{align*}
Fubini allows us to switch the order of integration in $x$ and $w_1$, and thus the expression above is equal to
\begin{align*}
&\big( \sssize_{I} \one_{F \cap E_1} \big)^{\frac{r^1}{\left(r^1_1\right)'}-\epsilon} \big( \sssize_{I} \one_{G \cap E_2} \big)^{\frac{r^1}{\left(r^1_2\right)'}-\epsilon} \cdot \big( \sssize_{I} \one_{\tilde E \cap \tilde E_3} \big)^{1-\epsilon} \\
& \qquad \cdot \big\| \big\|  \big\| \vec f_{w_1} \big\|_{\tilde R^{n-1}_1} \big\|_{L_{w_1}^{r_1^1}}  \cdot \ci_I \big\| _{L_x^{r_1^1}}^{r^1} \big\| \big\|  \big\| \vec g_{w_1} \big\|_{\tilde R^{n-1}_2} \big\|_{L_{w_1}^{r_2^1}}  \cdot \ci_I \big\| _{L_x^{r_2^1}}^{r^1}.
\end{align*}
We recall that the functions $\vec f$ and $\vec g$ in fact satisfy
\[
\big\| \vec f (x)\big\|_{L^{R^n_1}} \leq \one_{F \cap E_1}(x) \quad \text{and} \quad \big\| \vec g (x)\big\|_{L^{R^n_2}} \leq \one_{G \cap E_2}(x)
\]
because the operator involves the projections onto the sets $F$ and $G$ respectively. So we have proved that 
{\fontsize{10}{10}
\begin{equation}
\label{eq:loc-rec-r-1}
\big\| \big\| \big \| \Pi_{I}^{F, G, \tilde E \cap \tilde E_3} \big( \vec f_{w_1}(x), \vec g_{w_1}(x)  \big)  \big\|_{L^{\tilde R^{n-1}}}^{r^1} \|_{L_x^1}\|_{L_{w_1}^1} \lesssim \big( \sssize_{I} \one_{F \cap E_1} \big)^{r^1-\epsilon} \big( \sssize_{I} \one_{G \cap E_2} \big)^{r^1-\epsilon} \cdot \big( \sssize_{I} \one_{\tilde E \cap \tilde E_3} \big)^{1-\epsilon} \cdot \lft  I \rg.
\end{equation}}
We note that this estimate is similar to $\ii P^*(n)$ from Remark \ref{rmk:P*}. Turning back to the initial estimate, we have:
\begin{align*}
& \int_{\rr R} \big\| \Pi_{I_0}^{F, G, \tilde E \cap \tilde E_3} \left( \vec f_{w_1}(x), \vec g_{w_1}(x)  \right) \big\|_{R^n}^{r^1} dx \\
&\lesssim \sum_{\bar n_1, \bar n_2, \bar n_3} \sum_{I \in \ii I^{\bar n_1, \bar n_2, \bar n_3}} \big( \sssize_{I} \one_{F \cap E_1} \big)^{r^1-\epsilon} \big( \sssize_{I} \one_{G \cap E_2} \big)^{r^1-\epsilon} \cdot \big( \sssize_{I} \one_{\tilde E \cap \tilde E_3} \big)^{1-\epsilon} \cdot \lft  I \rg.
\end{align*}
With the purpose of recovering the norms of $\one_{E_1}$ and $\one_{E_2}$, we separate $\sssize \one_F \cdot \one_{E_1}$ as
\[
\big( \sssize_I \one_F \cdot \one_{E_1} \big)^{r^1-\epsilon} \lesssim \big( \sssize_{I_0} \one_F\big)^{\frac{r^1}{\left(s_1\right)'}-\epsilon} \cdot \big(\sssize_I \one_{E_1} \big)^{\frac{r^1}{s_1}},
\]
and similarly for $\sssize \one_G \cdot \one_{E_2}$ and $\sssize \one_{\tilde E} \cdot \one_{\tilde E_3}$.

Due to the stopping times, we have
\begin{align*}
& \int_{\rr R} \big\| \Pi_{I_0}^{F, G, \tilde E \cap \tilde E_3} \left( \vec f_{w_1}(x), \vec g_{w_1}(x)  \right) \big\|_{R^n}^{r^1} dx \\
&\lesssim \big( \sssize_{I_0} \one_F\big)^{\frac{r^1}{\left(s_1\right)'}-\epsilon} \cdot \big( \sssize_{I_0} \one_G \big)^{\frac{r^1}{\left(s_2\right)'}-\epsilon} \cdot \big( \sssize_{I_0} \one_{\tilde E}\big)^{\frac{r^1}{s}-\epsilon} \\
&\qquad \cdot \sum_{\bar n_1, \bar n_2, \bar n_3} \sum_{I \in \ii I^{\bar n_1, \bar n_2, \bar n_3}} 2^{-\frac{n_1 r^1}{s_1}} 2^{-\frac{n_2 r^1}{s_2}} 2^{-n_3 \left( 1 -\frac{r^1}{\tilde s} +\epsilon  \right)} \lft I \rg
\end{align*}

From Section \ref{subsection-stopping_times}, we know that whenever we are performing the stopping times we have
\[
\sum_{I \in \ii I^{\bar n_1, \bar n_2, \bar n_3}} \lft I \rg \lesssim \left( 2^{n_1} \big\| \one_{E_1} \cdot  \ci_{I_0} \big\|_1 \right)^{\gamma_1} \cdot \left( 2^{n_2} \big\| \one_{E_2} \cdot  \ci_{I_0} \big\|_1 \right)^{\gamma_2} \cdot \left( 2^{n_3} \lft E_3 \rg \right)^{\gamma_3},
\]
where $0 \leq \gamma_j \leq 1$, $\gamma_1+\gamma_2+\gamma_3=1$.

In the end, we obtain 
\begin{align*}
\big\| \big\|  \Pi_{I_0}^{F, G, \tilde E}(\vec f, \vec g) \big\|_{L^{R^n}}  \cdot \one_{\tilde E_3} \big\|_{r^1}^{r^1} & \lesssim 
\big( \sssize_{I_0} \one_F\big)^{\frac{r^1}{\left(s_1\right)'}-\epsilon} \cdot \big( \sssize_{I_0} \one_G \big)^{\frac{r^1}{\left(s_2\right)'}-\epsilon} \cdot \big( \sssize_{I_0} \one_{\tilde E}\big)^{\frac{r^1}{s}-\epsilon} \\
&\qquad   \cdot  \big\| \one_{E_1} \cdot  \ci_{I_0} \big\|_1^\frac{r^1}{s_1} \cdot  \big\| \one_{E_1} \cdot  \ci_{I_0} \big\|_1^\frac{r^1}{s_1} \lft E_3 \rg^{1-\frac{r^1}{\tilde s}},
\end{align*}
which, together with \eqref{eq:weak-type-mul-r-1} and interpolation, concludes the induction statement $\ii P(n-1) \Rightarrow \ii P(n)$ for the case \ref{case:simple}.

\vspace{.2cm}
\subsection*{Case $I. (b): \tilde{s} < r^1$}~\\
In this case, we will dualize through an $L^\tau$ space, for some $\tau \leq \tilde s < r^1$. Given $E_3$, we define $\tilde E_3$ in the same way as before, and hence we have
\[
\big\|  \big\|  \Pi_{I_0}^{F, G, \tilde E}(\vec f, \vec g)   \big\|_{L^{R^n}}  \big\|_{\tilde s, \infty} \sim \big\|  \big\|  \Pi_{I_0}^{F, G, \tilde E}(\vec f, \vec g)   \big\|_{L^{R^n}} \cdot \one_{\tilde E_3} \big\|_{\tau} \cdot \lft E_3 \rg^{\frac{1}{\tilde s}-\frac{1}{\tau}}.
\]

Since $\ds \tau<r^1=\min_{1 \leq j \leq n}r^j$, following Proposition \ref{prop:reorder}, $\ds \big\| \|\cdot \|_{L^{R^n}} \big\|_\tau^\tau$ is subadditive. Hence we have
\begin{align*}
&\big\|  \big\|  \Pi_{I_0}^{F, G, \tilde E}(\vec f, \vec g)   \big\|_{L^{R^n}} \cdot \one_{\tilde E_3} \big\|_{\tau}^\tau \leq \sum_{\bar n_1, \bar n_2, \bar n_3} \sum_{I \in \ii I^{\bar n_1, \bar n_2, \bar n_3}} \big\|  \big\|  \Pi_{I}^{F, G, \tilde E}(\vec f, \vec g)   \big\|_{L^{R^n}} \cdot \one_{\tilde E_3} \big\|_{\tau}^\tau.
\end{align*}

We denote by $\tau_r$ the positive exponent for which $\ds \frac{1}{r^1}+\frac{1}{\tau_r}=\frac{1}{\tau}$. This, together with the previous case $s \geq r^1$ will allow us to estimate the term on the RHS of the above expression. Through an intermediate step consisting of a decomposition similar to that appearing in Lemma \ref{lemma:sub-unit-local-est-paraprod}, we eventually obtain
\begin{align*}
& \big\|  \big\|  \Pi_{I}^{F, G, \tilde E \cap \tilde E_3}(\vec f, \vec g)   \big\|_{L^{R^n}} \big\|_{\tau} \lesssim \big\|  \big\|  \Pi_{I}^{F, G, \tilde E \cap \tilde E_3}(\vec f, \vec g)   \big\|_{L^{R^n}} \big\|_{r^1} \cdot \big\| \one_{\tilde E} \cdot \one_{\tilde E_3} \cdot \ci_I  \big\|_{\tau_r}\\
& \lesssim \big(\sssize_{I} \one_F \cdot \one_{E_1}  \big)^{1-\epsilon} \big(\sssize_{I} \one_G \cdot \one_{E_2}  \big)^{1-\epsilon} \big(\sssize_{I} \one_{\tilde E} \cdot \one_{\tilde E_3}  \big)^{\frac{1}{r^1}+\frac{1}{\tau_r}-\epsilon} \lft I \rg^{\frac{1}{r^1}+\frac{1}{\tau_r}} .
\end{align*}

Here in fact we used $\ii P^*(n)$ form Remark \ref{rmk:P*}, applied to the operator $\ds \Pi_I^{F \cap E_1, G \cap E_2, \tilde E \cap \tilde E_3}$. We then obtain a familiar estimate, that will allow us to recover \eqref{eq:weak-type-mul-r-1}:
{\fontsize{9}{10}
\begin{align*}
\big \|   \big \| \Pi_{I_0}^{F, G, \tilde E}(\vec f, \vec g)\big\|_{L^{R^n}} \cdot \one_{\tilde E_3}\big \|_\tau^\tau \lesssim \sum_{\bar n_1, \bar n_2, \bar n_3} \sum_{I \in \ii I^{\bar n_1, \bar n_2, \bar n_3}} \big(\sssize_{I} \one_F \cdot \one_{E_1}  \big)^{\tau-\epsilon} \big(\sssize_{I} \one_G \cdot \one_{E_2}  \big)^{\tau-\epsilon} \big(\sssize_{I} \one_{\tilde E} \cdot \one_{\tilde E_3}  \big)^{1-\epsilon} \lft I \rg.
\end{align*}}

\subsection*{II. The case $\ds r^1> r^{j^0}=\min_{1 \leq j \leq n} r^j <1$}~\\
Let $\sigma:= r^1/r^{j^0} >1$, and $\tilde R^{n-1}=\left(r^2, \ldots, r^n \right)$ so that $R^n=\left( r^1, \tilde R ^{n-1}  \right)$. We note that in this case $\| \cdot \|_{L^{R^n}}^{r^{j_0}}$ and $\| \cdot \|_{L^{\tilde R^{n-1}}}^{r^{j_0}}$ are both subadditive.

What we aim for is an inequality similar to \eqref{eq:weak-type-mul-r-1}, for any admissible tuple $(s_1, s_2, \tilde s)$ in a neighborhood of $( p, q, s)$. For this, we need to rearrange the quasinorms:
\begin{align*}
&\big\| \big\| \Pi_{I_0}^{F, G, \tilde E}(\vec f, \vec g) \big\|_{L^{R^n}} \big\|_{\tilde s, \infty}=
\big\| \big(\big\| \big\| \Pi_{I_0}^{F, G, \tilde E}(\vec f_{w_1}, \vec g_{w_1}) \big\|_{L^{\tilde R^{n-1}}}^{r^{j_0}} \big\|_{L_{w_1}^{r^1/r^{j_0}}} \big)^{\frac{1}{r^{j_0}}}\big\|_{\tilde s, \infty}\\
&=\big\| \big\| \big\| \Pi_{I_0}^{F, G, \tilde E}(\vec f_{w_1}, \vec g_{w_1}) \big\|_{L^{\tilde R^{n-1}}}^{r^{j_0}} \big\|_{L_{w_1}^{\sigma}} \big\|_{\frac{\tilde s}{r^{j_0}}, \infty}^{\frac{1}{r^{j_0}}}.
\end{align*}
\subsection*{Case $II. (a): s \geq r^{j_0}$}
Now we will want to ``dualize" the $L_x^{\frac{\tilde s}{r^{j_0}}, \infty}$ norm; we regard it as an $L^\sigma_{w_1}$-valued function, and we will use the fact that $\sigma>1$ and hence $L^{\sigma}_{w_1}$ is a Banach space. For a set $E_3$ of finite measure, we set $E_3'=E_3 \setminus \tilde \Omega$, where $\tilde \Omega$ is defined in \eqref{eq:def-exceptional-set}, and we note that 
{\fontsize{10}{10}
\begin{align*}
\big\| \big\| \big\| \Pi_{I_0}^{F, G, \tilde E}(\vec f_{w_1}, \vec g_{w_1}) \big\|_{L^{\tilde R^{n-1}}}^{r^{j_0}} \big\|_{L_{w_1}^{\sigma}} \big\|_{\frac{\tilde s}{r^{j_0}}, \infty}
\sim \int_{\rr R} \int_{\ic W_1} \big\| \Pi_{I_0}^{F, G, \tilde E}(\vec f_{w_1}, \vec g_{w_1}) \big\|_{L^{\tilde R^{n-1}}}^{r^{j_0}} h(w_1, x) d w_1 dx \cdot \lft E_3 \rg^{\frac{r^{j_0}}{\tilde s}-1},
\end{align*}}
where $h(w_1, x)$ is so that $\ds \big\| h(\cdot, x) \big\|_{L^{\sigma '}_{w_1}} \lesssim \one_{E_3'}(x)$. That is, we are using a Banach-valued version of Proposition \ref{prop:Lr-dualization}, which can be found in \cite{vv_BHT}.

Now we can finally make use of the subadditivity of $\| \cdot \|_{L^{\tilde R^{n-1}}}^{r^{j_0}}$. With the collections of intervals $\ii I^{\bar n_1, \bar n_2, \bar n_3}$ as in Section \ref{subsection-stopping_times}, we have
\begin{align*}
&\int_{\rr R} \int_{\ic W_1} \big\| \Pi_{I_0}^{F, G, \tilde E \cap  E_3'}(\vec f_{w_1}, \vec g_{w_1}) \big\|_{L^{\tilde R^{n-1}}}^{r^{j_0}} h(w_1, x) d w_1 dx \\
& \lesssim \sum_{\bar n_1, \bar n_2, \bar n_3} \sum_{I \in \ii I ^{\bar n_1, \bar n_2, \bar n_3}}\int_{\rr R} \int_{\ic W_1} \big\| \Pi_{I}^{F, G, \tilde E \cap E_3'}(\vec f_{w_1}, \vec g_{w_1}) \big\|_{L^{\tilde R^{n-1}}}^{r^{j_0}} h(w_1, x) d w_1 dx \\
&=\sum_{\bar n_1, \bar n_2, \bar n_3} \sum_{I \in \ii I ^{\bar n_1, \bar n_2, \bar n_3}}\int_{\ic W_1} \int_{\rr R} \big\| \Pi_{I}^{F, G, \tilde E \cap E_3'}(\vec f_{w_1}, \vec g_{w_1}) \big\|_{L^{\tilde R^{n-1}}}^{r^{j_0}} h(w_1, x) dx d w_1 \\
&\lesssim \sum_{\bar n_1, \bar n_2, \bar n_3} \sum_{I \in \ii I ^{\bar n_1, \bar n_2, \bar n_3}} \big\| \big\|  \big\| \Pi_{I}^{F, G, \tilde E \cap E_3'}(\vec f_{w_1}, \vec g_{w_1}) \big\|_{L^{\tilde R^{n-1}}}^{r^{j_0}}  \big\|_{L^\sigma_x} \big\|_{L^\sigma_{w_1}} \cdot \big\| \big\| h(w_1, \cdot) \cdot \one_{\tilde E \cap E_3'} \cdot \ci_I\big\|_{L^{\sigma'}_x} \big\|_{L^{\sigma '}_{w_1}}.
\end{align*}

Above, we used Fubini and H\"older's inequality several times. The decaying factor $\ci_I$ appearing in the last line is motivated by the same line of ideas as those appearing in Lemma \ref{lemma:sub-unit-local-est-paraprod}. Following the induction step $\ii P(n-1)$, we have 
\begin{align*}
&\big\|  \big\| \Pi_{I}^{F, G, \tilde E \cap E_3'}(\vec f_{w_1}, \vec g_{w_1}) \big\|_{L^{\tilde R^{n-1}}}^{r^{j_0}}  \big\|_{L^\sigma_x} = \big\| \big\| \Pi_{I}^{F\cap E_1, G\cap E_2, \tilde E \cap E_3'}(\vec f_{w_1}, \vec g_{w_1}) \big\|_{L^{\tilde R^{n-1}}}  \big\|_{L_x^{r^1}}^{r_{j_0}}\\
&\lesssim \big(\sssize_I \one_{F} \cdot \one_{E_1} \big)^{\frac{r^{j_0}}{\left(r_1^1\right)'}-\epsilon} \cdot \big(\sssize_I \one_{G} \cdot \one_{E_2} \big)^{\frac{r^{j_0}}{\left(r_2^1\right)'}-\epsilon}\cdot \big(\sssize_I \one_{\tilde E} \cdot \one_{E'_3} \big)^{\frac{r^{j_0}}{r^1}-\epsilon}\\
&\qquad \cdot \big\| \big\|\vec f_{w_1}\big\|_{L^{\tilde R_1^{n-1}}} \cdot \ci_{I} \big\|_{L^{r^1_1}_x}^{r^{j_0}} \cdot \big\| \big\|\vec g_{w_1}\big\|_{L^{\tilde R_2^{n-1}}} \cdot \ci_{I} \big\|_{L^{r^1_2}_x}^{r^{j_0}}.
\end{align*}

We integrate the last line in $w_1$ and we use H\"older's inequality for the Lebesgue exponents $\left(\frac{r_1^1}{r^{j_0}}, \frac{r_2^1}{r^{j_0}}, \frac{r^1}{r^{j_0} } \right)$, since $\ds \frac{1}{\sigma}=\frac{r^{j_0}}{r_1^1}+\frac{r^{j_0}}{r_2^1}$. One of the terms we obtain in this way is $\ds \big\| \big\| \big\|\vec f_{w_1}\big\|_{L^{\tilde R_1^{n-1}}} \cdot \ci_{I} \big\|_{L^{r^1_1}_x}^{r^{j_0}} \big\|_{L_{w_1}^{\frac{r^1_1}{r^{j_0}}}}$, which can be rewritten, using Fubini, as 
\[
 \big\| \big\| \big\|\vec f_{w_1}\big\|_{L^{\tilde R_1^{n-1}}} \cdot \ci_{I} \big\|_{L^{r^1_1}_x}^{r^{j_0}} \big\|_{L_{w_1}^{\frac{r^1_1}{r^{j_0}}}}=\big( \int_{\ic W_1} \int_{\rr R} \big| \big\|\vec f_{w_1}\big\|_{L^{\tilde R_1^{n-1}}}  \cdot \ci_I \big|^{r_1^1} dx d w_1\big)^{\frac{r^{j^0}}{r_1^1}} =\big\|  \big\|  \vec f \big\|_{L^{R^n_1}} \cdot \ci_{I} \big\|^{r^{j_0}}_{L_x^{r_1^1}}.
\]
The function $\vec f$ is supported on $F \cap E_1$ in the $x$ coordinate, and the above expression is bounded by
\[
 \big\| \one_{F \cap E_1} \cdot \ci_I \big\|_1^{\frac{r^{j_0}}{r_1^1}} \lesssim \big( \sssize_I \one_F \cdot \one_{E_1}\big)^{\frac{r^{j_0}}{r_1^1}} \cdot \lft I \rg^{\frac{r^{j_0}}{r_1^1}}.
 \] 

Similarly, the term corresponding to $\vec g$ will be bounded above by 
\[
 \big\| \one_{G \cap E_2} \cdot \ci_I \big\|_1^{\frac{r^{j_0}}{r_2^1}} \lesssim \big( \sssize_I \one_G \cdot \one_{E_2}\big)^{\frac{r^{j_0}}{r_2^1}} \cdot \lft I \rg^{\frac{r^{j_0}}{r_2^1}}.
 \] 
For the function $h(w_1, x)$, we have the estimate 
\[
\big\| \big\| h(w_1, \cdot) \cdot \one_{\tilde E \cap E_3'} \cdot \ci_I\big\|_{L^{\sigma'}_x} \big\|_{L^{\sigma '}_{w_1}} \lesssim \big\|  \one_{E_3'} \cdot \one_{\tilde E} \cdot \ci_I \big\|_{L^{\sigma'}_x} \lesssim  \big( \sssize_I \one_{\tilde E} \cdot \one_{E_3'}\big)^{\frac{1}{\sigma'}} \cdot \lft I \rg^{\frac{1}{\sigma'}}.
\]
Returning to the stopping time, our initial estimate becomes
\begin{align*}
&\int_{\rr R} \int_{\ic W_1} \big\| \Pi_{I_0}^{F, G, \tilde E}(\vec f_{w_1}, \vec g_{w_1}) \big\|_{L^{\tilde R^{n-1}}}^{r^{j_0}} h(w_1, x) d w_1 dx \\
&\lesssim \sum_{\bar n_1, \bar n_2, \bar n_3} \sum_{I \in \ii I^{\bar n_1, \bar n_2, \bar n_3}} \big( \sssize_I \one_F \cdot \one_{E_1}\big)^{r^{j_0}-\epsilon} \cdot \big( \sssize_I \one_G \cdot \one_{E_2}\big)^{r^{j_0}-\epsilon} \cdot \big( \sssize_I \one_{\tilde E} \cdot \one_{E_3'}\big)^{1-\epsilon} \cdot \lft I \rg \\
&\lesssim \big( \sssize_{I_0} \one_F\big)^{\frac{r^{j_0}}{s_1'}-\epsilon} \cdot \big( \sssize_{I_0} \one_G \big)^{\frac{r^{j_0}}{s_2'}-\epsilon} \cdot \big( \sssize_{I_0} \one_{\tilde E}\big)^{\frac{r^{j_0}}{\tilde s}-\epsilon} \\
&\qquad \cdot   \sum_{\bar n_1, \bar n_2, \bar n_3} 2^{-\frac{n_1 r^{j_0}}{s_1}} 2^{-\frac{n_2 r^{j_0}}{s_2}} 2^{-n_3\left( 1-\frac{r^{j_0}}{\tilde s} +\epsilon \right)} \left(  2^{n_1} \big\| \one_{E_1} \cdot \ci_I  \big\|_1\right)^{\gamma_1} \left(  2^{n_2} \big\| \one_{E_2} \cdot \ci_I  \big\|_1\right)^{\gamma_2} \lft E_3 \rg^{\gamma_3},
\end{align*}
where, as usual, $\gamma_1+\gamma_2+\gamma_3=1$. The series will converge if $s \geq r^{j_0}$, and we obtain \eqref{eq:weak-type-mul-r-1}, with both terms in the inequality raised to the $r^{j_0}$ power.

\subsection*{Case $II. (b): s< r^{j_0}$}

Here we will make use of the previous result from $II.(b)$ corresponding to $s \geq r^{j_0}$, after rewriting the localized paraproduct. First, we want to use the subadditivity of $\|  \cdot \|_{L^{R^n}}^{r^{j_0}}$, and hence we write
\[
\big\|  \big\|  \Pi_{I_0}^{F, G, \tilde E} (\vec f, \vec g) \big\|_{L^{R^n}}  \big\|_{\tilde s, \infty}= \big\|  \big\|  \Pi_{I_0}^{F, G, \tilde E} (\vec f, \vec g) \big\|_{L^{R^n}}^{r^{j_0}}  \big\|_{\frac{\tilde s}{r^{j^0}}, \infty}^{\frac{1}{r^{j_0}}}.
\]  

The $L^{\frac{\tilde s}{r^{j^0}}, \infty}$ quasinorm will be dualized as in Proposition \ref{prop:Lr-dualization}, through $L^{\frac{\tau}{r^{j_0}}}$, for some $\tau \leq s$. In particular, $\frac{\tau}{r^{j_0}} <1$, hence $\ds \| \cdot  \|_{\frac{\tau}{r^{j_0}}}^{\frac{\tau}{r^{j_0}}}$ is subadditive.

We have $E_3$ a set of finite measure, and $\tilde E_3$ a major subset constructed as before, for which 
\[
\big\|  \big\|  \Pi_{I_0}^{F, G, \tilde E} (\vec f, \vec g) \big\|_{L^{R^n}}^{r^{j_0}}  \big\|_{\frac{\tilde s}{r^{j^0}}, \infty} \sim \big\|  \big\|  \Pi_{I_0}^{F, G, \tilde E} (\vec f, \vec g) \big\|_{L^{R^n}}^{r^{j_0}}  \cdot \one_{\tilde E_3} \big\|_{\frac{\tau}{r^{j^0}}} \lft E_3 \rg^{\frac{r^{j_0}}{\tilde s}-\frac{r^{j_0}}{\tau}}.
\]

Employing the subadditivity of $\|  \cdot \|_{L^{R^n}}^{r^{j_0}}$ and $\ds \| \cdot  \|_{\frac{\tau}{r^{j_0}}}^{\frac{\tau}{r^{j_0}}}$, we have, for the collections of intervals $\ii I^{\bar n_1, \bar n_2, \bar n_3}$ as in Section \ref{subsection-stopping_times}
\begin{align}
\label{eq:ineq-smthg}
&\big\|  \big\|  \Pi_{I_0}^{F,G, \tilde E} (\vec f, \vec g) \big\|_{L^{R^n}}^{r^{j_0}}  \cdot \one_{\tilde E_3} \big\|_{\frac{\tau}{r^{j^0}}}^{\frac{\tau}{r^{j^0}}} \\
&\lesssim \sum_{\bar n_1, \bar n_2, \bar n_3} \sum_{I \in \ii I ^{\bar n_1, \bar n_2, \bar n_3}} \big\|  \big\|  \Pi_{I}^{F, G, \tilde E} (\vec f, \vec g) \big\|_{L^{R^n}}^{r^{j_0}}  \cdot \one_{\tilde E_3} \big\|_{\frac{\tau}{r^{j^0}}}^{\frac{\tau}{r^{j^0}}}=\sum_{\bar n_1, \bar n_2, \bar n_3} \sum_{I \in \ii I ^{\bar n_1, \bar n_2, \bar n_3}} \big\|  \big\|  \Pi_{I}^{F, G, \tilde E \cap \tilde E_3} (\vec f, \vec g) \big\|_{L^{R^n}} \big\|_{L^\tau}^{\tau}. \nonumber
\end{align}

Since $\tau \leq s< r^{j_0}$, there exists $\tau_r>0$ so that $\ds \frac{1}{r^{j_0}}+\frac{1}{\tau_r}=\frac{1}{\tau}$. An analysis similar to the one in Lemma \ref{lemma:sub-unit-local-est-paraprod}, together with the case $II. (a)$ of the present theorem, imply that 
\begin{align*}
& \big\|  \big\|  \Pi_{I}^{F, G, \tilde E \cap \tilde E_3} (\vec f, \vec g) \big\|_{L^{R^n}} \big\|_{L^\tau} \lesssim 
\big\|  \big\|  \Pi_{I}^{F, G, \tilde E \cap \tilde E_3} (\vec f, \vec g) \big\|_{L^{R^n}} \big\|_{L^{r^{j_0}}} \cdot \big\| \one_{\tilde E \cap \tilde E_3} \cdot \ci_I \big\|_{\tau_r}\\
& \lesssim \big( \sssize_I \one_F \cdot \one_{E_1} \big)^{1 -\epsilon}  \big( \sssize_I \one_G \cdot \one_{E_2} \big)^{1 -\epsilon} \big( \sssize_I \one_{\tilde E} \cdot \one_{\tilde E_3} \big)^{\frac{1}{r^{j_0}} -\epsilon} \cdot  \lft I \rg^{\frac{1}{r^{j_0}}} \cdot \big( \ssize_I \one_{\tilde E} \cdot \one_{\tilde E_3} \big)^{\frac{1}{\tau_r}} \cdot \lft I \rg^{\frac{1}{\tau_r}} \\
&=\big( \sssize_I \one_F \cdot \one_{E_1} \big)^{1 -\epsilon}  \big( \sssize_I \one_G \cdot \one_{E_2} \big)^{1 -\epsilon} \big( \sssize_I \one_{\tilde E} \cdot \one_{\tilde E_3} \big)^{\frac{1}{\tau} -\epsilon} \cdot  \lft I \rg^{\frac{1}{\tau}}.
\end{align*}

Returning to inequality \eqref{eq:ineq-smthg}, we have 
\begin{align*}
&\big\|  \big\|  \Pi_{I_0}^{F,G, \tilde E} (\vec f, \vec g) \big\|_{L^{R^n}}^{r^{j_0}}  \cdot \one_{\tilde E_3} \big\|_{\frac{\tau}{r^{j^0}}}^{\frac{\tau}{r^{j^0}}} \\
&\lesssim \sum_{\bar n_1, \bar n_2, \bar n_3} \sum_{I \in \ii I ^{\bar n_1, \bar n_2, \bar n_3}} \big( \sssize_I \one_F \cdot \one_{E_1} \big)^{\tau -\epsilon}  \big( \sssize_I \one_G \cdot \one_{E_2} \big)^{\tau -\epsilon} \big( \sssize_I \one_{\tilde E} \cdot \one_{\tilde E_3} \big)^{1 -\epsilon} \cdot  \lft I \rg \\
& \lesssim \big( \sssize_{I_0} \one_F \big)^{\frac{\tau}{p'} -\epsilon}  \big( \sssize_{I_0} \one_G \big)^{\frac{\tau}{q'} -\epsilon} \big( \sssize_{I_0} \one_{\tilde E}\big)^{\frac{\tau}{s} -\epsilon} \\
&\qquad \cdot  \sum_{\bar n_1, \bar n_2, \bar n_3} 2^{-n_1 \frac{\tau}{s_1}} 2^{-n_2 \frac{\tau}{s_2}} 2^{-n_3 \left( 1-\frac{\tau}{\tilde s} +\epsilon \right)} \left( 2^{n_1} \big\| \one_{E_1} \cdot \ci_{I_0} \big\|_1 \right)^{\gamma_1} \left( 2^{n_2} \big\| \one_{E_2} \cdot \ci_{I_0} \big\|_1 \right)^{\gamma_2} \lft E_3 \rg^{\gamma_3}.
\end{align*}

The series above converge because we are under the assumption that $\tau \leq s< r^{j^0}$, and eventually we obtain inequality \eqref{eq:weak-type-mul-r-1}. This ends the proof of Theorem \ref{thm:genral_case}.
\end{proof}

\section{Similar results for $BHT$}
\label{sec:BHT}
The bilinear Hilbert transform, $BHT$ in short, is a bilinear operator whose Fourier multiplier is singular along a line. Its study reduces to that of the model operator
\[
BHT_{\rr P}(f, g)(x):=\sum_{P \in \rr P} \frac{1}{\lft I_P \rg^{1/2}} \langle f, \phi_P^1 \rangle  \langle g, \phi_P^2 \rangle \phi_P^3(x).
\]

Instead of families $\phi_I$ indexed after a collection of intervals (the paraproduct case), we have as index set $\rr P$, a collection of tritiles. A \emph{tile} is a product $I \times \omega$ of an interval $I$ in space and an interval $\omega$ in frequency. A tritile $P$ is a set of three tiles sharing the spacial interval:
\[
P=\left( P_1, P_2, P_3 \right), \qquad P_j=I_P \times \omega_{P_j}, \qquad |I_P| \cdot |\omega_{P_j}|\sim 1.
\]
The functions $\phi_P^j$ are called ``wave packets" associated to the tritiles: $\widehat \phi_P^j$ is supported inside the frequency interval $.9 \omega_{P_j}$, and is $L^2$ adapted to $I_P$, in the sense that 
\[
\vert \partial^{\alpha} \phi_{P}^j (x) \vert \leq C_{\alpha, M} |I_P|^{-1/2-\vert \alpha \vert} \ci_{I_P}^M(x).
\] 
The collection $\rr P$ of tritiles associated to the model operator $BHT_{\rr P}$ is of \emph{rank one}, reflecting the dimension of the singularity. That is, the tiles can be located anywhere in frequency, but there is only one degree of freedom.

For more properties of $BHT$, reduction to the model operator, as well as a self-contained proof, we refer the interested reader to \cite{multilinear_harmonic}. We recall a few results and definitions, that can be found in \cite{vv_BHT}.

\begin{definition}Traditionally, the size is defined to be a supremum over suitable trees of discretized square functions. Instead, we will use this term for expressions that bound the `classical sizes':
\[
\ssize_{\rr P}f :=\sup_{P \in \rr P}\frac{1}{|I_P|} \int_{\rr R} |f(x)| \ci_{I_P}^M dx.
\]
If $I_0$ is a fixed interval, then $\rr P(I_0)$ denotes the collection of tritiles in $\rr P$ whose spatial interval is contained inside $I_0$:
\[
\rr P(I_0):=\lbrace I \in \rr P: I_P \subseteq I_0  \rbrace.
\] 
In this case, we define a new size:
\[
\sssize_{\rr P\left( I_0 \right)}f :=\max\left( \sup_{P \in \rr P}\frac{1}{|I_P|} \int_{\rr R} |f(x)| \ci_{I_P}^M dx, \frac{1}{|I_0|} \int_{\rr R} |f(x)| \ci_{I_0}^M dx   \right).
\]
\end{definition}

In our approach from \cite{vv_BHT}, a very important role is played by localized results. Here $\Lambda_{BHT; \rr P}$ is the trilinear form associated to the model operator $BHT_{\rr P}$, and in general, the collection $\rr P$ will be understood from the context (sometimes we write $BHT_{I_0}$ for $BHT_{\rr P \left( I_0 \right)}$).

\begin{proposition}[Lemma 5  from \cite{vv_BHT}]
\label{prop:loc-tril-form}
If $I_0$ is a fixed dyadic interval, and $\rr P$ is a rank 1 collection of intervals, then
{\fontsize{10}{10}
\[
\lft\Lambda_{BHT; \rr P \left(I_0\right)}(f, g, h) \rg \lesssim \big( \ssize_{\rr P \left( I_0 \right)} f   \big)^{\theta_1}  \big( \ssize_{\rr P \left( I_0 \right)} g   \big)^{\theta_2}  \big( \ssize_{\rr P \left( I_0 \right)} h   \big)^{\theta_3} \big\| f \cdot \ci_{I_0}  \big\|_2^{1-\theta_1}  \big\| g \cdot \ci_{I_0}  \big\|_2^{1-\theta_2}  \big\| h \cdot \ci_{I_0}  \big\|_2^{1-\theta_3},
\]}
for any $0 \leq \theta_1, \theta_2, \theta_3<1$ with $\theta_1+\theta_2+\theta_3=1$.
\end{proposition}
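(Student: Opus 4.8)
The statement to prove is Proposition~\ref{prop:loc-tril-form}, the localized estimate for the trilinear form $\Lambda_{BHT;\rr P(I_0)}$ in terms of sizes and $L^2$ norms adapted to $I_0$. The plan is to reduce it to the non-localized size/energy estimate for $BHT$, exactly in the spirit of how Lemma~\ref{lemma:localized-paraprod-est} was reduced to Proposition~\ref{paraproduct estimates} in the paraproduct case.

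\textbf{Setup and the main input.} The classical machinery for $BHT$ (see \cite{multilinear_harmonic}) gives the estimate
\[
\lft\Lambda_{BHT;\rr P}(f,g,h)\rg\lesssim \prod_{j=1}^{3}\big(\ssize_{\rr P}^{(j)}\big)^{\theta_j}\big(\eenergy_{\rr P}^{(j)}\big)^{1-\theta_j},
\]
valid for any $0\le\theta_1,\theta_2,\theta_3<1$ with $\theta_1+\theta_2+\theta_3=1$, where the sizes and energies are the tree-based quantities for rank-one collections of tritiles, together with the energy bound $\eenergy_{\rr P}^{(j)}(\langle f,\phi_P^j\rangle)\lesssim\|f\|_2$. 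I would first recall, as in \cite{vv_BHT}, that the $\ssize$ appearing in the statement dominates the genuine tree-size, and that for a collection $\rr P(I_0)$ localized to $I_0$ one has a refinement of the energy estimate: if $f$ is supported so that $2^{k-1}\le 1+\dist(\supp f,I_0)/|I_0|\le 2^k$, then $\eenergy_{\rr P(I_0)}(f)\lesssim 2^{-kM}\|f\|_2$. This is the $BHT$ analogue of Lemma~\ref{lemma-ref-energy-est-paraprod} and is proved the same way, using that for any tile $I_P\subseteq I_0$ one has $|I_P|^{-1}\langle|f|,\ci_{I_P}^{2M}\rangle\lesssim 2^{-kM}\inf_{y\in I_P}\ic Mf(y)$.

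\textbf{The decomposition argument.} Assuming first $0<\theta_j<1$ for all $j$, I would split each of $f,g,h$ according to dyadic distance from $I_0$: $f=\sum_{k_1\ge0}f_{k_1}$ with $f_{k_1}=f\cdot\one_{\{x:\dist(x,I_0)\sim(2^{k_1}-1)|I_0|\}}$, and similarly $g=\sum_{k_2}g_{k_2}$, $h=\sum_{k_3}h_{k_3}$. Expanding $\Lambda_{BHT;\rr P(I_0)}$ trilinearly and applying the size/energy estimate termwise, then using the trivial bound $\ssize_{\rr P(I_0)}f_{k_1}\le\sssize_{\rr P(I_0)}f$ for the sizes and the refined energy bound $\eenergy_{\rr P(I_0)}f_{k_1}\lesssim 2^{-k_1M}\|f_{k_1}\|_2$ for the energies, I get
\[
\lft\Lambda_{BHT;\rr P(I_0)}(f,g,h)\rg\lesssim \sum_{k_1,k_2,k_3}\prod_{j}\big(\sssize_{\rr P(I_0)}\big)^{\theta_j}\cdot\prod_{j}\big(2^{-k_jM}\|(\cdot)_{k_j}\|_2\big)^{1-\theta_j}.
\]
Then the sums over $k_1,k_2,k_3$ decouple, and for each index I apply H\"older's inequality with exponents $\tfrac{1}{1-\theta_1}$ and its conjugate to absorb a power of $2^{-k_1M(1-\theta_1)}$ and, using that the $f_{k_1}$ have almost disjoint supports, bound $\sum_{k_1}2^{-k_1M/2}\|f_{k_1}\|_2\lesssim\|f\cdot\ci_{I_0}\|_2$ — this is the $L^2$ version of the $L^1$ summation step in Lemma~\ref{lemma:localized-paraprod-est}. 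The case where some $\theta_j=0$ is handled identically by decomposing only in the remaining two indices (the $j$-th factor then contributes only a size, with no energy to provide decay, so there is nothing to sum there).

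\textbf{Expected obstacle.} The only genuinely new point compared with the paraproduct case is the refined localized energy estimate for $BHT$; everything else is a formal repetition of the paraproduct argument with $L^1$ replaced by $L^2$. The subtlety is that the $BHT$ energy is a more delicate quantity (a sup over trees of discretized square functions) than the paraproduct energy, so one must check that the pointwise bound on $|I_P|^{-1}\langle|f|,\ci_{I_P}^{2M}\rangle$ indeed feeds through the tree-counting argument in \cite{multilinear_harmonic} to produce the $2^{-kM}$ gain uniformly over all trees contributing to $\eenergy_{\rr P(I_0)}$. Since this exact refinement already appears in \cite{vv_BHT} (it is precisely Lemma~5 and the estimates surrounding it), I would cite it rather than reprove it, and the proof of Proposition~\ref{prop:loc-tril-form} then follows by the decomposition-and-summation scheme above, verbatim from the paraproduct model.
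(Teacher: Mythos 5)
The paper states this result as a citation of Lemma 5 from \cite{vv_BHT} and does not reproduce its proof, so there is no in-paper argument to compare against; your outline does, however, match the approach of that reference. Decomposing $f$, $g$, $h$ by dyadic distance from $I_0$, applying the $BHT$ size/energy estimate termwise, invoking a refined localized energy bound (the $BHT$ analogue of Lemma \ref{lemma-ref-energy-est-paraprod} with $\|\cdot\|_1$ replaced by $\|\cdot\|_2$) to produce $2^{-kM}$ decay, and then summing via H\"older and almost-disjointness of supports is exactly the right scheme, and you correctly isolate the refined localized energy estimate as the one genuinely new ingredient relative to the paraproduct case.

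There is, though, a slip in your handling of $\theta_j = 0$, caused by transcribing the paraproduct convention rather than the $BHT$ convention, which are opposite. In Proposition \ref{paraproduct estimates} the $j$-th factor is $(\ssize)^{1-\theta_j}(\eenergy)^{\theta_j}$, so $\theta_j = 0$ leaves only a size, the conclusion of Lemma \ref{lemma:localized-paraprod-est} carries no $\|\cdot\ci_{I_0}\|_1$ factor in that slot, and no decomposition in $k_j$ is needed. But the size/energy estimate you yourself quote for $BHT$ is $\prod_j (\ssize)^{\theta_j}(\eenergy)^{1-\theta_j}$, so at $\theta_j = 0$ the $j$-th factor is \emph{all energy}, and the corresponding factor in the conclusion of Proposition \ref{prop:loc-tril-form} is precisely $\big\|f_j\cdot\ci_{I_0}\big\|_2$. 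If you skip the decomposition in $k_j$, you can only reach $\eenergy_{\rr P(I_0)}(f_j)\lesssim\|f_j\|_2$, not the localized weighted norm. The repair is immediate: decompose in all three indices; when $\theta_j = 0$ the $k_j$-sum is in fact simpler, since the energy enters with exponent $1$ and no H\"older split is required, only the $2^{-k_jM}$ decay paired with almost-disjointness of the $f_{k_j}$. A smaller note: for the truncated pieces, $\ssize_{\rr P(I_0)} f_{k_1}\le \ssize_{\rr P(I_0)} f$ is already available and yields the $\ssize$ appearing on the right-hand side of the proposition; passing to $\sssize$ as you do is unnecessary and would give only the weaker conclusion with $\sssize$ in place of $\ssize$.
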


As an immediate consequence, we have:
\begin{corollary}
If $F, G$ and $\tilde E$ are sets of finite measure, and $f, g, h$ are so that $\lft f \rg \leq \one_F(x), \lft g\rg \leq \one_G(x)$ and $\lft h\rg \leq \one_{\tilde E}(x)$, then 
\[
\lft \Lambda_{\rr P \left( I_0 \right)} (f, g, h) \rg \lesssim \big( \sssize_{\rr P \left( I_0 \right)} \one_F  \big)^{\frac{1+\theta_1}{2}} \big( \sssize_{\rr P \left( I_0 \right)} \one_G  \big)^{\frac{1+\theta_2}{2}} \big( \sssize_{\rr P \left( I_0 \right)} \one_{\tilde E}  \big)^{\frac{1+\theta_3}{2}} \cdot \lft I_0 \rg.
\]
\end{corollary}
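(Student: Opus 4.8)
The plan is to obtain this corollary directly from Proposition \ref{prop:loc-tril-form}, via two reductions: replacing $f,g,h$ by the characteristic functions of the sets in which they are supported, and then converting the weighted $L^2$ norms into a product of a modified size and a power of $\lft I_0\rg$.

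First I would apply Proposition \ref{prop:loc-tril-form} with the given admissible triple $(\theta_1,\theta_2,\theta_3)$, which bounds $\lft\Lambda_{\rr P(I_0)}(f,g,h)\rg$ by
\[
(\ssize_{\rr P(I_0)} f)^{\theta_1}(\ssize_{\rr P(I_0)} g)^{\theta_2}(\ssize_{\rr P(I_0)} h)^{\theta_3}\,\big\| f \cdot \ci_{I_0} \big\|_2^{1-\theta_1}\big\| g \cdot \ci_{I_0} \big\|_2^{1-\theta_2}\big\| h \cdot \ci_{I_0} \big\|_2^{1-\theta_3}.
\]
Since $\lft f\rg\le\one_F$ pointwise, monotonicity of the size functional in the function gives $\ssize_{\rr P(I_0)} f\le\ssize_{\rr P(I_0)}\one_F\le\sssize_{\rr P(I_0)}\one_F$, and likewise for $g,h$ with $G,\tilde E$. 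For the weighted $L^2$ factors I would estimate $\|f\cdot\ci_{I_0}\|_2^2\le\int_{\rr R}\one_F\,\ci_{I_0}^2\,dx$; using that the decay exponent in $\ci_{I_0}$ may be taken $\ge 2$ (it is a free parameter throughout the paper), this is $\le\lft I_0\rg\cdot\frac{1}{\lft I_0\rg}\int_{\rr R}\one_F\,\ci_{I_0}^M\,dx\le\lft I_0\rg\,\sssize_{\rr P(I_0)}\one_F$, so $\|f\cdot\ci_{I_0}\|_2\le\lft I_0\rg^{1/2}(\sssize_{\rr P(I_0)}\one_F)^{1/2}$, and similarly for $g$ and $h$.

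Substituting these into the displayed inequality, the exponent attached to $\sssize_{\rr P(I_0)}\one_F$ becomes $\theta_1+\frac{1-\theta_1}{2}=\frac{1+\theta_1}{2}$, and symmetrically $\frac{1+\theta_2}{2}$, $\frac{1+\theta_3}{2}$ for $G$ and $\tilde E$, while the total exponent of $\lft I_0\rg$ is $\frac{(1-\theta_1)+(1-\theta_2)+(1-\theta_3)}{2}=\frac{3-(\theta_1+\theta_2+\theta_3)}{2}=1$, using $\theta_1+\theta_2+\theta_3=1$. This yields exactly the asserted estimate, with implicit constant depending only on $\theta_1,\theta_2,\theta_3$, as inherited from Proposition \ref{prop:loc-tril-form}.

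There is no genuine obstacle here — the statement is indeed an immediate consequence. The only point requiring a little attention is the step $\ci_{I_0}^2\lesssim\ci_{I_0}^M$, i.e. recognizing that $\frac{1}{\lft I_0\rg}\int_{\rr R}\one_F\,\ci_{I_0}^2\,dx$ is, up to the harmless change of decay exponent, precisely the extra averaging term over $I_0$ that distinguishes $\sssize_{\rr P(I_0)}$ from $\ssize_{\rr P(I_0)}$; this is exactly what allows the weighted $L^2$ norms to be absorbed into the modified size with no loss, and it is the reason the corollary is phrased in terms of $\sssize_{\rr P(I_0)}$ rather than $\ssize_{\rr P(I_0)}$.
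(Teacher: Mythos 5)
Your proof is correct and is exactly the ``immediate consequence'' argument the paper has in mind (the paper gives no details, simply stating the corollary after Proposition \ref{prop:loc-tril-form}): substitute $\one_F, \one_G, \one_{\tilde E}$ for $f,g,h$ by monotonicity of the sizes and the weighted $L^2$ norms, then absorb $\|\one_F\cdot\ci_{I_0}\|_2^{1-\theta_1}$ into $(\sssize_{\rr P(I_0)}\one_F)^{(1-\theta_1)/2}|I_0|^{(1-\theta_1)/2}$, and sum the exponents using $\theta_1+\theta_2+\theta_3=1$. The only technicality, which you flagged, is matching decay exponents between the $\ci_{I_0}$ in the weighted $L^2$ norm and the $\ci_{I_0}^M$ in the definition of $\sssize$; since both exponents are free parameters throughout the paper this causes no loss.
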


\begin{corollary}
\label{cor:L1-bht-local}
If $F, G$ and $\tilde E$ are sets of finite measure, and $f, g$ are so that $\lft f \rg \leq \one_F(x), \lft g\rg \leq \one_G(x)$, then 
\[
\big \| BHT_{I_0}(f, g) \cdot \one_{\tilde E} \big\|_1\lesssim \big( \sssize_{\rr P \left( I_0 \right)} \one_F  \big)^{\frac{1+\theta_1}{2}} \big( \sssize_{\rr P \left( I_0 \right)} \one_G  \big)^{\frac{1+\theta_2}{2}} \big( \sssize_{\rr P \left( I_0 \right)} \one_{\tilde E}  \big)^{\frac{1+\theta_3}{2}} \cdot \lft I_0 \rg.
\]
\end{corollary}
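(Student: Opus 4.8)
The plan is to obtain this as an immediate dual reformulation of the preceding corollary. First I would write, using the duality between $L^1$ and $L^\infty$ (legitimate since, after the customary restriction to a finite subcollection of tritiles, $BHT_{I_0}(f,g)$ is a fixed bounded compactly supported function),
\[
\big\| BHT_{I_0}(f,g) \cdot \one_{\tilde E} \big\|_1 = \sup_{\| h \|_\infty \leq 1} \Big| \int_{\rr R} BHT_{I_0}(f,g)(x)\, \one_{\tilde E}(x)\, h(x)\, dx \Big|.
\]
For each fixed $h$ with $\|h\|_\infty \le 1$, the integral on the right is, up to harmless complex conjugations, precisely the localized trilinear form $\Lambda_{BHT; \rr P(I_0)}(f, g, h \cdot \one_{\tilde E})$, by the definitions of $BHT_{\rr P}$ and of its associated trilinear form.

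Next I would check that the triple $(f, g, h \cdot \one_{\tilde E})$ meets the hypotheses of the previous corollary: $|f| \le \one_F$ and $|g| \le \one_G$ hold by assumption, while $|h \cdot \one_{\tilde E}| = |h|\,\one_{\tilde E} \le \one_{\tilde E}$ because $\|h\|_\infty \le 1$. Applying that corollary then bounds $\big|\Lambda_{BHT;\rr P(I_0)}(f, g, h \cdot \one_{\tilde E})\big|$ by
\[
\big( \sssize_{\rr P(I_0)} \one_F \big)^{\frac{1+\theta_1}{2}} \big( \sssize_{\rr P(I_0)} \one_G \big)^{\frac{1+\theta_2}{2}} \big( \sssize_{\rr P(I_0)} \one_{\tilde E} \big)^{\frac{1+\theta_3}{2}} \cdot |I_0|,
\]
with an implicit constant depending only on $\theta_1, \theta_2, \theta_3$. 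Since this bound is independent of $h$, taking the supremum over all admissible $h$ gives exactly the asserted estimate; a standard limiting argument then removes the restriction to a finite subcollection of tritiles.

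There is no real obstacle here — the only point worth a remark is that the constant produced by the previous corollary is uniform in the third entry (it is a function of the admissible exponents $\theta_j$ alone), which is what makes the passage to the supremum harmless. All the genuine analysis — the tree decomposition and size/energy bounds for $BHT$ — is already encapsulated in Proposition \ref{prop:loc-tril-form} and the corollary preceding this one, so the present statement is essentially the bookkeeping step needed to feed an $L^1$-based (rather than trilinear-form-based) local estimate into the helicoidal induction for the quasi-Banach target range.
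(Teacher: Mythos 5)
Your argument is correct and is exactly the intended route: since $\big\| BHT_{I_0}(f,g)\,\one_{\tilde E} \big\|_1 = \sup_{\|h\|_\infty\le 1} \big|\Lambda_{BHT;\rr P(I_0)}(f,g,h\,\one_{\tilde E})\big|$ and $|h\,\one_{\tilde E}|\le\one_{\tilde E}$, the bound is just the preceding trilinear corollary read through $L^1$--$L^\infty$ duality, uniformly in $h$. The paper omits the proof because it regards both this and the trilinear-form version as immediate consequences of Proposition \ref{prop:loc-tril-form}, which is precisely the observation you make.
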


Our approach to proving vector-valued estimates for the bilinear operators involves localizations. Just like in the paraproduct case, we define
\[
BHT_{I_0}^{F, G, \tilde E}(f, g)(x):=BHT_{I_0}(f \cdot \one_F, g \cdot \one_G)(x) \cdot \one_{\tilde E}(x).
\]
 
For the trilinear form associated to this localized operator, we have proved in \cite{vv_BHT} the following inequality:

\begin{proposition}[Proposition 8 of \cite{vv_BHT}]
\label{prop:local-bht-banach}
If $1<r_1, r_2 \leq \infty$, and $1 \leq r <\infty$, then
\begin{align*}
\Big | \Lambda_{BHT_{I_0}^{F, G, \tilde E}}(f, g, h)  \Big | &\lesssim \big( \sssize_{I_0} \one_F \big)^{\frac{1+\theta_1}{2}-\frac{1}{r_1}-\epsilon}  \big( \sssize_{I_0} \one_G \big)^{\frac{1+\theta_2}{2}-\frac{1}{r_2}-\epsilon}  \big( \sssize_{I_0} \one_{\tilde E} \big)^{\frac{1+\theta_3}{2}-\frac{1}{r'}-\epsilon}  \\
&\cdot \big \| f \cdot \ci_{I_0} \big\|_{r_1} \big \| g \cdot \ci_{I_0} \big\|_{r_2} \big \| h \cdot \ci_{I_0} \big\|_{r'},
\end{align*}
provided the exponents appearing above are all strictly positive.
\end{proposition}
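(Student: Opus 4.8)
The plan is to follow the template of the paraproduct analogue, Proposition~\ref{prop:Localization for local L^1 paraproducts}, with the local trilinear estimate for $BHT$ from Proposition~\ref{prop:loc-tril-form} (and the size-only Corollary~\ref{cor:L1-bht-local}) playing the role that Lemma~\ref{lemma:localized-paraprod-est} played for paraproducts. Writing $\Lambda_{BHT_{I_0}^{F,G,\tilde E}}(f,g,h)=\Lambda_{BHT;\rr P(I_0)}(f\cdot\one_F,\,g\cdot\one_G,\,h\cdot\one_{\tilde E})$, I would first reduce to restricted-type inputs: by the standard multilinear (generalized restricted-type) interpolation --- and it is exactly this interpolation, together with the equivalence $\|\one_E\ci_{I_0}\|_{s}\sim\|\one_E\ci_{I_0}\|_1^{1/s}$ already used for paraproducts, that converts the restricted-type bounds below into the statement for general $f\in L^{r_1}$, $g\in L^{r_2}$, $h\in L^{r'}$ and produces the $\epsilon$-loss in the size exponents (arbitrarily small, per the paper's conventions) --- it suffices to prove that for all sets $E_1,E_2,E_3$ of finite measure and all $f,g,h$ with $|f|\le\one_{E_1}$, $|g|\le\one_{E_2}$, $|h|\le\one_{E_3}$,
\begin{align*}
\big|\Lambda_{BHT_{I_0}^{F,G,\tilde E}}(f,g,h)\big| &\lesssim \big(\sssize_{I_0}\one_F\big)^{\frac{1+\theta_1}{2}-\frac{1}{s_1}}\big(\sssize_{I_0}\one_G\big)^{\frac{1+\theta_2}{2}-\frac{1}{s_2}}\big(\sssize_{I_0}\one_{\tilde E}\big)^{\frac{1+\theta_3}{2}-\frac{1}{s'}}\\
&\quad\cdot\big\|\one_{E_1}\ci_{I_0}\big\|_1^{\frac{1}{s_1}}\big\|\one_{E_2}\ci_{I_0}\big\|_1^{\frac{1}{s_2}}\big\|\one_{E_3}\ci_{I_0}\big\|_1^{\frac{1}{s'}}
\end{align*}
for all admissible triples $(s_1,s_2,s')$ with $\frac1{s_1}+\frac1{s_2}+\frac1{s'}=1$ in a neighbourhood of $(r_1,r_2,r')$ small enough that the three size exponents stay strictly positive --- which is possible precisely under the hypothesis of the proposition.

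To establish this restricted-type bound I would apply Proposition~\ref{prop:loc-tril-form} to $(f\cdot\one_F,\,g\cdot\one_G,\,h\cdot\one_{\tilde E})$ with inner exponents $\theta'_1,\theta'_2,\theta'_3$, using the elementary bounds $\ssize_{\rr P(I_0)}(f\cdot\one_F)\le\sssize_{I_0}\one_{F\cap E_1}\le\sssize_{I_0}\one_F$ and $\|f\cdot\one_F\,\ci_{I_0}\|_2^2\le\int_{\rr R}\one_{E_1}\ci_{I_0}^2\lesssim\|\one_{E_1}\ci_{I_0}\|_1$ (the exponent on $\ci_{I_0}$ being immaterial), and symmetrically for $g$ and $h$. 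Choosing $\theta'_j$ with $\tfrac{1}{s_j}=\tfrac{1-\theta'_j}{2}$ --- possible when $s_j\ge 2$ --- yields the restricted-type bound at such exponent triples with size exponents equal to $\theta'_j$; combined with the size-only estimate of Corollary~\ref{cor:L1-bht-local} (the ``$s_j=\infty$'' endpoint), a multilinear interpolation over the exponent triple then delivers the restricted-type bound on the whole region where the size exponents are positive. When $s_j<2$ for some index, one additionally splits the far-away parts of $f,g,h$ into the dyadic annuli $\{x:\dist(x,I_0)\sim(2^k-1)|I_0|\}$ as in the proof of Lemma~\ref{lemma:localized-paraprod-est} (using the $BHT$ refinement of the energy estimate, the analogue of Lemma~\ref{lemma-ref-energy-est-paraprod}, for the $2^{-kM}$ decay) and runs the triple stopping time on $\rr P(I_0)$ along the densities of $\one_{E_1},\one_{E_2},\one_{E_3}$ exactly as in Section~\ref{subsection-stopping_times}: the density levels control the sizes on each selected sub-tree while the count of the selected intervals reconstructs the missing powers of $\|\one_{E_j}\ci_{I_0}\|_1$, the residual mass of $\sssize_{I_0}\one_{F\cap E_1}$ being absorbed into $(\sssize_{I_0}\one_F)^{\frac{1+\theta_1}{2}-1/s_1}$ via $\sssize_{I_0}\one_{F\cap E_1}\le\min(\sssize_{I_0}\one_F,1)$. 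Finally, the cases $r_1=\infty$ or $r_2=\infty$, where the quasi-Banach factor cannot be dualised, are treated separately in the spirit of the $q=\infty$ argument for paraproducts: freeze $g$ with $\|g\,\ci_{I_0}\|_\infty$ normalised, note $\ssize_{\rr P(I_0)}(g\cdot\one_G)\lesssim\|g\,\ci_{I_0}\|_\infty\,\sssize_{I_0}\one_G$, and repeat the argument for the resulting bilinear form.

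The main obstacle is the exponent bookkeeping in the restricted-type step: one must match the powers of the three sizes and of the three quantities $\|\one_{E_j}\ci_{I_0}\|_1$ simultaneously while summing the stopping-time series, and both the matching and the convergence of the series close exactly when the size exponents $\frac{1+\theta_j}{2}-\frac{1}{s_j}$ are strictly positive, so that hypothesis is the sharp threshold for the method rather than a technical convenience. A secondary difficulty is the $L^\infty$ endpoint, which forces the separate bilinear treatment above since the associated quasi-Banach factor admits no dualisation, together with arranging the annular and stopping-time decompositions to be summed at once, which is done as in Lemma~\ref{lemma:sub-unit-local-est-paraprod}.
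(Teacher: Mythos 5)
Your overall architecture---reduction to restricted type, Proposition~\ref{prop:loc-tril-form} for the ``local $L^2$'' input, the size-only Corollary~\ref{cor:L1-bht-local}, a triple stopping time---is the correct template and mirrors both the paper's paraproduct sketch (Proposition~\ref{prop:Localization for local L^1 paraproducts}) and the argument in \cite{vv_BHT}, from which the result is in fact cited rather than re-proved here. However, the ``multilinear interpolation over the exponent triple'' step contains a genuine gap and should be removed rather than presented as an alternative to the stopping time.

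Here is the issue. Fixing restricted inputs $|f|\le\one_{E_1}$, $|g|\le\one_{E_2}$, $|h|\le\one_{E_3}$, Proposition~\ref{prop:loc-tril-form} forces the $L^2$ factors to contribute $\|\one_{E_j}\ci_{I_0}\|_1^{(1-\theta'_j)/2}$, so matching $\|\one_{E_j}\ci_{I_0}\|_1^{1/s_j}$ pins $\theta'_j = 1 - 2/s_j$, which is nonnegative only if $s_j\ge 2$. The hypothesis of the proposition permits, say, $1<r_1<2$ (with positivity of $\frac{1+\theta_1}{2}-\frac{1}{r_1}$ achieved by choosing $\theta_1$ closer to $1$), and for such $r_1$ no choice of $\theta'_j$ in Proposition~\ref{prop:loc-tril-form} produces the right exponent on $\|\one_{E_1}\|_1$: the Lebesgue exponent $2$ of $\|f\ci_{I_0}\|_2$ is structural and cannot be moved. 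Interpolating the local $L^2$ bounds with the size-only Corollary does not escape this region: in $\big(\frac{1}{s_1},\frac{1}{s_2},\frac{1}{s'}\big)$ coordinates, the convex hull of the local $L^2$ triangle $\{\frac{1}{s_j}\le\frac12\ \forall j,\ \sum\frac{1}{s_j}=1\}$ and the origin (the ``$s_j=\infty$'' point) is still confined to $\{\frac{1}{s_j}\le\frac12\ \forall j\}$, so no new exponent triples are produced; moreover the origin does not satisfy the H\"older scaling constraint, so it is not even an admissible interpolation endpoint. Your claim that the interpolation ``delivers the restricted-type bound on the whole region where the size exponents are positive'' is therefore false for $r_j<2$.

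The stopping-time decomposition is consequently not an ``additional'' device for a subcase---it is the argument. On each selected sub-tree $\rr P(I)$ the densities of $\one_{E_1},\one_{E_2},\one_{E_3}$ are pinned to levels $2^{-n_1},2^{-n_2},2^{-n_3}$; the size-only corollary yields $\prod_j(\sssize_{I}\one_{F\cap E_j})^{(1+\theta_j)/2}\,|I|$, whose size factors are then split as $\frac{1+\theta_j}{2}=\big(\frac{1+\theta_j}{2}-\frac{1}{s_j}\big)+\frac{1}{s_j}$; the Carleson-type count $\sum_{I}|I|\lesssim 2^{n_j}\|\one_{E_j}\ci_{I_0}\|_1$ then reconstitutes $\|\one_{E_j}\ci_{I_0}\|_1^{1/s_j}$; and the resulting geometric series in $(n_1,n_2,n_3)$ converges precisely because the residual size exponents $\frac{1+\theta_j}{2}-\frac{1}{s_j}$ are strictly positive, which is the hypothesis of the proposition. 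You describe exactly this machinery in the second half of your paragraph, so the proof is recoverable; the fix is simply to delete the interpolation claim and present the stopping-time decomposition as the mechanism valid on the entire admissible range, with the direct application of Proposition~\ref{prop:loc-tril-form} a shortcut available only when all exponents are in the local $L^2$ range.
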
 

The $BHT$ operator is bounded on $L^s$, for $\frac{2}{3} < s <\infty$, so it is natural to look for bounds within the same range for the localization $BHT_{I_0}^{F, G, \tilde E}$. Proposition \ref{prop:local-bht-banach} provides an answer for $1 \leq s < \infty$. In the quasi-Banach case, we have the following:
\begin{lemma}
\label{lemma:sub-unit-BHT}
If $\frac{2}{3}<\tau <1$, then 
\begin{equation}
\label{eq:sub-unit-BHT}
\big\|  BHT_{I_0}^{F, G}(f, g) \cdot \one_{\tilde E} \big\|_\tau \lesssim \big( \sssize_{I_0} \one_{F}  \big)^{\frac{1+\theta_1}{2}-\epsilon} \big( \sssize_{I_0} \one_{G}  \big)^{\frac{1+\theta_2}{2}-\epsilon} \big( \sssize_{I_0} \one_{\tilde E}  \big)^{\frac{1+\theta_3}{2}-\frac{1}{\tau'}-\epsilon} \lft I_0 \rg^{\frac{1}{\tau}},
\end{equation}
provided $\lft f(x) \rg \leq \one_F(x)$, $\lft g(x) \rg \leq \one_G(x)$.
\end{lemma}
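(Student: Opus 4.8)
The plan is to mimic, essentially line by line, the proof of Lemma~\ref{lemma:sub-unit-local-est-paraprod}, replacing the localized $L^1$ bound for $\Pi_{I_0}$ (Corollary~\ref{cor:local-paraprod-L1}) by its $BHT$-counterpart, Corollary~\ref{cor:L1-bht-local}. The mechanism is the same: for $\tau<1$ the quantity $\|\cdot\|_\tau$ is only a quasinorm, but $\|\cdot\|_\tau^\tau$ is subadditive, and H\"older's inequality lets one trade an $L^\tau$ quasinorm for an $L^1$ norm at the price of an $L^{\tau_0}$ factor on a spatially localized piece of $\one_{\tilde E}$; that extra factor is exactly what upgrades the exponent of $\sssize_{I_0}\one_{\tilde E}$ from $\frac{1+\theta_3}{2}$ to $\frac{1+\theta_3}{2}-\frac{1}{\tau'}$.

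Concretely, first I would fix $\tau_0>0$ by $\frac{1}{\tau}=1+\frac{1}{\tau_0}$, so that $\frac{1}{\tau_0}=\frac{1}{\tau}-1=-\frac{1}{\tau'}$ and $\tau_0=\frac{\tau}{1-\tau}>2$ (in particular $\tau_0>1$, which is what makes the H\"older step below legitimate). As in Lemma~\ref{lemma:sub-unit-local-est-paraprod}, write $\one_{\tilde E}=\sum_{k_3\ge0}\one_{\tilde E_{k_3}}$ with $\one_{\tilde E_{k_3}}:=\one_{\tilde E}\cdot\one_{\{x:\,\dist(x,I_0)\sim(2^{k_3}-1)|I_0|\}}$. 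Using subadditivity of $\|\cdot\|_\tau^\tau$ and then H\"older with exponents $1$ and $\tau_0$,
\[
\big\|BHT_{I_0}^{F,G}(f,g)\cdot\one_{\tilde E}\big\|_\tau^\tau\lesssim\sum_{k_3\ge0}\big\|BHT_{I_0}^{F,G}(f,g)\cdot\one_{\tilde E_{k_3}}\big\|_1^{\tau}\,\big\|\one_{\tilde E_{k_3}}\big\|_{\tau_0}^{\tau},
\]
and, since $\lft f\rg\le\one_F$ and $\lft g\rg\le\one_G$, Corollary~\ref{cor:L1-bht-local} applied with $\tilde E$ replaced by $\tilde E_{k_3}$ gives, for the fixed triple $(\theta_1,\theta_2,\theta_3)$,
\[
\big\|BHT_{I_0}^{F,G}(f,g)\cdot\one_{\tilde E_{k_3}}\big\|_1\lesssim\big(\sssize_{I_0}\one_F\big)^{\frac{1+\theta_1}{2}}\big(\sssize_{I_0}\one_G\big)^{\frac{1+\theta_2}{2}}\big(\sssize_{I_0}\one_{\tilde E_{k_3}}\big)^{\frac{1+\theta_3}{2}}\,|I_0|.
\]
Then I would extract decay exactly as in \eqref{eq:handling_sizes}: using $\sssize_{I_0}\one_{\tilde E_{k_3}}\lesssim 2^{-k_3M}$ together with $\sssize_{I_0}\one_{\tilde E_{k_3}}\lesssim\sssize_{I_0}\one_{\tilde E}$ one bounds $(\sssize_{I_0}\one_{\tilde E_{k_3}})^{\frac{1+\theta_3}{2}}\lesssim 2^{-k_3M\epsilon}(\sssize_{I_0}\one_{\tilde E})^{\frac{1+\theta_3}{2}-\epsilon}$, and since $\|\one_{\tilde E_{k_3}}\|_{\tau_0}^\tau=\|\one_{\tilde E_{k_3}}\|_1^{\tau/\tau_0}$, a second H\"older summation in $k_3$ (with $M$ taken large relative to $\epsilon$ and $\tau_0$) yields $\sum_{k_3\ge0}2^{-k_3M\epsilon\tau}\|\one_{\tilde E_{k_3}}\|_1^{\tau/\tau_0}\lesssim\|\one_{\tilde E}\cdot\ci_{I_0}\|_1^{\tau/\tau_0}\lesssim(\sssize_{I_0}\one_{\tilde E})^{\tau/\tau_0}|I_0|^{\tau/\tau_0}$. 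Multiplying everything, the powers of $|I_0|$ add to $\tau+\frac{\tau}{\tau_0}=1$, the powers of $\sssize_{I_0}\one_{\tilde E}$ add to $\tau\big(\frac{1+\theta_3}{2}-\epsilon+\frac1{\tau_0}\big)$, and taking the $\tau$-th root turns the exponent of $\sssize_{I_0}\one_{\tilde E}$ into $\frac{1+\theta_3}{2}+\frac1{\tau_0}-\epsilon=\frac{1+\theta_3}{2}-\frac1{\tau'}-\epsilon$; the $\epsilon$-losses on $\sssize_{I_0}\one_F$ and $\sssize_{I_0}\one_G$ can be inserted for free since these sizes are $\lesssim1$. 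This is exactly \eqref{eq:sub-unit-BHT}.

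The honest assessment is that there is no genuinely new obstacle here: all of the real time-frequency content — the rank-one tree organization and the passage from trees to the localized estimate — is already packaged inside Proposition~\ref{prop:loc-tril-form} and its consequence Corollary~\ref{cor:L1-bht-local}, just as in the paraproduct case it sat inside Corollary~\ref{cor:local-paraprod-L1}. The only points that require care are the bookkeeping of $\epsilon$ through the two H\"older steps and the choice of $\tau_0$; one should also note, as in Lemma~\ref{lemma:sub-unit-local-est-paraprod}, that this estimate is strictly sharper than what a crude interpolation would give, precisely because the exponent on the subunitary size $\sssize_{I_0}\one_{\tilde E}$ is kept as large as possible — this is what later allows the $BHT$ vector-valued inequalities to reach the full local $L^2$ range. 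Finally, the hypothesis $\tau>\frac23$ is actually stronger than this particular argument needs (only $\tau>\frac12$, i.e. $\tau_0\ge1$, is used), but it is the natural range since it matches the boundedness range $\frac23<s<\infty$ of the scalar $BHT$, and the lemma is only invoked there.
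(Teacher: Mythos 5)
Your proof is correct and coincides with what the paper actually carries out: although the paper formally states Lemma \ref{lemma:sub-unit-BHT} as a consequence of the more general Proposition \ref{lemma-subuni-BHT-functions}, the proof of that proposition contains as its key internal step precisely your computation --- shell decomposition of $\one_{\tilde E}$, subadditivity of $\|\cdot\|_\tau^\tau$, H\"older with $\frac{1}{\tau}=1+\frac{1}{\tau_0}$ against the $L^1$ bound of Corollary \ref{cor:L1-bht-local}, and a second H\"older summation in the shell parameter. Your exponent bookkeeping ($\tau+\tau/\tau_0=1$ and $1/\tau_0=-1/\tau'$) is right; the only tiny inaccuracy is the parenthetical claim that $\tau_0\ge1$ is what legitimizes the H\"older step, since $\|\Phi\cdot\one_E\|_\tau\le\|\Phi\|_1\,\|\one_E\|_{\tau_0}$ for a characteristic function holds for every $0<\tau<1$ regardless of whether $\tau_0\ge1$, so the argument in fact imposes no lower constraint on $\tau$ beyond positivity (the hypothesis $\tau>\frac{2}{3}$ is, as you note, simply the range in which the lemma is used).
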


This actually follows from the following:

\begin{proposition}
\label{lemma-subuni-BHT-functions}
For any $\frac{2}{3}<r <1$, and $1< r_1, r_2 \leq \infty$ so that $\frac{1}{r_1}+\frac{1}{r_2}=\frac{1}{r}$, we have
\begin{align}
\label{eq:gen-BHT-loc-fns}
\big \| BHT_{I_0}^{F, G, \tilde E}(f, g) \big\|_r &\lesssim \big( \sssize_{I_0} \one_F  \big)^{\frac{1+\theta_1}{2}-\frac{1}{r_1}-\epsilon} \big( \sssize_{I_0} \one_G  \big)^{\frac{1+\theta_1}{2}-\frac{1}{r_2}-\epsilon} \big( \sssize_{I_0} \one_{\tilde E}  \big)^{\frac{1+\theta_3}{2}-\frac{1}{r'}-\epsilon} \\&\cdot \quad \big\| f \cdot \ci_{I_0} \big\|_{r_1} \big\| g \cdot \ci_{I_0} \big\|_{r_2}, \nonumber
\end{align}
provided the exponents above are all positive. That is, there exist $0 \leq \theta_1, \theta_2, \theta_3 <1$ so that $ \theta_1+ \theta_2+ \theta_3 =1$ and
\begin{equation}
\label{eq:cond-vv-bht}
\tag*{$\ic C (r_1, r_2, r')$}
\frac{1}{r_1}<\frac{1+\theta_1}{2}, \quad \frac{1}{r_2}<\frac{1+\theta_2}{2}, \quad \frac{1}{r'}<\frac{1+\theta_3}{2}.
\end{equation}
\begin{proof}
It will be enough to prove
\begin{align}
\label{eq:enough-weak-type}
\big \| BHT_{\rr P \left( I_0 \right)}^{F, G, \tilde E}(f, g)  \big\|_{\tilde s, \infty}& \lesssim \big( \sssize_{I_0} \one_F \big)^{\frac{1+\theta_1}{2}-\frac{1}{s_1}-\epsilon}  \big( \sssize_{I_0} \one_G \big)^{\frac{1+\theta_2}{2}-\frac{1}{s_2}-\epsilon}  \big( \sssize_{I_0} \one_{\tilde E} \big)^{\frac{1+\theta_3}{2}-\frac{1}{\tilde{s}'}-\epsilon} \\
& \quad \cdot \big\| \one_{E_1} \cdot \ci_{I_0} \big\|_{s_1} \big\| \one_{E_2} \cdot \ci_{I_0} \big\|_{s_2}
\end{align}
whenever $\lft f(x) \rg \leq \one_{E_1}(x), \lft g(x) \rg \leq \one_{E_2}(x)$, and for $(s_1, s_2, \tilde s)$ admissible tuple in a neighborhood of $( r_1, r_2, r)$.
We will dualize the weak-$L^{\tilde s}$ norm through an $L^\tau$ space, with $\tau<\tilde s$. Given $E_3$ a set of finite measure, we set $\tilde E_3:=E_3 \setminus \tilde \Omega$. The exceptional set $\tilde \Omega$ is defined by the same formula \eqref{eq:def-ex-set-local}. We write $\ds\rr P :=\bigcup_{d \geq 0} \rr P_d$, where all the tiles in $\rr P_d$ have the property that 
\[
1+\frac{\dist (I_P, \tilde \Omega^c)}{\lft I_P \rg} \sim 2^d.
\]

For every $n_1$ with $2^{-n_1} \leq 2^d \frac{\| \one_{E_1} \cdot \ci_{I_0}  \|}{\lft E_3 \rg}$, we perform a stopping time similar to the one described in Section \ref{subsection-stopping_times}. The stopping time will yield a collection $\ii I^{n_1}$ of mutually disjoint intervals, and for every $I \in \ii I^{n_1}$, also a collection $\rr P(I) \subseteq \rr P_d$ of tri-tiles. For each interval $I \in \ii I^{n_1}$, we have 
\[
2^{-n_1-1} \leq \frac{1}{\lft I \rg} \int_{\rr R} \one_{E_1} \cdot \ci_{I} dx \leq 2^{-n_1} \sim \sssize_{\rr P \left( I \right)} \one_{E_1}.
\]
As a consequence, $\sum_{I \in \ii I^{n_1}}\lft I \rg \lesssim 2^{n_1} \| \one_{E_1} \cdot \ci_{I_0}\|_1$.

Moreover, whenever $\rr P' \subseteq \rr P\left( I \right)$, $\ssize_{\rr P'} \one_{E_1} \lesssim 2^{-n_1}$. The collections of intervals $\ii I^{n_2}, \ii I^{n_3}$ associated to $\one_{E_2}$ and $\one_{\tilde E_3}$ will have similar properties.

We choose a $\tau< \tilde s <1$, and it will be enough to estimate
\[
\big\| BHT_{\rr P \left( I_0 \right)}^{F, G, \tilde E} (f, g)  \cdot \one_{\tilde E_3}\big\|_\tau=\big\| BHT_{\rr P \left( I_0 \right)}^{F \cap E_1, G \cap E_2, \tilde E \cap \tilde E_3} (f, g)\big\|_\tau.
\]

The subadditivity and monotonicity of $\| \cdot \|_\tau^\tau$ implies that 
\[
\big\| BHT_{\rr P \left( I_0 \right)}^{F \cap E_1, G \cap E_2, \tilde E \cap \tilde E_3} (f, g)\big\|_\tau^\tau \lesssim \sum_{n_1, n_2, n_3} \sum_{\substack{I=I_1 \cap I_2 \cap I_3 \\ I_j \in \ii I^{n_j}}} \big\| BHT_{\rr P \left(I \right)}^{F \cap E_1, G \cap E_2, \tilde E \cap \tilde E_3} (f, g)\big\|_\tau^\tau.
\]
An estimate similar to Lemma \ref{lemma:sub-unit-local-est-paraprod} is needed; informally, this reduces to
\begin{equation}
\label{eq:sub-unit-bht}
\big \| BHT_{\rr P \left( I \right)}^{F \cap E_1, G \cap E_2, \tilde E \cap \tilde E_3} (f, g) \big \|_\tau \lesssim \big\| BHT_{\rr P \left( I \right)}^{F \cap E_1, G \cap E_2, \tilde E \cap \tilde E_3} (f, g)\|_1 \cdot \big\|  \one_{\tilde E \cap \tilde E_3}  \cdot \ci_I \big\|_{\tau_0},
\end{equation}
where $\tau_0$ is so that $\frac{1}{\tau_0}+1=\frac{1}{\tau}$. Even though we cannot expect to prove such an estimate, we will show that
{\fontsize{10}{10}
\begin{align*}
\big \| BHT_{\rr P \left( I \right)}^{F \cap E_1, G \cap E_2, \tilde E \cap \tilde E_3} (f, g) \big \|_\tau &\lesssim  \big( \sssize_{\rr P \left( I \right)} \one_{F \cap E_1}  \big)^{\frac{1+\theta_1}{2}} \big( \sssize_{\rr P \left( I \right)} \one_{G \cap E_2}  \big)^{\frac{1+\theta_2}{2}} \big( \sssize_{\rr P \left( I_0 \right)} \one_{\tilde E \cap \tilde E_3}  \big)^{\frac{1+\theta_3}{2}-\epsilon} \\
 &\quad \cdot \big\|  \one_{\tilde E \cap \tilde E_3}  \cdot \ci_I \big\|_{\tau_0} \lft I \rg .
\end{align*}}
Compared to the estimate in Corollary \ref{cor:L1-bht-local}, we loose an $\epsilon$ in the exponent of $\sssize_{\rr P \left( I \right)} \one_{\tilde E \cap \tilde E_3}$. The proof is very similar to the estimate in Lemma \ref{lemma:sub-unit-local-est-paraprod}: first we write $\one_{\tilde E \cap \tilde E_3}$ as
\[
\one_{\tilde E \cap \tilde E_3}=\sum_{k_3 \geq 0} \one_{\tilde E_{k_3}},  \text{   where    } \one_{\tilde E_{k_3}}(x)=\one_{\tilde E \cap \tilde E_3} \cdot \one_{\left\lbrace x: \dist(x, I) \sim \left( 2^{k_3}-1\right) \vert I \vert \right\rbrace}.
\]
Using again the subadditivity of $\| \cdot \|_\tau^\tau$, together with the estimate in Corollary \ref{cor:L1-bht-local}, we have that 
\begin{align*}
&\big \| BHT_{\rr P \left( I \right)}^{F \cap E_1, G \cap E_2, \tilde E \cap \tilde E_3} (f, g) \big \|_\tau^\tau \lesssim \sum_{k_3 \geq 0} \big \| BHT_{\rr P \left( I \right)}^{F \cap E_1, G \cap E_2, \tilde E_{k_3}} (f, g) \big \|_1^\tau \cdot \lft \tilde E_{k_3} \rg^{\tau/{\tau_0}} \\
&\lesssim \big( \sssize_{\rr P \left( I \right)} \one_{F \cap E_1}  \big)^{\tau \cdot \frac{1+\theta_1}{2}} \big( \sssize_{\rr P \left( I \right)} \one_{G \cap E_2}  \big)^{\tau \cdot  \frac{1+\theta_2}{2}} \big( \sssize_{\rr P \left( I \right)} \one_{\tilde E_{k_3}}  \big)^{\tau \cdot\frac{1+\theta_3}{2}}\cdot \lft I \rg^\tau \cdot \lft \tilde E_{k_3} \rg^{\tau/{\tau_0}} .
\end{align*}
Noticing that $$\sssize_{\rr P \left( I \right)} \one_{\tilde E_{k_3}} \lesssim \big( \sssize_{\rr P \left( I \right)} \one_{ \tilde E \cap \tilde E_{3}}\big)^{1-\epsilon} \cdot 2^{-k_3 M \epsilon}$$
and that 
\[
2^{-k_3 \tilde M} \cdot \lft \tilde E_{k_3} \rg^{\tau/{\tau_0}} \lesssim \|  \one_{\tilde E_{k_3}} \cdot \ci_I \|_1^{\frac{\tau}{\tau_0}},
\]
H\"older's inequality eventually implies 
\begin{align*}
&\big \| BHT_{\rr P \left( I \right)}^{F \cap E_1, G \cap E_2, \tilde E \cap \tilde E_3} (f, g) \big \|_\tau^\tau \\
&\lesssim \big( \sssize_{\rr P \left( I \right)} \one_{F \cap E_1}  \big)^{\tau \cdot \frac{1+\theta_1}{2}} \big( \sssize_{\rr P \left( I \right)} \one_{G \cap E_2}  \big)^{\tau \cdot  \frac{1+\theta_2}{2}} \big( \sssize_{\rr P \left( I \right)} \one_{\tilde E \cap \tilde E_{3}}  \big)^{\tau \cdot\frac{1+\theta_3}{2}+\frac{\tau}{\tau_0}-\epsilon} \cdot \lft I \rg.
\end{align*}

Now we are ready to prove inequality \eqref{eq:gen-BHT-loc-fns}; indeed, we have 
\begin{align*}
&\big\| BHT_{\rr P \left( I_0 \right)}^{F \cap E_1, G \cap E_2, \tilde E \cap \tilde E_3} (f, g)\big\|_\tau^\tau \lesssim \sum_{n_1, n_2, n_3} \sum_{\substack{I=I_1 \cap I_2 \cap I_3 \\ I_j \in \ii I^{n_j}}} \big\| BHT_{\rr P \left(I \right)}^{F \cap E_1, G \cap E_2, \tilde E \cap \tilde E_3} (f, g)\big\|_\tau^\tau \\
& \lesssim  \sum_{n_1, n_2, n_3} \sum_{\substack{I=I_1 \cap I_2 \cap I_3 \\ I_j \in \ii I^{n_j}}}  \big( \sssize_{\rr P \left( I \right)} \one_{F \cap E_1}  \big)^{\tau \cdot \frac{1+\theta_1}{2}} \big( \sssize_{\rr P \left( I \right)} \one_{G \cap E_2}  \big)^{\tau \cdot  \frac{1+\theta_2}{2}} \big( \sssize_{\rr P \left( I \right)} \one_{\tilde E \cap \tilde E_{3}}  \big)^{\tau \cdot\frac{1+\theta_3}{2}+\frac{\tau}{\tau_0}-\epsilon} \cdot \lft I \rg \\
&\lesssim \big( \sssize_{\rr P \left( I_0 \right)} \one_{F }  \big)^{\tau \cdot \left( \frac{1+\theta_1}{2} -\frac{1}{ s_1} \right)} \big( \sssize_{\rr P \left( I_0 \right)} \one_{G \cap E_2}  \big)^{\tau \cdot \left(  \frac{1+\theta_2}{2}-\frac{1}{s_2} \right)} \big( \sssize_{\rr P \left( I _0\right)} \one_{\tilde E \cap \tilde E_{3}}  \big)^{\tau \cdot\left( \frac{1+\theta_3}{2}-\frac{1}{\tilde s'}-\epsilon \right)} \\
&\cdot  \sum_{n_1, n_2, n_3} \sum_{\substack{I=I_1 \cap I_2 \cap I_3 \\ I_j \in \ii I^{n_j}}} 2^{-\frac{n_1 \tau }{s_1}} 2^{-\frac{n_2 \tau }{s_2}} 2^{-n_3 \left( 1 -\frac{\tau}{\tilde s}+\epsilon\right)}  \cdot \lft I \rg.
\end{align*}
The last line can eventually be bounded above by 
\[
2^{-\tilde M d} \|  \one_{E_1} \cdot \ci_{I_0} \|_1^{\frac{\tau}{s_1}} \|  \one_{E_2} \cdot \ci_{I_0} \|_1^{\frac{\tau}{s_2}} \lft E_3 \rg^{1-\frac{\tau}{\tilde s}}, 
\]
proving, upon summation in $d \geq 0$, the estimate in \eqref{eq:enough-weak-type}.
\end{proof}
\end{proposition}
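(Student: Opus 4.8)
The plan is to follow the template established for paraproducts in Proposition~\ref{prop:localization-lemma} and Lemma~\ref{lemma:sub-unit-local-est-paraprod}, with the basic local input replaced by its $BHT$ analogue, Proposition~\ref{prop:loc-tril-form} and its consequence Corollary~\ref{cor:L1-bht-local}. By the interpolation theorem (Proposition~\ref{prop:interpolation}, in the form valid for general measures) it is enough to prove the restricted weak-type bound \eqref{eq:enough-weak-type} when $\lft f(x)\rg\leq\one_{E_1}(x)$ and $\lft g(x)\rg\leq\one_{E_2}(x)$, for all admissible tuples $(s_1,s_2,\tilde s)$ in a small neighbourhood of $(r_1,r_2,r)$; that such a neighbourhood exists is precisely the hypothesis $\ic C(r_1,r_2,r')$, which also guarantees that every exponent $\frac{1+\theta_j}{2}-\frac{1}{s_j}-\epsilon$ occurring below stays strictly positive. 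To dualise $\|\cdot\|_{\tilde s,\infty}$ I would use $L^\tau$-dualisation (Proposition~\ref{prop:Lr-dualization}) with $\tau<\tilde s<1$ and $\tau\leq r$; since all the sizes are subunitary, the choice $\tau\leq r$ is what makes the estimate sharp — dualising through a larger exponent would only weaken the power of $\sssize_{I_0}\one_{\tilde E}$. For a given set $E_3$ of finite measure I would remove the exceptional set $\tilde\Omega$ defined exactly as in \eqref{eq:def-ex-set-local}, where the maximal functions of $\one_{E_1}\ci_{I_0}$ and $\one_{E_2}\ci_{I_0}$ are large, set $\tilde E_3:=E_3\setminus\tilde\Omega$ (a major subset), and be reduced to estimating $\big\|BHT_{\rr P(I_0)}^{F\cap E_1,\,G\cap E_2,\,\tilde E\cap\tilde E_3}(f,g)\big\|_\tau$ against the operatorial norm times $\|\one_{E_1}\ci_{I_0}\|_{s_1}\|\one_{E_2}\ci_{I_0}\|_{s_2}\lft E_3\rg^{1/\tau-1/\tilde s}$.

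The next step is the stopping time of Section~\ref{subsection-stopping_times}: split $\rr P=\bigcup_{d\geq0}\rr P_d$ according to $1+\dist(I_P,\tilde\Omega^c)/\lft I_P\rg\sim2^d$, and inside each $\rr P_d$ run a triple stopping time adapted to the concentration of $\one_{E_1}$, $\one_{E_2}$ and $\one_{\tilde E_3}$, producing pairwise disjoint interval families $\ii I^{n_1}$, $\ii I^{n_2}$, $\ii I^{n_3}$ and, for $I=I_1\cap I_2\cap I_3$ with $I_j\in\ii I^{n_j}$, sub-collections of tritiles $\rr P(I)$. Using the subadditivity of $\|\cdot\|_\tau^\tau$ ($\tau<1$), the norm splits into a sum over these pieces, and one needs a sharp local bound for each $BHT_{\rr P(I)}^{F\cap E_1,\,G\cap E_2,\,\tilde E\cap\tilde E_3}(f,g)$. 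Here is where $BHT$ genuinely differs from a paraproduct: $BHT_{\rr P(I)}(f,g)$ is not morally supported near $I$, so the naive inequality $\|BHT_{\rr P(I)}(f,g)\cdot\one_{\tilde E}\|_\tau\lesssim\|BHT_{\rr P(I)}(f,g)\|_1\,\|\one_{\tilde E}\ci_I\|_{\tau_0}$ (with $\frac{1}{\tau_0}+1=\frac{1}{\tau}$) fails. The fix, exactly as in Lemma~\ref{lemma:sub-unit-local-est-paraprod}, is to write $\one_{\tilde E\cap\tilde E_3}=\sum_{k_3\geq0}\one_{\tilde E_{k_3}}$ by dyadic distance to $I$, apply Corollary~\ref{cor:L1-bht-local} to each piece, combine $\sssize_{\rr P(I)}\one_{\tilde E_{k_3}}\lesssim\big(\sssize_{\rr P(I)}\one_{\tilde E\cap\tilde E_3}\big)^{1-\epsilon}2^{-k_3M\epsilon}$ with the crude decay $2^{-k_3\tilde M}\lft\tilde E_{k_3}\rg^{\tau/\tau_0}\lesssim\|\one_{\tilde E_{k_3}}\ci_I\|_1^{\tau/\tau_0}$, and sum in $k_3$ by H\"older. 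This produces
\[
\big\|BHT_{\rr P(I)}^{F\cap E_1,\,G\cap E_2,\,\tilde E\cap\tilde E_3}(f,g)\big\|_\tau^\tau\lesssim\big(\sssize_{\rr P(I)}\one_{F\cap E_1}\big)^{\tau\frac{1+\theta_1}{2}}\big(\sssize_{\rr P(I)}\one_{G\cap E_2}\big)^{\tau\frac{1+\theta_2}{2}}\big(\sssize_{\rr P(I)}\one_{\tilde E\cap\tilde E_3}\big)^{\tau\frac{1+\theta_3}{2}+\frac{\tau}{\tau_0}-\epsilon}\lft I\rg,
\]
i.e.\ Corollary~\ref{cor:L1-bht-local} with only an $\epsilon$ lost in the exponent of $\sssize_{\rr P(I)}\one_{\tilde E\cap\tilde E_3}$.

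Finally I would reassemble the sum. On $\rr P(I)$ with $I\in\ii I^{n_j}$ one has $\sssize_{\rr P(I)}\one_{E_1}\lesssim2^{-n_1}$ (and similarly for $E_2$, $\tilde E_3$), and $\sum_I\lft I\rg\lesssim\big(2^{n_1}\|\one_{E_1}\ci_{I_0}\|_1\big)^{\gamma_1}\big(2^{n_2}\|\one_{E_2}\ci_{I_0}\|_1\big)^{\gamma_2}\big(2^{n_3}\lft E_3\rg\big)^{\gamma_3}$ with $\gamma_1+\gamma_2+\gamma_3=1$. Splitting each subunitary size, e.g.\ $\big(\sssize_{\rr P(I)}\one_{F\cap E_1}\big)^{\tau\frac{1+\theta_1}{2}}\lesssim\big(\sssize_{I_0}\one_F\big)^{\tau(\frac{1+\theta_1}{2}-\frac{1}{s_1})}\big(\sssize_{\rr P(I)}\one_{E_1}\big)^{\tau/s_1}$, extracts the operatorial norm on $I_0$ and leaves a geometric series in $n_1,n_2,n_3$ that converges because $\frac{\tau}{s_1}+\frac{\tau}{s_2}+\big(1-\frac{\tau}{\tilde s}+\epsilon\big)=1+\epsilon>1$ (using $\frac{1}{s_1}+\frac{1}{s_2}=\frac{1}{\tilde s}$); summing the remaining factor $2^{-\tilde Md}$ over $d\geq0$ yields \eqref{eq:enough-weak-type}, and hence \eqref{eq:gen-BHT-loc-fns} by interpolation. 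The main obstacle is the one flagged in the second paragraph — the absence of a clean spatial localisation for $BHT_{\rr P(I)}$ — so the heart of the argument is the $k_3$-decomposition that recovers the decay of $\sssize_{\rr P(I)}\one_{\tilde E\cap\tilde E_3}$ away from $I$ at the cost of an arbitrarily small $\epsilon$; a secondary but essential point of bookkeeping is to keep $(s_1,s_2,\tilde s)$ inside the range dictated by $\ic C(r_1,r_2,r')$ at every step.
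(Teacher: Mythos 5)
Your proposal is correct and reproduces the paper's argument essentially step for step: same reduction to restricted weak type via Proposition~\ref{prop:interpolation}, same $L^\tau$-dualisation with $\tau<\tilde s<1$, same exceptional set and triple stopping time, same $k_3$-decomposition of $\one_{\tilde E\cap\tilde E_3}$ feeding Corollary~\ref{cor:L1-bht-local}, and the same recombination of sizes and interval counts to close the geometric series via $\frac{\tau}{s_1}+\frac{\tau}{s_2}+1-\frac{\tau}{\tilde s}+\epsilon=1+\epsilon>1$. The only quibble is presentational: you frame the $k_3$-splitting as the point where $BHT$ ``genuinely differs'' from a paraproduct, but the identical device is already needed in the paraproduct case (Lemma~\ref{lemma:sub-unit-local-est-paraprod}) to upgrade the crude $|\tilde E\cap\tilde E_3|^{1/\tau_0}$ coming from H\"older into $\sssize_{\rr P(I)}\one_{\tilde E\cap\tilde E_3}$ with the correct power of $|I|$; it is a general feature of the $L^\tau$-dualisation with $\tau<1$, not a $BHT$-specific obstruction.
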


For the general case of Theorem \ref{thm:main-thm-BHT}, we would need to  prove inductively the following statements:
{\fontsize{9}{10}
\begin{align}
\label{Pn-bht}
\tag*{$\ii P(n):$} 
\big\|  \big\|   BHT_{\rr P \left( I_0 \right)}^{F, G, \tilde E} \left(\vec f, \vec g  \right)\big \|_{L^{R^n}} \big\|_s & \lesssim \big( \sssize_{\rr P\left( I_0 \right)} \one_F  \big)^{\frac{1+\theta_1}{2}-\frac{1}{p}-\epsilon} \big( \sssize_{\rr P\left( I_0\right)} \one_G  \big)^{\frac{1+\theta_2}{2}-\frac{1}{q}-\epsilon} \big( \sssize_{\rr P\left( I_0 \right)} \one_{\tilde E}  \big)^{\frac{1+\theta_3}{2}-\frac{1}{s'}-\epsilon} \\
&\cdot \quad \big \|  \big\|  \vec f \big\|_{L^{R^n_1}} \cdot \ci_{I_0} \big\|_p \big \|  \big\|  \vec g\big\|_{L^{R^n_2}} \cdot \ci_{I_0} \big\|_q. \nonumber
\end{align}}
Also, whenever $\big\|  \vec f (x)\big\|_{L^{R^n_1}} \leq \one_{F}(x)$ and $\big\|  \vec g(x)\big\|_{L^{R^n_2}} \leq \one_G(x)$, we have
{\fontsize{9}{10}\begin{align}
\label{Pn-star-bht}
\tag*{$\ii P^*(n):$} 
\big\|  \big\|   BHT_{\rr P \left( I_0 \right)}^{F, G, \tilde E} \left(\vec f, \vec g  \right)\big \|_{L^{R^n}} \big\|_s & \lesssim \big( \sssize_{\rr P\left( I_0 \right)} \one_F  \big)^{\frac{1+\theta_1}{2}-\epsilon} \big( \sssize_{\rr P\left( I_0\right)} \one_G  \big)^{\frac{1+\theta_2}{2}-\epsilon} \big( \sssize_{\rr P\left( I_0 \right)} \one_{\tilde E}  \big)^{\frac{1+\theta_3}{2}-\frac{1}{s'}-\epsilon} \cdot \lft I_0 \rg^{1/s}.
\end{align}}

We note that $\ii P(0)$ is precisely Lemma \ref{lemma:sub-unit-BHT}, and as a consequence we also obtain $\ii P^*(0)$. $\ii P(1)$ follows through $L^r$ dualization, and the $\ii P^*(0)$ statement is needed. More generally, $\ii P (n)$ follows through $L^{r^1}$ dualization, as a consequence of $\ii P ^*(n-1)$. The proof separates in two cases: $s \geq r^{j_0}$ and $s< r^{j_0}$, just like in the paraproduct case. In fact, the proof follows the same principle, with the difference that now the exponents of the sizes are $\frac{1+\theta_j}{2}$ and not $1$. The details are left to the interested reader.

\section{Mixed norm estimates for $\Pi \otimes \Pi$ and the Leibniz Rule}
\label{sec:mixed_norm-est}

We present the proof of Theorem \ref{thm:kenig-two-param}, in the case when $s_2<1$ (and  as a consequence, $1<p_2, q_2<\infty$). The other situations were considered in \cite{vv_BHT}, with the case $s_1<1$ and $p_2=\infty$ or $q_2=\infty$ being the most difficult.

\begin{proof}[Proof of Theorem \ref{thm:kenig-two-param}]
Since the other cases are very similar, we can assume that $\Pi_y$, the paraproduct acting on the variable $y$ is of the form 
\[
\Pi_y\left( \cdot , \cdot \right)= \sum_k Q_k \left( P_k \left( \cdot  \right) , Q_k \left( \cdot \right) \right).
\]  
Then we can write $\Pi \otimes \Pi$ as
\[
\Pi \otimes \Pi(f, g)(x, y)=\sum_k Q_k^2 \Pi \left( P_k^y, Q_k^y \right)(x).
\]
Using the inequality $\ds \big \| \sum_k Q_k \Phi \big\|_p \leq \big\| \big( \sum_k | Q_k \Phi  |^2  \big)^{1/2} \big\|_p$, which is true for any $0<p<\infty$, we have
\begin{align*}
&\big\|    \big\|    \sum_k Q_k^2 \Pi \big( P_k^y, Q_k^y \big)(x)    \big\|_{L_y^{s_2}}   \big\|_{L_x^{s_1}} \lesssim 
\big\|    \big\|  \big(  \sum_k  \lft \Pi \left( P_k^y, Q_k^y \right)(x)\rg^2 \big)^{1/2} \big\|_{L_y^{s_2}}   \big\|_{L_x^{s_1}}\\
& \lesssim  \big\| \big\|  \sup_k \lft  P_k^y f(x) \rg \big\|_{L_y^{p_2}} \big\|_{L_x^{p_1}} \cdot  \big\|    \big\|    \big( \sum_k \lft Q_k^y g(x)  \rg^2  \big)^{1/2}    \big\|_{L_y^{q_2}}   \big\|_{L_x^{q_1}}.
\end{align*}

In the estimate above we used the multiple vector-valued inequality
\[
\Pi_x : L_x^{p_1}\big(  L_y^{p_2} \big(  \ell^\infty \big) \big) \times  L_x^{q_1}\big(  L_y^{q_2} \big(  \ell^2 \big) \big) \to L_x^{s_1}\big(  L_y^{s_2} \big(  \ell^2 \big) \big),
\]
which is a consequence of Theorem \ref{thm:genral_case}.

Together with the result in \cite{vv_BHT}, we obtain the boundedness of $\Pi \otimes \Pi$ in the whole possible range of Lebesgue exponents. 
\end{proof}

\begin{remark}
In a similar way, mixed-norm $L^p$ estimates for $BHT \otimes \Pi$ can be deduced, using this time the multiple vector-valued estimates for $BHT$ from Theorem \ref{thm:main-thm-BHT}. 
\end{remark}

Now we provide a proof for Theorem \ref{thm:Leibniz}, which will also clarify the necessity of the conditions imposed on the Lebesgue coefficients $s_1$ and $s_2$.

\begin{proof}[Proof of Theorem \ref{thm:Leibniz}]

As usual, the derivatives $D_1^\alpha$ and $D_2^\beta$ will not act directly on the product $f \cdot g$, but on the paraproducts. In the bi-parameter case, the product can be written as a sum of nine paraproducts:
{\fontsize{9}{10}
\[
f\cdot g(x, y)= \underbrace{\sum_{k,l} \left( f \ast \varphi_k \otimes \varphi_l \cdot g \ast \psi_k \otimes \psi_l  \right)\ast \psi_k \otimes \psi_l(x, y) +\ldots +  \left( f \ast \psi_k \otimes \psi_l \cdot g \ast \psi_k \otimes \psi_l  \right)\ast \varphi_k \otimes \varphi_l(x, y).}_{ 9 \text{  terms   }}
\]}

We now claim that the derivative of a paraproduct becomes a paraproduct of certain derivatives of $f$ and of $g$ : $D_1^\alpha D_2^\beta \big( \Pi \otimes \Pi (f, g) \big) =\tilde\Pi \otimes \tilde\Pi (D_1^\alpha  f, D_2^\beta g) $ or a like term.

The derivatives initially are placed on the outer-most terms of the paraproduct, giving rise to expressions of the form $D_1^\alpha \psi_k \otimes D_2^\beta \psi_l, D_1^\alpha \psi_k \otimes D_2^\beta \varphi_l$ or $D_1^\alpha \varphi_k \otimes D_2^\beta \varphi_l$. On the dyadic frequency shell $\vert  \xi\vert \sim 2^k$, the $D_1^\alpha$ derivative acts as multiplication by $2^{k \alpha}$:
\[
D_1^\alpha \psi_k(x)=2^{k\alpha} \tilde \psi_k(x), \quad \text{where}\quad \widehat{\tilde{\psi}}_k=\frac{\vert \xi \vert^\alpha}{2^{k \alpha}} \widehat{\psi}_k(\xi).
\]

For a paraproduct of the type $\sum_k Q_k \left(P_k f \cdot Q_k g \right)$, we have 
\begin{align*}
&D_1^\alpha\big(\sum_k (f \ast \varphi_k \cdot g \ast \psi_k)\ast \psi_k(x)\big)= \sum_k 2^{k \alpha} \left( f \ast \varphi_k \cdot g \ast \psi_k  \right)\ast \tilde \psi_k(x),
\end{align*}
and now the idea is to transform the multiplication by $2^{k \alpha}$ again into a derivative. Note that $$2^{k \alpha} g \ast \psi_k (x) = g \ast D_1^\alpha \tilde{ \tilde{\psi}}_k (x) = D_1^\alpha (g \ast \tilde{\tilde{ \psi}}_k)(x)=(D_1^\alpha g)\ast \tilde{\tilde{ \psi}}_k (x),$$
where $\tilde{\tilde {\psi}}_k$ is defined by $\ds \widehat{\tilde{\tilde{\psi}}}_k(\xi):=\frac{2^{k\alpha}}{\vert \xi \vert^\alpha} \widehat{\psi}_k(\xi)$. In addition, it becomes evident that we couldn't have placed the derivative on $ f \ast \varphi_k$ because $0$ is contained in its Fourier support.

Consequently, in this case, $$ D_1^\alpha( \Pi (f, g))(x)=\sum_k \big( f \ast \varphi_k \cdot \big( D_1^\alpha g \ast \tilde{\tilde \psi}_k\big)    \big)\ast \tilde \psi_k(x):=\tilde\Pi(f, D_1^\alpha g)(x).$$

Similarly, inside the ball $\vert \xi \vert \leq 2^k$ we have 
\[
D_1^\alpha \varphi_k(x)=2^{k\alpha} \tilde{\tilde{  \varphi}}_k(x), \quad \text{where}\quad \widehat{\tilde{\tilde{  \varphi}}}_k=\frac{\vert \xi \vert^\alpha}{2^{k \alpha}} \widehat{\varphi}_k(\xi).
\]
The difference now is that $\widehat{\tilde{\tilde{  \varphi}}}_k$ is not smooth at the origin (unlike $\widehat{\psi}_k$, the support of $\widehat{\varphi_k}$ contains the origin), and $\tilde{\tilde{  \varphi}}_k$ has only finite decay: every $\tilde{\tilde{  \varphi}}_k(x)=2^k \tilde{\tilde{  \varphi}}(2^k x)$, where
\begin{equation}
\label{eq:decay-alpha}
\big| \tilde{\tilde{ \varphi}}(x) \big| \lesssim \frac{1}{\left( 1+\big| x \big|  \right)^{1+\alpha}}.
\end{equation}

A paraproduct associated to a function of fixed decay as in \eqref{eq:decay-alpha} will be denoted $\Pi^\alpha:$
\begin{equation}
\label{eq:def-Pi-alpha}
\Pi^\alpha(f, g)(x):=\sum_k \left( f \ast \psi_k \cdot g \ast \psi_k  \right) \ast \tilde{\tilde \varphi}_k(x).
\end{equation}

To deal with the finite decay in \eqref{eq:decay-alpha}, we split each $\widehat{\tilde{\tilde{ \varphi}}}_k$ into Fourier series onto the set $\vert \xi \vert \leq 2^k$:
\[
\widehat{\tilde{\tilde{ \varphi}}}_k(\xi):=\sum_{n} c_{n,k} e^{\frac{2 \pi i n \xi}{2^k}}=\sum_{n} c_{n,k} e^{\frac{2 \pi i n \xi}{2^k}} \widehat{\tilde \varphi}_k(\xi):=\sum_{n} c_{n,k}  \widehat{\tilde \varphi}_{k, n}(\xi),
\] 
where $\widehat{\tilde \varphi}_k$ is similar to $\widehat{\varphi}_k$, but it is going to be constantly equal to $1$ on $\supp \widehat{\psi}_k +\supp \widehat{\psi}_k$. Moreover, for any function $\Phi \in \ic{S}$, we use the notation $\ds \Phi_{k, n}(x):= 2^k \Phi\left(2^k x +n  \right)=\Phi_k(x+\frac{n}{2^k})$. As a consequence of \eqref{eq:decay-alpha}, the Fourier coefficients satisfy uniformly in $k$
\[
\vert c_{n, k} \vert \lesssim \frac{1}{\left( 1+\vert n \vert  \right)^{1+\alpha}}.
\]

Now we can see how the derivative in the first variable acts on the paraproduct  $\sum_k P_k(Q_k f \cdot Q_k g)$:
\begin{align*}
&D_1^\alpha\big(\sum_k (f \ast \psi_k \cdot g \ast \psi_k)\ast \varphi_k(x)\big)=\sum_n \sum_k c_{n, k}  2^{k \alpha}  \left( f \ast\psi_{k, n} \cdot g \ast\psi_{k, n}  \right) \ast \tilde \varphi_k(x) \\
&=\sum_n \sum_k c_{n, k}  \big(  \big(D_1^\alpha f\big) \ast \tilde{ \tilde{\psi}}_{k, n} \cdot g \ast\psi_{k, n}  \big) \ast \tilde \varphi_k(x).
\end{align*}

We denote $P_{k, n}f(x):=f \ast \varphi_{k, n}(x), Q_{k, n}f(x):=f \ast \psi_{k, n}(x)$. In frequency, these correspond to $\ds\widehat{P_{k, n}f} (\xi):=\hat{f}(\xi) \widehat{\varphi}_k(\xi) e^{\frac{2 \pi i n \xi}{2^k}}$ and $\ds\widehat{Q_{k, n}f} (\xi):=\hat{f}(\xi) \widehat{\psi}_k(\xi) e^{\frac{2 \pi i n \xi}{2^k}}$, respectively. We also used that 
\[
\sum_k c_{n, k} \left( f \ast \psi_k \cdot g \ast \psi_k   \right)\ast \varphi_{k, n}(x)=\sum_k c_{n, k} \left( f \ast \psi_{k,n} \cdot g \ast \psi_{k,n}   \right)\ast \varphi_{k}(x),
\]
which becomes obvious when written on the frequency side:
\begin{align*}
&\int_{\rr R^2}\sum_k c_{n, k} \hat{f}(\xi_1) \widehat{\psi}_{k}(\xi_1) \hat{g}(\xi_2) \widehat{\psi}_{k}(\xi_2) \widehat{\varphi}_k(\xi_1+\xi_2) e^{\frac{2 \pi i n \left( \xi_1+\xi_2 \right)}{2^k}} e^{2 \pi i x \left( \xi_1+\xi_2 \right)}d \xi_1 d \xi_2\\
=&\int_{\rr R^2}\sum_k c_{n, k} \hat{f}(\xi_1) \widehat{\psi}_{k}(\xi_1) e^{\frac{2 \pi i n \xi_1}{2^k}} \hat{g}(\xi_2) \widehat{\psi}_{k}(\xi_2) e^{\frac{2 \pi i n \xi_2}{2^k}} \widehat{\varphi}_k(\xi_1+\xi_2) e^{2 \pi i x \left( \xi_1+\xi_2 \right)} d \xi_1 d \xi_2.
\end{align*}

Hence, the Leibniz rule reduces to the boundedness of the shifted paraproduct
\begin{equation}
\label{def:shifted-paraprod}
\Pi_n(f, g)(x):=\sum_k P_k \left( Q_{k, n}f \cdot Q_{k, n} g  \right)(x)=\sum_k P_{k, n} \left( Q_k  f \cdot Q_k g \right)(x).
\end{equation}

The bilinear operator $\Pi_n$ is very similar to the classical paraproduct $\Pi$ from \eqref{def:classicalParap}, except that we need in this case shifted maximal operators and square functions
\[
\ic M^n (f)(x):= \sup_{I \ni x} \frac{1}{\vert I\vert}\int_{\rr R} \vert  f(y) \vert  \cdot \ci_{I_n}(y)dy \quad {\text{and}} \quad \ic S^n(f)(x):=\big( \sum_{I} \frac{\vert \langle f, \psi_{I_n}   \rangle  \vert^2 }{\vert I \vert}   \cdot \one_{I} (x)\big)^{1/2}.
\]
Above, for a fixed interval $I$, we denote by $I_n:=I +n |I|$, the translation of $I$ $n$ units to the right (or to the left, if $n<0$). It is well known (a complete proof is provided in \cite{multilinear_harmonic}), that these operators are bounded on every $L^p$ space for $1<p<\infty$, with an operatorial norm bounded above by $ \log \left( 1+\vert n \vert\right)$. So in fact we don't loose much by performing this decomposition, and the summability in $n$ is dictated by the decay of the coefficients $c_{n, k}$.

We recall that in proving the boundedness of the paraproduct $\Pi^\alpha$ in one dimension, the more difficult case corresponds to estimates in $L^s$, with $s<1$. More exactly, we have 
\[
\vert \Pi^\alpha (f, g)(x)\vert \leq \sum_{n} \frac{1}{\left( 1 +\vert n \vert \right)^{1+\alpha}} \vert \Pi_n (f, g)(x) \vert \quad \text{and}
\]
\[
\|  \Pi^{\alpha}(f, g) \|_s^s \lesssim \sum_{n} \frac{1}{\left( 1 +\vert n \vert \right)^{\left(1+\alpha\right)s}} \|  \Pi_n(f, g) \|_s^s \lesssim \sum_n \frac{\left(\log\left( 1+\vert n \vert \right)\right)^{2}}{\left( 1 +\vert n \vert \right)^{\left(1+\alpha\right)s}} \|f\|_p \|  g \|_q.
\]
Provided $\left( 1+\alpha \right) s>1$, we obtain the $L^p \times L^q \to L^s$ boundedness of $\Pi^\alpha$.

A similar analysis will yield the general Leibniz of Theorem \ref{thm:Leibniz}; in the end, we will need to study the boundedness of $\big\|\Pi \otimes \Pi \big\|_{L_x^{s_1}L_y^{s_2}},  \big\|\Pi^\alpha \otimes \Pi \big\|_{L_x^{s_1}L_y^{s_2}}, \big\|\Pi \otimes \Pi^\beta \big\|_{L_x^{s_1}L_y^{s_2}}$ or $\big\|\Pi^\alpha \otimes \Pi^\beta \big\|_{L_x^{s_1}L_y^{s_2}}$. Ultimately, the range of $L^p$ estimates in Theorem \ref{thm:Leibniz} will be determined by the ``worst" term, which is $\Pi^\alpha \otimes \Pi^\beta$. If $s_0$ denotes the minimum between $s_1$ and $s_2$, Proposition \ref{prop:reorder} implies that $\| \cdot \|_{L_x^{s_1}L_y^{s_2}}^{s_0}$ is subadditive. Following the arguments presented earlier, we have 
\begin{equation}
\label{eq:bdd-Pi-alpha-Pi-beta}
\big\| \Pi^\alpha \otimes \Pi^\beta \big\|_{L_x^{s_1}L_y^{s_2}}^{s_0} \lesssim \sum_n \frac{1}{\left( 1 +\vert n \vert \right)^{\left(1+\alpha\right)s_0}} \big\| \Pi_n \otimes \Pi^\beta \big\|_{L_x^{s_1}L_y^{s_2}}^{s_0}.
\end{equation}
Provided 
\begin{equation}
\label{eq:cond-alpha-s_1}
\min(s_1, s_2) >\frac{1}{1+\alpha},
\end{equation}
the boundedness of $\Pi^\alpha \otimes \Pi^\beta$ reduces to that of $\Pi_n \otimes \Pi^\beta$, with an operatorial norm that depends at most logarithmically on $n$. 


If $n=0$, we need to prove that $\Pi \otimes \Pi^\beta : L_x^{p_1}L_y^{p_2} \times L_x^{q_1}L_y^{q_2} \to L_x^{s_1}L_y^{s_2}$, whenever
\begin{equation}
\label{eq:cond-s_2}
s_2>\frac{1}{1+\beta}
\end{equation}
The conditions \eqref{eq:cond-alpha-s_1} and \eqref{eq:cond-s_2} are equivalent to the constraints of $s_1$ and $s_2$ from the hypotheses of Theorem \ref{thm:Leibniz}.

In order to establish that $\Pi \otimes \Pi^\beta : L_x^{p_1}L_y^{p_2} \times L_x^{q_1}L_y^{q_2} \to L_x^{s_1}L_y^{s_2}$, we use restricted type interpolation, as in Proposition \ref{prop:interpolation}: it will be enough to prove 
\begin{equation}
\label{eq:restricted-bi-par}
\|  \|  \Pi \otimes \Pi^\beta (f, g)   \|_{L_y^{s_2}} \cdot \one_{\tilde E} \|_{L_x^{s_2}} \lesssim \vert F \vert^{\frac{1}{p_1}}  \cdot \vert G \vert^{\frac{1}{q_1}} \cdot \vert E \vert^{\frac{1}{s_1}-\frac{1}{s_2}},
\end{equation}
where $F, G$ and $E$ are sets of finite measure, $\tilde E$ is a major subset of $E$ to be constructed (it is defined by \eqref{eq:def-exceptional-set}), while $f$ and $g$ are functions satisfying $\| f(x, \cdot ) \|_{L_y^{p_2}} \leq \one_F(x)$ and $\| g(x, \cdot ) \|_{L_y^{q_2}} \leq \one_G(x)$, respectively.

The cases $s_2<1$ and $s_2 \geq 1$ need to be treated separately. We first deal with the case $s_2<1$. In fact, we will be proving sharp estimates for $\big\| \Pi_{I_0}^{F, G, \tilde E} \otimes \Pi^\beta\big\|_{L_{xy}^{s_2}}$, where $\Pi_{I_0}^{F, G, \tilde E}$ is the same discretized, localized paraproduct introduced in \eqref{eq-def-loc-paraprod}. Before, we were using the localized paraproducts in order to deduce multiple vector-valued inequalities for $\Pi$, and from there, mixed norm $L^p$ estimates for $\Pi \otimes \Pi$. Now we work directly with $\Pi_{\ii I \left( I_0 \right)} \otimes \Pi^\beta$, and we want to prove that
\begin{equation*}
\|  \|  \Pi_{\ii I\left(I_0\right)} \otimes \Pi^\beta (f, g)   \|_{L_y^{s_2}} \cdot \one_{\tilde E} \|_{L_x^{s_2}}^{s_2} \lesssim \big( \sssize_{I_0} \one_F \big)^{s_2-\epsilon} \big( \sssize_{I_0} \one_G \big)^{s_2-\epsilon} \big( \sssize_{I_0} \one_{\tilde E} \big)^{1-\epsilon} \cdot \vert I_0 \vert,
\end{equation*}
for any $f$ and $g$ as above.

Formerly, we decomposed $\Pi^\alpha$ using Fourier series in frequency, and now we are going to do the same for $\Pi^\beta$. In this way, we can write it as
\begin{equation}
\label{eq:dec-Pi-beta}
\Pi^\beta (f, g)(y):=\sum_{m} \sum_l c_{m, l} \left( f \ast \psi_{l, m} \cdot g \ast \psi_{l, m}   \right) \ast {\tilde\varphi}_l(y),
\end{equation}
where the Fourier coefficients satisfy $\vert c_{m, l} \vert \lesssim \frac{1}{\left( 1+\vert m \vert \right)^{\left( 1+\beta \right)}}$, and $\widehat{\tilde{\varphi}}_l \equiv 1$ on $\supp \widehat{\psi}_{l, m} + \supp \widehat{\psi}_{l, m}$. Since $s_2<1$, we have 
\begin{align*}
\big\| \Pi_{\ii I \left( I_0 \right)}^{F, G, \tilde E} \otimes    \Pi^\beta (f, g) \|_{L_{xy}^{s_2}}^{s_2} &\lesssim \sum_m \frac{1}{\left( 1+\vert m \vert \right)^{s_2\left( 1+\beta \right)}} \big\| \sum_l P_l^2 \Pi_{\ii I \left( I_0 \right)}^{F, G, \tilde E}(Q_{l, m}^y f,  Q_{l, m}^y g)(x) \|_{L_{xy}^{s_2}}^{s_2} \\
&\lesssim \sum_m \frac{1}{\left( 1+\vert m \vert \right)^{ s_2\left( 1+\beta \right)}} \big\| \sum_l  \vert \Pi_{\ii I \left( I_0 \right)}^{F, G, \tilde E}(Q_{l, m}^y f,  Q_{l, m}^y g)(x)\vert \|_{L_{xy}^{s_2}}^{s_2}.
\end{align*}

To deduce the last inequality, we used that $\widehat{\tilde{ \varphi}}_l \equiv 1$ on $\supp \widehat{\psi}_{l, m} + \supp \widehat{\psi}_{l, m}$, which further indicates that $P_l (f \ast \psi_{l, m} \cdot g \ast \psi_{l, m})(x)=f \ast \psi_{l, m} \cdot g \ast \psi_{l, m}(x)$.

The vector-valued estimates for $\Pi_{I_0}^{F, G, \tilde E}$ from Theorem \ref{thm:genral_case} imply that 
\begin{align*}
&\big\| \sum_l  \vert \Pi_{\ii I \left( I_0 \right)}^{F, G, \tilde E}(Q_{l, m}^y f,  Q_{l, m}^y g)(x)\vert \|_{L_{xy}^{s_2}} \lesssim \big( \sssize_{I_0} \one_F   \big)^{\frac{1}{p_2'}-\epsilon} \cdot \big( \sssize_{I_0} \one_G   \big)^{\frac{1}{q_2'}-\epsilon} \cdot \big( \sssize_{I_0} \one_{\tilde E}   \big)^{\frac{1}{s_2}-\epsilon} \\
&\qquad \quad \cdot \big \|  \big(\sum_l \vert Q_{l, m}^y f  \vert^2   \big)^{1/2} \cdot \ci_{I_0}(x) \big\|_{L_{xy}^{p_2}} \cdot \big \|  \big(\sum_l \vert Q_{l, m}^y g  \vert^2  \cdot \ci_{I_0}(x) \big)^{1/2} \big\|_{L_{xy}^{q_2}}.
 \end{align*}

Then, because $1<p_2, q_2<\infty$, the shifted square function is bounded and we have 
{\fontsize{9}{10}
\begin{align*}
\big\| \sum_l  \vert \Pi_{\ii I \left( I_0 \right)}^{F, G, \tilde E}(Q_{l, m}^y f,  Q_{l, m}^y g)(x)\vert \|_{L_{xy}^{s_2}}^{ s_2} \lesssim \left( 1 +\log \vert m \vert \right)^{2} \big( \sssize_{I_0} \one_F   \big)^{s_2-\epsilon} \cdot \big( \sssize_{I_0} \one_G   \big)^{s_2-\epsilon} \cdot \big( \sssize_{I_0} \one_{\tilde E}   \big)^{1-\epsilon} \cdot \vert I_0 \vert.
\end{align*}}

With the above estimate and the usual stopping times from Section \ref{subsection-stopping_times}, for each $d \geq 0$, we have collections $\ii I_d^{n_1, n_2, n_3}$ of dyadic intervals for which 
\begin{enumerate}
\item if $I \in \ii I^{n_1}$, then $\ds 2^{-n_1} \sim \frac{1}{\vert I \vert}\int_{\rr R} \one_{F} \cdot \ci_{I} dx \lesssim 2^d \frac{\vert F \vert}{|E|}$\\
\item if $I \in \ii I^{n_2}$, then $\ds 2^{-n_2} \sim \frac{1}{\vert I \vert}\int_{\rr R} \one_{G} \cdot \ci_{I} dx \lesssim 2^d \frac{\vert G \vert}{|E|}$\\
\item if $I \in \ii I^{n_3}$, then $\ds 2^{-n_3} \sim \frac{1}{\vert I \vert}\int_{\rr R} \one_{\tilde E} \cdot \ci_I   dx \lesssim 2^{-M d} $.
\end{enumerate}
Moreover, for every $I_0 \in \ii I_d^{n_1, n_2, n_3}$, there exists a certain collection $\ii I(I_0)$ associated to $I_0$, which is selected through the stopping time.  This yields a partition of $\ii I$ as $\ds \ii I := \bigcup_{d \geq 0} \bigcup_{n_1, n_2, n_3} \bigcup_{I_0 \in \ii I_d^{n_1, n_2, n_3}} \ii I(I_0)$, which we use in order to estimate 
\begin{align*}
&\big\| \Pi \otimes \Pi^\beta \big\|_{L_{xy}^{s_2}}^{s_2} \lesssim \sum_{d \geq 0} \sum_{n_1, n_2, n_3} \sum_{I_0 \in \ii I^{n_1, n_2, n_3}_d} \big\| \Pi_{\ii I\left( I_0 \right)}^{F, G, \tilde E} \otimes \Pi^\beta \big\|_{L_{xy}^{s_2}}^{s_2} \\
&\qquad\lesssim \sum_{d \geq 0} \sum_{n_1, n_2, n_3} \sum_{I_0 \in \ii I^{n_1, n_2, n_3}_d} \big( \sssize_{I_0} \one_F   \big)^{s_2-\epsilon} \cdot \big( \sssize_{I_0} \one_G   \big)^{s_2-\epsilon} \cdot \big( \sssize_{I_0} \one_{\tilde E}   \big)^{1-\epsilon} \cdot \vert I_0 \vert \\
&\qquad \lesssim \sum_{d \geq 0} \sum_{n_1, n_2, n_3} \sum_{I_0 \in \ii I^{n_1, n_2, n_3}_d} 2^{-n_1 \frac{s_2}{p_1}} 2^{-n_2 \frac{s_2}{p_2}} 2^{-n_3\left(1-\epsilon\right)} \vert I_0\vert.
\end{align*}

The sum $\ds\sum_{I_0 \in \ii I^{n_1, n_2, n_3}_d} \vert I_0 \vert$ is bounded above by $\left( 2^{n_1} \vert F \vert\right)^{\gamma_1} \cdot\left( 2^{n_2} \vert G \vert\right)^{\gamma_2} \left( 2^{n_3} \vert E \vert\right)^{\gamma_3}$, where $0 \leq\gamma_1, \gamma_2, \gamma_3 \leq 1$ and $\gamma_1+ \gamma_2+\gamma_3 = 1$. Hence we obtain 
\begin{align*}
&\big\| \Pi \otimes \Pi^\beta \big\|_{L_{xy}^{s_2}}^{s_2} \lesssim \sum_{d \geq 0} \sum_{n_1, n_2, n_3} 2^{-n_1\left( \frac{s_2}{p_1}-\gamma_1 \right)} \cdot 2^{-n_2\left( \frac{s_2}{q_1}-\gamma_2 \right)} \cdot 2^{-n_3\left( 1-\epsilon-\gamma_3 \right)} \vert F \vert^{\gamma_1} \cdot \vert G \vert^{\gamma_2} \cdot \vert E \vert^{\gamma_3},
\end{align*} 
and the series above converge (we have the freedom to choose $\gamma_1, \gamma_2$ and $\gamma_3$) provided $\frac{s_2}{p_1}+\frac{s_2}{q_1}+ 1-\epsilon>\gamma_1+\gamma_2+\gamma_3=1$. The condition is satisfied thanks to the contribution of $\sssize_{I_0} \one_{\tilde E}$ which comes with an exponent arbitrarily close to $1$.

Finally, it is not difficult to see that all of the above imply exactly \eqref{eq:restricted-bi-par}.

We still have to treat the case $s_2 \geq 1$: that is, we want to prove that $\Pi \otimes \Pi^\beta$ is bounded in the space $\|   \cdot \|_{L_x^{s_1}\left( L_y^{s_2} \right)}$. Since $s_2 \geq 1$, we can dualize the inner norm, and using generalized restricted type interpolation, it is enough to prove
\begin{equation}
\label{eq:dual-s_2geq1}
\Big\vert \int_{\rr R^2} \Pi \otimes \Pi^\beta (f, g)(x, y) \cdot h(x, y) dx dy \Big\vert \lesssim \vert F \vert^{\frac{1}{p_1}}  \cdot \vert G \vert^{\frac{1}{q_1}} \cdot \vert E \vert^{1-\frac{1}{s_1}},
\end{equation} 
whenever  $F, G$ and $E$ are sets of finite measure, $E'$ is a major subset of $E$ to be constructed (it is also defined by \eqref{def:exceptional-set}), while $f, g$ and $h$ are functions satisfying $\| f(x, \cdot ) \|_{L_y^{p_2}} \leq \one_F(x),  \| g(x, \cdot ) \|_{L_y^{q_2}} \leq \one_G(x)$, and $\| h(x, \cdot ) \|_{L_y^{s_2'}} \leq \one_{E'}(x)$ respectively.

From here on, everything follows the same pattern:
\begin{align*}
\big\vert \int_{\rr R^2} \Pi \otimes \Pi^\beta (f, g)(x, y) \cdot h(x, y) dx dy \big\vert\lesssim \sum_m \frac{1}{\left(  1+\vert m\vert \right)^{1+\beta}} 
\big\vert \int_{\rr R^2} \Pi \otimes \Pi_m (f, g)(x, y) \cdot h(x, y) dx dy \big\vert,
\end{align*}
and in fact we will need to estimate $\Pi_{I_0}^{F, G, E'} \otimes \Pi_m$. We don't repeat the argument because it's identical to the situation $s_2<1$. 

The case $p_2=\infty$ or $q_2=\infty$ (which is acceptable since now $s_2 \geq 1$) needs an additional justification, but the proof reduces to the boundedness of $\Pi \otimes \Pi: L_x^{p_1}L_y^\infty \times L_x^{q_1}L_y^{q_2} \to L_x^{s_1}L_y^{q_2}$. The latter was proved in \cite{vv_BHT}, using a similar strategy: due to restricted-type interpolation, it is enough to prove a sharp estimate for the adjoint $\big(\Pi_{I_0}^{F, G, H'} \otimes \Pi \big)^{1, *}$ which is defined by the relation
\[
\int_{\rr R^2} \Pi_{I_0}^{F, G, H'} \otimes \Pi (f, g)(x, y) \cdot h(x, y) dx dy = \int_{\rr R^2} \big(\Pi_{I_0}^{F, G, H'} \otimes \Pi \big)^{1, *} (h, g)(x, y) \cdot f(x, y) dx dy.
\]
The sharp estimate concerns the operatorial norm:
\begin{equation*}
 \big\| \big(\Pi_{I_0}^{F, G, H'} \otimes \Pi \big)^{*, 1}\big\|_{L_x^{q'} L_y^{q'}\times L_x^q L_y^q \to L_x^1 L_y^1} \lesssim \ \big( \sssize_{I_0} \one_{H'}\big)^{\frac{1}{q}-\epsilon}\big( \sssize_{I_0} \one_{G}\big)^{\frac{1}{q'}-\epsilon} \big( \sssize_{I_0} \one_{F}\big)^{1-\epsilon}.
\end{equation*}

This ends the proof in the case $n=0$, when the paraproduct $\Pi_n$ is a classical paraproduct. We are left with proving, for any $\vert n \vert \geq 1$, that 
\begin{equation}
\label{eq:Pi_notimesPibeta}
\big\| \Pi_n \otimes \Pi^\beta (f,g) \big\|_{L_x^{s_1}L_y^{s_2}} \lesssim  \log (1+ \vert n \vert)^{2} \|f\|_{L_x^{p_1}L_y^{p_2}} \cdot \|g\|_{L_x^{q_1}L_y^{q_2}}.
\end{equation}
Together with \eqref{eq:bdd-Pi-alpha-Pi-beta}, the above inequality implies the boundedness of $\Pi^\alpha \otimes \Pi^\beta$.  Similarly to the case $n=0$, we use vector-valued restricted type interpolation, and the equivalent of \eqref{eq:restricted-bi-par} in this case is
\begin{equation}
\label{eq:restricted-bi-par-n}
\big\|  \big\| \Pi_n \otimes \Pi^\beta (f, g) \big\|_{L_y^{s_2}} \cdot \one_{\tilde E} \big\|_{L_x^{s_2}} \lesssim \log (1 +\vert n \vert )^{2} \vert F \vert^{\frac{1}{p_1}} \cdot \vert G \vert^{\frac{1}{q_1}} \cdot \vert E \vert^{\frac{1}{s_1}-\frac{1}{s_2}}.
\end{equation}

To achieve this, we need to prove local estimates for the discretized version of $\Pi_n$, which is the operator
\begin{equation}
\label{eq:def-discretized-shifted-paraprod}
\Pi_n(f, g)(x):=\sum_{I \in \ii I} \frac{1}{\vert I \vert} \langle f, \psi_{I_n} \rangle \langle g, \psi_{I_n} \rangle \varphi_I(x).
\end{equation} 
If we look at the intervals $I \in \ii I$ so that $I \subseteq I_0$, their translates $I_n$ need not be contained inside $I_0$; in fact, there are approximately $\log \vert n\vert$ translates of $I_0$ that could possibly contain such a $I_n$. This is also the key observation in proving the boundedness of the shifted maximal operator $\ic M^n$ or of the shifted square function $\ic S^n$ with an operatorial norm not larger that $1+\log \vert n\vert$. 

In order to make sure that $\log \vert n \vert\neq 0$, we replace it by the equivalent expression $\log \nn := \log \left( 1+ n^2 \right)^{1/2} $. Then given a fixed dyadic interval $I_0$, we denote by $I_0^\sharp$, with $0 \leq \sharp \leq \log \nn $ the translates of $I_0$ that contain some intervals $I_n$ with $I \in \ii I (I_0)$. These are actually the $2^l$-translates of $I_0$, for $2^l \leq n$.

The local estimate for $\Pi_n$, corresponding to Proposition \ref{prop:localization-lemma}, reads as 
{\fontsize{9}{9}
\begin{align*}
\big\| \Pi_{n, I_0}^{F, G, \tilde E}(f, g)   \big\|_r^{r_0} \lesssim \sum_{0 \leq \sharp_1, \sharp_2 \leq \log \nn } \big( \sssize_{I_0^{\sharp_1}} \one_F  \big)^{\frac{r_0}{r_1'}-\epsilon} \big( \sssize_{I_0^{\sharp_2}} \one_G  \big)^{\frac{r_0}{r_2'}-\epsilon} \big( \sssize_{I_0} \one_{\tilde E}  \big)^{\frac{r_0}{r}-\epsilon}
 \cdot \big\|  f \cdot \ci_{I_0^{\sharp_1}} \big\|_{r_1}^{r_0}  \cdot \big\|  g \cdot \ci_{I_0^{\sharp_2}} \big\|_{r_1}^{r_0}, 
\end{align*}}
where $r_0:=\min \left( r, 1 \right)$. The localized vector-valued paraproduct will satisfy similar estimates.

The stopping time is similar, but it is defined by more parameters; the $\sssize$ for the functions $f$ and $g$ are given by
\[
\sssize^n_{\ii I} f :=\sup_{I \in \ii I} \frac{1}{\vert I \vert} \int_{\rr R } \vert f(x) \vert \ci_{I_n}(x) dx, 
\]
so the collections $\ii I_{n_1}$ from the stopping time described in Section \ref{subsection-stopping_times} will be replaced by collections $\ii I_{n_1, \sharp_1}$, with $0 \leq \sharp_1 \leq \log \nn$. An interval $I_1 \in \ii I_{n_1, \sharp_1}$ if there exists $I \in \ii I_{Stock}, I \subseteq I_1$ so that $I_n \subseteq I_1^{\sharp_1}$ and 
\begin{equation}
\label{eq:cond-shifted-stopping-time}
2^{-n_1-1}\leq \frac{1}{|I|}\int_{\rr R} \one_F \cdot \ci_{I_n} dx \leq 2^{-n_1}.
\end{equation}
In fact, $I_1^{\sharp_1}$ is selected prior to $I_1$, from the set of the intervals containing such a $I_n$ satisfying the above condition. Moreover, we require $I_1^{\sharp_1}$ to satisfy a condition similar to \eqref{eq:cond-shifted-stopping-time}, and to be maximal among the intervals meeting these properties. Then the collection $\ii I_{n_1, \sharp_1}(I_1)$ will consist of 
\[
\ii I_{n_1, \sharp_1}(I_1):=\lbrace I \in \ii I_{Stock}: I \subset I_1,  I_n \subseteq I_1^{\sharp_1}   \rbrace.
\]
We note that an interval $I_1$ can be selected in several collections $\ii I_{n_1, \sharp_1}$, but however in no more than $\log \nn$ of them. We also note that 
\[
\sssize^n_{\ii I_{n_1, \sharp_1(I_1)}} \one_F \lesssim 2^{-n_1} \lesssim \log \nn 2^d \frac{\vert F \vert}{\vert E \vert},
\]
since in this case the exceptional set is defined by:
\[
\Omega:=\Big\lbrace x: \ic M^n \one_F > C \log \nn \frac{|F|}{|E|}  \Big\rbrace \cup \Big\lbrace x: \ic M^n \one_G > C \log \nn \frac{|G|}{|E|}  \Big\rbrace.
\]
The parameter $d \geq 0$ is introduced as before, in order to control $\ic M^n \one_F$ and $\ic M^n \one_G$: we have a partition $\ds \ii I:=\bigcup_{d \geq 0} \ii I _d$, where for all $I \in \ii I_d$ we require that $\ds 1+\frac{\dist (I, \Omega^c)}{\vert I \vert} \sim 2^d$.

The stopping time for $\Pi_n$ is more elaborate because we need to find a way of grouping the intervals in $\ii I$ so that the shifted size $\sssize_{\ii I}^n \one_F \sim 2^{-n_1}$, and at the same time we need to assure some disjointness that will allow us to estimate $\ds \sum_I \vert I \vert$.  Using the disjointness of the intervals $I_1^{\sharp_1}$ as $I_1$ varies in $\ii I_{n_1, \sharp_1}$, we have 
\[
\sum_{I_1 \in \ii I_{n_1, \sharp_1}} \vert I_1 \vert=\sum_{I_1 \in \ii I_{n_1, \sharp_1}} \vert I_1^{\sharp_1} \vert \lesssim 2^{n_1} \vert F \vert, 
\]
since every $I_1^{\sharp_1} \subseteq \lbrace \ic M \one_F > 2^{-n_1-1}  \rbrace$ (it satisfies the condition \eqref{eq:cond-shifted-stopping-time}).

We are now ready to prove the desired estimate for $\| \Pi_n \otimes \Pi^\beta (f, g)(x,y) \cdot \one_{\tilde E}(x)\|_{L_{xy}^{s_2}}$. For simplicity, we illustrate the main ideas in the case $s_2<1$; in the case $s_2 \geq 1$, we only need to rewrite the argument by employing the trilinear form.

Using the above stopping time, we have
\[
\big\| \Pi_n \otimes  \Pi^\beta (f, g) \cdot \one_{\tilde E} \big\|_{L_{xy}^{s_2}}^{s_2} \leq \sum_{d \geq 0} \sum_{n_1, n_2, n_3} \sum_{0 \leq \sharp_1, \sharp_2 \leq \log \nn  }\sum_{I_0 \in \ii I_{\sharp_1, \sharp_2}^{n_1, n_2, n_3}} \big\| \Pi_{n,  \sharp_1, \sharp_2, I_0}^{F, G, \tilde E} \otimes  \Pi^\beta (f, g) \big\|_{L_{xy}^{s_2}}^{s_2}.
\]

Now we decompose $\Pi^\beta$ as in \eqref{eq:dec-Pi-beta}, and provided $(1+\beta)s_2>1$, it will be enough to prove
{\fontsize{10}{10}
\begin{align*}
\big\| \sum_l \big\vert \Pi_{n,  \sharp_1, \sharp_2, I_0}^{F, G, \tilde E} (Q_{l, m}^y f, Q_{l, m}^y g)   \big \vert\big\|_{L_{xy}^{s_2}} &\lesssim \big\|  \Pi_{n,  \sharp_1, \sharp_2, I_0}^{F, G, \tilde E} \big\|  \\
&\cdot \big\| \big( \sum_l \vert  Q_{l, m}^y f \vert^2  \big)^{1/2} \cdot \ci_{I_0^{\sharp_1}}  \big\|_{L_{xy}^{p_2}}  \cdot \big\| \big( \sum_l \vert  Q_{l, m}^y g \vert^2  \big)^{1/2} \cdot \ci_{I_0^{\sharp_2}}  \big\|_{L_{xy}^{p_2}},
\end{align*}}
with an operatorial norm
\[
 \big\| \Pi_{n,  \sharp_1, \sharp_2, I_0}^{F, G, \tilde E} \big\|  \lesssim \big( \sssize_{I_0^{\sharp_1}} \one_F \big)^{\frac{1}{p_2'}-\epsilon} \cdot \big( \sssize_{I_0^{\sharp_2}} \one_G \big)^{\frac{1}{p_2'}-\epsilon} \cdot \big( \sssize_{I_0} \one_{\tilde E} \big)^{\frac{1}{s_2}-\epsilon}.   
\]
This follows from the boundedness of the shifted square function (a certain power of $\log 1+ |m|$ will appear, but it doesn't affect the summation in $m$) and the usual vector-valued estimates for the paraproduct $\Pi_{n,  \sharp_1, \sharp_2, I_0}^{F, G, \tilde E}$, which localizes well.

In the end, we will have 
\begin{align*}
\big\| \Pi_n \otimes  \Pi^\beta (f, g) \cdot \one_{\tilde E} \big\|_{L_{xy}^{s_2}}^{s_2} &\lesssim \sum_{d \geq 0} \sum_{n_1, n_2, n_3} \sum_{0 \leq \sharp_1, \sharp_2 \leq \log \nn  }\sum_{I_0 \in \ii I_{\sharp_1, \sharp_2}^{n_1, n_2, n_3}}  \big( \sssize_{I_0^{\sharp_1}} \one_F \big)^{s_2-\epsilon}  \\
& \cdot \big( \sssize_{I_0^{\sharp_2}} \one_G \big)^{s_2-\epsilon} \cdot \big( \sssize_{I_0} \one_{\tilde E} \big)^{1-\epsilon} \vert I_0 \vert.
\end{align*}

Since we have control over all the sizes and over $\sum_{I_0} \vert I_0 \vert$, we can easily obtain the inequality
\[
\big\| \Pi_n \otimes  \Pi^\beta (f, g) \cdot \one_{\tilde E} \big\|_{L_{xy}^{s_2}}^{s_2} \lesssim \log \nn ^2 \vert F \vert^{\frac{s_2}{p_1}} \cdot \vert G \vert^{\frac{s_2}{p_2}} \cdot \vert E \vert^{\frac{s_2}{s_1}-1},
\]
which ends the proof because this is exactly the estimate \eqref{eq:restricted-bi-par-n}.

We want to emphasize however that without using the vector-valued point of view (and vector-valued restricted-type interpolation), it is difficult to remove the constraint that $s_1>\frac{1}{1+\beta}$, which is implied by splitting both $\Pi^\alpha$ and $\Pi^\beta$ from the beginning, as in \eqref{eq:bdd-Pi-alpha-Pi-beta}. Also, in the case $s_2 \geq 1$, dualizing through $L^{s_2}$ is not enough, and we have to bring forth the trilinear form.
\end{proof}

\section{Proof of Interpolation Proposition \ref{prop:interpolation}}
\label{sec:proofs-interp-etc}
\begin{proof}
The tuples $(p, q, s)$ and $( r_1, r_2, r)$ are fixed, but $( s_1, s_2, \tilde s)$ are allowed to vary in a neighborhood of $( p, q, s)$. We will decompose both $\vec f =\lbrace f_k \rbrace_k$ and $\vec g =\lbrace g_k \rbrace_k$ into pieces that we can control:
\[
\vec f=\sum_{m_1} \vec f_{m_1}=\sum_{m_1} \vec f \cdot \one_{\left\lbrace x: 2^{m_1} \leq \big\| \vec f \big\|_{\ell^{r_1}} <2^{m_1+1} \right \rbrace},
\]
and similarly 
\[
\vec g=\sum_{m_2} \vec g_{m_2}=\sum_{m_2} \vec g \cdot \one_{\left\lbrace 2^{m_2} \leq \big\| \vec g \big\|_{\ell^{r_2}} <2^{m_2+1} \right \rbrace}.
\]
We note that for every $m_1 \in \rr Z$, $\| \vec f_{m_1}  \|_{\ell^{r_1}} \sim 2^{m_1}$ and is supported on a set of finite measure. For simplicity, we will assume that $\big\| \big( \sum_k \lft f_k \rg^{r_1} \big)^{1/{r_1}} \big\|_p = \big\|\big( \sum_k \lft g_k \rg^{r_2} \big)^{1/{r_2}} \big\|_q=1$.

Given a function $\varphi$, we will use the \emph{distribution function} $d_{\varphi}$ for estimating the $L^p$ norm of $\varphi$. We recall that  $d_{\varphi}$ is a function from $\rr R^+$ to $\left[ 0, \infty \right]$, defined by
\[
d_{\varphi}(\lambda):=\lft \left \lbrace x: \lft\varphi(x)\rg >\lambda  \right\rbrace \rg.
\]
Then we have, for any $0<p<\infty$,
\begin{equation}
\label{eq:formula-distrib-Lp}
\| \varphi  \|_p^p=p \int_{\rr R^+} \lambda^{p-1} d_\varphi(\lambda) d\lambda.
\end{equation}
We will be using a discrete variant of the formula above:
\[
\| \varphi  \|_p^p \sim \sum_{n \in \rr Z} 2^{np} d_{\varphi}(2^n).
\]

The assumptions on the $L^p\big( \ell^{r_1} \big)$ and $L^q\big( \ell^{r_2} \big)$ norms of $\vec f$ and $\vec g$ respectively translate into
\begin{equation}
\label{eq:cond-norm1-f}
\big\| \big( \sum_k \lft f_k \rg^{r_1} \big)^{1/{r_1}} \big\|_p^p \sim \sum_{m_1}2^{m_1p} d_{\big\| \vec f  \big\|_{\ell^{r_1}}}(2^{m_1}) \sim 1
\end{equation}
and 
\begin{equation}
\label{eq:cond-norm1-g}
\big\| \big( \sum_k \lft g_k \rg^{r_2} \big)^{1/{r_2}} \big\|_q^q \sim \sum_{m_2}2^{m_2q} d_{\big\| \vec g  \big\|_{\ell^{r_2}}}(2^{m_2}) \sim 1.
\end{equation}
For $T(f, g)$, we have the estimate
\[
\big\| \big(  \sum_k  \lft T(f_k, g_k)\rg^r  \big)^{1/r}   \big\|_{s}^s  \sim \sum_n 2^{ns} d_{\big(  \sum_k  \lft T(f_k, g_k)\rg^r  \big)^{1/r}}(2^n).
\]
However, since $r<1$, the $\| \cdot \|_{\ell^r}$ is not subadditive, and this is a property that plays an important role in the classical proofs of interpolation theorems. We will use instead the subadditivity of $\| \cdot \|_{\ell^r}^r$:
\begin{equation}
\label{eq:estLp_norm_distrib}
\big\| \big(  \sum_k  \lft T(f_k, g_k)\rg^r  \big)^{1/r}   \big\|_{s}^s =\big\| \sum_k  \lft T(f_k, g_k)\rg^r   \big\|_{s/r}^{s/r} \sim 
\sum_n 2^{ns/r} d_{ \sum_k  \lft T(f_k, g_k)\rg^r }(2^n).
\end{equation}
We need to estimate the distribution function of $\sum_k  \lft T(f_k, g_k)\rg^r$. First, we note that 
\[
d_{ \sum_k  \lft T(f_k, g_k)\rg^r }(2^n) \leq \sum_{m_1, m_2} d_{ \sum_k  \lft T(f_{k, m_1}, g_{k, m_2})\rg^r }(c_{n, m_1, m_2}2^n),
\]
where $c_{n, m_1, m_2}>0$ will be chosen later, with the property that $\sum_{m_1, m_2} c_{n, m_1, m_2} \sim 1$.

Condition \eqref{eq:interp-restricted-weak-type} generalizes to a weak-type condition:
{\fontsize{9}{10}
\[
\big\| \big( \sum_k \lft T( F_k, G_k) \rg^{r} \big)^{1/{r}}  \big\|_{\tilde s , \infty}= \big\|  \sum_k \lft T( F_k, G_k) \rg^{r} \big\|_{\frac{\tilde s}{r} , \infty}^{\frac{1}{r}}\leq K_{ s_1, s_2, \tilde s} \big\| \| F \|_{\ell^{r_1}}  \big\|_{s_1} \big\| \| G \|_{\ell^{r_2}}  \big\|_{s_2}
\]}
whenever $\| F \|_{\ell^{r_1}}\sim A_1 \one_{E_1}$ and $\| G \|_{\ell^{r_2}}\sim A_2 \one_{E_2}$. This further implies that
\[
d_{\sum_k \lft T( F_k, G_k) \rg^{r} }(\lambda) \leq K_{s_1, s_2, \tilde s}^{\tilde s} \lambda^{-\frac{\tilde s}{r}} \big\| \| F \|_{\ell^{r_1}}  \big\|_{s_1}^{\tilde s} \big\| \| G \|_{\ell^{r_2}}  \big\|_{s_2}^{\tilde s}.
\]
We will apply this to the functions $\vec f_{m_1}$ and $\vec g_{m_2}$. We also note that, due to the way $\vec f_{m_1}$ and $\vec g_{m_2}$ were defined, we have that 
\[
\big\| \| \vec f_{m_1} \|_{\ell^{r_1}}  \big\|_{s_1}  \lesssim 2^{m_1} d_{\| \vec f \|_{\ell^{r_1}}}(2^{m_1})^{\frac{1}{s_1}}, \quad    \big\| \| \vec g_{m_2} \|_{\ell^{r_2}}  \big\|_{s_2} \lesssim 2^{m_2} d_{\| \vec g \|_{\ell^{r_2}}}(2^{m_2})^{\frac{1}{s_2}}.
\]
Hence, 
{\fontsize{9}{10}
\[
d_{ \sum_k  \lft T(f_{k, m_1}, g_{k, m_2})\rg^r }(c_{n, m_1, m_2}2^n) \lesssim K_{ s_1, s_2, \tilde s}^{\tilde s} \left( c_{n, m_1, m_2}  2^n\right)^{-\frac{\tilde s}{r}} \cdot \left(  2^{m_1} d_{\| \vec f \|_{\ell^{r_1}}}(2^{m_1})^{\frac{1}{s_1}} \right)^{\tilde s} \cdot \left(  2^{m_2} d_{\| \vec g \|_{\ell^{r_2}}}(2^{m_2})^{\frac{1}{s_2}} \right)^{\tilde s},
\]}
where in fact the tuple $( s_1, s_2, \tilde s)$ depends on $n, m_1, m_2$ and is to be chosen later. For simplicity, we don't write down this dependency, but it is an important step in our proof.

All of the above imply that, for $\tilde K$ so that $\tilde K^s:=\sup_{\left( s_1, s_2, \tilde s\right)} K_{ s_1, s_2, \tilde s}^{\tilde s}$, we have
\begin{align*}
\big\| \sum_k \lft T(f_k, g_k)  \rg^r   \big\|_\frac{s}{r}^\frac{s}{r} \lesssim \tilde{K}^s \sum_n 2^{\frac{n \left(s -\tilde s \right)}{r}}\sum_{m_1, m_2} c_{n, m_1, m_2}^{-\frac{\tilde s}{r}} 2^{m_1\tilde s} d_{\| \vec f \|_{\ell^{r_1}}}(2^{m_1})^{\frac{\tilde s}{s_1}} 2^{m_2 \tilde s} d_{\| \vec g \|_{\ell^{r_2}}}(2^{m_2})^{\frac{\tilde s}{s_2}}.
\end{align*}

We clearly need to make use of conditions \eqref{eq:cond-norm1-f} and \eqref{eq:cond-norm1-g}, and of the H\"older condition $\frac{1}{\tilde s}=\frac{1}{s_1}+\frac{1}{s_2}$.  We note that the above expression can be eventually written as
\begin{align*}
& \sum_n \sum_{m_1, m_2} c_{n, m_1, m_2}^{-\frac{\tilde s}{r}} 2^{\tilde s \left( \frac{1}{p}-\frac{1}{s_1}\right) \left(m_1 p- \frac{ns}{r}  \right)} 2^{\tilde s \left( \frac{1}{q}-\frac{1}{s_2}\right) \left(m_2 q- \frac{n s}{r}  \right)} \\
&\qquad \qquad \cdot 2^{m_1 p \frac{\tilde s}{s_1}} d_{\| \vec f \|_{\ell^{r_1}}}(2^{m_1})^{\frac{\tilde s}{s_1}} 2^{m_2 q \frac{\tilde s}{s_2}} d_{\| \vec g \|_{\ell^{r_2}}}(2^{m_2})^{\frac{\tilde s}{s_2}} \\
&=\sum_n\sum_{m_1p-\frac{ns}{r}, m_2q -\frac{ns}{r}} c_{n, m_1, m_2}^{-\frac{\tilde s}{r}} 2^{\tilde s \left( \frac{1}{p}-\frac{1}{s_1}\right) \left(m_1 p- \frac{ns}{r}  \right)} 2^{\tilde s \left( \frac{1}{q}-\frac{1}{s_2}\right) \left(m_2 q- \frac{n s}{r}  \right)} \\
&\quad \qquad \cdot 2^{\frac{\tilde s}{s_1}\left( m_1p-\frac{ns}{r}+\frac{ns}{r} \right)} d_{\| \vec f \|_{\ell^{r_1}}}(2^{\frac{1}{p}\left(m_1 p -\frac{ns}{r} +\frac{ns}{r}  \right)})^{\frac{\tilde s}{s_1}}\cdot 2^{\frac{\tilde s}{s_2}\left( m_2q-\frac{ns}{r}+\frac{ns}{r} \right)} d_{\| \vec g \|_{\ell^{r_2}}}(2^{\frac{1}{q}\left(m_2 q -\frac{ns}{r} +\frac{ns}{r}  \right)})^{\frac{\tilde s}{s_2}}.
\end{align*}

Now we will turn to our advantage the freedom to choose the triples $( s_1, s_2, \tilde s)$ and the numbers $c_{n, m_1, m_2}$. First we fix $\epsilon$ small and we choose $( s_1, s_2, \tilde s)$ sufficiently close to $( p, q, s)$ so that 
\[
\tilde s \left(\frac{1}{p}-\frac{1}{s_1}\right)\left( m_1p-\frac{ns}{r}\right), \tilde s \left(\frac{1}{q}-\frac{1}{s_2}\right)\left( m_2q-\frac{ns}{r}\right) \leq -\epsilon \max \left(\lft m_1 p -\frac{np}{r}\rg, \lft m_2q -\frac{np}{r} \rg \right).
\]
We then choose $c_{n, m_1, m_2}$of the form
\[
c_{n, m_1, m_2}:= \gamma 2^{-\frac{\epsilon r}{3\tilde s}\lft m_1 p -\frac{np}{r}\rg } \cdot 2^{-\frac{\epsilon r}{3\tilde s}\lft m_2 q -\frac{np}{r}\rg }, 
\]
where $\gamma$ is so that $\sum_{m_1, m_2} c_{n, m_1, m_2}=1$, and it depends only on $( p, q, s)$ and $\epsilon$.

Making the change of variables $\bar m_1:= m_1 p -\frac{np}{r}$ and $\bar m_2:= m_2 q -\frac{np}{r}$, we obtain
\begin{align*}
&\big\| \big(  \sum_k  \lft T(f_k, g_k)\rg^r  \big)^{1/r}   \big\|_{s}^s  \lesssim \tilde{K} \gamma \sum_{\bar m_1, \bar m_2} 2^{-\frac{\epsilon}{3} \max\left(\lft \bar m_1\rg, \lft \bar m_2 \rg \right)}\\
& \qquad \cdot \sum_n 2^{\frac{\tilde s}{s_1}\left( \bar m_1 +\frac{ns}{r} \right)} d_{\| \vec f \|_{\ell^{r_1}}}(2^{\frac{1}{p}\left(\bar m_1 +\frac{ns}{r}  \right)})^{\frac{\tilde s}{s_1}} 2^{\frac{\tilde s}{s_2}\left( \bar m_2+\frac{ns}{r} \right)} d_{\| \vec g \|_{\ell^{r_2}}}(2^{\frac{1}{q}\left(\bar m_2 +\frac{ns}{r}  \right)})^{\frac{\tilde s}{s_2}}.
\end{align*}

Applying H\"older's inequality in the last line, and thanks to identities \eqref{eq:cond-norm1-f} and \eqref{eq:cond-norm1-g}, we obtain the $L^p \times L^q \to L^s$ strong-type estimate. Concerning the constant $K$, we can see that 
\[
K_{ p, q, s} \sim \sup_{\left( s_1, s_2, \tilde s \right) \in \ic V_{\left( p, q, s \right)}} K_{ s_1, s_2, \tilde s}.
\]
\end{proof}

The proof of Proposition \ref{prop:interp-multi} is similar, and the fact that we allow for arbitrary measures is of no consequence. In this situation, we use the subadditivity of $\ds \| T(\vec f, \vec g)  \|_{L^R}^{r^{j_0}}$, for some index $1 \leq j_0 \leq N$ as in Proposition \ref{prop:reorder}.


\bibliographystyle{alpha}


\end{document}